    \OR\ifentrytype{incollection}\OR\ifentrytype{inproceedings}%
    \OR\ifentrytype{inreference}} {\printtext[title]{%
\DeclareFontFamily{U}{BOONDOX-calo}{\skewchar\font=45 }
\DeclareFontShape{U}{BOONDOX-calo}{m}{n}{
  <-> s*[1.05] BOONDOX-r-calo}{}
\DeclareFontShape{U}{BOONDOX-calo}{b}{n}{
  <-> s*[1.05] BOONDOX-b-calo}{}
\DeclareMathAlphabet{\mcb}{U}{BOONDOX-calo}{m}{n}
\SetMathAlphabet{\mcb}{bold}{U}{BOONDOX-calo}{b}{n}
\newcommand*\bigcdot{\mathpalette\bigcdot@{.5}}
\newcommand*\bigcdot@[2]{\mathbin{\vcenter{\hbox{\scalebox{#2}{$\m@th#1\bullet$}}}}}
\definecolor{darkblue}{rgb}{0.13,0.13,0.39}
\newtheorem{thm}{Theorem}[section] 
\newtheorem{lem}[thm]{Lemma}
\newtheorem{prop}[thm]{Proposition} 
\newtheorem{cor}[thm]{Corollary}
\theoremstyle{definition} 
\newtheorem{rem}[thm]{Remark} 
\newtheorem*{rem*}{Remark}
\newtheorem{ex}[thm]{Example}
\newtheorem{assumption}[thm]{Assumption} 
\newcounter{assum}
\newcommand{\head}{{\uptext{head}}}
\newcommand{\TASEP}{{\uptext{TASEP}}}
\newcommand{\push}{{\uptext{push}}}
\newcommand\distr{\mathrel{\overset{\makebox[0pt]{\mbox{\normalfont\tiny\uptext{dist}}}}{=}}}
\newcommand{\prll}{\uptext{prll}}
\newcommand{\gen}{\uptext{gen}}
\newcommand{\rB}{\uptext{r-B}}
\newcommand{\lB}{\uptext{l-B}}
\newcommand{\rG}{\uptext{r-G}}
\newcommand{\lG}{\uptext{l-G}}
\newcommand{\UC}{\uptext{UC}}
\newcommand{\set}[1]{\llbracket #1 \rrbracket}
\renewcommand{\ul}[1]{\underline{#1}}
\newcommand{\ut}{\ul{t}}
\newcommand{\un}{\ul{n}}
\renewcommand{\d}{\mathrm{d}}
\newcommand{\per}{\uptext{prd}}
\newcommand{\T}{\mathbb{T}}
\newcommand{\V}[1]{\mathbb{V}_{\hspace{-0.1em}#1}}
\newcommand{\W}[1]{\mathbb{W}_{\hspace{-0.08em}#1}}
\newcommand{\Wd}[1]{\mathbb{W}^\circ_{\hspace{-0.08em}#1}}
\newcommand{\mQ}{\bar{Q}}
\newcommand{\Q}{\mcb{V}}
\newcommand{\E}{\vartheta}
\newcommand{\GT}{\mathbb{GT}}
\newcommand{\D}{\mathbb{D}}
\newcommand{\G}{G}
\renewcommand{\H}{\mathbf{F}}
\newcommand{\fh}{\mathfrak{h}}
\newcommand{\fn}{\mathfrak{n}}
\newcommand{\xx}{X}
\newcommand{\x}{\mathsf{x}}
\newcommand{\X}{\mathsf{X}}
\newcommand{\R}{\mathcal{R}}
\newcommand{\CM}{\mathcal{M}}
\newcommand{\CE}{\mathcal{E}}
\newcommand{\cK}{\mathcal{K}}
\newcommand{\Qt}{Q^*}
\newcommand{\CN}{\mathcal{N}}
\newcommand{\cS}{\mathcal{S}}
\newcommand{\cT}{\mathcal{T}}
\newcommand{\cW}{\mathcal{W}}
\newcommand{\vS}{\vv{S}}
\newcommand{\vT}{\vv{T}}
\newcommand{\arrowright}[1]{\parbox{#1}{\tikz{\draw[->](0,0)--(#1,0);}}}
\newcommand{\arrowleft}[1]{\parbox{#1}{\tikz{\draw[<-](0,0)--(#1,0);}}}
\newcommand{\la}{\vphantom{m}\arrowleft{.15cm}}
\newcommand{\ra}{\vphantom{m}\arrowright{.15cm}}
\newcommand{\I}{{\rm i}} 
\newcommand{\pp}{\mathbb{P}}
\newcommand{\ee}{\mathbb{E}} 
\newcommand{\rr}{\mathbb{R}}
\newcommand{\nn}{\mathbb{N}} 
\newcommand{\zz}{\mathbb{Z}}
\newcommand{\bn}{{\bar{n}}}
\newcommand{\M}{\CM}
\newcommand{\K}{K}
\newcommand{\Id}{I}
\newcommand{\cc}{{\mathbb{C}}}
\newcommand{\p}{\partial}
\newcommand{\uno}[1]{\mathbf{1}_{#1}}
\newcommand{\ep}{\varepsilon}
\newcommand{\vs}{\vspace{6pt}}
\newcommand{\wt}{\widetilde}
\newcommand{\qqand}{\qquad\text{and}\qquad}
\DeclareMathOperator{\Ai}{\uptext{Ai}}
\newcommand{\ts}{\hspace{0.1em}}
\newcommand{\tts}{\hspace{0.05em}}
\newcommand{\bioneref}{\hyperlink{it:biorth4}{\normalfont ($\star$)}\xspace}
\newcommand{\bitworef}{\hyperlink{it:poly4}{\normalfont ($\star\star$)}\xspace}
\newcommand\RedeclareMathOperator{%
  \@ifstar{\def\rmo@s{m}\rmo@redeclare}{\def\rmo@s{o}\rmo@redeclare}%
}
\newcommand\rmo@redeclare[2]{%
  \begingroup \escapechar\m@ne\xdef\@gtempa{{\string#1}}\endgroup
  \expandafter\@ifundefined\@gtempa
     {\@latex@error{\noexpand#1undefined}\@ehc}%
     \relax
  \expandafter\rmo@declmathop\rmo@s{#1}{#2}}
\newcommand\rmo@declmathop[3]{%
  \DeclareRobustCommand{#2}{\qopname\newmcodes@#1{#3}}%
}
\newcommand{\uptext}[1]{\text{\upshape{#1}}}
\RedeclareMathOperator{\det}{\mathop{\uptext{det}}}
\RedeclareMathOperator{\ker}{\mathop{\uptext{ker}}}
\RedeclareMathOperator{\exp}{\mathop{\uptext{exp}}}
\RedeclareMathOperator{\log}{\mathop{\uptext{log}}}
\RedeclareMathOperator*{\lim}{\mathop{\uptext{lim}}}
\RedeclareMathOperator*{\sup}{\mathop{\uptext{sup}}}
\RedeclareMathOperator*{\limsup}{\mathop{\uptext{lim\hspace{1pt}sup}}}
\RedeclareMathOperator*{\liminf}{\mathop{\uptext{lim\hspace{1pt}inf}}}
\RedeclareMathOperator*{\max}{\mathop{\uptext{max}}}
\RedeclareMathOperator*{\inf}{\mathop{\uptext{inf}}}
\RedeclareMathOperator*{\min}{\mathop{\uptext{min}}}
\RedeclareMathOperator*{\cos}{\mathop{\uptext{cos}}}
\RedeclareMathOperator*{\sin}{\mathop{\uptext{sin}}}
\RedeclareMathOperator*{\arg}{\mathop{\uptext{arg}}}
\RedeclareMathOperator{\Re}{\uptext{Re}}
\RedeclareMathOperator{\Im}{\uptext{Im}}
\DeclareMathOperator{\spanning}{\uptext{span}}
\newcommand{\twopii}[1]{\ifthenelse{#1=1}{2\pi\I}{(2\pi\I)^{#1}}}
\newcommand{\SLP}{\mathbb{S}}
\newcommand{\SM}{\mathcal{S}}
\newcommand{\SN}{\bar{\mathcal{S}}}
\newcommand{\fT}{\mathbf{S}}
\newcommand{\ft}{\mathbf{t}}
\newcommand{\ff}{\mathfrak{f}}
\newcommand{\fx}{\mathbf{x}}
\newcommand{\fa}{\mathbf{a}}
\newcommand{\fB}{\mathbf{B}}
\newcommand{\fK}{\mathbf{K}}
\newcommand{\fI}{\mathbf{I}}
\newcommand{\ftau}{\bm{\tau}}
\renewcommand{\P}{\chi}
\newcommand{\bP}{\bar{\P}}
\newcommand{\rin}{r}
\newcommand{\rout}{\bar\rin}
\newcommand{\rrin}{\rin}
\newcommand{\rrout}{\rout}
\newcommand{\rhoin}{\rho}
\newcommand{\rhoout}{\bar\rho}
\let\Re\relax
\DeclareMathOperator{\Re}{Re}
\DeclareMathOperator{\hypo}{\uptext{hypo}}
\DeclareMathOperator{\epi}{\uptext{epi}}
\def\dash---{\kern.16667em---\penalty\exhyphenpenalty\hskip.16667em\relax}
\newcommand{\catp}{\raisebox{0.1em}{\tikz[y=0.80pt,x=0.80pt,yscale=-1.0,xscale=1.0,inner sep=0pt,outer sep=0pt]%
{
\path[draw=black,line join=bevel,line cap=round,miter limit=4.00,line width=0.280pt] (-240.5382,165.4074) circle (0.0476cm);
\path[draw=black,fill=white,line join=bevel,line cap=round,miter limit=4.00,line width=0.280pt] (-239.2864,165.0583) circle (0.0476cm);
\path[draw=black,fill=white,line join=bevel,line cap=round,miter limit=4.00,line width=0.280pt] (-238.0008,164.9891) circle (0.0476cm);
\path[draw=black,fill=white,line join=bevel,line cap=round,miter limit=4.00,line width=0.280pt] (-236.6079,164.6444) circle (0.0476cm);
\path[draw=black,fill=white,line join=bevel,line cap=round,miter limit=4.00,line width=0.280pt] (-235.2670,164.3894) circle (0.0476cm);
\path[draw=black,fill=white,line join=bevel,line cap=round,miter limit=4.00,line width=0.280pt] (-234.0488,163.8984) circle (0.0476cm);
\path[draw=black,fill=white,line join=bevel,line cap=round,miter limit=4.00,line width=0.280pt] (-233.9610,163.2756) ellipse (0.0096cm and 0.0141cm);
\path[draw=black,line join=round,line cap=round,miter limit=4.00,line width=0.130pt] (-234.3312,162.2005) .. controls (-234.3312,162.2005) and (-234.7325,160.8832) .. (-235.0441,161.2231);
\path[draw=black,line join=round,line cap=round,miter limit=4.00,line width=0.130pt] (-233.5987,162.2260) .. controls (-233.5987,162.2260) and (-233.1974,160.9087) .. (-232.8857,161.2486);
}}}
\newcommand{\catpl}{\raisebox{0.1em}{\tikz[y=0.80pt,x=0.80pt,yscale=-1.0,xscale=-1.0,inner sep=0pt,outer sep=0pt]%
{
\path[draw=black,line join=bevel,line cap=round,miter limit=4.00,line width=0.280pt] (-240.5382,165.4074) circle (0.0476cm);
\path[draw=black,fill=white,line join=bevel,line cap=round,miter limit=4.00,line width=0.280pt] (-239.2864,165.0583) circle (0.0476cm);
\path[draw=black,fill=white,line join=bevel,line cap=round,miter limit=4.00,line width=0.280pt] (-238.0008,164.9891) circle (0.0476cm);
\path[draw=black,fill=white,line join=bevel,line cap=round,miter limit=4.00,line width=0.280pt] (-236.6079,164.6444) circle (0.0476cm);
\path[draw=black,fill=white,line join=bevel,line cap=round,miter limit=4.00,line width=0.280pt] (-235.2670,164.3894) circle (0.0476cm);
\path[draw=black,fill=white,line join=bevel,line cap=round,miter limit=4.00,line width=0.280pt] (-234.0488,163.8984) circle (0.0476cm);
\path[draw=black,fill=white,line join=bevel,line cap=round,miter limit=4.00,line width=0.280pt] (-233.9610,163.2756) ellipse (0.0096cm and 0.0141cm);
\path[draw=black,line join=round,line cap=round,miter limit=4.00,line width=0.130pt] (-234.3312,162.2005) .. controls (-234.3312,162.2005) and (-234.7325,160.8832) .. (-235.0441,161.2231);
\path[draw=black,line join=round,line cap=round,miter limit=4.00,line width=0.130pt] (-233.5987,162.2260) .. controls (-233.5987,162.2260) and (-233.1974,160.9087) .. (-232.8857,161.2486);
}}}
\numberwithin{equation}{section}
\let\oldmarginpar\marginpar
\renewcommand\marginpar[1]{\-\oldmarginpar[\raggedleft\footnotesize #1]%
  {\raggedright{\small\textsf{#1}}}}
\begin{document}

\title{TASEP and generalizations: Method for exact solution}

\author{Konstantin Matetski} \address[K.~Matetski]{
  Department of Mathematics\\
  Columbia University\\
  2990 Broadway\\
  New York, NY 10027\\
  USA} \email{matetski@math.columbia.edu}

\author{Daniel Remenik} \address[D.~Remenik]{
  Departamento de Ingenier\'ia Matem\'atica and Centro de Modelamiento Matem\'atico (UMI-CNRS 2807)\\
  Universidad de Chile\\
  Av. Beauchef 851, Torre Norte, Piso 5\\
  Santiago\\
  Chile} \email{dremenik@dim.uchile.cl}
  
\begin{abstract}
 The explicit biorthogonalization method, developed in \cite{fixedpt} for continuous time TASEP, is generalized to a broad class of determinantal measures which describe the evolution of several interacting particle systems in the KPZ universality class.
 The method is applied to sequential and parallel update versions of each of the four variants of discrete time TASEP (with Bernoulli and geometric jumps, and with block and push dynamics) which have determinantal transition probabilities; to continuous time PushASEP; and to a version of TASEP with generalized update.
 In all cases, multipoint distribution functions are expressed in terms of a Fredholm determinant with an explicit kernel involving hitting times of certain random walks to a curve defined by the initial data of the system.
 The method is further applied to \emph{systems of interacting caterpillars}, an extension of the discrete time TASEP models which generalizes sequential and parallel updates.
\end{abstract}

\maketitle
  \tableofcontents 
  
\section{Introduction and main result}
\label{sec:intro}

A large class of exactly solvable models in the KPZ universality class can be described as marginals of determinantal measures, corresponding to either determinantal point processes or their generalizations to complex-valued measures.
For instance, some projections of Schur processes with suitably chosen specifications have the distribution of (discrete or continuous time) totally asymmetric simple exclusion processes (TASEP) with blocking and pushing interaction and with very special initial states (typically with an infinite number of particles and with either half-packed or half-stationary initial states) \cite{IntProbLectures}.
This naturally yields formulas for the cumulative distribution functions of the particle positions based on the Fredholm determinant of a kernel given in the form of a double contour integral.
These formulas can be used to show that, in the usual KPZ 1:2:3 scaling limit, the asymptotic fluctuations of the particle positions are described by the Tracy-Widom GUE distribution from random matrix theory \cite{tracyWidom} and, more generally, their joint distributions lead to the Airy$_2$ process \cite{johanssonShape,prahoferSpohn,johansson}.

For general initial condition, a representation for continuous time TASEP as a marginal of a signed determinantal measure was discovered in \cite{sasamoto,borFerPrahSasam}, where a formula in terms of a Fredholm determinant was derived involving a kernel characterized implicitly by a certain biorthogonalization problem.
For the simplest initial condition, half-packed, this biorthogonalization problem becomes trivial and the previously derived kernel is recovered.
In the case of $2$-periodic initial state the biorthogonalization problem was solved explicitly in those papers, and in the 1:2:3 scaling limit the resulting formulas led to the Airy$_1$ process, with one-point marginals now given by the Tracy-Widom GOE distribution \cite{tracyWidom2}.
Later this method was applied to several other models, including discrete time TASEP with sequential and parallel updates \cite{bfp,borodFerSas} and PushASEP \cite{bp-push} with periodic initial conditions.
The method was also applied to compute the distribution of the two-dimensional process of interacting particles introduced in \cite{Anisotropic}, whose projections yield a coupling of TASEPs with pushing and blocking interactions.

For continuous time TASEP with arbitrary (one-sided) initial condition, the biorthogonalization problem was solved in \cite{fixedpt}, leading to a kernel which can be expressed in terms of the hitting time of a certain random walk to a curve defined by the initial data.
This was used to show that, if the initial data converges under diffusive scaling, then in 1:2:3 scaling limit the TASEP height function converges to a Markov process whose fixed time, multipoint distributions are explicitly given by a Fredholm determinant of an analogous kernel, now defined in terms of Brownian hitting times.
This limiting process is known as the \emph{KPZ fixed point}, and is expected to arise as the universal scaling limit of all processes in the KPZ universality class.
The same approach was later used in studying the KPZ fixed point scaling limit of reflected Brownian motions \cite{Mihai}.

The purpose of this article is to extend the biorthogonalization method of \cite{sasamoto,borFerPrahSasam} and the explicit biorthogonalization scheme developed in \cite{fixedpt} to compute correlation kernels for a general class of determinantal measures, whose marginals in particular cases yield several exactly solvable models in the KPZ class.
The class of measures which we will study include, in particular, discrete time variants of TASEP with either blocking or pushing interaction and with Bernoulli or geometric jumps. Formulas will be derived for versions of these processes with either sequential or parallel update, unifying in particular the biorthogonalization schemes presented in \cite{bfp} and \cite{borodFerSas}.

Sequential and parallel update dynamics for TASEP-like systems will be obtained as particular cases of a more general class of \emph{systems of interacting caterpillars} which we introduce. Along the way we will study some natural versions of these interacting particle systems which seem to not have been considered before.

The formulas which we will obtain have the same structure as those obtained for continuous time TASEP, and for measures corresponding to models in the KPZ class they can be used to show convergence to the KPZ fixed point as in \cite{fixedpt}.
This is left for future work.

\subsection{Notation and conventions}\label{sec:notat}

We use $\nn$ for the set of natural numbers $1, 2, \dotsc$, and we denote $\nn_0 = \nn \cup \{0\}$.
For $N \in \nn$ we use the shorthand notation $\set{N} = \{1, \dotsc, N\}$.
$t$ will denote a time variable taking values in a domain $\T$, which can be either $\T = \rr$ or $\T = \zz$.

For $N \geq 2$ the \emph{Weyl chamber} is 
\[\Omega_N = \{(x_1,\dotsc,x_N)\in\zz^N\!:x_1>x_2>\dotsm>x_N\}.\]

$\gamma_{r}$ will denote a circle in the complex plane with radius $r$, centered at $0$.
$A_{r,R}$ will denote the closed annulus on the complex plane centered at $0$ and with radii $0<r<R$.
For a closed subset $U$ of $\cc$ we say that a complex function $f$ is analytic on $U$ if it is analytic on some open domain which contains $U$.

Throughout the paper we will consider many different kernels $K\!:\zz\times\zz\longrightarrow\rr$.
We regard such kernels as integral operators acting on suitable families of functions $f\!: \zz \to \cc$, i.e.,
\begin{equation}
Kf(x) = \sum_{y \in \zz} K(x,y)f(y),
\end{equation}
provided the sum is absolutely convergent.
Two such kernels of this form are composed as $KL(x,y)=\sum_{z\in\zz}K(x,z)L(z,y)$, provided again that the sum is absolutely convergent.
We will usually not need to spell out the precise function spaces on which these operators act; in particular, compositions of these kernels will be well defined by
the absolute convergence of all the sums involved.

By the inverse of a kernel $K$ we mean a kernel $K^{-1}$ such that $K^{-1}(x,y)=K^{-1}K(x,y)=\uno{x=y}$.
For a kernel $K$ we denote its adjoint as $K^*(x_1, x_2) = K(x_2,x_1)$.

\subsection{Main result}
\label{sec:main}

We present next our main result in the context of a class of Markov chains with determinantal transition probabilities.
This is a particular case of the results proved in the paper, which work for a more general class of determinantal measures.
We do this in order to simplify the presentation, and because it is enough to cover most of the applications to examples of interest (exceptions are discrete time TASEP with geometric jumps and sequential update, see Sec.~\ref{sec:rightGeometric}, and the generalized TASEP dynamics from Secs. \ref{sec:Povolotsky-discrete} and \ref{sec:Povolotsky-continuous}).

For $N \geq 1$ we consider a Markov chain $\xx_t$ on $\Omega_N$, where time $t \in \T$ is either continuous or discrete.
We interpret $\xx_t = (\xx_t(1), \dotsc, \xx_t(N))$ as the locations of a system of particles whose evolution preserves the order.
The key property which we ask of $\xx_t$ (first shown to hold for TASEP \cite{MR1468391}) is the following: the transition probabilities of the process from $\vec y \in \Omega_N$ to $\vec x \in \Omega_N$ are given by
\begin{align}\label{eq:G-main}
  \pp(X_t = \vec x | X_0 = \vec y) = \det \bigl[F_{i - j}(x_{N + 1 - i} - y_{N + 1 - j}, t)\bigr]_{i, j \in \set{N}},
\end{align}
where 
\begin{align}\label{eq:F-main}
F_{n}(x, t) = \frac{1}{2\pi\I}\oint_{\gamma_{\rhoout}}\!\!\d w\, \frac{(w - 1)^{-n}}{w^{x - n +1}} \varphi(w)^t
\end{align}
for some $\rhoout>1$ and some complex function $\varphi$ which, besides giving a probability distribution in \eqref{eq:G-main} (this in particular implies $\varphi(1) = 1$), satisfies the following:

\begin{assumption}\label{a:phi}
\leavevmode
\begin{enumerate}[label=\uptext{(\roman*)}]
\item $\varphi\!:U\longrightarrow\cc$, where the domain $U\subseteq\cc$ contains $0$ and $1$, and $\varphi$ has at most a finite number of singularities in $U$.
\item $\varphi$ is analytic on an annulus $A_{\rhoin,\rhoout} \subseteq U$ with radii $0<\rhoin < 1 <\rhoout$.
\item $\varphi(w) \neq 0$ for all $w\in A_{\rhoin,\rhoout}$.
\end{enumerate}
\end{assumption}

Throughout the rest of this section we assume that Assum.~\ref{a:phi} is satisfied and, in particular, we fix the parameter $\rho$ appearing in the assumption.

Note that for $n\leq0$ we can shrink the contour in the integral in \eqref{eq:F-main} to a circle of radius less than $1$, from which we get $\sum_{z<x}F_n(z,t)=\uno{n=0}-F_{n+1}(x,t)$ for such $n$ (where for $n=0$ we have used $\varphi(1)=1$).
Using this and the multilinearity of the determinant in \eqref{eq:G-main} one gets
\begin{equation}\label{eq:chierarchy}
	\textstyle\sum_{x_N < x_{N-1}} \pp(X_t = \vec x | X_0 = \vec y) = \pp(X^{(N-1)}_t = \vec x_{<N} | X^{(N-1)}_0 = \vec y_{<N}),
\end{equation}
where $X^{(N-1)}_t$ denotes the process with $N-1$ particles and the vector $\vec x_{<N}$ is obtained from $\vec x$ by removing the last entry. In other words, we can remove the last particle from the $N$-particle system to obtain the same evolution on $N-1$ particles; equivalently, the first $N-1$ particles do not ``feel'' the presence of the $N^{\text{th}}$ one.

In order to state our result for the joint cumulative distribution function of the particle locations, we need to make some definitions.
To this end we introduce a parameter $\kappa\in\nn_0$, which for now can be thought of as being $0$ ($\kappa>0$ will be used later in Thm.~\ref{thm:main2} to state a more general result).
We also introduce an auxiliary parameter $\theta\in(\rho,1)$, whose role will be clear shortly (in applications to scaling limits, $\theta$ is adjusted according to the density of particles in the system, see Rem. \ref{rem:fp-dens}). 

For $z_1,z_2\in\zz$ we define
\begin{equation}\label{eq:defQintro-pre}
Q(z_1,z_2) = \frac{\alpha}{2\pi\I}\oint_{\gamma_{r}}\d w\,\frac{\theta^{z_1-z_2}}{w^{z_1-z_2}}\frac{\varphi(w)^\kappa}{1-w}
\end{equation}
with $r\in(\rho,1)$ and with
\[\alpha=(1-\theta)\theta^{-1}\varphi(\theta)^{-\kappa}.\]
When $\kappa=0$ one has $Q(z_1,z_2)=(1-\theta)\theta^{z_1-z_2-1}\uno{z_1>z_2}$, i.e., $Q$ is the transition matrix of a random walk on $\zz$ taking Geom$[1-\theta]$ steps (strictly) to the left.
In the case $\kappa\geq1$ it is not very hard to check that $\sum_{z_2\in\zz}Q(z_1,z_2)=1$ (for instance, as in the proof of Lem.~\ref{lem:conv}), and we will impose an additional condition on $\varphi$ (see Assum.~\ref{a:kappa} below) which will ensure positivity, so that $Q$ is still the transition matrix of a random walk. 
In any case, under Assum.~\ref{a:phi}, $Q$ has an inverse, and the $n$-th powers of $Q$ and its inverse can be obtained explicitly by convolution (see Lem. \ref{lem:conv}): for all $n\in\zz$ we have
\begin{equation}\label{eq:defQintro}
Q^n(z_1,z_2) = \frac{\alpha^n}{2\pi\I}\oint_{\gamma_{\rrin}}\d w\,\frac{\theta^{z_1-z_2}}{w^{z_1-z_2-n+1}}\left(\frac{\varphi(w)^\kappa}{1-w}\right)^n.
\end{equation}

Next we set
\begin{align}
\SM_{-t, -n}(z_1,z_2) &= \frac{\alpha^{-n+1}}{2\pi\I}\oint_{\gamma_{\rrin}}\d w\,\frac{\theta^{z_2-z_1}}{w^{z_2-z_1+n+1}}(1-w)^n\varphi(w)^{t-\kappa(n-1)},\label{eq:defSMintro}\\
\SN_{-t, n} (z_1,z_2) &= \frac{\alpha^{n-1}}{2\pi\I}\oint_{\gamma_\delta}\d w\,\frac{(1-w)^{z_2-z_1+n-1}}{\theta^{z_2-z_1}w^n}\varphi(1-w)^{\kappa(n-1)-t},\label{eq:defSNintro}
\end{align}
with $r,\theta\in(\rho,1)$ as above and with $\delta>0$ small enough so that $\varphi(1-w)^{\kappa(n-1)-t}$ is analytic inside $\gamma_\delta$.
If we introduce another family of kernels
\[\R_t(z_1,z_2)=\frac1{2\pi\I}\oint_{\gamma_\rin}\d w\,\frac{\theta^{z_1-z_2}}{w^{z_1-z_2+1}}\varphi(w)^t\]
for $t\in\zz$, then the last two kernels can be written as
\begin{equation}\label{eq:SMSNQ}
\SM_{-t, -n}=Q^{-n}\R_{t+\kappa}\qqand\SN_{-t,n}=\mQ^{(n)}\R_{-t-\kappa}
\end{equation}
with $\mQ^{(n)}=\SN_{0,n}$ (see Sec.~\ref{sec:biorth}, and in particular the comment at the end of the section, for a proof of these formulas).

Now we focus on the case $\kappa=0$, so that $Q$ is the transition matrix of a random walk on $\zz$ with Geom$[1-\theta]$ jumps to the left, which we denote by $B$.
Fix $\vec y\in\Omega_N$ and let
\begin{equation}\label{eq:tauintro}
\tau= \min\{m = 0, \dotsc, N-1 : B_m> y_{m+1}\}
\end{equation}
be the hitting time of the strict epigraph of the ``curve'' $(y_{m+1})_{m=0,\dotsc,n-1}$ by the random walk $(B_m)_{m\geq0}$ (we set $\tau=\infty$ if the walk does not go above the curve by time $N-1$).
Then we set 
\begin{equation}\label{eq:SMepi}
	\SN^{\epi(\vec y)}_{-t, n}(z_1,z_2) = \ee_{B_{0}=z_1}\!\left[\SN_{-t, n-\tau}(B_\tau,z_2) \uno{\tau<n}\right].
\end{equation}
The indicator $\uno{\tau<n}$ can be omitted in the expectation, because $\SN_{-t,m}$ vanishes for $m\leq0$, as can be seen from \eqref{eq:defSNintro}.

Finally, for a fixed vector $a\in\rr^m$ and indices $n_1<\dotsm<n_m$ we let 
\begin{equation}\label{eq:defChis}
\chi_a(n_j,x)=\uno{x>a_j}\qqand\bP_a(n_j,x)=\uno{x\leq a_j},
\end{equation}
which we also regard as multiplication operators acting on the space $\ell^2(\{n_1,\dotsc,n_m\}\times\zz)$. We will also use this notation when the first argument is a pair $(n_j,t)$ with $t\in\T$, with $\chi_a((n_j,t),x)=1-\bP_a((n_j,t),x)=\P_a(n_j,x)$, as well as in the case that $a$ is a scalar, writing $\chi_a(x)=1-\bP_a(x)=\uno{x>a}$.

The following is the simplest version of the main result of this article, and can be applied for example to continuous time TASEP and discrete time TASEP with sequential update.

\begin{thm}\label{thm:main}
Let $\varphi$ satisfy Assum.~\ref{a:phi}. Then for any $t\geq0$, any $1 \leq n_1 < \dotsm <  n_m \leq N$, any $\vec a \in \rr^m$, and any $\vec y \in \Omega_N$, we have
\begin{equation}\label{eq:probability-main-kappa=0}
\pp \bigl(\xx_{t}(n_i) > a_{i},\, i \in \set{m}\, \big|\, X_{0} = \vec y\bigr) = \det \bigl(I-\bP_{a}  K_t \bP_{a} \bigr)_{\ell^2(\{n_1, \dotsc, n_m\} \times \zz)},
\end{equation}
with
\begin{equation}\label{eq:kernel-main-kappa=0}
K_t(n_1, \cdot\,; n_2, \cdot\,) = -Q^{n_2-n_1} \uno{n_1 < n_2} + (\SM_{-t, -n_1})^*\SN^{\epi(\vec y)}_{-t, n_2},
\end{equation}
where the objects on the right hand side are as in \eqref{eq:defQintro}--\eqref{eq:defChis} with $\kappa = 0$.
\end{thm}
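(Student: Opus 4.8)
The plan is to follow the Eynard--Mehta / biorthogonalization strategy in the spirit of \cite{sasamoto,borFerPrahSasam} and \cite{fixedpt}, adapted to the general weight $\varphi$. First I would start from the determinantal transition probability \eqref{eq:G-main} and use the standard fact that, since the first $n_i-1$ particles do not feel particle $n_i$ (the hierarchy \eqref{eq:chierarchy}), the joint distribution of $(\xx_t(n_1),\dots,\xx_t(n_m))$ can be written as a determinantal measure with a finite rank perturbation structure. Applying the Eynard--Mehta theorem (or rather its complex-measure version from \cite{borFerPrahSasam}) yields a Fredholm determinant $\det(I-\bP_a\bar K_t\bP_a)$ whose kernel has the form $\bar K_t(n_1,\cdot;n_2,\cdot)=-Q^{n_2-n_1}\uno{n_1<n_2}+\sum_{k,l}\Psi^{n_1}_k(\cdot)\,[M^{-1}]_{kl}\,\Phi^{n_2}_l(\cdot)$, where $\Psi^{n}_k$ are the ``forward'' functions built from $F$ and $Q$, $\Phi^n_l$ are determined by the initial data $\vec y$, and $M$ is the Gram-type matrix $\langle\Psi^{N}_k,\Phi^{N}_l\rangle$. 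The role of $Q$ here is as the conjugation/propagation kernel that turns the $F_{i-j}$ structure into a walk transition matrix: one checks using \eqref{eq:defQintro} and the contour-integral definition \eqref{eq:F-main} that $Q$ intertwines the spaces correctly, so that the epigraph/hitting-time representation can be substituted.

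The heart of the argument is then to \emph{solve the biorthogonalization problem explicitly}: one must produce functions $\Phi^n_l$ biorthogonal to the $\Psi^n_k$ and show that the resulting kernel collapses to \eqref{eq:kernel-main-kappa=0}. Following \cite{fixedpt}, the key is to guess that the biorthogonal functions are given by $\Phi^n_l(z)=\ee_{B_0=z}[\text{(something)}(B_\tau)\uno{\tau<n}]$ with $\tau$ the hitting time \eqref{eq:tauintro} of the strict epigraph of the curve $(y_{m+1})$. Concretely, I would define $h^n_l(z)$ to be the unique bounded solution of the spatial recursion $Q h^{n+1}_l = h^n_l$ subject to a boundary condition dictated by $\vec y$ (this is the discrete analogue of solving a heat equation with boundary data on the curve), and then verify (a) that these $h^n_l$ are biorthogonal to the $\Psi$'s — this reduces to a residue computation comparing \eqref{eq:defSMintro}, \eqref{eq:defSNintro}, and \eqref{eq:defQintro} — and (b) that the strong Markov property of the walk $B$ at time $\tau$ gives exactly the representation $\SN^{\epi(\vec y)}_{-t,n}$ in \eqref{eq:SMepi}. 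The second part, $(\SM_{-t,-n_1})^*\SN^{\epi(\vec y)}_{-t,n_2}$, then emerges by combining the biorthogonal kernel with the factorizations $\SM_{-t,-n}=Q^{-n}\R_{t+\kappa}$ and $\SN_{-t,n}=\mQ^{(n)}\R_{-t-\kappa}$ from \eqref{eq:SMSNQ}, using $\R_t\R_{-t}=I$ (a direct consequence of Assumption~\ref{a:phi}(ii)--(iii), since the contour lies in the annulus of analyticity and $\varphi\neq0$ there).

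A separate step is to justify all the manipulations analytically: the contours $\gamma_r,\gamma_\rin,\gamma_\rout,\gamma_\delta$ must be deformable within the annulus $A_{\rho,\rout}$ where $\varphi$ is analytic and non-vanishing, so that $Q$, $Q^{-1}$, $\SM$, $\SN$ and $\R_t$ are all well-defined and the sums/integrals converge absolutely; the constant $\alpha=(1-\theta)\theta^{-1}\varphi(\theta)^{-\kappa}$ is precisely what normalizes $Q$ to be a stochastic matrix when $\kappa=0$ (and, given Assumption~\ref{a:kappa}, in general). I would also need the decay/growth estimates ensuring that the Fredholm determinant on $\ell^2(\{n_1,\dots,n_m\}\times\zz)$ is trace-class — here the geometric factor $\theta^{z_1-z_2}$ inside $Q$ and the conjugation by $\bP_a$ supply the needed decay, much as in \cite{fixedpt}.

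The main obstacle I expect is step (a)--(b) of the biorthogonalization: verifying that the hitting-time ansatz actually produces the biorthogonal family for a \emph{general} $\varphi$ rather than the specific continuous-time kernel of \cite{fixedpt}. In \cite{fixedpt} this rests on a somewhat delicate interplay between the random-walk structure of $Q$, the boundary condition encoded by $\vec y$, and a careful choice of contours; reproducing it here requires isolating exactly which properties of $\varphi$ are used (essentially only Assumption~\ref{a:phi}, plus $\varphi(1)=1$), and checking that the residue computations identifying $\langle\Psi^N_k,h^N_l\rangle=\delta_{kl}$ go through with the extra factors $\varphi(w)^t$, $\varphi(w)^\kappa$ present. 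Everything else — the Eynard--Mehta reduction, the factorization identities \eqref{eq:SMSNQ}, and the strong Markov rewriting — is structurally the same as in the continuous-time case and should be routine once the biorthogonal functions are in hand.
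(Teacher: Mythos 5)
Your proposal follows essentially the same route as the paper: an Eynard--Mehta-type reduction to a Fredholm determinant with an implicitly biorthogonal kernel (the paper's Sec.~\ref{sec:measures} and Appendix~\ref{app:biorth}, built on the space-like-path framework of \cite{bp-push}), followed by the explicit solution of the biorthogonalization problem via a backwards-heat-equation boundary value problem and a random-walk hitting-time representation (Sec.~\ref{sec:biorth}), specialized to $a(w)\equiv1$, $\kappa=0$, $\psi(w)=\varphi(w)^t$. One minor correction to keep in mind when filling in details: the boundary value problem must be posed with $(Q^*)^{-1}$ (applying $Q^*$ to the terminal datum diverges) and its well-posedness class is not bounded functions but $\theta^{-z}$ times polynomials of degree $<n$ (condition \bitworef), which is exactly how the paper sets it up, so this does not alter your overall strategy.
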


An extension of the above setting involves considering particles which start at different times.
This extension, which will correspond to using $\kappa\geq1$ above, will allow us to cover discrete time TASEP with parallel update and the more general systems of interacting caterpillars, see Sec. \ref{sec:caterpillars}.
In this case it is convenient to regard the process $\xx_t$ as starting at negative times.
Then, for an integer $\kappa \geq 0$ 
we define the event 
\begin{equation}\label{eq:E-event}
\CE_\kappa = \bigcap_{i \in \set N} \bigl\{i^{\text{th}} ~\text{particle stays put until time}~ -\kappa (i-1)\bigr\}.
\end{equation}
If the process starts at time $t \leq -\kappa (N-1)$, then conditioning on this event means that the $i^{\uptext{th}}$ particle will only start evolving at time $-\kappa(i-1)$ (see Fig.~\ref{fig:TASEP} for an example of possible trajectories of the particles).
Note that particles with smaller indices start moving later.

We will be interested in models for which the following assumption holds.
In the assumption we take $\kappa\in\nn$ as given together with an initial state $\vec y\in\zz^N$.
We remark that, since we are interested in particles starting at different times, the correct space for initial conditions is not necessarily $\Omega_N$, and will have to be specified in each application (in Sec.~\ref{sec:caterpillars} there will be cases where particles have to start at distance at least $\kappa$ from each other and others where particles can initially be only weakly ordered).

\needspace{20pt}
\begin{assumption}\label{a:kappa}
\leavevmode
\begin{enumerate}[label=\uptext{(\alph*)}]
\item\label{it:one} Fix an integer $\kappa \geq 1$ and an initial state $\vec y\in\zz^N$.
Then for any $\vec x \in \Omega_N$,
\begin{equation}\label{eq:second-property}
\pp(X_0 = \vec x | X_{- \kappa (N-1)} = \vec y, \CE_\kappa) = \det \bigl[F_{i - j}(x_{N + 1 - i} - y_{N + 1 - j}, \kappa (j-1))\bigr]_{i, j \in \set{N}}.
\end{equation}
\item\label{it:phi} The function $\varphi$ from Assum.~\ref{a:phi} has the following additional properties:
\begin{enumerate}[label=\uptext{(\roman*)}]
\item $\varphi$ is analytic on $\{w\in\cc\!:|w|\geq\rhoin\}$, with $\rhoin$ the radius from Assum.~\ref{a:phi}.
\item $\varphi$ is the generating function of a real positive measure on $\{i\in\zz\!: i \leq 1\}$, i.e.,
\begin{equation}\label{eq:varphiappl}
\varphi(w)=\sum_{i\leq 1}b_iw^i
\end{equation}
with $b_i\geq0$ for all $i\leq1$ and not all $b_i$'s are zero.
\item $\sum_{i\leq1}b_i\leq1$.
\end{enumerate}
\end{enumerate}
\end{assumption}

Assum.~\ref{a:kappa}\ref{it:one}, which may look a bit artificial, is essentially just stating that the transition probabilities of the particle system with different starting times have a determinantal form similar to \eqref{eq:G-main} (note however how the time index in $F_{i-j}$ is shifted in \eqref{eq:second-property}).
We will see in examples that this property in general does not hold for any initial condition. Moreover, formula \eqref{eq:second-property} does not hold for $\kappa > 0$ for transition probabilities of the form \eqref{eq:G-main} with any $\varphi$, so we will need to prove it for the models we are interested in.

Assum.~\ref{a:kappa}\ref{it:phi}, on the other hand, encodes some extra restrictions on $\varphi$ which are not needed when $\kappa=0$.
They imply, in particular, that 
\begin{equation}
Q(z_1,z_2)=\alpha\tts\theta^{z_1-z_2}q_{z_1-z_2},\label{eq:Qalphaq-intro}
\end{equation}
where the $q_i$'s are non-negative and are uniquely defined through the conditions $q_i=1$ for $i>\kappa$ and $\varphi(w)^\kappa=\sum_{i\leq\kappa}(q_{i+1}-q_i)w^i$, and that $Q$ is still the transition matrix of a random walk on $\zz$ (see Sec.~\ref{sec:setting} for a proof of this in a more general setting and Sec.~\ref{sec:main-proof} for the application to the present context).
We will keep denoting by $B$ the random walk with transition matrix $Q$, by $\tau$ the associated hitting time \eqref{eq:tauintro}, and by $\SN^{\epi(\vec y)}_{-t, n}$ the kernel defined through \eqref{eq:SMepi} in terms of this new random walk (with $\SN_{-t, n}$ now given by \eqref{eq:defSNintro} with this $\kappa$).

The following result extends Thm.~\ref{thm:main} to the case of different starting times, where each particle is evolved for the same total amount of time $t$.
The basic case of discrete time TASEP with parallel update corresponds to $\kappa=1$, while $\kappa>1$ will yield the generalization to systems of caterpillars (and $\kappa=0$ essentially recovers Thm.~\ref{thm:main}).

\begin{thm}\label{thm:main2}
Assume that $\varphi$ satisfies Assums.~\ref{a:phi} and \ref{a:kappa} and let $\kappa \geq 0$ and $\vec y$ be as in Assum.~\ref{a:kappa}.
Then for any $1 \leq n_1 < \dotsm <  n_m \leq N$, any $t\geq\kappa(n_m-1)$, and any $\vec a \in \rr^m$, we have
\begin{equation}\label{eq:probability-main}
\pp \bigl(\xx_{t - \kappa (n_i-1)}(n_i) > a_{i},\, i \in \set{m}\, \big|\, X_{- \kappa (N-1)} = \vec y, \CE_\kappa\bigr) = \det \bigl(I-\bP_{a}  K_t \bP_{a} \bigr)_{\ell^2(\{n_1, \dotsc, n_m\} \times \zz)},
\end{equation}
with the kernel 
\begin{equation}\label{eq:kernel-main}
K_t(n_1, \cdot\,; n_2, \cdot\,) = -Q^{n_2-n_1} \uno{n_1 < n_2} + (\SM_{-t, -n_1})^*\SN^{\epi(\vec y)}_{-t, n_2},
\end{equation}
where the objects on the right hand side are as in \eqref{eq:defQintro}--\eqref{eq:defChis} with this value of $\kappa$.
Moreover, if $X$ satisfies the additional condition \eqref{eq:backward-in-time-one} given below, then \eqref{eq:probability-main} holds for $t \geq 0$.
\end{thm}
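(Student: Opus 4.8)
The plan is to realize the conditional joint law on the left of \eqref{eq:probability-main} as a marginal of a signed determinantal measure of the general type analyzed in Secs.~\ref{sec:setting}--\ref{sec:biorth}, and then to run the explicit biorthogonalization scheme to recognize the correlation kernel as \eqref{eq:kernel-main}. First I would condition on $X_{-\kappa(N-1)}=\vec y$ and on $\CE_\kappa$: by Assum.~\ref{a:kappa}\ref{it:one} this gives the law of $X_0$ as the determinant \eqref{eq:second-property} (when $\kappa=0$ this step is vacuous and one uses \eqref{eq:G-main} directly, recovering Thm.~\ref{thm:main}). From time $0$ on — the delayed starts having already been absorbed into \eqref{eq:second-property} — every particle evolves freely, and since the $n_i$-th particle is observed at time $t-\kappa(n_i-1)\ge 0$, the Markov property together with \eqref{eq:G-main} and the consistency relation \eqref{eq:chierarchy} (which lets one discard the particles of index $>n_m$) expresses the joint law of $\bigl(\xx_{t-\kappa(n_i-1)}(n_i)\bigr)_{i\in\set m}$ as a sum over intermediate configurations of products of determinants $\det[F_{\cdot}(\cdot,\cdot)]$. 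This is exactly the extended determinantal/biorthogonal-ensemble structure of the general results, which then yield $\pp(\cdots)=\det(I-\bP_a\wt K_t\bP_a)$ with $\wt K_t$ built from a biorthogonal family $\{\Psi^n_k\},\{\Phi^n_k\}$, $0\leq k<n$, characterized by $\sum_x\Psi^n_k(x)\Phi^n_\ell(x)=\uno{k=\ell}$ together with an explicit span condition on the $\Phi^n_\ell$.

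The functions $\Psi^n_k$ are read off directly from \eqref{eq:F-main}: up to the $Q$- and $\R$-dressings recorded in \eqref{eq:SMSNQ} they are given by $\SM_{-t,-n}$, which is why $(\SM_{-t,-n_1})^*$ appears in \eqref{eq:kernel-main}; the ``free'' term $-Q^{n_2-n_1}\uno{n_1<n_2}$ is the diagonal-block part of the extended kernel and comes from the same convolution computation as \eqref{eq:defQintro} (Lem.~\ref{lem:conv}). At this point I would invoke Assum.~\ref{a:kappa}\ref{it:phi} to conclude that $Q$ is an honest transition matrix, \eqref{eq:Qalphaq-intro}, so that the random walk $B$ and its hitting time $\tau$ from \eqref{eq:tauintro} are well defined — this is the only place where $\kappa\ge1$ forces hypotheses on $\varphi$ beyond Assum.~\ref{a:phi}.

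The heart of the proof, as in \cite{fixedpt}, is to produce the biorthogonal partners $\Phi^n_k$ in closed form. I would make the hitting-time ansatz encoded in $\SN^{\epi(\vec y)}_{-t,n}$, i.e.\ take the candidate to be a $\mQ^{(n)}\R$-dressing of $\ee_{B_0=z}\bigl[\SN_{-t,n-\tau}(B_\tau,\cdot)\bigr]$ from \eqref{eq:SMepi}, and then verify two things: that the candidate lies in the prescribed span (a finite-dimensional linear-algebra check using that $\SN_{-t,m}$ vanishes for $m\leq0$), and that it is biorthogonal to the $\Psi^n_k$. The biorthogonality would be proved via the strong Markov property of $B$ at $\tau$ — splitting the walk into the part below the curve $(y_{m+1})$ and the part from $\tau$ on — which allows one to resum the pre-$\tau$ contribution against a contour-integral identity matching $\SM_{-t,-n}$ in \eqref{eq:defSMintro} with $\SN_{-t,n}$ in \eqref{eq:defSNintro}. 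Assembling the pieces gives precisely \eqref{eq:kernel-main}, valid whenever $t\geq\kappa(n_m-1)$, which is the range in which all exponents $t-\kappa(n-1)$ and $\kappa(n-1)-t$ in \eqref{eq:defSMintro}--\eqref{eq:defSNintro} and all delayed starting times are admissible. For the final assertion, under the extra condition \eqref{eq:backward-in-time-one} the formulas admit an alternative representation in which the negative powers of $\varphi$ no longer occur (``backward evolution'' becoming meaningful), so the argument above goes through without the lower bound on $t$, giving \eqref{eq:probability-main} for all $t\geq0$.

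I expect the main obstacle to be the explicit identification of $\Phi^n_k$ and the verification of biorthogonality: guessing the hitting-time form is the conceptual leap from \cite{fixedpt}, and the proof depends delicately on the random-walk interpretation of $Q$ (hence on Assum.~\ref{a:kappa}\ref{it:phi}) and on matching the two contour-integral families $\SM$ and $\SN$, where keeping track of the shifted time indices $t-\kappa(n-1)$ is the most error-prone bookkeeping.
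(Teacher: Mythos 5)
Your architecture is the paper's own: step one uses Assum.~\ref{a:kappa}\ref{it:one} together with the Markov property along a space-like path to write the left-hand side of \eqref{eq:probability-main} as a marginal of a signed determinantal measure, whose kernel is characterized by the implicit biorthogonalization of Thm.~\ref{thm:biorth_general}/Cor.~\ref{cor:biorth_caterpillars} (with the restriction $t\geq\kappa(n_m-1)$ arising exactly for the reason you give, and the $t\geq0$ extension obtained from \eqref{eq:backward-in-time-one} by moving particles one-by-one ``back in time''); step two applies the explicit scheme of Sec.~\ref{sec:biorth} with $a(w)=\varphi(w)^\kappa$ and $\psi(w)=\varphi(w)^t$, where Assum.~\ref{a:kappa}\ref{it:phi} is what verifies Assums.~\ref{assum:q} and \ref{assum:apsi} so that $Q$ is a bona fide random walk kernel and Thm.~\ref{thm:kernel-rw} gives \eqref{eq:kernel-main}. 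So in outline you and the paper agree.

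The one place your sketch is materially thinner than the actual proof is the verification step for the hitting-time ansatz when $\kappa\geq1$. You propose to check biorthogonality directly via the strong Markov property at $\tau$ plus a contour-matching identity; that is essentially the shortcut available in \cite{fixedpt}, where $Q=Q_0$ is memoryless and $Q_0(x,y)$ is the truncation of an analytic kernel, so the solution below the curve extends analytically for free. For $\kappa\geq1$ neither property holds, and the paper explicitly warns about this. Its route is: characterize $\Phi^n_k$ as $(\R^*)^{-1}$ applied to the unique solution in $\Wd{n}$ of the boundary value problem \eqref{bhe} (Prop.~\ref{prop:pnkhnk}, Thm.~\ref{thm:h_heat_Q}), construct that solution explicitly through the contour formulas for the hitting probabilities $p^n_k(\ell,z)$ (Lem.~\ref{lem:pnk}, Cor.~\ref{cor:barpnk}) — noting that $p^n_k(\ell,z)$ agrees with an element of $\Wd{n}$ only for $z\leq y_{n-\ell}-\kappa$, not all the way up to the curve — and only then pass to the representation \eqref{eq:barG0n}/\eqref{eq:Kn-RW}, using crucially the spacing hypothesis $y_j-y_{j+1}\geq\kappa$ so that overshoots strictly above the curve see only the geometric tail of $Q$ (the computation \eqref{eq:memoryless}) and so that the polynomial extension $\mQ^{(\ell)}$ agrees with $Q^\ell$ there (\eqref{eq:QQext}). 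Your proposal omits this spacing condition on $\vec y$ and the polynomial-extension subtleties entirely; if you fleshed out your strong-Markov verification you would be forced to rebuild essentially this machinery, so while the route is the right one, the verification as you state it would not go through verbatim for $\kappa\geq1$.
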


The quantity on the left hand side of \eqref{eq:probability-main} can be thought of as the distribution of the particles with a particular choice of starting and ending times (both regularly spaced by $\kappa$).
In Sec.~\ref{sec:measures} we derive an expression for \eqref{eq:kernel-main} as a biorthogonal kernel in the case of general starting and ending times, see Thm.~\ref{thm:biorth_general}.
The extension of the explicit biorthogonalization of Sec.~\ref{sec:biorth} to that case is left for future work.

The restriction $t\geq\kappa(n_m-1)$ in the theorem means that we are requiring all of the first $n_m$ particles to start moving before any of them stop.
This is used in our method of proof, but in fact it is not clear to us that the restriction can be lifted under the general assumptions of the theorem.
However, our argument can be extended to cover all $t\geq0$ under an additional assumption, which holds at least in the important case of discrete time TASEP with right Bernoulli jumps.
The additional assumption is the following: for any $t \geq 0$, any $\vec y \in \Omega_N$, any $x_1 \in \zz$ and any integer values $z_N < z_{N-1} < \dotsm < z_2$, satisfying $z_2 < y_{1}$, one has
\begin{align}\label{eq:backward-in-time-one}
\sum_{x_N < \dotsm < x_{2}\uptext{ s.t. } x_2< x_1} &\pp(X_t = \vec x | X_0 = \vec y) \det \bigl[F_{i - j}(z_{N + 1 - i} - x_{N + 1 - j}, -t)\bigr]_{i, j \in \set{N-1}} \\
&\hskip1.4in= \pp(X_t(1) = x_1 | X_0(1) = y_1) \prod_{2 \leq i \leq N} \uno{z_i = y_i}.
\end{align}
This amounts essentially to saying that the evolution of the particles can be run ``backwards in time'' by using the function \eqref{eq:G-main} with a negative time (even though the determinant on the right hand side doesn't have a clear physical meaning).

\subsection*{Outline}

The first two sections contain the applications of Thms.~\ref{thm:main} and \ref{thm:main2} (as well as their generalization in Sec.~\ref{sec:biorth}) to several interacting particle systems in discrete and continuous time: in Sec.~\ref{sec:caterpillars} we consider the four discrete time models which were solved in \cite{MR2469339}, as well as their extensions to parallel update dynamics and systems of interacting caterpillars, while Sec.~\ref{sec:continuousTASEP} is devoted to the models in continuous time.
In Sec.~\ref{sec:continuousTASEP} we also review briefly the convergence of TASEP to the KPZ fixed point and derive explicit formulas for these processes started with a special choice of random initial data.

\noindent The proof of Thms.~\ref{thm:main} and \ref{thm:main2} is split into two big steps: in Sec.~\ref{sec:measures} we provide a formula of the form \eqref{eq:probability-main} with a kernel $K_t$ defined implicitly through a biorthogonalization problem, while in Sec.~\ref{sec:biorth} we solve the biorthogonalization problem and obtain an explicit formula for the resulting kernel.
Sec. \ref{sec:biorth} works in a more general setting; the main result is Thm. \ref{thm:kernel-rw}, which is then applied in Sec. \ref{sec:main-proof} to prove Thms.~\ref{thm:main} and \ref{thm:main2}.

\noindent In Appdx.~\ref{app:convolutions} we prove several generalizations of the Cauchy-Binet identity, which yield determinantal formulas for convolutions of determinants which may be of different sizes. Appdx.~\ref{app:biorth} contains the derivation of the biorthogonal ensemble from Sec.~\ref{sec:measures}.
In Appdcs.~\ref{sec:rightBernoulli-assumptions} and \ref{sec:rGeometric-proof} we show that the variants of TASEP with right Bernoulli and geometric jumps satisfy Assum.~\ref{a:kappa}. In Appdx.~\ref{app:DW} we rewrite the formulas from \cite{MR2469339} for discrete time variants of TASEP in the form \eqref{eq:G-main}. 

\section{Discrete time variants of TASEP and their generalizations}
\label{sec:caterpillars}

Determinantal formulas of the form \eqref{eq:G-main} for the transition probabilities of a particle system were first derived by \citet{MR1468391} for TASEP in continuous time using the coordinate Bethe ansatz.
Similar formulas were later derived in a similar way for discrete time TASEP with sequential \cite{Brankov} (see also \cite{Rakos2005}) and parallel update \cite{Povolotsky_2006} and for several other models.
The same type of formulas arise for non-colliding Markov processes \cite{karlinMcGregor}.
In some cases, and for a special choice of initial data, such processes can be coupled with interacting particle systems via the Robinson-Schensted-Knuth (RSK) correspondence (an alternative coupling through a process on triangular arrays exists in essentially the same cases \cite{warren,Anisotropic}, see Sec.~\ref{sec:previous} for a brief discussion).

In \cite{MR2469339} the authors described the RSK coupling for four discrete time particle systems with different transition and interaction rules, using the four known variants of the RSK correspondence: the RSK and Burge algorithms, as well as their dual variants.
Using intertwining of transition kernels, this allowed them to compute Sch\"{u}tz-type formulas for these four models, corresponding to discrete time TASEPs with blocking and pushing dynamics, and with Bernoulli and geometric jumps.
In their formulas the functions appearing inside the determinant in \eqref{eq:G-main} are written in terms of certain sums involving symmetric polynomials, but, as we show in Appdx.~\ref{app:DW}, they have equivalent expressions in terms of contour integral formulas like \eqref{eq:F-main} (alternatively, one could proceed along the lines of Sch\"{u}tz's derivation and prove directly that the resulting determinants solve the Kolmogorov forward equation for each model; in the particular case of Bernoulli jumps, block dynamics are addressed in \cite{Brankov} while push dynamics follow by adapting the continuous time proof for PushASEP given in \cite{bp-push}). 

In this section we introduce an extension of each of these models, which for the first three we call systems of interacting caterpillars, and state explicit Fredholm determinant formulas for their multipoint distributions.
Caterpillars of length $1$ yield the basic models mentioned above.
Caterpillars of length $2$ correspond to the parallel TASEP model studied in \cite{borodFerSas} in the case of Bernoulli jumps with block dynamics, but seem to not have been studied before for the other systems (though they appear implicitly in \cite{Anisotropic}).
For caterpillar lengths larger than $2$ the models appear to be new; the ``heads'' of the caterpillars evolve as Markov chains with memory length larger than $1$.
The fourth model, corresponding to geometric jumps with block dynamics, is different from the other three; in that case our extension is from the basic rule, which is parallel update for this model, to sequential update.
In all cases we consider only the situation when all particles have equal speeds.
See Sec.~\ref{sec:previous} for some connections to earlier work and an additional discussion.

We remark that in \cite{arai}, the author considered TASEP with right Bernoulli and geometric jumps, for which he derived formulas of the type \eqref{eq:probability-main} following the blueprint of \cite{fixedpt} and showed pointwise convergence of the resulting kernels to those appearing in the KPZ fixed point formulas.
He also derived a formula for a mixture of the two dynamics and continuous time TASEP; those formulas can be derived too as an application of our results and the general fact that certain mixtures of TASEP-like particle systems yield again formulas \eqref{eq:G-main} (see Sec.~\ref{sec:KPZfixedpt-random} and Appdx.~\ref{app:convolutions}).

Throughout the section, for $p\in(0,1)$ we always write $q=1-p$.

\subsection{Right Bernoulli jumps}
\label{sec:RB_caterpillars}

Consider the evolution of $N$ particles $X^{\rB}_t$ on $\Omega_N$, where to go from time $t$ to time $t+1$, particles are updated sequentially from right to left as follows \cite{Brankov}: the $k^{\text{th}}$ particle jumps to the right with probability $p \in (0,1)$ and stays put with probability $q = 1 - p$, but if a particle tries to jump on top of an occupied site, the transition is blocked.
Note that a particle trying to jump at time $t$ is blocked by the position of its right neighbor at time $t+1$.
The evolution of the particle system $X^{\rB}_t \in \Omega_N$ can be written as $X^{\rB}_{t+1}(1) = X^{\rB}_{t}(1) + \xi(t+1,1)$ and
\begin{equation}\label{eq:rB-rec}
X^{\rB}_{t+1}(k) = \min \{X^{\rB}_t(k) + \xi(t+1,k), X^{\rB}_{t+1}(k-1) - 1\}, \qquad k = 2, \dotsc, N,
\end{equation}
where $\xi(t,k)$ are independent Ber[$p$] random variables. 
The transition probabilities of $X^{\rB}_t$ are given by (see \cite{Brankov} and Appdx.~\ref{sec:Ber-TASEP})
\begin{equation}\label{eq:rightBernoulliBlock}
 \pp (X^{\rB}_t = \vec x | X^{\rB}_0 = \vec y) = \det \bigl[F^{\rB}_{i - j}(x_{N + 1 - i} - y_{N + 1 - j}, t)\bigr]_{i, j \in \set{N}},
\end{equation}
where $\vec x, \vec y \in \Omega_N$, $t \in \nn_0$, and
\begin{equation}\label{eq:rBerBlockF}
	F^{\rB}_{n}(x, t) = \frac{1}{2\pi\I}\oint_{\gamma}\!\d w\, \frac{(w - 1)^{-n}}{w^{x - n +1}} (q + p w)^t,
\end{equation}
where the contour $\gamma$ includes $0$ and $1$. One can readily see that the model \eqref{eq:rightBernoulliBlock} satisfies Assum.~\ref{a:phi} with the function $\varphi(w) = q + p w$.
In particular, Thm.~\ref{thm:main} can be applied for $\kappa=0$, giving that the distribution function is given by formula \eqref{eq:probability-main-kappa=0} with this choice of $\varphi$.
The result is stated explicitly below in the more general setting of Prop.~\ref{prop:caterpillars}, corresponding to $\kappa \geq 0$.

\subsubsection{Caterpillars}
\label{sec:caterpillars-rB}

Now we describe the extension of discrete time TASEP with Bernoulli jumps to a system of interacting caterpillars.
A \emph{(forward) caterpillar} of length $L\geq 1$ is an element $\xx$ of the space
\begin{equation}\label{eq:catp-sp}
\cK^{\ra}_L=\big\{(\xx^1,\dotsc,\xx^L)\in\zz^L\!: \xx^{i}-\xx^{i+1}\in\{0,1\},\,i \in \set{L-1}\big\}.
\end{equation}
Our system of $N$ interacting caterpillars of length $L$ will take values in the space
\begin{equation}\label{eq:catp-syst-sp}
\Omega^{\catp}_{N,L}=\big\{\xx = (\xx(1), \dotsc, \xx(N))\in(\cK^{\ra}_L)^N\!:\xx^1(i+1) < \xx^L(i),\,i \in \set{N-1}\big\}
\end{equation}
(i.e., configurations in $\Omega^{\catp}_{N,L}$ are such that no two caterpillars overlap).
Fig.~\ref{fig:caterpillars} depicts possible configurations of $N=4$ caterpillars of length $3$. 

For $\xx\in\Omega^{\catp}_{N,L}$ we call $\xx^1(i)$ and $\xx^L(i)$ the \emph{head} and \emph{tail} of the $i^{\text{th}}$ caterpillar respectively, and we define $\xx^{\head} = (\xx^1(i) : i \in \set{N}) \in \Omega_N$ to be the vector of heads, which can be thought of as the location of $N$ particles located at the sites $\xx^1(i)$ for $i \in \set{N}$.
Note that in the case of caterpillars of length $1$, $\Omega^{\catp}_{N, 1}$ becomes just the usual Weyl chamber $\Omega_N$, i.e., $\xx$ becomes just a configuration of $N$ particles on $\zz$ whose locations are strictly decreasing.

We define now the dynamics on caterpillars $\xx_t \in \Omega^{\catp}_{N, L}$, $t\in\nn_0$, associated to discrete time TASEP with right Bernoulli jumps.
The transition from time $t$ to time $t+1$ occurs in the following way, with the positions of the caterpillars being updated consecutively for $i\in\set{N}$ (i.e., from right to left):
\begin{itemize}[leftmargin=1.5em]
\item The head of the $i^\text{th}$ caterpillar makes a unit step to the right with probability $p \in (0,1)$ (i.e., $\xx^1_{t+1}(i) = \xx^1_t(i) + 1$), provided that the destination site is empty.
Otherwise it stays put (i.e., $\xx^1_{t+1}(i) = \xx^1_t(i)$).
\item The remaining sections of the $i^\text{th}$ caterpillar move according to $\xx^j_{t+1}(i) = \xx^{j-1}_t(i)$, $j = 2, \dotsc, L$.
\end{itemize}
In words, the heads jump as in TASEP with right Bernoulli jumps, but are blocked by the whole caterpillar to its right, while each of the remaining sections of each caterpillar follows the movement of the section to its right in the previous time step.
One sees directly that the new configuration $\xx_{t+1}$ is again in $\Omega^{\catp}_{N, L}$ and that this choice of dynamics defines a Markov chain on $\Omega^{\catp}_{N, L}$. 
An example of such an update is provided in Fig.~\ref{fig:caterpillars}.

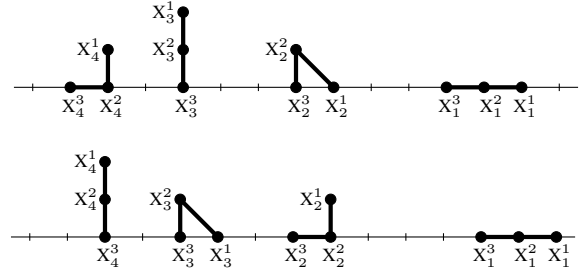
\begin{figure}[t]
\centering
\begin{tikzpicture}
  \draw (0.25, 0) -- (7.75, 0);
  \foreach \x in {1,...,15} {
    \draw (0.5 * \x, -0.05) -- (0.5 * \x, 0.05);
  }
  \draw[fill=black] (1,0) circle (0.07);
  \node at (1.05,-0.25) {\tiny $\text{X}_4^3$};
  \draw[fill=black] (1.5,0) circle (0.07);
  \node at (1.55,-0.25) {\tiny $\text{X}_4^2$};
  \draw[fill=black] (1.5,0.5) circle (0.07);
  \node at (1.25,0.5) {\tiny $\text{X}_4^1$};
  \draw[ultra thick] (1, 0) -- (1.5, 0);
  \draw[ultra thick] (1.5, 0) -- (1.5, 0.5);
  \draw[fill=black] (2.5,0) circle (0.07);
  \node at (2.55,-0.25) {\tiny $\text{X}_3^3$};
  \draw[fill=black] (2.5,0.5) circle (0.07);
  \node at (2.25,0.5) {\tiny $\text{X}_3^2$};
  \draw[fill=black] (2.5,1) circle (0.07);
  \node at (2.25,1) {\tiny $\text{X}_3^1$};
  \draw[ultra thick] (2.5, 0) -- (2.5, 1);
  \draw[fill=black] (4,0) circle (0.07);
  \node at (4.05,-0.25) {\tiny $\text{X}_2^3$};
  \draw[fill=black] (4,0.5) circle (0.07);
  \node at (3.75,0.5) {\tiny $\text{X}_2^2$};
  \draw[fill=black] (4.5,0) circle (0.07);
  \node at (4.55,-0.25) {\tiny $\text{X}_2^1$};
  \draw[ultra thick] (4, 0) -- (4, 0.5);
  \draw[ultra thick] (4, 0.5) -- (4.5, 0);
  \draw[fill=black] (7,0) circle (0.07);
  \node at (7.05,-0.25) {\tiny $\text{X}_1^1$};
  \draw[fill=black] (6.5,0) circle (0.07);
  \node at (6.6,-0.25) {\tiny $\text{X}_1^2$};
  \draw[fill=black] (6,0) circle (0.07);
  \node at (6.05,-0.25) {\tiny $\text{X}_1^3$};
  \draw[ultra thick] (7, 0) -- (6, 0);
\end{tikzpicture}

\medskip
\begin{tikzpicture}
  \draw (0.25, 0) -- (7.75, 0);
  \foreach \x in {1,...,15} {
    \draw (0.5 * \x, -0.05) -- (0.5 * \x, 0.05);
  }
  \draw[fill=black] (1.5,0) circle (0.07);
  \node at (1.55,-0.25) {\tiny $\text{X}_4^3$};
  \draw[fill=black] (1.5,0.5) circle (0.07);
  \node at (1.25,0.5) {\tiny $\text{X}_4^2$};
  \draw[fill=black] (1.5,1) circle (0.07);
  \node at (1.25,1) {\tiny $\text{X}_4^1$};
  \draw[ultra thick] (1.5, 0) -- (1.5, 0.5);
  \draw[ultra thick] (1.5, 0.5) -- (1.5, 1);
  \draw[fill=black] (2.5,0) circle (0.07);
  \node at (2.55,-0.25) {\tiny $\text{X}_3^3$};
  \draw[fill=black] (2.5,0.5) circle (0.07);
  \node at (2.25,0.5) {\tiny $\text{X}_3^2$};
  \draw[fill=black] (3,0) circle (0.07);
  \node at (3.05,-0.25) {\tiny $\text{X}_3^1$};
  \draw[ultra thick] (2.5, 0.5) -- (3, 0);
  \draw[ultra thick] (2.5, 0) -- (2.5, 0.5);
  \draw[fill=black] (4,0) circle (0.07);
  \node at (4.05,-0.25) {\tiny $\text{X}_2^3$};
  \draw[fill=black] (4.5,0) circle (0.07);
  \node at (4.55,-0.25) {\tiny $\text{X}_2^2$};
  \draw[fill=black] (4.5,0.5) circle (0.07);
  \node at (4.25,0.5) {\tiny $\text{X}_2^1$};
  \draw[ultra thick] (4, 0) -- (4.5, 0);
  \draw[ultra thick] (4.5, 0) -- (4.5, 0.5);
  \draw[fill=black] (7.5,0) circle (0.07);
  \node at (7.55,-0.25) {\tiny $\text{X}_1^1$};
  \draw[fill=black] (7,0) circle (0.07);
  \node at (7.1,-0.25) {\tiny $\text{X}_1^2$};
  \draw[fill=black] (6.5,0) circle (0.07);
  \node at (6.55,-0.25) {\tiny $\text{X}_1^3$};
  \draw[ultra thick] (7.5, 0) -- (6.5, 0);
  \end{tikzpicture}
\caption{Two possible configurations of $N = 4$ caterpillars of lengths $L = 3$ (here we write for convenience $X^i_k$ instead of $X^i(k)$, and we draw the parts of the caterpillars, occupying the same site, above each other). Each caterpillar $X_k = (X_k^1, X_k^2, X_k^3)$ is an element of $\cK^{\protect\ra}_3$, so that the configuration of four caterpillars is an element of $\Omega^{\protect\catp}_{4,3}$. The bottom configuration is obtained from the one on the top by the described update rule, where the heads of the $2^\text{nd}$ and $4^\text{st}$ caterpillars stay put, while the $1^\text{st}$ and $3^\text{rd}$ make one step to the right.}
\label{fig:caterpillars}
\end{figure}

We will be interested in the evolution of the vector of heads $\xx^{\head}_t$.
For $L\geq2$, it evolves as a particle system with memory of length $L-1$: a particle trying to jump at time $t$ is blocked by the position of its right neighbor at time $t-L+1$.
As we will explain next, in the cases $L=1$ and $L=2$ the heads evolve as the well known versions of discrete time TASEP with Bernoulli jumps and either sequential or parallel update.

We will say that the system of caterpillars $\xx_t$ has \emph{initial condition} $\vec y \in \Omega_N$ if $\xx_0 \in \Omega^{\catp}_{N, L}$ is given by $\xx^1_0(k) = \dotsm = \xx^L_0(k) = y_k$ for each $k \in\ \set{N}$. 
With a little ambiguity, we will write in this case $\xx_0 = \vec y \in \Omega_N$.
We will only be interested in the case where $\vec y$ is in the set
\begin{equation}\label{eq:Omega-kappa}
\Omega_{N}(\kappa) = \{\vec x \in \Omega_N\!:x_{i - 1} - x_i \geq \kappa\vee1\uptext{ for }i = 2, \dotsc, N\}
\end{equation}
with $\kappa=L-1$.
We are interested in this type of initial data\footnote{For fixed $\vec y\in\Omega_N(L-1)$ there are other choices of initial data so that $\xx^{\head}_0=\vec y$ and which are equivalent to the one above, in the sense that the evolution of the heads (and of the other sections after time $L-1$) is the same, as can be checked directly from the definition of the process (for example one could take $\xx^1_0(k)=y_k$ and $\xx^i_0(k)=y_k-1$ for $i=2,\dotsc,L$).} because it ensures that each caterpillar will not feel the caterpillar to its right until time $L-1$, resolving any ambiguity in the evolution of the heads for small times.

The key to our analysis is the following simple relation between caterpillars and the model $\xx^{\rB}_t$.
In the result we need to consider the process $\xx^{\rB}_t$ with particles starting at different (negative) times.
As in Sec.~\ref{sec:main}, this corresponds to forcing particles to stay put for some time by conditioning on an event like the one in \eqref{eq:E-event}, but for simplicity we will omit this from the notation and simply say that particles start moving at some different prescribed times.

\begin{lem}\label{lem:TASEP_and_caterpillars}
For $\kappa \geq 0$, let the process $\xx^{\rB}_t$ start at initial times $\vT = (-(k-1) \kappa)_{k \in \set{N}}$ and at a configuration $\vec y \in \Omega_N(\kappa)$.
Let $L=\kappa+1$ and define a new process $\xx_t \in \Omega^{\catp}_{N, L}$ as follows: for each $i \in \set{L}$ and $k\in\set{N}$,
\begin{equation}
\xx^i_t(k) = \xx^{\rB}_{t - (k-1)\kappa-i+1}(k).
\end{equation}
Then $\xx$ is distributed as the system of interacting caterpillars of length $L$ described above, with initial condition $\vec y$.
\end{lem}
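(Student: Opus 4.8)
The plan is to verify directly that the process $\xx_t$ defined by the formula $\xx^i_t(k) = \xx^{\rB}_{t - (k-1)\kappa - i + 1}(k)$ satisfies the two defining properties of a system of interacting caterpillars: first, that each configuration $\xx_t$ genuinely lies in $\Omega^{\catp}_{N,L}$, and second, that the one-step transition from $\xx_t$ to $\xx_{t+1}$ obeys exactly the update rule described above (heads jump right with probability $p$ unless blocked by the caterpillar to the right, and the remaining sections shift). Since the right-Bernoulli dynamics of $\xx^{\rB}_t$ are themselves Markovian with explicit independent Bernoulli inputs $\xi(t,k)$, once the one-step rule is matched and the initial condition is checked, the equality in distribution follows because both processes are Markov chains on $\Omega^{\catp}_{N,L}$ with the same initial law and the same transition kernel.

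First I would check the initial condition: at $t=0$ the formula gives $\xx^i_0(k) = \xx^{\rB}_{-(k-1)\kappa-i+1}(k)$, and since the $k^{\text{th}}$ particle of $\xx^{\rB}$ only starts moving at its initial time $-(k-1)\kappa$, for all $i \in \set{L}$ the index $-(k-1)\kappa - i + 1 \leq -(k-1)\kappa$, so $\xx^i_0(k) = y_k$; this is precisely the initial condition $\xx_0 = \vec y$ in the caterpillar sense. Next I would verify the caterpillar shape constraints. For the within-caterpillar condition $\xx^i_t(k) - \xx^{i+1}_t(k) \in \{0,1\}$: by the formula this difference equals $\xx^{\rB}_{s}(k) - \xx^{\rB}_{s-1}(k)$ for $s = t - (k-1)\kappa - i + 1$, which is a single time increment of the $k^{\text{th}}$ particle under right-Bernoulli dynamics, hence in $\{0,1\}$ by the recursion \eqref{eq:rB-rec}. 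For the non-overlap condition $\xx^1_t(k+1) < \xx^L_t(k)$: $\xx^1_t(k+1) = \xx^{\rB}_{t-k\kappa}(k+1)$ and $\xx^L_t(k) = \xx^{\rB}_{t-(k-1)\kappa - L + 1}(k) = \xx^{\rB}_{t-(k-1)\kappa-\kappa}(k) = \xx^{\rB}_{t-k\kappa}(k)$, so this reduces to $\xx^{\rB}_{t-k\kappa}(k+1) < \xx^{\rB}_{t-k\kappa}(k)$, which holds because $\xx^{\rB}$ always takes values in $\Omega_N$ — here the hypothesis $\vec y\in\Omega_N(\kappa)$ is what makes the spacings large enough at the staggered starting times so that the particles are genuinely in the Weyl chamber at all the relevant times.

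The heart of the argument is matching the one-step update. For the non-head sections $j = 2,\dotsc,L$: the formula gives $\xx^j_{t+1}(k) = \xx^{\rB}_{t+1 - (k-1)\kappa - j + 1}(k) = \xx^{\rB}_{t - (k-1)\kappa - (j-1) + 1}(k) = \xx^{j-1}_t(k)$, which is exactly the shifting rule. For the head $j=1$: $\xx^1_{t+1}(k) = \xx^{\rB}_{t+1-(k-1)\kappa}(k)$, and by \eqref{eq:rB-rec} this equals $\min\{\xx^{\rB}_{t-(k-1)\kappa}(k) + \xi(\cdot,k),\ \xx^{\rB}_{t+1-(k-1)\kappa}(k-1) - 1\}$, where the first term inside the min is $\xx^1_t(k)$ plus an independent Bernoulli$(p)$, and the blocking term $\xx^{\rB}_{t+1-(k-1)\kappa}(k-1) - 1$ must be identified with $\xx^L_{t+1}(k-1) - 1$; indeed $\xx^L_{t+1}(k-1) = \xx^{\rB}_{t+1-(k-2)\kappa - L + 1}(k-1) = \xx^{\rB}_{t+1-(k-2)\kappa - \kappa}(k-1) = \xx^{\rB}_{t+1-(k-1)\kappa}(k-1)$. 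So the head of the $k^{\text{th}}$ caterpillar at time $t+1$ is blocked precisely by the tail of the $(k-1)^{\text{th}}$ caterpillar at time $t+1$, and since updates in both descriptions proceed from right to left (largest index first in $\xx^{\rB}$, so that $\xx^{\rB}_{t+1}(k-1)$ is already determined when particle $k$ updates), the blocking terms agree — and the jump happens iff the destination is empty, i.e. iff $\xx^1_t(k)+1 < \xx^L_{t+1}(k-1)$, matching the caterpillar rule. The main obstacle, and the step requiring the most care, is the bookkeeping of the staggered time indices together with the right-to-left update order: one must confirm that at every invocation of \eqref{eq:rB-rec} the neighboring-particle value being referenced has already been computed (so the $\min$ is well-defined in the same sequential sense), and that the $\xi(t+1,k)$ used for the caterpillar head is genuinely the same independent Bernoulli variable driving $\xx^{\rB}_{t+1-(k-1)\kappa}(k)$, with no variable reused across different $(k,t)$ pairs — this is what ultimately licenses the "same transition kernel" conclusion and hence the equality in distribution.
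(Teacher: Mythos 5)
Your proposal is correct and follows essentially the same route as the paper, which only asserts that the lemma ``can be readily obtained from the distributions of the two processes'': you carry out exactly that direct verification, checking the initial condition, the membership in $\Omega^{\catp}_{N,L}$, the shift rule for the non-head sections, the identification of the blocking value $\xx^{\rB}_{t+1-(k-1)\kappa}(k-1)$ with the updated tail $\xx^L_{t+1}(k-1)$, and the independence bookkeeping of the driving Bernoulli variables, which together give equality of the transition kernels and hence of the laws. One minor remark: the role you ascribe to the hypothesis $\vec y\in\Omega_N(L-1)$ (keeping the particles in the Weyl chamber) is not quite its purpose, since the blocking dynamics preserves the strict ordering for any $\vec y\in\Omega_N$; the paper invokes the gap condition to ensure that particle $k$ can collide with particle $k-1$ only at times $t\geq T_{k-1}$, resolving the small-time ambiguity in the evolution of the heads, but this mis-attribution does not affect the validity of any step you actually use.
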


In words, for $\xx_t$ and $\xx^{\rB}_t$ as in the lemma, the head of the $k$-th caterpillar follows the trajectory of $\xx^{\rB}_{t - (k-1)\kappa}(k)$ while, for $i=2,\dotsc,L$, $\xx^i_t(k)$ equals the location of the same TASEP particle $i-1$ instants in the past.
The result can be readily obtained from the distributions of the two processes. Fig.~\ref{fig:TASEP} shows an example of possible trajectories of caterpillars and their map to trajectories of TASEP particles. As can be appreciated there, the assumption on $\vec y$ guarantees that particle $k$ can collide with particle $k-1$ only at times $t \geq T_{k-1}$.

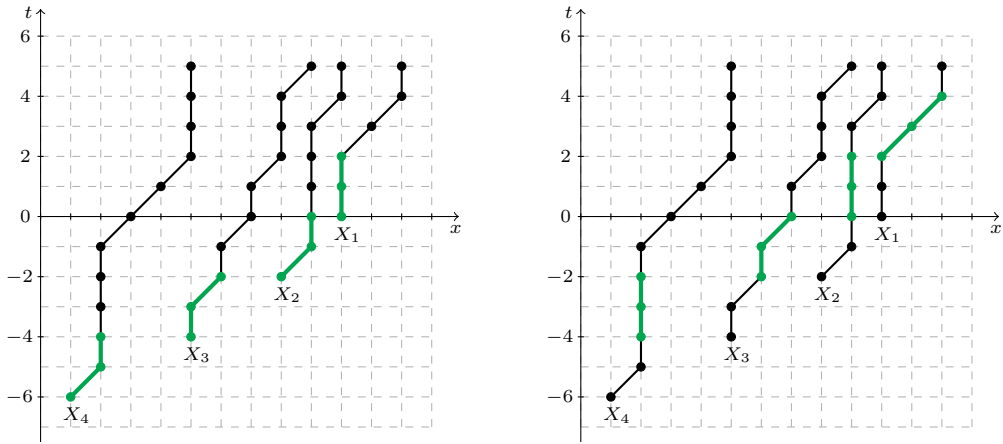
\begin{figure}[h]
\centering
\begin{tikzpicture}[scale=0.8]
	\foreach \x in {4,...,17} {
		\draw[black!30, dashed] (0.5 * \x, -3.5) -- (0.5 * \x, 3);
	}
	
	\foreach \y in {-7,...,6} {
		\draw[black!30, dashed] (2, 0.5 * \y) -- (8.5, 0.5 * \y);
	}

	\draw[->] (2, 0) -- (8.95, 0);
	\node at (8.9,-0.2) {\tiny$x$};
	\foreach \x in {4,...,17} {
		\draw (0.5 * \x, -0.05) -- (0.5 * \x, 0.05);
	}
	
	\draw[->] (2, -3.75) -- (2,3.45);
	\node at (1.8, 3.4) {\tiny$t$};
	\foreach \y in {-6, -4, -2, 0, 2, 4, 6} {
		\draw (1.95, 0.5 * \y) -- (2.05, 0.5 * \y);
		\ifthenelse{\y < 0}{\node at (1.65, 0.5 * \y) {\tiny $\y$};}{\node at (1.75, 0.5 * \y) {\tiny $\y$};};
	}

	\draw[fill=black] (7.5,1.5) circle (0.07);
	\draw[fill=black] (8,2) circle (0.07);
	\draw[fill=black] (8,2.5) circle (0.07);
	\draw[thick] (7, 0) -- (7, 0.5) -- (7, 1) -- (7.5,1.5) -- (8,2) -- (8,2.5);
	\draw[fill=Green, draw=Green] (7,0) circle (0.07);
	\draw[fill=Green, draw=Green] (7,0.5) circle (0.07);
	\draw[fill=Green, draw=Green] (7,1) circle (0.07);
	\draw[ultra thick, draw=Green] (7, 0) -- (7, 0.5) -- (7, 1);
	\node at (7.1,-0.3) {\tiny $X_1$};

	\draw[fill=black] (6.5,0.5) circle (0.07);
	\draw[fill=black] (6.5,1) circle (0.07);
	\draw[fill=black] (6.5,1.5) circle (0.07);
	\draw[fill=black] (7,2) circle (0.07);
	\draw[fill=black] (7,2.5) circle (0.07);
	\draw[thick] (6,-1) -- (6.5,-0.5) -- (6.5,0) -- (6.5,0.5) -- (6.5,1.5) -- (7,2) -- (7,2.5);
	\draw[fill=Green, draw=Green] (6,-1) circle (0.07);
	\draw[fill=Green, draw=Green] (6.5,-0.5) circle (0.07);
	\draw[fill=Green, draw=Green] (6.5,0) circle (0.07);
	\draw[ultra thick, draw=Green] (6,-1) -- (6.5,-0.5) -- (6.5,0);
	\node at (6.1,-1.3) {\tiny $X_2$};

	\draw[fill=black] (5,-0.5) circle (0.07);
	\draw[fill=black] (5.5,0) circle (0.07);
	\draw[fill=black] (5.5,0.5) circle (0.07);
	\draw[fill=black] (6,1) circle (0.07);
	\draw[fill=black] (6,1.5) circle (0.07);
	\draw[fill=black] (6,2) circle (0.07);
	\draw[fill=black] (6.5,2.5) circle (0.07);
	\draw[thick] (4.5,-2) -- (4.5,-1.5) -- (5,-1) -- (5,-0.5) -- (5.5,0) -- (5.5,0.5) -- (6,1) -- (6,1.5) -- (6,2) -- (6.5,2.5);
	\draw[fill=Green, draw=Green] (4.5,-2) circle (0.07);
	\draw[fill=Green, draw=Green] (4.5,-1.5) circle (0.07);
	\draw[fill=Green, draw=Green] (5,-1) circle (0.07);
	\draw[ultra thick, draw=Green] (4.5,-2) -- (4.5,-1.5) -- (5,-1);
	\node at (4.6,-2.3) {\tiny $X_3$};

	\foreach \y in {3,...,5} {
		\draw[fill=black] (3,-3 + 0.5 * \y) circle (0.07);
	}
	\draw[fill=black] (3.5,0) circle (0.07);
	\draw[fill=black] (4,0.5) circle (0.07);
	\foreach \y in {0,...,3} {
		\draw[fill=black] (4.5,1 + 0.5 * \y) circle (0.07);
	}
	\draw[thick] (2.5,-3) -- (3,-2.5) -- (3,-0.5) -- (4.5,1) -- (4.5,2.5);
	\draw[fill=Green, draw=Green] (2.5,-3) circle (0.07);
	\foreach \y in {1,2} {
		\draw[fill=Green, draw=Green] (3,-3 + 0.5 * \y) circle (0.07);
	}
	\draw[ultra thick, draw=Green] (2.5,-3) -- (3,-2.5) -- (3,-2);
	\node at (2.6,-3.3) {\tiny $X_4$};
\end{tikzpicture}
\qquad
\begin{tikzpicture}[scale=0.8]
	\foreach \x in {4,...,17} {
		\draw[black!30, dashed] (0.5 * \x, -3.5) -- (0.5 * \x, 3);
	}
	
	\foreach \y in {-7,...,6} {
		\draw[black!30, dashed] (2, 0.5 * \y) -- (8.5, 0.5 * \y);
	}

	\draw[->] (2, 0) -- (8.95, 0);
	\node at (8.9,-0.2) {\tiny$x$};
	\foreach \x in {4,...,17} {
		\draw (0.5 * \x, -0.05) -- (0.5 * \x, 0.05);
	}
	
	\draw[->] (2, -3.75) -- (2,3.45);
	\node at (1.8, 3.4) {\tiny$t$};
	\foreach \y in {-6, -4, -2, 0, 2, 4, 6} {
		\draw (1.95, 0.5 * \y) -- (2.05, 0.5 * \y);
		\ifthenelse{\y < 0}{\node at (1.65, 0.5 * \y) {\tiny $\y$};}{\node at (1.75, 0.5 * \y) {\tiny $\y$};};
	}

	\draw[fill=black] (7,0) circle (0.07);
	\draw[fill=black] (7,0.5) circle (0.07);
	\draw[fill=black] (8,2.5) circle (0.07);
	\draw[thick] (7, 0) -- (7, 0.5) -- (7, 1) -- (7.5,1.5) -- (8,2) -- (8,2.5);
	\draw[fill=Green, draw=Green] (7,1) circle (0.07);
	\draw[fill=Green, draw=Green] (7.5,1.5) circle (0.07);
	\draw[fill=Green, draw=Green] (8,2) circle (0.07);
	\draw[ultra thick, draw=Green] (7, 1) -- (7.5,1.5) -- (8,2);
	\node at (7.1,-0.3) {\tiny $X_1$};

	\draw[fill=black] (6,-1) circle (0.07);
	\draw[fill=black] (6.5,-0.5) circle (0.07);
	\draw[fill=black] (6.5,1.5) circle (0.07);
	\draw[fill=black] (7,2) circle (0.07);
	\draw[fill=black] (7,2.5) circle (0.07);
	\draw[thick] (6,-1) -- (6.5,-0.5) -- (6.5,0) -- (6.5,0.5) -- (6.5,1.5) -- (7,2) -- (7,2.5);
	\draw[fill=Green, draw=Green] (6.5,0) circle (0.07);
	\draw[fill=Green, draw=Green] (6.5,0.5) circle (0.07);
	\draw[fill=Green, draw=Green] (6.5,1) circle (0.07);
	\draw[ultra thick, draw=Green] (6.5,0) -- (6.5,0.5) -- (6.5,1);
	\node at (6.1,-1.3) {\tiny $X_2$};

	\draw[fill=black] (4.5,-2) circle (0.07);
	\draw[fill=black] (4.5,-1.5) circle (0.07);
	\draw[fill=black] (5.5,0.5) circle (0.07);
	\draw[fill=black] (6,1) circle (0.07);
	\draw[fill=black] (6,1.5) circle (0.07);
	\draw[fill=black] (6,2) circle (0.07);
	\draw[fill=black] (6.5,2.5) circle (0.07);
	\draw[thick] (4.5,-2) -- (4.5,-1.5) -- (5,-1) -- (5,-0.5) -- (5.5,0) -- (5.5,0.5) -- (6,1) -- (6,1.5) -- (6,2) -- (6.5,2.5);
	\draw[fill=Green, draw=Green] (5,-1) circle (0.07);
	\draw[fill=Green, draw=Green] (5,-0.5) circle (0.07);
	\draw[fill=Green, draw=Green] (5.5,0) circle (0.07);
	\draw[ultra thick, draw=Green] (5,-1) -- (5,-0.5) -- (5.5,0);
	\node at (4.6,-2.3) {\tiny $X_3$};
		
	\draw[fill=black] (2.5,-3) circle (0.07);
	\foreach \y in {1,5} {
		\draw[fill=black] (3,-3 + 0.5 * \y) circle (0.07);
	}
	\draw[fill=black] (3.5,0) circle (0.07);
	\draw[fill=black] (4,0.5) circle (0.07);
	\foreach \y in {0,...,3} {
		\draw[fill=black] (4.5,1 + 0.5 * \y) circle (0.07);
	}
	\draw[thick] (2.5,-3) -- (3,-2.5) -- (3,-0.5) -- (4.5,1) -- (4.5,2.5);
	\foreach \y in {2,...,4} {
		\draw[fill=Green, draw=Green] (3,-3 + 0.5 * \y) circle (0.07);
	}
	\draw[ultra thick, draw=Green] (3,-2) -- (3,-1);
	\node at (2.6,-3.3) {\tiny $X_4$};
\end{tikzpicture}
\caption{Possible trajectories of $N = 4$ particles of $\xx^{\rB}$ with starting times $T_k = - 2 (k-1)$. The initial configuration is $\vec y = (10, 8, 5, 1)$ and the final time is $t = 5$. The green segments are the respective locations of the caterpillars of lengths $L = 3$ at time $t = 2$ (on the left) and at time $t = 4$ (on the right).}
\label{fig:TASEP}
\end{figure}

Considering initial conditions $\vec y\in\Omega_{N}(\kappa)$ for $\xx^{\rB}_t$ ensures also that Assum.~\ref{a:kappa} is satisfied; proving \eqref{eq:second-property} is a bit involved, we do it in Lem.~\ref{lem:G-formula}.
Moreover, if $p<1/2$ then in the definition of the function \eqref{eq:F-main} we can take $\rhoout < q/p$ so that the singularity of the integrand at $w=-q/p$ is outside the contour, and under this additional restriction it turns out that \eqref{eq:backward-in-time-one} also holds; we prove this in Lem.~\ref{lem:rB-back-in-time}. 

\subsubsection{Caterpillars of length $L=1$ and $L=2$: sequential and parallel update}
\label{sec:Ber_seq}

In the case $L = 1$ the process $\xx^\head_t \in \Omega_N$ evolves as the usual discrete time TASEP with Bernoulli jumps, blocking, and sequential update described at the beginning of Sec. \ref{sec:RB_caterpillars}.

In the case $L = 2$, let us denote $X^{\prll}_t = \xx^\head_t$.
The definition of the system of caterpillars means that when $\xx^1_t(j)$ tries to jump to the right, it is blocked by $\xx^2_{t+1}(j-1)$.
Moreover, we have $\xx^2_{t+1}(j) = \xx^1_t(j) = X^{\prll}_t(j)$.
Hence, the evolution of $X^{\prll}_t$ is given by $X^{\prll}_{t+1}(1) = X^{\prll}_t(1) + \xi(t+1,1)$ and 
\begin{equation}
X^{\prll}_{t+1}(k) = \min \{X^{\prll}_t(k) + \xi(t+1,k), X^{\prll}_t(k-1) - 1\}, \qquad k = 2, \dotsc, N,
\end{equation}
where $\xi(t,k)$ are independent Ber[$p$] random variables.
Then the evolution of $X^{\prll}_t$ coincides is that of \emph{discrete time TASEP with right Bernoulli jumps, blocking and parallel update} \cite{Povolotsky_2006,borodFerSas}, which evolves in the same way as the model with sequential update corresponding to $L=1$  except that the transition of the particle $X^{\prll}(i)$ from time $t$ to $t+1$ is blocked by the particle $X^{\prll}(i+1)$ at time $t$, rather than $t+1$. Equivalently, all particles attempt to jump at the same time, but get blocked by the current location of the particles to their right, which is why the update is said to occur in parallel.
The representation of parallel TASEP as sequential TASEP with different starting times appears also in \cite{Anisotropic,bf-tilings} in the setting of a Markov chain on Gelfand-Tsetlin patterns (see Sec.~\ref{sec:previous}).

An explicit formula for the transition probabilities for this model can be given \cite[Lem.~10]{borodFerSas}:

\begin{lem}\label{lem:Ber-TASEP_prll}
The transition probabilities of the model with right Ber[$p$] jumps, blocking and parallel update are
\begin{equation}\label{eq:ProbBernoulliBlockY}
 \pp (X^{\prll}_t = \vec x | X^{\prll}_0 = \vec y) = q^{\CN(\vec x)} \det \bigl[F^{\rB}_{i - j}(x_{N + 1 - i} - y_{N + 1 - j}, t + i - j)\bigr]_{i, j \in \set{N}},
\end{equation}
where $\vec x, \vec y \in \Omega_N$, $t \in \nn_0$, $\CN(\vec x) = \# \{ 2 \leq i \leq N : x_{i-1} - x_i = 1\}$ and $F^{\rB}$ is defined in \eqref{eq:rBerBlockF} with a contour $\gamma$ which includes $0$ and $1$ but not the singularity at $-q/p$.
\end{lem}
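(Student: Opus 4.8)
The plan is to verify, following the Bethe‑ansatz argument of Sch\"utz and its adaptation in \cite{borodFerSas}, that the right‑hand side of \eqref{eq:ProbBernoulliBlockY} --- write it
\[\Psi_t(\vec x)=q^{\CN(\vec x)}\det\bigl[F^{\rB}_{i-j}(x_{N+1-i}-y_{N+1-j},t+i-j)\bigr]_{i,j\in\set N},\qquad \vec x\in\Omega_N\]
--- solves the Kolmogorov forward equation of the parallel‑update chain with initial datum $\Psi_0(\vec x)=\uno{\vec x=\vec y}$; by uniqueness this identifies $\Psi_t$ with the transition probability. Throughout, $F^{\rB}$ is taken with the contour encircling $0$ and $1$ but \emph{not} $-q/p$, as in the statement, which is what gives meaning to the negative time arguments $t+i-j$ appearing in the upper triangle.

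The inductive step over $t$ rests on the one‑line identity $q\,F^{\rB}_n(x,t)+p\,F^{\rB}_n(x-1,t)=F^{\rB}_n(x,t+1)$, immediate from \eqref{eq:rBerBlockF} since division by $w$ shifts $x\mapsto x-1$ and $q+pw=\varphi(w)$. Writing the one‑step weight of the parallel update from a predecessor $\vec x^-$ (necessarily $x^-_k\in\{x_k-1,x_k\}$) to $\vec x$ as $p^{J}q^{\,N-\CN(\vec x^-)-J}$, where $J=\sum_k(x_k-x^-_k)$ is the number of jumps and $\CN(\vec x^-)$ counts the particles forced to stay put because their right neighbour sits one site ahead, the factor $q^{-\CN(\vec x^-)}$ cancels the prefactor $q^{\CN(\vec x^-)}$ inside $\Psi_t(\vec x^-)$ and $p^Jq^{N-J}=\prod_k p^{x_k-x^-_k}q^{1-(x_k-x^-_k)}$ factorizes over particles. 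Hence
\[\Psi_{t+1}(\vec x)=\sum_{\vec x^-\ \mathrm{legal}}\ \Bigl(\textstyle\prod_{k}p^{x_k-x^-_k}q^{1-(x_k-x^-_k)}\Bigr)\det\bigl[F^{\rB}_{i-j}(x^-_{N+1-i}-y_{N+1-j},t+i-j)\bigr]_{i,j\in\set N},\]
where \emph{legal} means $\vec x^-\in\Omega_N$ together with the constraint that no particle simultaneously jumps and has its left neighbour adjacent in $\vec x^-$.

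If this sum ran over \emph{all} $\vec x^-$ with $x^-_k\in\{x_k-1,x_k\}$, then --- using that the weight factorizes over $k$, that $x^-_{N+1-i}$ enters only the $i$‑th row, and multilinearity of the determinant --- one could apply the one‑step identity row by row and get exactly $\det[F^{\rB}_{i-j}(x_{N+1-i}-y_{N+1-j},t+1+i-j)]$, which is $\Psi_{t+1}(\vec x)$ \emph{without} its prefactor $q^{\CN(\vec x)}$. So the whole content of the lemma is that the omitted predecessors --- those leaving $\Omega_N$ (possible only at the $\CN(\vec x)$ positions where $x_{k-1}-x_k=1$, forcing a coincidence $x^-_{k-1}=x^-_k$) and those violating the jump constraint --- contribute precisely the correction turning $\det[\cdots,t+1,\cdots]$ into $q^{\CN(\vec x)}\det[\cdots,t+1,\cdots]$. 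I expect this cancellation bookkeeping to be the crux: one uses the contour‑integral form of $F^{\rB}$ to see that the determinant, as a function of an unrestricted predecessor vector, obeys ``string''-type identities along each maximal run of consecutive particles of $\vec x$, so that the omitted contributions telescope, and a case analysis over these runs (as in \cite{borodFerSas}) then produces one factor of $q$ per adjacent pair. Everything else should be routine.

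For the initial datum one computes $\Psi_0$ by residues: the identities $\tfrac{q+pw}{w-1}=p+\tfrac1{w-1}$ and $\tfrac{w-1}{q+pw}=\tfrac1p\bigl(1-\tfrac1{q+pw}\bigr)$ give $F^{\rB}_n(z,n)=p^n\uno{z=n}$ for $z\ge n\ge0$ and $F^{\rB}_n(z,n)=0$ for $z<n\le0$, which for $\vec x,\vec y\in\Omega_N$ forces $\det[F^{\rB}_{i-j}(x_{N+1-i}-y_{N+1-j},i-j)]=0$ whenever $\vec x\ne\vec y$ and, at $\vec x=\vec y$, makes this matrix block‑diagonal along the maximal runs of $\vec y$, a run of length $r$ contributing determinant $q^{-(r-1)}$, cancelled by the $q^{r-1}$ it contributes to $q^{\CN(\vec y)}$; thus $\Psi_0=\uno{\cdot=\vec y}$. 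A second, more ``in‑framework'' route would avoid the master equation: by Lem.~\ref{lem:TASEP_and_caterpillars} with $\kappa=1$, $\pp(X^{\prll}_t=\vec x\mid X^{\prll}_0=\vec y)=\pp\bigl(X^{\rB}_{t-(k-1)}(k)=x_k,\ k\in\set N\bigr)$ for the sequential right‑Bernoulli process with particle $k$ launched at time $-(k-1)$ from $y_k$, and one would prove the determinantal formula for this staggered‑time probability by induction on $N$, peeling off the leftmost particle (an unconstrained $\mathrm{Ber}[p]$ walk) and invoking the Cauchy--Binet‑type identities of Appdx.~\ref{app:convolutions}. Either way the sole non‑routine ingredient is the $q^{\CN(\vec x)}$ prefactor, which records that in a parallel step a particle directly behind its right neighbour cannot move.
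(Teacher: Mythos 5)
The paper itself offers no proof of this lemma\,---\,it is quoted from \cite[Lem.~10]{borodFerSas}\,---\,so the natural benchmark is the cited verification of the Kolmogorov equations, which is also the route you choose. Your setup is sound as far as it goes: the one-step identity $qF^{\rB}_n(x,t)+pF^{\rB}_n(x-1,t)=F^{\rB}_n(x,t+1)$, the factorization of the parallel one-step weight as $p^Jq^{N-\CN(\vec x^-)-J}$ and the resulting cancellation of the prefactor, and the observation that the unrestricted sum over $x^-_k\in\{x_k-1,x_k\}$ reproduces $\det\bigl[F^{\rB}_{i-j}(x_{N+1-i}-y_{N+1-j},t+1+i-j)\bigr]$ are all correct. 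But the proof stops exactly where the lemma begins: you must show that the omitted (illegal) predecessors contribute precisely $\bigl(1-q^{\CN(\vec x)}\bigr)$ times that determinant, and for this you only say that you ``expect'' string-type identities and a case analysis as in \cite{borodFerSas} to work, declaring the rest routine. That cancellation \emph{is} the content of \eqref{eq:ProbBernoulliBlockY}; nothing in your writeup identifies the mechanism that drives it. (What is actually needed is, e.g., the relation $F^{\rB}_n(x,t+1)-F^{\rB}_n(x,t)=p\,F^{\rB}_{n-1}(x-1,t)$, which makes the row of particle $k-1$ proportional to the row of particle $k$ whenever $x_{k-1}=x_k+1$, followed by an inclusion--exclusion/induction over maximal clusters of adjacent particles to produce one factor of $q$ per adjacency; none of this is set up, and without it the ``bookkeeping'' is not a verifiable step.)

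Two smaller points. In the initial-condition check, $F^{\rB}_n(z,n)=p^n\uno{z=n}$ holds only for $z\ge n\ge0$; for $n>0$ and $z<n$ the entries do not vanish (e.g.\ $F^{\rB}_1(0,1)=1$), so the vanishing of the $t=0$ determinant for $\vec x\neq\vec y$ and the block-diagonal structure at $\vec x=\vec y$ require the usual column-manipulation argument rather than being immediate from the residue computations you list (the block computation itself, one factor $q^{-(r-1)}$ per run of length $r$, is correct). Finally, the alternative route through Lem.~\ref{lem:TASEP_and_caterpillars} is likewise only a gesture: it recasts the left-hand side as a joint law at the staggered space-time points $(k,t-(k-1))$, and extracting from the convolution identities of Appdx.~\ref{app:convolutions} a single $N\times N$ determinant \emph{with} the prefactor $q^{\CN(\vec x)}$ is again exactly the nontrivial step (the appendix lemmas produce determinants of the unweighted form, as in Lem.~\ref{lem:G-formula}, and the emergence of $q^{\CN(\vec x)}$ from the boundary effects of coinciding arguments is not addressed). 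So the approach is viable, but as written the argument has a genuine gap at its central step.
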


 In the case $L > 2$ we do not expect to have determinantal formulas of this type describing transition distributions.
Note that when $p\geq1/2$, the function inside the determinant in the lemma cannot be written in the form \eqref{eq:F-main} used in the definition of the Markov chains studied in Sec.~\ref{sec:main}, because, since in this case $q/p\leq1$, the contour $\gamma$ cannot be chosen to be a circle.
We will not use \eqref{eq:ProbBernoulliBlockY} in the sequel, but the same issue will play a role in the next proposition.

\subsubsection{Distribution function for caterpillar heads}
\label{sec:distr_seq}

Finally we use the results from the Introduction to derive a formula for the distribution function of the heads of the caterpillars (of any length $L\geq1$).
For $L=1$ the result is a consequence of \eqref{eq:rightBernoulliBlock} and Thm.~\ref{thm:main}.
In the general case it follows from Lem.~\ref{lem:TASEP_and_caterpillars} and Thm. \ref{thm:main2}, together with an analytic continuation argument in the case $t\leq\kappa(n_m-1)$ and $p\geq1/2$.

\begin{prop}\label{prop:caterpillars}
Consider the system of caterpillars $\xx$ of length $L\geq 1$, and suppose that $\xx(0)=\vec y$ with $\vec y\in\Omega_N(L-1)$.
Then for any $t\geq0$, any $1 \leq n_1 < \dotsm <  n_m \leq N$, and any $\vec a \in \rr^m$, we have
\begin{equation}\label{eq:caterpillars}
\pp \bigl(\xx^{\head}_t(n_i) > a_{i},\, i \in \set{m}\bigr) = \det \bigl(I-\bP_{a}  K^{\rB} \bP_{a} \bigr)_{\ell^2(\{n_1, \dotsc, n_m\} \times \zz)},
\end{equation}
where the kernel $K^{\rB}$ is given by \eqref{eq:kernel-main} defined using $\varphi(w) = q + pw$ and $\kappa = L - 1$ and where, in the definition of the kernels \eqref{eq:defQintro}--\eqref{eq:defSNintro}, $\theta\in(0,1)$ is arbitrary while $r\in(0,1)$ is also arbitrary unless $\kappa\geq1$, $p\geq1/2$ and $t<\kappa(n_m-1)$, in which case $r$ has to be bounded above by $q/p$.
The random walk used to define \eqref{eq:SMepi} in this case has transition matrix $Q(x,y)=\frac{1-\theta}{(q+p\theta)^\kappa}\theta^{x-y-1}q_{x-y}\uno{x>y}$ with $q_i=1-\sum_{j=i}^\kappa\binom{\kappa}{j}p^jq^{\kappa-j}$ ($q_i=1$ for $i>\kappa$). 
\end{prop}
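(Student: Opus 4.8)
The plan is to reduce the statement entirely to the general results of the Introduction, using Lemma~\ref{lem:TASEP_and_caterpillars} to rewrite the caterpillar heads in terms of the auxiliary process $\xx^{\rB}_t$ with staggered starting times. First I would treat the base case $L=1$: there $\xx^{\head}_t$ is exactly discrete time TASEP with right Bernoulli jumps, blocking and sequential update, whose transition probabilities are \eqref{eq:rightBernoulliBlock}--\eqref{eq:rBerBlockF} with $\varphi(w)=q+pw$; this $\varphi$ visibly satisfies Assum.~\ref{a:phi} (it is entire, nonvanishing on $A_{\rhoin,\rhoout}$ once $\rhoin$ is taken large enough relative to $q/p$, and $\varphi(1)=1$), so Thm.~\ref{thm:main} applies verbatim and gives \eqref{eq:caterpillars} with $\kappa=0$, $\theta,r\in(0,1)$ arbitrary.

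For $L\geq2$, set $\kappa=L-1\geq1$. By Lemma~\ref{lem:TASEP_and_caterpillars}, with $\vec y\in\Omega_N(\kappa)$ the vector of heads $\xx^{\head}_t(k)=\xx^1_t(k)=\xx^{\rB}_{t-(k-1)\kappa}(k)$ is precisely the marginal appearing on the left-hand side of \eqref{eq:probability-main}, namely $\xx^{\rB}_{t-\kappa(n_i-1)}(n_i)$, under the conditioning on $\CE_\kappa$ with the process started at time $-\kappa(N-1)$ at $\vec y$. So it suffices to verify the hypotheses of Thm.~\ref{thm:main2} for $\varphi(w)=q+pw$ with this $\kappa$. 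Assum.~\ref{a:kappa}\ref{it:phi} is immediate: $\varphi$ is entire hence analytic on $\{|w|\geq\rhoin\}$; it is the generating function of the Bernoulli measure $b_1=p$, $b_0=q$ on $\{i\leq1\}$, which is nonnegative and sums to $1$. The identity \eqref{eq:Qalphaq-intro} then yields $q_i=1-\sum_{j=i}^{\kappa}\binom{\kappa}{j}p^jq^{\kappa-j}$ from the binomial expansion of $(q+pw)^\kappa=\sum_{i\le\kappa}(q_{i+1}-q_i)w^i$ (with $q_i=1$ for $i>\kappa$), giving exactly the stated transition matrix $Q(x,y)=\frac{1-\theta}{(q+p\theta)^\kappa}\theta^{x-y-1}q_{x-y}\uno{x>y}$ after substituting $\alpha=(1-\theta)\theta^{-1}(q+p\theta)^{-\kappa}$. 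The remaining hypothesis, Assum.~\ref{a:kappa}\ref{it:one} — i.e. that the staggered-start transition probabilities have the shifted determinantal form \eqref{eq:second-property} — is the substantive input; I would invoke Lem.~\ref{lem:G-formula} (which establishes exactly this for right Bernoulli jumps with $\vec y\in\Omega_N(\kappa)$) rather than reprove it here. Thm.~\ref{thm:main2} then gives \eqref{eq:caterpillars} with kernel \eqref{eq:kernel-main} for all $t\geq\kappa(n_m-1)$, with $\theta\in(\rho,1)$ and $r\in(\rho,1)$; since $\rho$ can be taken arbitrarily small (as $\varphi$ is entire and nonvanishing except at $-q/p<0$), this is the claimed freedom $\theta,r\in(0,1)$ when $p<1/2$, and requires $r<q/p$ only when $p\geq1/2$ so that $\gamma_r$ can still be a circle avoiding $-q/p$ in the contour integrals \eqref{eq:defQintro}--\eqref{eq:defSNintro}.

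It remains to remove the restriction $t\geq\kappa(n_m-1)$ when $p\geq1/2$ (when $p<1/2$ the last sentence of Thm.~\ref{thm:main2} applies, since \eqref{eq:backward-in-time-one} holds for right Bernoulli jumps by Lem.~\ref{lem:rB-back-in-time} under the extra restriction $\rhoout<q/p$). For $p\geq1/2$ I would argue by analytic continuation in a complex parameter. Both sides of \eqref{eq:caterpillars} are, for fixed $N,\vec n,\vec a,\vec y$, functions of the time $t$; the left-hand side is a polynomial in $t$ (a finite sum of products of transition probabilities, each of which is a polynomial in $t$ by the contour-integral formula for $F^{\rB}$ with $\varphi^t=(q+pw)^t$ interpreted via the binomial series — here $\kappa(n_m-1)$ many steps suffice), and the right-hand Fredholm determinant, expanded via its finite series over $\ell^2(\{n_1,\dots,n_m\}\times\zz)$, is likewise entire in $t$ (each matrix entry of $K_t$ is a contour integral with integrand depending on $t$ only through $\varphi(w)^{t}$, $\varphi(1-w)^{-t}$, analytic in $t$ on the chosen contours). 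Having established equality for all integers $t\geq\kappa(n_m-1)$, equality of the two entire functions for all $t\geq0$ follows by the identity theorem, completing the proof. \textbf{The main obstacle} I anticipate is not in this continuation step — which is routine once analyticity in $t$ is set up carefully, paying attention to the choice $r<q/p$ needed to keep all contours valid — but rather in the verification of Assum.~\ref{a:kappa}\ref{it:one}, i.e. Lem.~\ref{lem:G-formula}: producing the shifted determinantal transition probabilities \eqref{eq:second-property} for the staggered-start process and checking that the $\Omega_N(\kappa)$ hypothesis on $\vec y$ is exactly what makes the decoupling work. Fortunately that lemma is proved separately, so here the burden is only to cite it and check that its hypotheses are met.
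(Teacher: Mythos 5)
The reduction to Thm.~\ref{thm:main2} via Lem.~\ref{lem:TASEP_and_caterpillars}, the verification of Assums.~\ref{a:phi} and \ref{a:kappa} (citing Lem.~\ref{lem:G-formula}), the identification of $Q$, and the treatment of $p<1/2$ via Lem.~\ref{lem:rB-back-in-time} all match the paper's proof. The genuine gap is in your last step, the extension to $t<\kappa(n_m-1)$ when $p\geq1/2$: you continue analytically in the time variable and invoke the identity theorem from equality ``for all integers $t\geq\kappa(n_m-1)$.'' But that set has no accumulation point, so the identity theorem gives nothing: two entire functions can agree on all large integers without being equal (add $\sin(\pi t)$). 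To make a continuation-in-$t$ argument work you would need growth control of Carlson type, which you neither state nor verify, and the auxiliary claim you lean on — that the left-hand side is a polynomial in $t$ — is false anyway: already $F^{\rB}_0(x,t)=\binom{t}{x}p^xq^{t-x}$ carries a factor $q^t$, so the discrete-time probabilities are not polynomial in $t$.

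The paper avoids this by continuing in the \emph{jump parameter} $p$ at fixed integer $t$: for $p\in(0,1/2)$ the identity holds for all $t\geq0$ (via \eqref{eq:backward-in-time-one}), and both sides are real analytic in $p$ on $(0,\bar p+\ep)$ — the left side via \eqref{eq:second-property}, the right side because, with the radius $r<(1-\bar p-\ep)/(\bar p+\ep)$, no singularity of the integrands (in particular the pole of $\varphi(w)^{t-\kappa(n-1)}$ at $w=-q/p$ when $t<\kappa(n-1)$) crosses the contours as $p$ varies, the sum defining $(\SM_{-t,-n_i})^*\SN^{\epi(\vec y)}_{-t,n_j}$ is finite, and a Hadamard bound gives analyticity of the Fredholm determinant. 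Agreement on the interval $(0,1/2)$ then propagates to $p=\bar p\geq1/2$, which is also where the restriction $r<q/p$ in the statement comes from (note it is tied to $t<\kappa(n_m-1)$, not to all $p\geq1/2$ as your parameter bookkeeping suggests; for $t\geq\kappa(n_m-1)$ the contour can be deformed to any $r\in(0,1)$ since $\varphi(w)^{t-\kappa(n-1)}$ has no pole). So your overall architecture is right, but the continuation step as written would fail and must be replaced by continuation in $p$ (or repaired with a genuinely different argument supplying an accumulation set).
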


\begin{proof}
By Lem.~\ref{lem:TASEP_and_caterpillars}, $\xx^{\head}_t(n_i)=\xx^1_t(n_i)\distr\xx^{\rB}_{t-(n_i-1)\kappa}(n_i)$, so the probability on the left hand side of \eqref{eq:caterpillars} can be computed using Thm.~\ref{thm:main2}.
The transition probabilities \eqref{eq:rightBernoulliBlock} of the system $\xx^{\rB}_t$ correspond to \eqref{eq:G-main}/\eqref{eq:F-main} with $\varphi(w) = q + pw$, which clearly satisfies Assum.~\ref{a:phi} with any $0<\rhoin<1<\rhoout$, while Assum.~\ref{a:kappa} is also satisfied in the case $\kappa\geq1$ (condition \eqref{eq:second-property} is proved in Appdx.~\ref{sec:kappaone}).
This leads to the desired formula (with arbitrary choices $r,\theta\in(0,1)$) in the case $t\geq\kappa(n_m-1)$.

Next we extend the formula to all $t\geq0$ in the case $\kappa\geq1$.
If $p<1/2$ then, by Lem.~\ref{lem:rB-back-in-time}, \eqref{eq:backward-in-time-one} holds for the model with the function $F_{i-j}$ defined as in the above paragraph, and thus Thm.~\ref{thm:main2} implies again the desired formula with arbitrary parameter choices.
Crucially, in that case we have $q/p>1$, so $\rhoout>1$ can be chosen in \eqref{eq:F-main} so that the singularity of $\varphi(w)$ at $w=-q/p$ is outside the contour $\gamma_{\rhoout}$, as required by Lem.~\ref{lem:rB-back-in-time}.
In the case $p\geq1/2$, however, this is not possible, so the extension to all $t\geq0$ provided in Thm.~\ref{thm:main2} does not apply, and thus we need a different argument.

Take $p=\bar p\geq1/2$, $\kappa\geq1$.
The left hand side of \eqref{eq:caterpillars} defines a function of the parameter $p$ which is real analytic for $p\in(0,1)$ (this can be proved for instance using \eqref{eq:second-property} with the current choice of $F_n$). We claim that for fixed (small) $\ep>0$, the right hand side is also real analytic in $p\in(0,\bar p+\ep)$ as long as the radius $r$ used to define $\SM_{-t,-n}$ satisfies $r<(1-\bar p-\ep)/(\bar p+\ep)$.
To see this, note that the kernels \eqref{eq:defQintro}--\eqref{eq:defSNintro}\noeqref{eq:defSMintro} are all real analytic in $p\in(0,\bar p+\ep)$ because no singularities of the integrand are crossed as $p$ is moved along this interval under the stated condition on $r$.
Moreover, in $(\SM_{-t, -n_i})^*\SN^{\epi(\vec y)}_{-t, n_j}(z_1,z_2)=\sum_{y\in\zz}(\SM_{-t, -n_i})^*(z_1,y)\SN^{\epi(\vec y)}_{-t, n_j}(y,z_2)$ the sum is actually finite: for $y\gg1$ the first factor vanishes because the integrand in \eqref{eq:defSMintro} has no pole at $0$, while for $y\ll1$ the second factor vanishes because the random walk $B_m$ in \eqref{eq:SMepi} takes only negative steps and so it cannot hit $\epi(\vec y)$ by time $n$.
Then the kernel $K^{\rB}(z_1,z_2)$ is analytic in $p\in(0,\bar p+\ep)$, and a standard argument (e.g. using a Hadamard bound to show uniform convergence of the Fredholm series) shows that the Fredholm determinant also is so.
Since the two sides are real analytic in $p\in(0,\bar p+\ep)$ and are equal for $p\in(0,1/2)$, we deduce they are equal also at $p=\bar p$.
This gives the result for this value of $p$, with the restriction on $r<(1-\bar p-\ep)/(\bar p+\ep)$, and since $\ep>0$ is arbitrary we recover the restriction $r<q/p$.
\end{proof}

In the next two examples we will focus mostly, for simplicity, on the one-point kernel 
\[K^{(n)}(z_1,z_2)\coloneqq K^{\rB}(n,z_1;n,z_2).\]

\begin{ex}\label{ex:periodic-one-sided} {\bf (Half-periodic initial data)}
\enspace For a fixed $L \geq 1$, let $\kappa = L - 1$ as before, $d \geq \kappa \vee 1$, and let the initial state be $\vec y \in \Omega_N$ such that $y_i = - d i$ for each $i$. 

\noindent In the case $d = 1$ (which means necessarily $L = 1$ or $L=2$ due to the restriction $d \geq \kappa \vee 1$), if $B_0 \leq -1$, then the random walk never hits the epigraph of $\vec y$.
This means that $\SM^{\epi(\vec y)}_{-t,n}=\P_{-1}\SM_{t,-n}$, and thus the second term in \eqref{eq:kernel-main} can be computed using \eqref{def:sm} and \eqref{def:sn}:
\begin{equation}\label{eq:stepRB}
(\SM_{-{n}_i})^*\P_{-1}\SN_{{n}_j}(x_1, x_2) = \frac{\theta^{x_1 - x_2}}{(2\pi\I)^2} \oint_{\gamma_r}\d w \oint_{\gamma_\delta}\d v\, \frac{(1-w)^{n_i} (1-v)^{n_2 + x_2}}{w^{n_i + x_1 +1} v^{n_j} (1 - w - v)} \frac{(q + p w)^{t - \kappa(n_i - 1)}}{(1 - p v)^{t - \kappa(n_j - 1)}},
\end{equation}
This gives the kernels for the models in the case of \emph{step} (or \emph{packed}) \emph{initial condition} $X_0(i)=-i$, $i\geq1$.

\noindent Let us now consider the case $d \geq 2$, with $n_i=n_j=n$.
When $d=2$ the kernel $\SM^{\epi(\vec y)}_{-t,n}$ can be computed directly as in \cite[Ex. 2.9]{fixedpt}, but in the general case it turns out to be easier to do it using an equivalent description, provided in Sec.~\ref{sec:biorth}, by computing the functions \eqref{eq:h_heat_Q} explicitly and then using \eqref{eq:Kn}.
In the notation of Sec.~\ref{sec:biorth} we have $a(w) = (q + p w)^\kappa$ and $\psi(w) = (q + p w)^t$.
Using \eqref{eq:contourQ} we can compute the logarithm of the moment generating function of $B^*_1$ as \mbox{$\phi^*(\lambda) = \log \bigl(\alpha \theta e^\lambda \frac{(q + p \theta e^{\lambda})^\kappa}{1 - \theta e^{\lambda}}\bigr)$} for $\lambda < - \log \theta$. Then \mbox{$\bigl(e^{\lambda B^*_m - m \phi^*(\lambda)}\bigr)_{m \geq 0}$} is a martingale, which yields \mbox{$\ee_{B^*_{-1}=z}\bigl[e^{\lambda B^*_{\tau^*} - \tau^* \phi^*(\lambda)}\bigr] = e^{\lambda z + \phi^*(\lambda)}$}, where $\tau^*$ is the hitting time of the strict epigraph of $(y_{n-m})_{m = 0, \ldots, n-1}$.
This identity can be used to compute the distribution of $\tau^*$ in the same way as in \cite[Ex.~2.10]{fixedpt}, and leads to the following formula for the function defined in \eqref{eq:pnkrw} in the case $z \leq y_n$:
\begin{equation}
\textstyle p^n_k(0,z) = \frac{\alpha^{k+1} \theta^{d(n-k) - z + 1}}{2 (1 - \theta) \pi\I}\oint_{\gamma_r} \d u\, \frac{(1 - p u)^{\kappa (k+1)-1} (1-u)^{z + d (n-k) + k - 1}}{u^{k+1}} \bigl(1 + (p \kappa - p - d)u + p(d - \kappa) u^2\bigr).
\end{equation}
This expression is analytic in $z$ and we can extend it to all $z \in \zz$. Then using Thm.~\ref{thm:h_heat_Q} and formulas \eqref{eq:defR}, \eqref{eq:contourQ}, and \eqref{eq:Apow}, we can compute $\Phi^n_k(x)=\frac{1 - \theta}{\theta} (\R^*)^{-1}(A^*)^{-1}\bar p^n_k(0,x)$ as
\begin{equation}
\textstyle\Phi^n_k(x) = \frac{\alpha^k \theta^{- d(n-k) - x}}{2 \pi\I}\oint_{\gamma_r} \d u\, \frac{(1-u)^{x + d n - 1}}{u (1 - p u)^{t + 1}} \left( \frac{(1 - pu)^\kappa}{u (1-u)^{d - 1}} \right)^{k} \bigl(1 + (p \kappa - p - d)u + p(d - \kappa) u^2\bigr).
\end{equation}
The function \eqref{eq:defPsink} on the other hand equals $\textstyle\Psi^n_k(x)=\frac{\alpha^{-k}}{2\pi\I}\oint_{\gamma_{\rin'}}\d w\,\frac{\theta^{x + d (n-k)} (q + pw)^t}{w^{x + d n + 1}} \left( \frac{w^{d - 1} (1 - w)}{(q + pw)^\kappa} \right)^{k}$, for $r' > 0$. 
Define now $g(w) = \frac{(q + pw)^\kappa}{w^{d - 1} (1 - w)}$.
Since clearly $\Phi^n_k(x) = 0$ for $k<n$ we may compute the sum in \eqref{eq:Kn} over all $k < n$; we get 
\begin{equation}
\textstyle K^{(n)}(z_1,z_2)=\textstyle \frac{\theta^{z_1 - z_2}}{(2\pi\I)^2} \oint_{\gamma_r} \d u \oint_{\gamma_{r'}}\d w\,\frac{(q + pw)^t}{ w^{z_1 + d n + 1}} \frac{(1-u)^{z_2 + d n - 1}}{u (1 - p u)^{t + 1}}\frac{(1 + (p \kappa - p - d)u + p(d - \kappa) u^2)g(1-u)^n}{g(w)^{n-1}(g(1-u) - g(w))}, \label{eq:kernel-in-example}
\end{equation}
where the contours are such that $|g(w)| < |g(1-u)|$ (for this we need to choose $r$ sufficiently small and $r' > r$ sufficiently large).
In the case $d = 1$ we recover the kernel from the beginning of this example (with $n_i=n_j=n$).
\end{ex}

\begin{rem}\label{rem:kappa-extension}
The restriction $d \geq \kappa \vee 1$ in the above example, coming from Prop. \ref{prop:caterpillars}, leaves out the important case of step initial data for caterpillars of length $L=\kappa+1\geq3$.
We believe (and have checked in a computer algebra system in simple cases), however, that the formula holds for all $\kappa$.
More precisely, we conjecture that the distribution of the heads of caterpillars of any length $L$ with step initial data is determined by \eqref{eq:probability-main} with the kernel $K_t$ in \eqref{eq:kernel-main} computed using \eqref{eq:stepRB} with $\kappa-L-1$.
\end{rem}

\begin{ex}{\bf (Periodic initial data)}\label{ex:TASEP-periodic}
\enspace Now we derive a kernel for caterpillars with infinite periodic initial state $X_0(i) = - d i$ for each $i \in \zz$ with for $d \geq 2$ (in the setting of Ex.~\ref{ex:periodic-one-sided}).
To this end we consider the initial state $\vec y \in \Omega_{2 N}$ given by $y_i = d(N-i)$ for each $i$ and focus on distribution of particles with indices $N+1, N+2, \dotsc, N + M$ for a fixed $M \leq N/2$. Then the respective kernel $K^{(n)}_{\per}$ is obtained from \eqref{eq:kernel-in-example} by $K^{(n)}_{\per}(z_1, z_2) = K^{(N + n)}(z_1 - d N, z_2 - d N)$. Proceeding as in \cite[Ex. 2.10]{fixedpt} one sees that for $N \geq \frac{M + a + 1}{d-1}$ the kernel becomes independent of $N$ and is given by
\begin{align}
\textstyle K^{(n)}_\per(z_1,z_2)&\textstyle=\frac{\theta^{z_1 - z_2}}{2\pi\I}\oint_{\gamma_r}\d u \sum_{i = 1}^{d-1} \frac{(q + p w_i(u))^{t - \kappa (n -1)} (1-w_i(u))^{n}}{w_i(u)^{z_1 + n + 1}} \frac{(1-u)^{z_2 + d + n - 2}}{u^{n} (1 - p u)^{t - \kappa n + 1}}  \\
&\hspace{5cm}\textstyle\times \frac{1 + (p \kappa - p - d)u + p(d - \kappa) u^2}{\partial_w f(u,w) |_{w = w_i(u)}},\label{eq:kernel-in-example2-final}
\end{align}
where $f(u,w) = (1-w) w^{d-1} (1-pu)^\kappa - u (1-u)^{d-1} (q+pw)^\kappa$ and $w_1(u), \ldots, w_{d-1}(u)$ denote the $d-1$ distinct solutions other than $w=1-u$ of $f(u,w)=0$ inside $\gamma_{r'}$ (with $r'$ from the previous example).
This gives the kernel for the $d$-periodic initial condition introduced in this example. For $L = 1$, the formula recovers the kernel derived in \cite[Thm.~2.1]{bfp}. In the case $L = d = 2$, the equation $f(u,w)=0$ has two solutions $w = 1 - u$ and $w = \frac{qu}{1-pu}$. In the formula for the kernel we consider only the latter and get
\[\textstyle K^{(n)}_\per(z_1,z_2) =\frac{\theta^{z_1 - z_2}}{(2\pi\I)^2}\oint_{\gamma_r}\d u\, \frac{q^{t - 2 n - z_1 (1-u)^{z_2 + 2 n}}}{u^{z_1 + 2n + 1} (1-pu)^{2t - 2n - z_1 + 1}}.\]
This is the kernel for parallel TASEP which was obtained in \cite[Thm.~1]{borodFerSas}.
\end{ex}

\subsubsection{TASEP with generalized update}
\label{sec:Povolotsky-discrete}

Finally we consider a version of discrete time TASEP with a more general dynamics, introduced in \cite{general-update}. In this new model $X^{\gen}(t)$, which takes values in $\Omega_N$, particles try to make right Bernoulli jumps independently with probability $p$ with the usual exclusion rule that jumps onto occupied sites are blocked.
As above the update of particles is from right to left. 
If at time $t$ it is the turn of the $i^{\text{th}}$ particle to jump and the configuration is such that $X^{\gen}_{i-1}(t)>X^{\gen}_i(t)+1$ then as usual the particle jumps with probability $p$, but if $X^{\gen}_{i - 1}(t) = X^{\gen}_i(t) + 1$ and $X^{\gen}_{i - 1}(t + 1) = X^{\gen}_{i - 1}(t) + 1$ then $X^{\gen}_i(t)$ jumps to the right with probability $\beta \in [0, 1)$ and stays put with probability $1 - \beta$.

In the case $\beta = p$ this model evolves as TASEP with sequential update, while in the case $\beta = 0$ it is TASEP with parallel update. Another interesting case is $\beta \to 1$, in which when a particle moves it pulls its left neighbor. 
In \cite[Eqn.~3.2]{general-update} it was proved that the transition probabilities for this model are
 \begin{equation}\label{eq:G-Povolotsky}
 	\pp (X^{\gen}_t = \vec x | X^{\gen}_0 = \vec y) = (\tfrac{q}{1-\beta})^{\CN(\vec x)} \det \bigl[F^{\gen}_{i - j}(x_{N + 1 - i} - y_{N + 1 - j}, t)\bigr]_{i, j \in \set{N}},
\end{equation}
where $\vec x, \vec y \in \Omega_N$, $t \in \nn_0$, the function $\CN(\vec x)$ is defined in Lem.~\ref{lem:Ber-TASEP_prll}, and
\begin{equation}\label{eq:Fgen}
F^{\gen}_{n}(x, t) = \frac{1}{2\pi\I}\oint_{\gamma}\frac{\d w}{w^{x - n + 1}} \Bigl( \frac{w-1}{1 - w (\beta - p) / q} \Bigr)^{-n} (q + p w)^t,
\end{equation}
with any contour $\gamma$ enclosing only the poles at $0$ and $1$.
While Thm.~\ref{thm:main} cannot be applied to this model, because the function $F^{\gen}_{n}$ is not quite of the form \eqref{eq:F-main}, the more general biorthogonalization result which we prove below, Thm.~\ref{thm:kernel-rw}, does apply.

In order to use Thm.~\ref{thm:kernel-rw}, one first applies the scheme of Sec. \ref{sec:measures} to show that
\begin{equation}\label{eq:genTASEP}
\pp_{\xx^{\gen}_0} \bigl(\xx^{\gen}_{t}(n_i) > a_{i},\, i \in \set{m}\bigr) = \det \bigl(I-\bP_{a}  K^{\gen} \bP_{a} \bigr)_{\ell^2(\{n_1, \dotsc, n_m\} \times \zz)}
\end{equation}
with $K^{\gen}$ a kernel of the form \eqref{eq:K-schutz} with $Q$, $\Psi^n_k$ and $\Phi^n_k$ defined using $a(w)=1 - w (\beta - p) / q$ and $\psi(w) = (q + pw)^t$ and with the choice $\vec y=X^{\gen}_{0}$ (note that, in the setting of Sec.~\ref{sec:biorth}, we have in this case $\kappa=1$).
In order for this to work, the contour $\gamma$ in \eqref{eq:Fgen} needs to be a circle of radius greater than $1$ (just as in \eqref{eq:F-main}).
Since it cannot include any poles of the integrand other than $0$ and $1$, we need the parameters of the model to satisfy $|\beta - p| < q$.
This condition holds if and only if $\beta\in[(2p-1)\vee0,1)$, so at this point we add this assumption (it could be lifted by an analytic continuation argument as in Prop.~\ref{prop:caterpillars}).
Then Thm.~\ref{thm:kernel-rw} shows that $K^{\gen}$ has the form \eqref{eq:Kn-RW} with the same choices where $\theta\in(0,1)$ is arbitrary while the radius $r$ in \eqref{eq:defQintro}--\eqref{eq:defSNintro} can be taken arbitrarily in $(0,1)$ when $\beta\in[p,1)$, while when $\beta\in[(2p-1)\vee0,p)$ it has to be bounded above by $q/(p - \beta)$.

\subsection{Left Bernoulli jumps}
\label{sec:LB}

In this model, to go from time $t$ to time $t+1$ particles are updated sequentially from right to left as follows: each particle jumps to the left with probability $p \in (0,1)$ and stays put with probability $q = 1 - p$ independently, except that particle $k$ is forced to jump if particle $k-1$ arrives on top of it, so that the configuration of particles stays in $\Omega_N$.
In other words, when a particle jumps on top of another one, it pushes it and its whole cluster of nearest neighbors one step to the left.
The model is often referred to as \emph{discrete time PushTASEP}.
Note that, analogously to the dynamics for right Bernoulli jumps, a given particle at time $t$ is pushed by the location of its right neighbor at time $t+1$.
The evolution of the particle system $X^{\lB}_t \in \Omega_N$ can be written as $ X^{\lB}_{t+1}(1) = X^{\lB}_{t}(1) - \xi(t+1,1)$ and
\begin{equation}
X^{\lB}_{t+1}(k) = \min \{X^{\lB}_{t}(k)  - \xi(t+1,k), X^{\lB}_{t+1}(k-1) - 1\}, \qquad k = 2, \dotsc, N,
\end{equation}
where $\xi(t,k)$ are independent Ber[$p$] random variables. The transition probabilities of the model are (see Appdx.~\ref{sec:Ber-pushTASEP})
 \begin{equation}\label{eq:Ber-pushTASEP}
 	\pp (X^{\lB}_t = \vec x | X^{\lB}_0 = \vec y) = \det \bigl[F^{\lB}_{i - j}(x_{N + 1 - i} - y_{N + 1 - j}, t)\bigr]_{i, j \in \set{N}},
\end{equation}
where $\vec x, \vec y \in \Omega_N$, $t \in \nn_0$ and (here the contour $\gamma$ encloses $0$ and $1$)
\begin{equation}\label{eq:lBerFunction}
	F^{\lB}_{n}(x, t) = \frac{1}{2\pi \I} \oint_{\gamma} \d w\,\frac{(w-1)^{-n}}{w^{x - n +1}} \left(q+ {p \over w}\right)^t.
\end{equation}
This corresponds to the setting of Thm.~\ref{thm:main} with $\varphi(w)=q+ \frac{p}{w}$, and one can check that Assum.~\ref{a:phi} is satisfied as needed.
We will write the distribution function for this model in a more general setting in Prop.~\ref{prop:caterpillars_BerPush}.
To that end we will in fact rely directly on the formulas for $X^{\rB}_t$, as we explain next (alternatively, one can proceed as in Sec.~\ref{sec:RB_caterpillars} and then apply Thms.~\ref{thm:main} and \ref{thm:main2} directly to this model).

There is a simple coupling which relates $X^{\lB}_t$ with the model of discrete time TASEP with blocking introduced in the preceding subsection.
Starting from the configuration of particles $X^{\lB}_t$ at time $t$, decrease each value $X^{\lB}_t(k)$ by $1$ and after that perform one step of Ber[$q$] TASEP with jumps to the right, where the update of particles is from right to left.
This will give us a configuration $X^{\lB}_{t+1}$, which is distributed as the one obtained after one step of Ber[$p$] TASEP with pushing.
In other words, we have 
\begin{equation}\label{eq:FromBlockToPush}
\xx^{\lB}_t \distr \bar{\xx}^{\rB}_t - t,
\end{equation}
where $\bar{\xx}^{\rB}$ is TASEP with right Ber[$q$] jumps, blocking and sequential update.
This identity is easy to prove directly, and provides an alternative proof of \eqref{eq:Ber-pushTASEP} (or alternatively, it follows from \eqref{eq:Ber-pushTASEP}).

\subsubsection{Caterpillars}
\label{sec:LB_distribution}

Caterpillars of length $L = 1$ are given by the model described above. 
We now construct caterpillars of lengths $L \geq 2$ by coupling them to the system of caterpillars from Sec.~\ref{sec:RB_caterpillars}, using \eqref{eq:FromBlockToPush} and Lem.~\ref{lem:TASEP_and_caterpillars}.
For $L \geq 2$ let $\bar \xx_t \in \Omega^{\catp}_{N, L}$ be a copy of the system of caterpillars defined in Sec.~\ref{sec:RB_caterpillars}, with jumps to the right occurring with probability $q$ (instead of $p$). Then, using Lem.~\ref{lem:TASEP_and_caterpillars} and the relation \eqref{eq:FromBlockToPush}, we define caterpillars with left Bernoulli jumps as 
\begin{equation}\label{eq:lBerCaterpillars}
\xx^{i}_{t}(k) = \bar \xx^{i}_{t}(k) - t + (k-1) \kappa + i - 1,
\end{equation}
where $\kappa = L-1$.
From the definition, $\xx_t(k)$ now lives in the space of \emph{backward caterpillars} $\cK^{\la}_L=\big\{(\xx^1,\dotsc,\xx^L)\in\zz^L\!: \xx^{i+1}-\xx^{i}\in\{0,1\},\,i \in \set{L-1}\big\}$, whose heads are to the left of their tails, and the system of caterpillars takes values in the space
\begin{equation}\label{eq:catp-syst-sp-left}
\Omega^{\catpl}_{N,L}=\big\{\xx = (\xx(1), \dotsc, \xx(N))\in(\cK^{\la}_L)^N\!:\xx^1(k+1) < \xx^L(k),\,k \in \set{N-1}\big\}.
\end{equation}
Note that in this new space, a site can be occupied by more than one caterpillar (in fact the head and intermediate sections of the $k^{\text{th}}$ caterpillar, but not its tail, can be on top or to the left of the head of the $(k+1)^{\text{th}}$ one). Moreover, the definition \eqref{eq:lBerCaterpillars} implies that if the initial state $\bar \xx_{0}$ is in $\Omega_N(\kappa)$, then the initial state $\xx_0$ is given by $\xx^1_0(k) = y_k + (k-1) \kappa$ and $\xx^i_0(k) = \xx^1_0(k) + i - 1$ for all $i$ and $k$.
This means that at time $0$ the caterpillars are stretched horizontally while the vector of heads $\xx^\head_0$ lives in
\begin{equation}\label{eq:OmegaBar}
\bar{\Omega}_N = \{(x_1,\dotsc,x_N)\in\zz^N\!:x_1 \geq x_2 \geq \dotsm \geq x_N\}.
\end{equation}
With a little ambiguity, we will write in this case $\xx_0 \in \bar \Omega_N$.

By definition of $X_t$ and \eqref{eq:rB-rec} the heads of the caterpillars evolve according to the equations $\xx^{1}_{{t+1}}(1) = \xx^{1}_{t}(1) - \xi(t+1, 1)$ and 
\begin{equation}
	\xx^{1}_{{t+1}}(k) = \min \{\xx^{1}_{t}(k) - \xi(t + 1, k), \xx^{L}_{{t+1}}({k-1}) - 1\}, \qquad k = 2, \dotsc, N,
\end{equation}
where the $\xi(t,k)$'s are independent Ber[$p$] random variables.
After the head jumps, the other parts are updated according to  $\xx^{i}_{{t+1}}(k) = \xx^{i-1}_{t}(k)$ for $i = 2, \dotsc, L$. More precisely, the transition from time $t$ to time $t+1$ is given as follows, with the positions of the caterpillars being updated consecutively for $k \in\ \set{N}$ (i.e., from right to left):
\begin{itemize}[leftmargin=1.5em]
	\item If $\xx^{1}_{t}({k}) \leq \xx_{t+1}^{{L}}({k-1}) - 1$, then the head of the $k^\text{th}$ caterpillar makes a unit step to the left with probability $p$. 
	\item If $\xx_{t}^{1}({k}) = \xx_{t+1}^{{L}}({k-1})$, then the head of the $k^\text{th}$ caterpillar makes a deterministic unit step to the left. 
\end{itemize}
After that we set $\xx_{t+1}^{i}(k) = \xx_{t}^{{i-1}}(k)$ for $i = 2, \dotsc, L$.
In words, the heads jump as in TASEP with left Bernoulli jumps but are pushed to the left by the tail of the caterpillar to their right, while each of the remaining sections of each caterpillar follows the movement of the section to its right in the previous time step.
We can reformulate Prop.~\ref{prop:caterpillars} using the transformation \eqref{eq:lBerCaterpillars} to get a formula for the distribution function of the heads of the caterpillars in this model:

\begin{prop}\label{prop:caterpillars_BerPush}
Consider the system of caterpillars $\xx$ of length $L\geq 1$, and with the initial state $\xx(0)=\vec y$, such that $\vec y \in \Omega_N$ if $L =1$ and $\vec y\in\bar{\Omega}_N$ if $L \geq 2$.
Then for any $t\geq0$, any $1 \leq n_1 < \dotsm <  n_m \leq N$, and any $\vec a \in \rr^m$, we have
\begin{equation}
\pp \bigl(\xx^{\head}_t(n_i) > a_{i},\, i \in \set{m}\bigr) = \det \bigl(I-\bP_{a}  K^{\lB} \bP_{a} \bigr)_{\ell^2(\{n_1, \dotsc, n_m\} \times \zz)},
\end{equation}
where the kernel $K^{\lB}$ is given by \eqref{eq:kernel-main} defined using $\varphi(w) = q + \frac{p}{w}$ and $\kappa = L - 1$ and where, in the definition of the kernels \eqref{eq:defQintro}--\eqref{eq:defSNintro}, $\theta\in(0,1)$ is arbitrary while $r\in(0,1)$ is also arbitrary unless $\kappa\geq1$, $p\leq1/2$ and $t<\kappa(N-1)$, in which case $r$ has to be bounded above by $p/q$.
The random walk used to define \eqref{eq:SMepi} in this case has transition matrix $Q(x,y)=\frac{1-\theta}{(q+p/\theta)^\kappa}\theta^{x-y-1}q_{x-y}\uno{x>y-\kappa}$ with $q_i=1-\sum_{j=i+\kappa}^\kappa\binom{\kappa}{j}p^jq^{\kappa-j}$ ($q_i=1$ for $i>0$). 
\end{prop}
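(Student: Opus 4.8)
The plan is to deduce the result from Prop.~\ref{prop:caterpillars} by means of the coupling that \emph{defines} the left-Bernoulli caterpillars --- namely \eqref{eq:lBerCaterpillars} together with \eqref{eq:FromBlockToPush} --- and then to turn the resulting Fredholm determinant formula into the stated one by a purely deterministic manipulation of the kernel. Throughout set $\kappa=L-1$.

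\emph{Step 1: reduction to Prop.~\ref{prop:caterpillars}.} Let $\bar\xx$ denote the system of right-Bernoulli caterpillars of length $L$ but with jump probability $q$ in place of $p$; in the notation of Prop.~\ref{prop:caterpillars} this corresponds to the function $\bar\varphi(w)=p+qw$. By \eqref{eq:lBerCaterpillars} the heads satisfy $\xx^{\head}_t(k)=\xx^1_t(k)=\bar\xx^{\head}_t(k)-t+(k-1)\kappa$, so
\[\pp\bigl(\xx^{\head}_t(n_i)>a_i,\ i\in\set{m}\bigr)=\pp\bigl(\bar\xx^{\head}_t(n_i)>b_i,\ i\in\set{m}\bigr),\qquad b_i:=a_i+t-(n_i-1)\kappa.\]
Again by \eqref{eq:lBerCaterpillars}, the initial state $\vec y\in\bar\Omega_N$ of $\xx$ (resp. $\vec y\in\Omega_N$ when $L=1$) corresponds to the initial state $\bar{\vec y}$ of $\bar\xx$ with $\bar y_k=y_k-(k-1)\kappa$, and the weak ordering of $\vec y$ gives $\bar y_{k-1}-\bar y_k\geq\kappa=\kappa\vee1$ (using $\kappa\geq1$), i.e. $\bar{\vec y}\in\Omega_N(\kappa)$; for $L=1$, $\kappa=0$ and $\bar{\vec y}=\vec y\in\Omega_N=\Omega_N(0)$. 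Hence Prop.~\ref{prop:caterpillars}, applied to $\bar\xx$, gives for every $t\geq0$
\[\pp\bigl(\bar\xx^{\head}_t(n_i)>b_i,\ i\in\set{m}\bigr)=\det\bigl(I-\bP_b\,\bar K^{\rB}\,\bP_b\bigr)_{\ell^2(\{n_1,\dots,n_m\}\times\zz)},\]
where $\bar K^{\rB}$ is the kernel \eqref{eq:kernel-main} built from $\bar\varphi$ and $\kappa$, with $\bar\theta,\bar r\in(0,1)$ arbitrary except that, when $\kappa\geq1$, $\bar p=q\geq\tfrac12$ (i.e. $p\leq\tfrac12$) and $t<\kappa(n_m-1)$, one needs $\bar r<\bar q/\bar p=p/q$; this is the restriction recorded in the proposition (in the slightly conservative form $t<\kappa(N-1)$, which is harmless since $n_m\leq N$).

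\emph{Step 2: absorbing the shift into the kernel.} Conjugating $\bP_b\bar K^{\rB}\bP_b$ by the unitary $V$ of $\ell^2(\{n_1,\dots,n_m\}\times\zz)$ acting by $(Vf)(n_j,z)=f(n_j,z-c_j)$ with $c_j=t-(n_j-1)\kappa$ turns $\bP_b$ into $\bP_a$ (since $b_j-c_j=a_j$) and leaves the Fredholm determinant unchanged, so it remains to identify $V^{-1}\bar K^{\rB}V$, i.e.
\[\widetilde K(n_i,z_1;n_j,z_2)=\bar K^{\rB}\bigl(n_i,\,z_1+c_i;\ n_j,\,z_2+c_j\bigr),\]
with $K^{\lB}$. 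This is a direct computation on the contour integrals \eqref{eq:defQintro}--\eqref{eq:defSNintro} and on the expectation \eqref{eq:SMepi}, resting on the elementary identities $\varphi^{\lB}(w)=q+p/w=\bar\varphi(w)/w$ (hence also $\varphi^{\lB}(1-w)=\bar\varphi(1-w)/(1-w)$) and $\bar\alpha\,\bar\theta^\kappa=\alpha^{\lB}$ when $\bar\theta=\theta$. Extracting from each shift the appropriate power of $w$ converts every factor $\bar\varphi(w)^m$ into $\varphi^{\lB}(w)^m$ up to a power of $w$, and a short bookkeeping shows that these powers of $w$ together with the powers of $\theta$ created by the shift recombine exactly: (i) the term $-\bar Q^{n_j-n_i}(z_1+c_i,z_2+c_j)\uno{n_i<n_j}$ becomes $-Q^{n_j-n_i}(z_1,z_2)\uno{n_i<n_j}$, where the index-dependent part of the shift (which contributes $c_i-c_j=(n_j-n_i)\kappa$) tilts the argument of $\bar Q$ by $\kappa$ per step, turning the left-moving walk with transition $\bar Q$ into the walk $B$ with transition $Q$ (which may now move $\kappa-1$ steps to the right, with weights $q_{x-y}$ read off from $\varphi^{\lB}(w)^\kappa=\sum_{i\leq\kappa}(q_{i+1}-q_i)w^i$ exactly as in Sec.~\ref{sec:main}); and (ii) in the second term, a factor $\theta^t$ coming from the $\SM$-factor and a factor $\theta^{-t}$ coming from the $\SN$-factor of $\bar K^{\rB}$ cancel, while the same tilt $B_m\mapsto B_m$ (obtained by adding $m\kappa$ to the walk for $\bar\xx$) identifies the hitting of $\epi(\bar{\vec y})$, $\bar y_{m+1}=y_{m+1}-m\kappa$, with the hitting of $\epi(\vec y)$ by $B$, so that the $\epi(\bar{\vec y})$-version of $\SN$ for $\bar\xx$ becomes $\SN^{\epi(\vec y)}_{-t,n_j}$ for the new walk. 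Carrying this through yields $\widetilde K=K^{\lB}$ with $\bar\theta=\theta$ and $\bar r=r$, so the restriction on $r$ is precisely the one inherited in Step 1.

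\emph{Main obstacle.} The delicate point is the bookkeeping in Step 2: tracking all the powers of $w$ and $\theta$ produced by the two index-dependent shifts, checking that they cancel exactly (in particular that the $\theta^{\pm t}$ arising in the $(\SM)^*$ and $\SN$ factors cancel independently of the random hitting time), and verifying that the combined spatial tilt restores genuine translation invariance of the walk in \eqref{eq:SMepi} with precisely the transition matrix $Q$ --- including its support condition $\uno{x>y-\kappa}$ and the weights $q_{x-y}$ --- displayed in the statement. The case $L=1$ ($\kappa=0$) is the degenerate instance of this computation: the tilt disappears, only the constant shift by $t$ survives, and the argument collapses to \eqref{eq:FromBlockToPush} combined with Thm.~\ref{thm:main}.
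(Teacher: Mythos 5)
Your proof is correct and takes essentially the same route as the paper, which obtains this proposition precisely by reformulating Prop.~\ref{prop:caterpillars} (applied to right-Bernoulli caterpillars with parameter $q$) through the coupling \eqref{eq:lBerCaterpillars}/\eqref{eq:FromBlockToPush}; your Step 2 bookkeeping is sound — the $\theta^{\pm t}$ factors indeed cancel (using translation invariance of $\SN_{-t,n}$ so the random $\tau$ drops out), and the tilt $B_m\mapsto \bar B_m+m\kappa$ converts the $\bar\varphi$-walk into the walk with kernel built from $\varphi^{\lB}(w)=q+p/w$. One useful byproduct of your computation: the weights you read off from $\varphi^{\lB}(w)^\kappa$, namely $q_i=1-\sum_{j=0}^{-i}\binom{\kappa}{j}p^jq^{\kappa-j}$ (equivalently $1-\sum_{j=i+\kappa}^{\kappa}\binom{\kappa}{j}q^jp^{\kappa-j}$), are the correct ones — e.g. they normalize $Q$ and match the residue computation for $\kappa=1$ — which indicates that the formula displayed in the statement has $p$ and $q$ transposed.
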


\subsubsection{Caterpillars of length $L=2$}

Let us write $X^{2\uptext{-}\lB}_t = \xx^\head_t$.
Then we have $\xx^2_t(k) = \xx^{2\uptext{-}\lB}_{t-1}(k)$ and the discussion in Sec.~\ref{sec:LB_distribution} implies that $\xx^{2\uptext{-}\lB}_t \in \bar \Omega_N$.
Moreover, the evolution is given by $X^{2\uptext{-}\lB}_{t+1}(1) = X^{2\uptext{-}\lB}_{t}(1) - \xi(t+1,1)$ and
\[X^{2\uptext{-}\lB}_{t+1}(k) = \min \{X^{2\uptext{-}\lB}_{t}(k)  - \xi(t + 1, k), X^{2\uptext{-}\lB}_{t}(k-1) - 1\},\quad k = N, \dotsc, 2,\]
where $\xi(t + 1, k)$ are independent Ber[$p$] random variables.
In other words, $\xx^{2\uptext{-}\lB}_t$ evolves as follows.
Particles are updated from left to right, and the update of $X^{2\uptext{-}\lB}_t(k)$ is as follows:
\begin{itemize}[leftmargin=1.5em]
	\item If $X^{2\uptext{-}\lB}_t(k) < X^{2\uptext{-}\lB}_t(k-1)$, then $X^{2\uptext{-}\lB}_t(k)$ makes a Ber[$p$] jump to the left,
	\item If $X^{2\uptext{-}\lB}_t(k) = X^{2\uptext{-}\lB}_t(k-1)$, then $X^{2\uptext{-}\lB}_t(k)$ makes a deterministic jump to the left.
\end{itemize}
This is the left Bernoulli analog of TASEP with right Bernoulli jumps and parallel update: when going from time $t$ to time $t+1$, particle $k$ is pushed by the location of particle $k-1$ at time $t$ (instead of time $t+1$ as in the sequential case).
We can write an explicit formula for the transition probabilities for this model (a formula for the multi-point distributions was given already in Prop.~\ref{prop:caterpillars_BerPush}):

\begin{lem}\label{lem:Ber_Left_L2}
The transition probabilities of the process $\xx^{2\uptext{-}\lB}$ introduced above are
\begin{equation}\label{eq:Ber_Left_L2}
 \pp (X^{2\uptext{-}\lB}_t = \vec x | X^{2\uptext{-}\lB}_0 = \vec y) = p^{\bar \CN(\vec x)} \det \bigl[F^{\lB}_{i - j}(x_{N + 1 - i} - y_{N + 1 - j}, t + i - j)\bigr]_{i, j \in \set{N}},
\end{equation}
where $t \in \nn_0$, $\vec y, \vec x \in \bar \Omega_N$, $\bar \CN(\vec x) = \# \{ 2 \leq i \leq N : x_{i-1} = x_i\}$, and the function \eqref{eq:lBerFunction} with a contour $\gamma$ which includes $0$ and $1$ but not the singularity at $-p/q$.
\end{lem}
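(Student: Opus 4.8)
The plan is to reduce the statement to the known transition-probability formula for discrete time TASEP with right Bernoulli jumps, blocking and parallel update (Lem.~\ref{lem:Ber-TASEP_prll}), using the coupling \eqref{eq:lBerCaterpillars} that builds the left Bernoulli caterpillars of length $L=2$ out of the right Bernoulli ones run with jump probability $q$.

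\emph{Step 1: reduction to a right-jump model.} By definition $X^{2\uptext{-}\lB}_t=\xx^{\head}_t$ is the vector of heads of the system of left Bernoulli caterpillars of length $L=2$, so \eqref{eq:lBerCaterpillars} with $\kappa=1$ gives, pathwise, $X^{2\uptext{-}\lB}_t(k)=\bar\xx^1_t(k)-t+(k-1)$ for $k\in\set N$, where $\bar\xx$ is the right Bernoulli caterpillar system of length $2$ with jump probability $q$; its heads $\bar X^{\prll}_t:=(\bar\xx^1_t(k))_{k\in\set N}$ evolve (Sec.~\ref{sec:Ber_seq}) as discrete time TASEP with right Ber$[q]$ jumps, blocking and parallel update. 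Setting $y'_k=y_k-(k-1)$ and $x'_k=x_k+t-(k-1)$, I would then obtain
\begin{equation*}
\pp\bigl(X^{2\uptext{-}\lB}_t=\vec x\,\big|\,X^{2\uptext{-}\lB}_0=\vec y\bigr)=\pp\bigl(\bar X^{\prll}_t=\vec x'\,\big|\,\bar X^{\prll}_0=\vec y'\bigr).
\end{equation*}
Since $x'_{k-1}-x'_k=x_{k-1}-x_k+1\ge1$ and $y'_{k-1}-y'_k=y_{k-1}-y_k+1\ge1$ for $\vec x,\vec y\in\bar\Omega_N$, we have $\vec x',\vec y'\in\Omega_N(1)\subseteq\Omega_N$, so Lem.~\ref{lem:Ber-TASEP_prll} applies to the right-hand side, with the roles of $p$ and $q$ interchanged.

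\emph{Step 2: matching the prefactor and the kernel.} Lem.~\ref{lem:Ber-TASEP_prll} with $p\leftrightarrow q$ yields the prefactor $p^{\CN(\vec x')}$; as $x'_{k-1}-x'_k=1$ exactly when $x_{k-1}=x_k$, one has $\CN(\vec x')=\bar\CN(\vec x)$, which is the claimed $p^{\bar\CN(\vec x)}$. For the determinant, write $\bar F^{\rB}$ for \eqref{eq:rBerBlockF} with $p\leftrightarrow q$; a direct computation gives $x'_{N+1-i}-y'_{N+1-j}=(x_{N+1-i}-y_{N+1-j})+(t+i-j)$, so the $(i,j)$ entry produced by Lem.~\ref{lem:Ber-TASEP_prll} is $\bar F^{\rB}_{i-j}\bigl((x_{N+1-i}-y_{N+1-j})+(t+i-j),\,t+i-j\bigr)$. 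It then remains to check the elementary identity, valid for all $n,s\in\zz$,
\begin{equation*}
F^{\lB}_n(x,s)=\frac1{2\pi\I}\oint_\gamma\d w\,\frac{(w-1)^{-n}}{w^{x+s-n+1}}\,(p+qw)^s=\bar F^{\rB}_n(x+s,s),
\end{equation*}
which is immediate from $(q+p/w)^s=w^{-s}(p+qw)^s$, the key point being that the contour used in \eqref{eq:lBerFunction} for $F^{\lB}$ (enclosing $0$ and $1$ but not $-p/q$) is exactly the one required for $\bar F^{\rB}$, since $(p+qw)^s$ has a pole at $-p/q$ only when $s<0$. Applying this with $x=x_{N+1-i}-y_{N+1-j}$, $s=t+i-j$, $n=i-j$ turns the $(i,j)$ entry into $F^{\lB}_{i-j}(x_{N+1-i}-y_{N+1-j},t+i-j)$, giving \eqref{eq:Ber_Left_L2}.

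\emph{Main obstacle.} There is no essential difficulty, only bookkeeping: one must check that the two linear shifts from the coupling (the $-t$ and the $+(k-1)$), together with the internal shift of the time index by $i-j$ inside Lem.~\ref{lem:Ber-TASEP_prll}, conspire to produce \emph{the same} combined shift $t+i-j$ in both the spatial and the temporal argument of $F^{\lB}$, and one must track which singularity (it becomes $-p/q$ rather than $-q/p$ after the $p\leftrightarrow q$ swap) the contour must avoid. A self-contained alternative would be to verify directly, in the style of Sch\"{u}tz, that the right-hand side of \eqref{eq:Ber_Left_L2} solves the Kolmogorov forward equation for $X^{2\uptext{-}\lB}$ with the correct boundary conditions on $\bar\Omega_N$ and the correct initial condition, but this is longer, and the coupling argument above is the natural one given the structure of the paper.
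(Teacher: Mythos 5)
Your proposal is correct and follows essentially the same route as the paper: both use the coupling \eqref{eq:lBerCaterpillars} to identify $X^{2\uptext{-}\lB}_t(k)=\bar X^{\prll}_t(k)-t+k-1$ with $\bar X^{\prll}$ the parallel-update right Ber$[q]$ TASEP, apply Lem.~\ref{lem:Ber-TASEP_prll} with $p$ and $q$ swapped, and match prefactor and determinant entries via $F^{\lB}_n(x,s)=\bar F^{\rB}_n(x+s,s)$. Your explicit bookkeeping (the shift computation, $\CN(\vec x')=\bar\CN(\vec x)$, and the contour avoiding $-p/q$) is exactly what the paper's terser final sentence leaves implicit.
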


\begin{proof}
Using the definition \eqref{eq:lBerCaterpillars}, we get the identity $X_t(k) = \bar{X}_t(k) - t + k -1$, where $\bar{X}_t$ is the parallel TASEP with right Ber[$q$] jumps. Then Lem.~\ref{lem:Ber-TASEP_prll} allows to write the probability \eqref{eq:Ber_Left_L2} as 
\begin{align}
 \pp &(\bar X_t(k) = x_k + t - k + 1, k \in \set{N} | \bar X_0(k) = y_k  - k + 1, k \in \set{N}) \\
 &= p^{\CN(x_k - k, k \in \set{N})} \det \bigl[\bar F^{\rB}_{i - j}(x_{N + 1 - i} - y_{N + 1 - j} + t + i - j, t + i - j)\bigr]_{i, j \in \set{N}},
\end{align}
where $\CN$ is defined in Lem.~\ref{lem:Ber-TASEP_prll}, and where the function $\bar F^{\rB}_{n}$ is given by \eqref{eq:rBerBlockF} with $p$ and $q$ swapped.
The last identity is exactly \eqref{eq:Ber_Left_L2}.
\end{proof}

\subsection{Left geometric jumps} 
\label{sec:LG}

In this model, \emph{discrete time TASEP with left geometric jumps and pushing} (or \emph{geometric PushTASEP}), to go from time $t$ to time $t+1$ particles are updated sequentially from right to left as follows: each particle makes a jump to the left with distribution Geom[$p$] pushing to the left all particles on its way, so that the configuration of particles stays in $\Omega_N$.
As in the previous two cases, the update rule is sequential: when going from time $t$ to time $t+1$, a particle is pushed by the location of its right neighbor at time $t+1$.
However, and in contrast to the push dynamics in the left Bernoulli model from Sec. \ref{sec:LB}, if a particle is pushed by its right neighbor, it still gets to make its own geometric jump after that (see also Sec.~\ref{sec:previous}).
The evolution of particles $\xx^{\lG}_t \in \Omega_N$ satisfies
$X^{\lG}_{t+1}(1) = X^{\lG}_{t}(1) - \xi(t+1,1)$ and
\begin{equation}\label{eq:ProbGeomPush_L1}
X^{\lG}_{t+1}(k) = \min \{X^{\lG}_{t}(k), X^{\lG}_{t+1}(k-1) - 1\} - \xi(t+1,k), \quad k = 2, \dotsc, N,
\end{equation}
where  $\xi(t,k)$ are independent Geom[$p$] random variables.
The transition probabilities of the model are (see Appdx.~\ref{sec:ProbGeomPush})
\begin{equation}\label{eq:ProbGeomPush}
 	\pp (X^{\lG}_t = \vec x | X^{\lG}_0 = \vec y) = \det \bigl[F^{\lG}_{i - j}(x_{N + 1 - i} - y_{N + 1 - j}, t)\bigr]_{i, j \in \set{N}},
\end{equation}
where $\vec x, \vec y \in \Omega_N$, $t \in \nn_0$ and
\begin{equation}\label{eq:F_GeomPush_new}
 	F^{\lG}_{n}(x, t) = \frac{1}{2\pi \I} \oint_{\gamma} \d w\,\frac{(w-1)^{-n}}{w^{x - n +1}} \left(\frac{p}{1- q / w}\right)^t,
\end{equation}
where the contour encloses $0$, $q$ and $1$. 
This is the setting of Thm.~\ref{thm:main} with $\varphi(w) = \frac{p}{1- q / w}$, for which Assum.~\ref{a:phi} clearly holds.
We write the corresponding distribution function in greater generality below.

This model can be used to obtain explicit formulas for TASEP, which scale to formulas for the KPZ fixed point, with some special choices of random initial data, see Sec.~\ref{sec:KPZfixedpt-random}.

\subsubsection{Caterpillars}

By analogy with Lem.~\ref{lem:TASEP_and_caterpillars} we define caterpillars in this case through $\xx^i_t(k) = \xx^{\lG}_{t - (k-1)\kappa-i+1}(k)$, with $\kappa = L -1$. Each caterpillar now lives in the space of \emph{stretched backward caterpillars} $\bar\cK^{\la}_L=\big\{(\xx^1,\dotsc,\xx^L)\in\zz^L\!: \xx^{i+1}-\xx^{i}\in\nn_0,\,i \in \set{L-1}\big\}$ (two segments of a given caterpillars can be at any distance from each other), and the whole system takes values in the space
\[\bar\Omega^{\catpl}_{N,L}=\big\{X=(X(1),\dotsc,X(N))\in(\bar\cK^{\la})^N\!:X^1(k+1)<X^L(k),\,k\in\set{N-1}\big\}\]
(as for the left Bernoulli case, caterpillars may overlap in this system but the tail of every caterpillar always has to be to the right of the head of its left neighbor).
The evolution of this system follows from the definition of $\xx^i_t(k)$ (and the dynamics \eqref{eq:ProbGeomPush_L1} of $\xx^{\lG}_t$); the transition from time $t$ to time $t+1$ is given as follows, with the positions of the caterpillars being updated consecutively for $k \in\ \set{N}$ (i.e., from right to left):
\begin{itemize}[leftmargin=1.5em]
 \item If $\xx_t^{1}(k) \geq \xx_{t+1}^{L}(k-1)$, then the head of the $k^\text{th}$ caterpillar moves to $\xx_{t+1}^{L}(k-1) - 1$.
 \item Next the head of the $k^\text{th}$ caterpillar makes a jump to the left with distribution Geom[$p$].
 \item Finally we set $\xx^i_{t+1}(k) = \xx^{i-1}_{t}(k)$ for $i = 2, \dotsc, L$.
\end{itemize}
We can now compute the distribution functions using Thm.~\ref{thm:main2}:

\begin{prop}\label{prop:LG_distribution}
Consider the system of caterpillars $\xx$ of length $L\geq 1$, with an initial state $\xx(0) = \vec y \in \Omega_N$.
Then for any $1 \leq n_1 < \dotsm <  n_m \leq N$, any $t \geq \kappa(n_m-1)$, and any $\vec a \in \rr^m$, we have
\begin{equation}\label{eq:LGheads}
\pp \bigl(\xx^{\head}_t(n_i) > a_{i}, i \in \set{m}\bigr) = \det \bigl(I-\bP_{a}  K^{\lG} \bP_{a} \bigr)_{\ell^2(\{n_1, \dotsc, n_m\} \times \zz)},
\end{equation}
where the kernel $K^{\lG}$ is given by \eqref{eq:kernel-main} with $\varphi(w) = \frac{p}{1- q / w}$, $\kappa = L - 1$, and any $r,\theta\in(q,1)$.
The random walk used to define \eqref{eq:SMepi} in this case has transition matrix $Q(x,y)=(1-\theta)(\frac{p}{1- q / \theta})^{-\kappa}\theta^{x-y-1}q_i$ with $q_i=1$ for $i>0$ and $q_i=1 - p^\kappa \sum_{j = 0}^{-i} {j + \kappa - 1 \choose \kappa - 1} q^j$ otherwise.
\end{prop}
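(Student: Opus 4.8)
The plan is to follow the route of the proof of Prop.~\ref{prop:caterpillars}, with the right Bernoulli weight replaced by the one attached to geometric pushing. By the construction of the caterpillar system in this subsection (the exact analog of Lem.~\ref{lem:TASEP_and_caterpillars}), the heads satisfy $\xx^{\head}_t(k)=\xx^1_t(k)=\xx^{\lG}_{t-(k-1)\kappa}(k)$ identically, where $\xx^{\lG}$ is geometric PushTASEP started from $\vec y\in\Omega_N$ with the $k^{\text{th}}$ particle held fixed until time $-(k-1)\kappa$, i.e.\ conditioned on the event $\CE_\kappa$ of \eqref{eq:E-event}. Hence the left hand side of \eqref{eq:LGheads} equals the left hand side of \eqref{eq:probability-main} for this process, and it remains only to check that $\varphi(w)=\tfrac{p}{1-q/w}$ satisfies Assums.~\ref{a:phi} and \ref{a:kappa} with $\kappa=L-1$ and to read off the objects appearing in the kernel. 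Since we claim the formula only in the range $t\geq\kappa(n_m-1)$, Thm.~\ref{thm:main2} applies directly and — in contrast to Prop.~\ref{prop:caterpillars} — no extension beyond its basic range, and in particular no analytic continuation in a parameter, is needed. (Note also that the pole of $\varphi$ at $w=q<1$ is harmless here: the defining contour $\gamma_{\rhoout}$ of \eqref{eq:F-main} is a circle of radius $>1$, so it automatically encloses $q$, exactly as prescribed for $F^{\lG}$ in \eqref{eq:F_GeomPush_new}.)

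Next I would dispose of the hypotheses. Assum.~\ref{a:phi} is immediate: $\varphi(w)=\tfrac{pw}{w-q}$ is rational with its unique pole at $w=q<1$ and its unique zero at $w=0$, hence analytic and nonvanishing on any annulus $A_{\rhoin,\rhoout}$ with $q<\rhoin<1<\rhoout$, and $\varphi(1)=p/(1-q)=1$; this also shows the parameter $\rho$ of Assum.~\ref{a:phi} may be chosen anywhere in $(q,1)$, which is what will give the stated freedom $r,\theta\in(q,1)$. For Assum.~\ref{a:kappa}\ref{it:phi}, expanding in the region $|w|>q$ gives $\varphi(w)=p\sum_{j\geq0}q^jw^{-j}=\sum_{i\leq0}pq^{-i}w^i$, which exhibits $\varphi$ as the generating function of a nonnegative measure supported on $\{i\leq0\}\subseteq\{i\leq1\}$ of total mass $p\sum_{j\geq0}q^j=1$; moreover $\varphi$ is analytic on $\{|w|\geq\rhoin\}$ since $\varphi(\infty)=p$ and the only pole sits at $q<\rhoin$. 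The remaining requirement, Assum.~\ref{a:kappa}\ref{it:one} — the determinantal form \eqref{eq:second-property} of the transition probabilities with staggered starting times — is the one genuinely nontrivial input, and I would establish it separately in Appdx.~\ref{sec:rGeometric-proof}. This is the step I expect to be the main obstacle, since \eqref{eq:second-property} is model specific and, as remarked in the text, does not follow from \eqref{eq:G-main} for a general $\varphi$.

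Finally I would identify the random walk $B$ whose transition matrix is $Q$. By \eqref{eq:Qalphaq-intro}, $Q(x,y)=\alpha\,\theta^{x-y}q_{x-y}$ with $\alpha=(1-\theta)\theta^{-1}\varphi(\theta)^{-\kappa}=(1-\theta)\theta^{-1}\bigl(\tfrac{p}{1-q/\theta}\bigr)^{-\kappa}$, which already produces the prefactor $(1-\theta)\bigl(\tfrac{p}{1-q/\theta}\bigr)^{-\kappa}\theta^{x-y-1}$ in the statement. The $q_i$ are pinned down by $q_i=1$ for $i>\kappa$ together with $\varphi(w)^\kappa=\sum_{i\leq\kappa}(q_{i+1}-q_i)w^i$; raising the expansion above to the $\kappa$-th power,
\[\varphi(w)^\kappa=p^\kappa\Bigl(\sum_{j\geq0}q^jw^{-j}\Bigr)^{\kappa}=p^\kappa\sum_{n\geq0}\binom{n+\kappa-1}{\kappa-1}q^nw^{-n},\]
so the coefficient of $w^i$ is $0$ for $1\leq i\leq\kappa$ (forcing $q_i=1$ for all $i\geq1$) and equals $p^\kappa\binom{-i+\kappa-1}{\kappa-1}q^{-i}$ for $i\leq0$; telescoping from $q_1=1$ then gives $q_i=1-p^\kappa\sum_{j=0}^{-i}\binom{j+\kappa-1}{\kappa-1}q^j$, exactly the formula claimed. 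One checks these $q_i$ are nonnegative — they decrease to $0$ as $i\to-\infty$ since $\sum_{j\geq0}\binom{j+\kappa-1}{\kappa-1}q^j=(1-q)^{-\kappa}=p^{-\kappa}$ — so $Q$ is a genuine transition matrix, consistently with the general statement proved in Sec.~\ref{sec:setting}. The only remaining bookkeeping is that the contour radius $\delta$ in \eqref{eq:defSNintro} can be taken small: in the range $t\geq\kappa(n_m-1)$ the exponent $\kappa(n-1)-t$ is nonpositive, and $\varphi(1-w)=\tfrac{p(1-w)}{p-w}$ has neither a zero ($w=1$) nor a pole ($w=p$) near $w=0$, so any $\delta<\min(p,1)$ works. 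Assembling these pieces in Thm.~\ref{thm:main2} yields \eqref{eq:LGheads}.
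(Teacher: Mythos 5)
Your argument is correct and is essentially the paper's proof: one checks Assum.~\ref{a:phi} and Assum.~\ref{a:kappa}\ref{it:phi} for $\varphi(w)=p/(1-q/w)$ exactly as you do, treats the staggered-start determinantal formula \eqref{eq:second-property} as the single model-specific input to be verified separately, and then applies Thm.~\ref{thm:main2} directly in the range $t\geq\kappa(n_m-1)$ (no analytic continuation), with your computation of $\alpha$ and the $q_i$'s reproducing the stated transition matrix $Q$. The only slip is the pointer for the deferred step: the paper verifies \eqref{eq:second-property} for this model by adapting the right Bernoulli argument of Appdx.~\ref{sec:rightBernoulli-assumptions} (with details omitted, for initial data in $\bar\Omega_N$), not Appdx.~\ref{sec:rGeometric-proof}, which concerns right geometric jumps, a model for which Assum.~\ref{a:kappa} actually fails.
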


\begin{proof}
The function $\varphi$ as chosen in this case clearly satisfies Assum.~\ref{a:phi} and Assum.~\ref{a:kappa}(b) with any $q<\rhoin<1<\rhoout$.
On the other hand, Assum.~\ref{a:kappa}(b) is satisfied for this model for any $\kappa\geq 1$ if the initial states $\vec y$ take values in $\bar \Omega_N$ (defined in \eqref{eq:OmegaBar}); the proof of this fact is similar to the one for the right Bernoulli case (see Appdx.~\ref{sec:rightBernoulli-assumptions}), so we omit the details.
In view of this, the result follows from Thm.~\ref{thm:main2}.
\end{proof}

For this model, condition \eqref{eq:backward-in-time-one} seems not to hold, and in fact it appears to be the case that \eqref{eq:LGheads} does not hold in general for $t<\kappa(n_m-1)$.

\subsubsection{Caterpillars of length $L=2$} 
In the case $L=2$ we set $X^{2\uptext{-}\lG}_t = \xx^\head_t$. The dynamics of $X^{2\uptext{-}\lG}_t$ can be described using the general caterpillars' dynamics. We can also derive equations similar to \eqref{eq:ProbGeomPush_L1}. We have $\xx^2_t(k) = \xx^{2\uptext{-}\lG}_{t-1}(k)$ and \eqref{eq:ProbGeomPush_L1} gives the evolution $$X^{2\uptext{-}\lG}_{t+1}(k) = \min \{X^{2\uptext{-}\lG}_{t}(k), X^{2\uptext{-}\lG}_{t}(k-1) - 1\} - \xi(t + 1, k)$$ for $k = N, \dotsc, 2$, and $X^{2\uptext{-}\lG}_{t+1}(1) = X^{2\uptext{-}\lG}_{t}(1) - \xi(t+1,1)$, where $\xi(t + 1, k)$ are independent Geom[$p$] random variables.
The process is again Markovian, and gives a parallel update version of geometric PushTASEP.
In words, particles are updated from left to right; when $X^{2\uptext{-}\lG}_{t}(k)$ is updated, first, if it is located on or to the right of $X^{2\uptext{-}\lG}_{t}(k-1)$, then it is pushed to $X^{2\uptext{-}\lG}_{t}(k-1) - 1$; after that $X^{2\uptext{-}\lG}_t(k)$ makes a Geom[$p$] jump to the left.
Note that at a fixed time $t$ the particles may not be ordered.

\subsection{Right geometric jumps}
\label{sec:rightGeometric}

\subsubsection{Parallel update}
\label{sec:rightGeometric-prll}

In this model, to go from time $t$ to time $t+1$ particles are updated sequentially from left to right (i.e., starting with the particle with label $N$) as follows: the $k^{\text{th}}$ particle $X^{\rG}_t(k)$ tries to make a Geom[$p$] jump to the right but if the destination site is bigger than or equal to $X^{\rG}_t(k-1)$, then it arrives at $X^{\rG}_t(k-1) - 1$.
Note that, in contrast with the previous three cases, updates in this model, \emph{TASEP with right geometric jumps and blocking} occur in \emph{parallel}, just as in the model from Sec.~\ref{sec:Ber_seq} (even though, as we will see, this model corresponds to $\kappa=0$, which in the above setting would correspond to caterpillars of length $L=1$), see also Sec.~\ref{sec:previous}.
The evolution of the particles $X^{\rG}_t \in \Omega_N$ is described by the equations $X^{\rG}_{t+1}(1) = X^{\rG}_{t}(1) + \xi(t+1,1)$ and
\begin{equation}\label{eq:RG-evolution}
X^{\rG}_{t+1}(k) = \min \{X^{\rG}_{t}(k) + \xi(t+1,k), X^{\rG}_t(k-1) - 1\},\qquad k = N, \dotsc, 2,
\end{equation}
with $\xi(t,k)$ i.i.d. Geom[$p$] random variables on $\{0,1,\dotsc\}$. The transition probabilities of $X^{\rG}_t$ are (see Appdx.~\ref{sec:GeomPush})
\begin{equation}\label{eq:rightGeometric}
\pp (X^{\rG}_t = \vec x | X^{\rG}_0 = \vec y) = \det \bigl[F^{\rG}_{i - j}(x_{N + 1 - i} - y_{N + 1 - j}, t)\bigr]_{i, j \in \set{N}},
\end{equation}
where $\vec x, \vec y \in \Omega_N$, $t \in \nn_0$, and 
\begin{equation}\label{eq:rightGeometric_F}
	F^{\rG}_{n}(x, t) = \frac{1}{2\pi\I}\oint_{\gamma}\!\d w\, \frac{(w - 1)^{-n}}{w^{x - n +1}} \left(\frac{p}{1-q w}\right)^t,
\end{equation}
where the contour includes $0$ and $1$, but does not include $1/q$. Assum.~\ref{a:phi} is satisfied with $\varphi(w) = \frac{p}{1- q w}$, which allows one to apply Thm.~\ref{thm:main}.
Assum.~\ref{a:kappa}, however, is not satisfied for this model.
The problem is that particles can jump to a location arbitrarily far to the right, so a particle starting at a given time needs to be aware of the particles to its right even if they have not started to move.
This is related to the fact that in this model updates occur from left to right, and means that the caterpillar construction of the above sections does not work in this case.

\subsubsection{Sequential update}

Instead, in order to have a determinantal formula of the type \eqref{eq:second-property} for the transition probability of particles with different starting times, we consider now a situation with particle $i$ starting at time $i - N$, $1 \leq i \leq N$.
Lem.~\ref{lem:RG-assumption} below proves the analog of \eqref{eq:second-property} in this situation, and by analogy with Lem.~\ref{lem:TASEP_and_caterpillars} we define 
\[X^{2\uptext{-}\rG}_t(k) = \xx^{\rG}_{t + k-1}(k).\]
Note that the analogy cannot be pushed any further: for $L=\kappa+1\geq3$ the definition $\xx^i_t(k) = \xx^{\rG}_{t + (k-1)\kappa-i+1}(k)$, $i\in\set{L}$, does not have a physical meaning, because a ``caterpillar'' interacts with its left neighbor in the future. 
From \eqref{eq:RG-evolution} we get the evolution $X^{2\uptext{-}\rG}_{t+1}(1) = X^{2\uptext{-}\rG}_{t}(1) + \xi(t+1,1)$ and
\[X^{2\uptext{-}\rG}_{t+1}(k) = \min \{X^{2\uptext{-}\rG}_{t}(k) +  \xi(t + 1, k), X^{2\uptext{-}\rG}_{t+1}(k-1) - 1\}\]
for $k = N, \dotsc, 2$, with $\xi(t + 1, k)$ i.i.d. Geom[$p$] random variables. 
In other words, $\xx^{2\uptext{-}\rG}_t$ evolves as follows: particles are updated from right to left; $X^{2\uptext{-}\rG}_t(k)$ makes a Geom[$p$] jump to the right, and if the destination site is larger than or equal to $X^{2\uptext{-}\rG}_{t+1}(k-1)$, then the $k^{\text{th}}$ particle arrives at $X^{2\uptext{-}\rG}_{t+1}(k-1)-1$.
Note that, with these dynamics, $X^{2\uptext{-}\rG}_t \in \Omega_N$ for all $t\geq0$.
This is just the \emph{sequential update} version of TASEP with right geometric jumps and blocking: the transformation $\xx^{\rG}_{t}(k)\longmapsto\xx^{\rG}_{t + k-1}(k)$ turns the parallel update of the right geometric model into sequential update, just as $\xx^{\rB}_{t}(k)\longmapsto\xx^{\rB}_{t -k+1}(k)$ turns sequential into parallel update for right Bernoulli jumps.

\subsubsection{Distribution functions}

The following result is proved in Appdx.~\ref{sec:rGeometric-proof}.
For parallel update it follows from a direct application of Thm.~\ref{thm:main}.

\begin{prop}\label{prop:RG_distribution}
Let $X_t$ be any of the particle systems $X^{\rG}_t$ (parallel update) or $X^{2\uptext{-}\rG}_t$ (sequential update).
Suppose $\xx(0) = \vec y \in \Omega_N$. Then for any $1 \leq n_1 < \dotsm <  n_m \leq N$ and any $\vec a \in \rr^m$, for any $t \geq 0$ in the case $X_t = X^{\rG}_t$ and for any $t \geq n_m-1$ in the case $X_t = X^{2\uptext{-}\rG}_t$ we have
\begin{equation}\label{eq:RG_distribution}
	\pp \bigl(X_t(n_i) > a_{i}, i \in \set{m}\bigr) = \det \bigl(I-\bP_{a}  K^{\rG} \bP_{a} \bigr)_{\ell^2(\{n_1, \dotsc, n_m\} \times \zz)},
\end{equation}
where the kernel $K^{\rG}$ is given by \eqref{eq:kernel-main}, defined using $\varphi(w) = \frac{p}{1 -q w}$, the values $\kappa = 0$ for parallel update and $\kappa = -1$ for sequential update, and with any $\theta\in(0,1)$ and $r \in (0,1)$.
The random walk used to define \eqref{eq:SMepi} in this case has transition matrix $Q(x,y)=(1-\theta)\theta^{x-y-1} \uno{x > y}$ in the case $\kappa = 0$ and $Q(x,y)=(1-\theta)\theta^{x-y-1} \frac{1}{1 - q \theta} (\uno{x = y + 1} + p \uno{x \geq y + 2})$ in the case $\kappa = -1$.
\end{prop}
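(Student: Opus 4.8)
The plan is to treat the two update rules separately: the parallel case is a direct instance of Thm.~\ref{thm:main}, whereas the sequential case corresponds to the value $\kappa=-1$, which lies outside the range $\kappa\in\nn_0$ of Thms.~\ref{thm:main}--\ref{thm:main2} and so must be routed through the general biorthogonalization result Thm.~\ref{thm:kernel-rw}. (The full argument is carried out in Appdx.~\ref{sec:rGeometric-proof}.)

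\emph{Parallel update.} The transition probabilities \eqref{eq:rightGeometric}--\eqref{eq:rightGeometric_F} are exactly of the form \eqref{eq:G-main}--\eqref{eq:F-main} with $\varphi(w)=\tfrac{p}{1-qw}$ and $\gamma=\gamma_{\rhoout}$, $1<\rhoout<1/q$ (recall $1/q>1$). This $\varphi$ satisfies Assum.~\ref{a:phi}: its only singularity is the simple pole at $w=1/q$, so it is analytic and non-vanishing on every annulus $A_{\rhoin,\rhoout}$ with $0<\rhoin<1<\rhoout<1/q$, and $\varphi(1)=p/(1-q)=1$; since $\rhoin$ may be taken arbitrarily small, the parameters $r,\theta$ range over all of $(0,1)$. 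Thm.~\ref{thm:main} then yields \eqref{eq:RG_distribution} with the kernel \eqref{eq:kernel-main} for all $t\geq0$, and for $\kappa=0$ a single residue at $w=0$ in \eqref{eq:defQintro-pre} gives $Q(x,y)=(1-\theta)\theta^{x-y-1}\uno{x>y}$, as stated.

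\emph{Sequential update.} The relevant observable is $X^{2\uptext{-}\rG}_t(k)=\xx^{\rG}_{t+k-1}(k)$, the staggered reading of the right-geometric dynamics in which particle $i$ is launched at time $i-N$; in the language of Secs.~\ref{sec:measures}--\ref{sec:biorth} this is the case $\kappa=-1$. The first step is Lem.~\ref{lem:RG-assumption}, the analogue of \eqref{eq:second-property} for $\kappa=-1$: the transition probabilities of $\xx^{\rG}$ with these staggered starting times are again determinantal, with the time argument of $F_{i-j}$ shifted as in \eqref{eq:second-property}. This is the one genuinely model-specific input and must be proved by hand — the unbounded rightward jumps make both the caterpillar coupling of the previous subsections and the ``backwards in time'' identity \eqref{eq:backward-in-time-one} unavailable, and no transition formula of the form \eqref{eq:G-main} enjoys such an identity when $\kappa\neq0$ — by starting from \eqref{eq:rightGeometric}, performing the shift $\xx^{\rG}_t(k)\mapsto\xx^{\rG}_{t+k-1}(k)$, and simplifying the resulting determinant of contour integrals, as in the right-Bernoulli computation of Appdx.~\ref{sec:rightBernoulli-assumptions}. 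Granting Lem.~\ref{lem:RG-assumption}, one runs the scheme of Sec.~\ref{sec:measures} to obtain \eqref{eq:RG_distribution} with an implicitly defined biorthogonal kernel, and then applies Thm.~\ref{thm:kernel-rw}, which is set up precisely to cover such general $\kappa$, to get the explicit kernel \eqref{eq:kernel-main} with $\varphi(w)=\tfrac{p}{1-qw}$, valid for $t\geq n_m-1$. Finally, $\varphi(w)^{\kappa}=(1-qw)/p$ is a polynomial, so the only pole of the integrand in \eqref{eq:defQintro-pre} inside $\gamma_r$ (with $r<1$) is at $w=0$, whose residue gives $Q(x,y)=(1-\theta)\theta^{x-y-1}\tfrac{1}{1-q\theta}\bigl(\uno{x=y+1}+p\,\uno{x\geq y+2}\bigr)$; this is manifestly non-negative and, using $1-\theta+p\theta=1-q\theta$, a one-line geometric sum gives $\sum_y Q(x,y)=1$, so $Q$ is a transition matrix and the walk $B$ and hitting time $\tau$ in \eqref{eq:SMepi} are well defined.

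\emph{Main obstacle.} The parallel case is immediate; all the work is in the sequential case, and it concentrates in two places: proving Lem.~\ref{lem:RG-assumption}, the staggered-time determinantal identity, which has no counterpart among the transition formulas \eqref{eq:G-main} and therefore needs a direct argument tailored to the geometric-jump rule; and verifying that each ingredient of the biorthogonalization machinery of Secs.~\ref{sec:measures}--\ref{sec:biorth} — the explicit inverse of $Q$ via \eqref{eq:defQintro}, the contours defining $\SM_{-t,-n}$ and $\SN_{-t,n}$ in \eqref{eq:defSMintro}--\eqref{eq:defSNintro}, and the permissible range of $t$ — survives passage to the value $\kappa=-1$. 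Everything beyond that is bookkeeping.
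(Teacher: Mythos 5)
Your proposal is correct and follows essentially the same route as the paper: the parallel case is a direct application of Thm.~\ref{thm:main}, and the sequential case goes through the staggered-starting-time determinantal identity (Lem.~\ref{lem:RG-assumption}) followed by the biorthogonalization scheme and Thm.~\ref{thm:kernel-rw} with $a(w)=1/\varphi(w)$. The only nuances you gloss over are that the paper proves Lem.~\ref{lem:RG-assumption} via the Markov-property decomposition of Lem.~\ref{lem:SchuetzRG} and the Cauchy--Binet variants of Appdx.~\ref{app:convolutions} (rather than by directly manipulating the shifted contour-integral determinant), and that ``running the scheme of Sec.~\ref{sec:measures}'' here requires reversing the orientation of the space-like paths (observation and starting times now increase with the particle index), which is exactly the adaptation the paper sketches.
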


The restriction $t \geq n_m-1$ for sequential update is analogous to the restriction appearing in Prop.~\ref{prop:LG_distribution} in the case of geometric jumps, and it is not clear to us whether it can be lifted. 

\subsection{Relation to previous work}\label{sec:previous}

As we mentioned at the beginning of this section, the transition probabilities of the four basic types of interacting particle systems (with blocking and pushing interactions, and with Bernoulli and geometric jumps) have been previously computed using the coordinate Bethe ansatz (which provides a method to solve the Kolmogorov forward equation for each model) \cite{Brankov,Povolotsky_2006} and via the RSK algorithm \cite{MR2469339}. 
While our generalization of these dynamics to systems of interacting caterpillars appears to be new, at least in the case of right Bernoulli jumps, the idea of computing transition probabilities of particles starting from different times (which plays a key role in our analysis, since it relates the evolutions of particles and caterpillars as explained in Lem.~\ref{lem:TASEP_and_caterpillars}) can be found in the literature in a different setting.

More precisely, in \cite{Brankov} a Sch\"{u}tz-type formula was derived for the transition probabilities of Bernoulli TASEP with sequential update with particles starting and finishing at different times (referred to as a generalized Green function in that paper) under a certain, not entirely explicit, condition on the initial and final configurations.
The method was developed further in \cite{PovolotskyPriezzhevSchutz} and \cite{Poghosyan_2012} in a setting where particles with larger indices start moving at later times, in contrast to our choice (which is exactly the opposite).
In that setting, there is no ambiguity in how the process gets started (since the dynamics of each particle is independent of those to its left), but interpreting what these Green functions compute is not entirely straightforward; those works show that they are essentially the exit probability of particles from certain special space-time regions. 
\cite{Poghosyan_2012} also contains a derivation of the biorthogonal correlation kernel for arbitrary starting and ending times (which we derive in a general setting see Sec.~\ref{sec:measures} for our choice of ordering of starting times).

The four basic dynamics considered in this section can also be described as edge projections of the $2+1$ dynamics on Gelfand-Tsetlin patterns (or triangular arrays of integers whose consecutive levels are interlaced) introduced in \cite{Anisotropic}, which is based on another intertwining construction (different from the one related to RSK; see also \cite{warren} for the Brownian case).
More precisely, the four choices of the function $F_t$ in \cite[Sec.~2.6]{Anisotropic} correspond to (up to a change of the variable and the parameters), and motivated, our four choices of the function $\varphi$ in Props.~\ref{prop:caterpillars}, \ref{prop:caterpillars_BerPush}, \ref{prop:LG_distribution} and \ref{prop:RG_distribution}.

The construction in \cite{Anisotropic} also helps to explain some of the specific features of the four basic particle systems which we have studied.
For example, the choice of right Bernoulli jumps yields Bernoulli TASEP as the left edge projection of the associated Gelfand-Tsetlin dynamics and Bernoulli PushTASEP  as its right edge projection (the top edge projection is related to non-intersecting Bernoulli walks for special choices of initial data).
In particular, the construction provides a coupling of the two processes.
Left Bernoulli jumps is essentially a sort of dual choice, which exchanges the dynamics of the two edges.
The case of right geometric jumps (left geometric jumps are again essentially equivalent) is more subtle: the right edge projection yields geometric PushTASEP, but the left edge projection is not even Markovian.
Very roughly, the reason is that the dynamics on Gelfand-Tsetlin patterns involves a certain conditioning on the particle jumps to respect the interlacing between levels, and while for Bernoulli jumps this can be done at each of the two edge projections by looking only at particles at the same edge, the long-range nature of geometric jumps makes this conditioning on the left edge depend also on other particles inside the triangular array.
This conditioning also explains why in Bernoulli PushTASEP particles which are pushed in a given time step do not get to attempt another jump, while in geometric PushTASEP particles can jump after being pushed: a Bernoulli random variable conditioned on being larger than or equal to $1$ can only take the value $1$, while (by the memoryless property) a geometric random variable conditioned on being larger than or equal to some value $\ell$ is just $\ell$ plus the same geometric random variable.

TASEP with right Bernoulli jumps and parallel update was also described in \cite[Sec.~2.6]{Anisotropic} as a projection of a parallel update version of their $2+1$ dynamics.
In the case of geometric jumps, \cite{WarrenWindridge} studied a suitable version of the same $2+1$ dynamics, which can be thought as enforcing one direction in the interlacing inequalities sequentially and the other one in parallel, for which both edge projections are Markovian, one being the parallel update version of geometric TASEP with block dynamics which we studied in Sec.~\ref{sec:rightGeometric}, the other one being the standard geometric PushTASEP.

Regarding the explicit Fredholm determinant formulas for the multipoint distribution of these systems, they were only available (though seemingly not explicitly written in the literature in all cases) in the case of packed or half-periodic initial data (see Ex.~\ref{ex:periodic-one-sided}) for the four basic models and for right Bernoulli TASEP with parallel update (versions of these formulas could be derived too in the case of half-stationary initial data, see Sec.~\ref{sec:KPZfixedpt-random}).

\section{Continuous time variants of TASEP}
\label{sec:continuousTASEP}

This section is devoted to the application of the general biorthogonalization framework presented in Sec.~\ref{sec:biorth} to continuous time versions of the TASEP models described in Sec. \ref{sec:caterpillars}.
We begin with continuous time TASEP, for which the biorthogonalization problem was originally solved in \cite{fixedpt}.

\subsection{Continuous time TASEP}\label{sec:TASEP}

In the \emph{totally asymmetric simple exclusion process} (simply \emph{TASEP} throughout the rest of this section), the process $X_t$, $t\in[0,\infty)$, takes values in $\Omega_N$ and evolves as follows: each particle independently attempts jumps to its neighboring site to the right at rate $1$, the jump being allowed only if that site is unoccupied. TASEP was first solved by \citet{MR1468391} using the coordinate Bethe ansatz, leading to
\begin{equation}\label{eq:schutzTASEP}
\pp \bigl(\xx_{t} = \vec x \big| \xx_0 = \vec y\bigr) = \det \bigl[F_{i - j}(x_{N + 1 - i} - y_{N + 1 - j}, t)\bigr]_{i, j \in \set{N}}
\end{equation}
with
\begin{equation}
F_{n}(x, t) = \frac{1}{2\pi\I}\oint_{\gamma}\d w\, \frac{(w - 1)^{-n}}{w^{x - n +1}} e^{t (w-1)},
\end{equation}
where the contour contains $0$ and $1$. 
Applying Thm.~\ref{thm:main}, we obtain 
\begin{equation}\label{eq:P-TASEP}
\pp \bigl(\xx_t(n_i) > a_{i},\, i \in \set{m}\bigr) = \det \bigl(I-\bP_{a}  K^{\TASEP} \bP_{a} \bigr)_{\ell^2(\{n_1, \dotsc, n_m\} \times \zz)},
\end{equation}
where $K^{\TASEP}$ is given by \eqref{eq:kernel-main} with $\varphi(w) = e^{w-1}$ and $\kappa = 0$, with any choice of $r,\theta\in(0,1)$.

In the case $\theta=1/2$, \eqref{eq:P-TASEP} is the formula obtained in \cite{fixedpt}, where it was used to show that in the usual KPZ 1:2:3 scaling, the TASEP particle positions converge to the KPZ fixed point (see Sec.~\ref{sec:fp}).

\subsection{Continuous time PushASEP}

Now we have $N$ particles at locations $X_t(1)>X_t(2)>\dotsm$ evolving according to the following continuous time Markovian dynamics.
Particles jump independently to the right with some rate $R\geq 0$ and to the left with some rate $L\geq 0$. 
When the $i^{\text{th}}$ particle jumps to the left, if the destination site is occupied, the occupying particle is pushed to the left (in other words, the whole cluster of nearest neighbor occupied site to the left of $i$ moves); this is the same push mechanism as in Sec. \ref{sec:LB}.
Jumps to the right by the $i^{\text{th}}$ particle, on the other hand, are blocked if the destination site is occupied (TASEP dynamics).
If $L=0$ the model becomes TASEP, while if $R=0$ it is a special case of Toom's model \cite{DLSS91}. 

From \cite[Prop.~2.1]{bp-push}, the distribution function of $N \geq 1$ particles is given by
\begin{equation}
\pp \bigl(\xx_{t} = \vec x \big| \xx_0 = \vec y\bigr) = \det \bigl[F^\push_{i - j}(x_{N + 1 - i} - y_{N + 1 - j}, t)\bigr]_{i, j \in \set{N}},
\end{equation}
where (here the contour again contains $0$ and $1$)
\begin{equation}
F^\push_{n}(x, t) = \frac{1}{2\pi\I}\oint_{\gamma} \d w\, \frac{(w - 1)^{-n}}{w^{x - n +1}} e^{t (Rw + L/w - R - L)}.
\end{equation}
Thm.~\ref{thm:main} yields
\begin{equation}\label{eq:PushTASEP}
\pp \bigl(\xx_t(n_i) > a_{i},\, i \in \set{m}\bigr) = \det \bigl(I-\bP_{a}  K^{\push} \bP_{a} \bigr)_{\ell^2(\{n_1, \dotsc, n_m\} \times \zz)},
\end{equation}
where $K^{\push}$ is given by \eqref{eq:kernel-main} with $\varphi(w) = e^{Rw + L/w - R - L}$ and $\kappa = 0$, with any choice of $r,\theta\in(0,1)$.

In the case $\theta=1/2$, \eqref{eq:PushTASEP} coincides with the formula obtained in \cite{nqr-kolmogorov}.
In that paper the validity of the formulas \eqref{eq:P-TASEP} and \eqref{eq:PushTASEP} for TASEP and PushASEP was proved directly by showing that the right hand side satisfies in each case the corresponding Kolmogorov backward equations.

\subsection{Continuous time TASEP with generalized update}\label{sec:Povolotsky-continuous}

Now we introduce a continuous time version of the model described in Sec.~\ref{sec:Povolotsky-discrete}. 
The state space is $\Omega_N$, and as for TASEP each particle jumps to the right at rate $1$ provided that the target site is empty.
However, if a particle jumps to the right and its neighboring site to the left was occupied before the jump, then its neighbor makes a unit jump to the right with probability $\beta \in [0,1)$ (in other words, a particle which jumps to the right brings along its left neighbor, if it has one, with probability $\beta$).
The transition probabilities for this model are given by
 \begin{equation}
 	\pp (X_t = \vec x | X_0 = \vec y) = (1-\beta)^{-\CN(\vec x)} \det \bigl[F^\gen_{i - j}(x_{N + 1 - i} - y_{N + 1 - j}, t)\bigr]_{i, j \in \set{N}},
\end{equation}
where $\vec x, \vec y \in \Omega_N$, $t \in \nn_0$, the function $\CN(\vec x)$ is defined in Lem.~\ref{lem:Ber-TASEP_prll}, and
\begin{equation}
F^\gen_{n}(x, t) = \frac{1}{2\pi\I}\oint_{\gamma}\frac{\d w}{w^{x - n + 1}} \Bigl( \frac{w-1}{1 - \beta w} \Bigr)^{-n} e^{t(w-1)},
\end{equation}
with $\gamma$ enclosing only the poles at $0$ and $1$. This formula can be obtained from \eqref{eq:G-Povolotsky}, by replacing the time variable $t$ by $t / p$ and taking $p \to 0$.
In the same way we get a formula like \eqref{eq:genTASEP} for the distribution function with $K^{\gen}$ now defined using $a(w)=1 - \beta w$ and $\psi(w) = e^{t(w-1)}$ and with any $r,\theta\in(0,1)$.

\subsection{Convergence to the KPZ fixed point}\label{sec:fp}

In \cite{fixedpt} the formula \eqref{eq:P-TASEP} was used to show that the TASEP particle positions (or the associated TASEP height function) converge, under the usual KPZ 1:2:3 scaling, to a scaling invariant Markov process.
This process, known as the \emph{KPZ fixed point}, is the conjectured universal scaling limit of all models in the KPZ universality class.
The main motivation for the present work is to obtain analogous formulas for other particle systems which can be used to prove convergence to the KPZ fixed point in a similar way.
We will not perform the asymptotics for these models in this paper, but we very briefly sketch the result in the case of TASEP. 

Let $\UC$ be the space of upper semicontinuous functions $\fh\!:\rr\longrightarrow[-\infty,\infty)$ satisfying $\fh(x)\leq A|x|+B$ for some $A,B>0$ and $\fh\not\equiv-\infty$, endowed with the local Hausdorff topology (see \cite[Sec.~3]{fixedpt} for more details).
Consider TASEP initial data $(X^\ep_0(i))_{i\geq1}$ such that for some $\fh_0\in\UC$ satisfying $\fh_0(\fx)=-\infty$ for $\fx>0$,
\begin{equation}\label{eq:TASEPini}
\ep^{1/2}\big(X^\ep_0(\ep^{-1}\fx)+2\ep^{-1}\fx-1\big)\longrightarrow-\fh_0(-\fx)
\end{equation}
in $\UC$.
Now consider \eqref{eq:P-TASEP} with $\theta=1/2$ and the scaling
\begin{equation}\label{eq:fp-scaling}
t=2\ep^{-3/2}\ft,\quad n_i=\tfrac12\ep^{-3/2}\ft-\ep^{-1}\fx_i-\tfrac12\ep^{-1/2}\fa_i+1,\quad a_i=2\ep^{-1}\fx_i-2.
\end{equation}
We also change variables in the kernel $K^{\TASEP}(n_i,z_i;n_j,z_j)$ in the Fredholm determinant through $z_i=2\ep^{-1}\fx_i+\ep^{-1/2}(u_i+\fa_i)-2$.
Note that this turns the projections $\bP_{a_i}$ into $\bP_{-\fa_i}$ and multiplies the kernel by $\ep^{-1/2}$.
Introducing the kernels
\begin{equation}\label{eq:fT}
\fT_{\ft,\fx}(u,v)=\ft^{-1/3}e^{\frac{2\fx^3}{3\ft^2}-\frac{(u-v)\fx}{\ft}}\Ai(\ft^{-1/3}(v-u)+\ft^{-4/3}\fx^2)
\end{equation}
for $\ft\neq0$, where $\Ai$ is the Airy function, it is proved in \cite[Lem. 3.5]{fixedpt} that for $y=\ep^{-1/2}v$ one has $\ep^{-1/2}\SM_{-t,-n_i}(y,z_i)\longrightarrow\fT_{-\ft,\fx_i}(v,u_i)$ and $\ep^{-1/2}\SN_{-t,n_j}(y,z_j)\longrightarrow\fT_{-\ft,-\fx_j}(v,u_j)$.
Note also that since $Q^n$ is the $n$-step transition probability of a Geom$[1/2]$ random walk jumping strictly to the left, under our scaling we have by the central limit theorem $\ep^{-1/2}Q^{n_j-n_i}(z_i,z_j)\longrightarrow e^{(\fx_i-\fx_j)\p^2}(u_i,u_j)$ as $\ep\to0$ for $\fx_i>\fx_j$ (which implies $n_j>n_i$), where $e^{\fx\p^2}$, $\fx\geq0$, denotes the heat kernel. Similarly, under this scaling the random walk $B$ inside the expectation defining $\SM^{\epi(X_0^\ep)}_{-t,n_j}$ in \eqref{eq:SMepi} becomes $\ep^{1/2}(B_{\ep^{-1}x}+2\ep^{-1}\fx-1)$, which converges to a Brownian motion $\fB(\fx)$ with diffusivity $2$, while the hitting time $\tau$ of the walk $B$ to the epigraph of $X^\ep_0$ in \eqref{eq:SMepi} becomes the hitting time of $\fB$ to the epigraph of the curve $-\fh^-_0(\fx)\coloneqq-\fh_0(-\fx)$ since, by \eqref{eq:TASEPini}, the initial data $X^\ep_0$ rescales to $-\fh^-_0$.
Putting these facts together leads in \cite[Sec. 3.3]{fixedpt} (after some calculations) to
\begin{multline}\label{eq:fixedptdet}
\lim_{\ep\to0}\pp\big(X_{2\ep^{-3/2}\ft}(\tfrac12\ep^{-3/2}\ft-\ep^{-1}\fx_i-\tfrac12\ep^{-1/2}\fa_i+1)>2\ep^{-1}\fx_i-2,\,i\in\set{m}\big)\\
=\det\!\left(\fI-\P_{\fa}\fK^{\hypo(\fh_0)}_{\ft,\uptext{ext}}\P_{\fa}\right)_{L^2(\{\fx_1,\dotsc,\fx_m\}\times\rr)}\eqqcolon\pp\big(\fh(\ft,\fx_i)\leq\fa_i,\,i\in\set{m}\big),
\end{multline}
where
$\fK^{\hypo(\fh_0)}_{\ft,\uptext{ext}}(\fx_i,\cdot;\fx_j,\cdot)=-e^{(\fx_j-\fx_i)\partial^2}\uno{\fx_i<\fx_j}+(\fT^{\hypo(\fh_0^-)}_{\ft,-\fx_i})^*\fT_{\ft,\fx_j}$,
with
\begin{equation}\label{eq:defShypo}
\fT^{\hypo(\fh)}_{\ft,\fx}(v,u)=\ee_{\fB(0)=v}\big[\fT_{\ft,\fx-\ftau}(\fB(\ftau),u)\uno{\ftau<\infty}\big]
\end{equation}
and where $\ftau$ is the hitting time by $\fB$ of the hypograph of $\fh$.
Justifying that the convergence of the kernels which we indicated above holds in trace class so that it  implies convergence of the Fredholm determinants requires considerable effort, we refer to \cite{fixedpt} for the details.

The second line of \eqref{eq:fixedptdet} defines the finite dimensional distributions of the KPZ fixed point $\fh(\ft,\fx)$, which in \cite{fixedpt} is shown to be a $\UC$-valued Markov process.
In fact, the formula only defines the KPZ fixed point for one-sided initial data $\fh_0$ (meaning $\fh_0(\fx)=-\infty$ for $\fx>0$); the generalization to all $\fh_0\in\UC$ can be done through a limiting procedure by shifting, see \cite[Sec. 3.4]{fixedpt}.

\begin{rem}\label{rem:fp-dens}
Note that \eqref{eq:TASEPini} means in particular that we are taking TASEP initial data which has average particle density $1/2$.
This is why in the derivation sketched above one takes $\theta=1/2$. One could instead assume that $\sqrt{2/(1-\rho)}\rho\tts\ep^{1/2}\big(X^\ep_0(\ep^{-1}\fx)+\rho^{-1}\ep^{-1}\fx-1\big)\longrightarrow-\fh_0(-\fx)$, corresponding to average particle density $\rho\in(0,1)$. By suitably modifying the above choice of scaling one would get again convergence to the KPZ fixed point.
To this end one needs to use $\theta=1-\rho$ to ensure as above the convergence of the random walk $B$ to a Brownian motion $\fB$.
We omit the details.
\end{rem}

\subsection{Formulas for TASEP and the KPZ fixed point with random initial data}\label{sec:KPZfixedpt-random}

The TASEP and KPZ fixed point formulas \eqref{eq:P-TASEP}/\eqref{eq:fixedptdet} have been derived for deterministic initial data.
Random initial data can be handled by averaging, but since the determinant is nonlinear this leads to non-explicit formulas.
However, for some special choices of random initial data one can write explicit formulas by composing the dynamics of two different particle systems.
A prominent example is TASEP with half-stationary initial data (and its KPZ fixed point limit), which can be obtained by composing TASEP with geometric PushTASEP: in fact, applying one step of geometric PushTASEP with parameter $p=1/2$ to the step initial condition $X_0^{\lG}(i)=-i$, $i\geq1$, leads to a configuration with particles on the negative integer line with independent Geom$[1/2]$ gaps, i.e., a product measure with density $1/2$.
This is known in the field and relatively simple, but we have not found it explicitly stated in this form in the literature (an exception is \cite{lnr}, which is partly based on a draft of this article), so we include it here, although we work in greater generality.
For simplicity we focus only on the composition of TASEP with geometric PushTASEP, although it will be clear that the same argument can be used for other combinations.

Let $X_t$ denote continuous time TASEP and recall the geometric PushTASEP particle system $X^{\lG}_t$ introduced in Sec.~\ref{sec:LG}.
We want to start with some given initial condition $(X^\lG_0(i))_{i\geq1}$, apply $\ell$ discrete time PushTASEP steps, and use the resulting configuration $X^\lG_\ell$ as the initial condition $X_0$ for the TASEP dynamics.
Since it does not introduce any difficulties, we also allow the parameter $p=p_k$ in the $k^{\text{th}}$ PushTASEP step to depend on $k$. 
 It is shown in \cite{lnr} that the resulting initial condition $X^\lG_\ell$ is essentially the top path of a system of reflected geometric random walks with a wall at $X^\lG_0$: the first walk $X^\lG_1$ is reflected off $X^\lG_0$, the second walk $X^\lG_2$ is reflected off $X^\lG_1$, and so on.
More precisely, these reflections take place through a discrete version of the Skorokhod reflection mapping (with a slight time shift), see \cite[Sec. 5.2]{lnr} for more details.

Using Cor.~\ref{cor:MCconv} we get for TASEP with initial data prescribed as above that
\[\pp\big(X_t=\vec{x} \ts\big|\ts X_0=X^{\lG}_\ell,\,X^{\lG}_0 = \vec y\big)=\det\!\big[\bar F_{i-j}(x_{N+i-1}-y_{N+j-1})\big]_{i,j\in\set{N}}\]
with $\bar F_{n}(x)=\frac{1}{2\pi\I}\oint_{\gamma}\d w\ts\frac{(w - 1)^{-n}}{w^{x - n +1}} e^{t (w-1)}\prod_{k=1}^\ell\frac{p_k}{1-q_kw^{-1}}$ (here $\gamma$ encloses $0$, $1$ and all the $q_k$'s, with $q_k=1-p_k$). Thm.~\ref{thm:main} now gives, for this choice of initial data, 
\begin{equation}\label{eq:circT}
\pp\bigl(X_t(n_i) > a_{i},\, i \in \set{m}\bigr) = \det \bigl(I-\bP_{a}K^\circ\bP_{a} \bigr)_{\ell^2(\{n_1, \dotsc, n_m\} \times \zz)}
\end{equation}
with $K^\circ$ given by \eqref{eq:kernel-main} with $\varphi(w) = e^{t (w-1)}\prod_{k=1}^\ell\frac{p_k}{1-q_kw^{-1}}$ and $\kappa = 0$, with any $\theta\in(\max_kq_k,1)$.

Consider now the scaling \eqref{eq:fp-scaling} introduced in Sec.~\ref{sec:fp} and assume that the initial PushTASEP configuration $X^{\lG}_0$ satisfies $\ep^{1/2}\big(X^\ep_0(\ep^{-1}\fx)+2\ep^{-1}\fx-1\big)\longrightarrow\ff(\fx)$, $\fx\geq0$, in $\UC$, for some $\ff\in\UC$ defined on $[0,\infty)$ (c.f. \eqref{eq:TASEPini}).
Choose also $p_k=\frac12(1-\ep^{1/2}b_k)$.
In view of the above description of our choice of initial data $X^{\lG}_\ell$, it is natural to expect that it will converge to an appropriate system of reflecting Brownian motions (RBMs).
It is shown indeed in \cite[Prop.~3]{lnr} that, under this scaling, $X^{\lG}_\ell$ converges in distribution, uniformly on compact sets, to a system of RBMs with drift $(Y^\ff(k))_{k\in\set{\ell}}$ with a wall at $\ff$, defined as follows: $Y^\ff_t(1)$ is a Brownian motion with drift $2b_1$ reflected off $\ff$ and, recursively, $Y^\ff_t(k)$ is a Brownian motion with drift $2b_k$ reflected off $Y^\ff_t(k-1)$ (all Brownian motions have diffusivity $2$; note also that we have changed the sign of the $b_k$'s compared with \cite{lnr}).
This pins down the limiting initial data for the TASEP dynamics, and then from \cite[Prop. 3.6]{fixedpt} we get, under the scaling \eqref{eq:fp-scaling}, that
\[\lim_{\ep\to0}\pp\bigl(X_t(n_i) > a_{i},\, i \in \set{m}\bigr)=\pp_{\mathfrak{H}^\ff_\ell}\big(\fh(\ft,\fx_i)\leq\fa_i,\,i\in\set{m}\big),\]
where the KPZ fixed point initial data is built out of the RBMs through (here we take $0\cdot\infty=0$)
\begin{equation}\label{eq:RMB-h}
\mathfrak{H}^{(\vec b)}_\ff(\fx)=Y^\ff_\ell(-\fx)\uno{\fx\leq0}-\infty\cdot\uno{\fx>0}.
\end{equation}

In order to compute the limit of the right hand side of \eqref{eq:circT} we need to repeat the arguments sketched in Sec. \ref{sec:fp} for the kernel $K^\circ$ instead of $K^{\TASEP}$.
The only difference in the kernels is that the current choice of $\varphi(w)$ has an extra factor $\prod_{k=1}^m\frac{p_k}{1-q_kw^{-1}}$.
Let us assume all the $b_k$'s are negative so that, given our choice $p_k=\frac12(1-\ep^{1/2}b_k)$, we can take $\theta=1/2$ in the definition of $\bar K$; if some $b_k$ is non-negative the argument can be repeated by adjusting $\theta$ with $\ep$.
To compute the limit \cite{fixedpt} uses the change of variables $w\longmapsto\frac12(1-\ep^{1/2}\tilde w)$. After this change of variables, the pointwise limit of the integrands in \eqref{def:sm} and \eqref{def:sn} are the same as in \cite{fixedpt} except for the additional factors coming from the rational perturbation in $\varphi(w)$.
Moreover, it can be checked that the steepest descent arguments used in Appdx. B of that paper to upgrade this to trace class convergence of the whole operator are not affected by these additional factors; the argument is lengthy but the adaptation is straightforward, so we omit it (the crucial points being, first, that the additional poles at $\tilde w=-b_k$ stay away from the contours of integration and, second, that the required estimates depend on terms of order $\ep^{-3/2}$ in the exponent after writing the integrands as $e^{F_\ep(w)}$, whereas the rational perturbations are of order $1$).
The upshot is that we just need to compute the limit of the rational perturbations in $\psi_t(w)$ and $1/\psi_t(1-w)$ after scaling.
For this we multiply $(\SM_{-t,-n})^*$ by $(-2)^\ell\ep^{\ell/2}$ and ${\SN}_{-t,n}$ by $(-2)^{-\ell}\ep^{-\ell/2}$ and note that, as $\ep\to0$,
\[\textstyle-2\ep^{1/2}\frac{p_k}{1-q_k\tts w^{-1}}\longrightarrow\frac1{b_k+\tilde w}\qqand-\frac{\ep^{-1/2}}{2}\frac{1-q_k\tts(1-w)^{-1}}{p_k}\longrightarrow b_k-\tilde w.\]
In view of this and \cite[Lem. 3.5]{fixedpt} we define the operators
\[\textstyle\fT_{\ft,\fx}^{\vec b,\pm}(u,v)=\frac1{2\pi\I}\int_{\langle}\ts dw\,e^{\frac{t}3 w^3+xw^2+(u-v)w}\prod_{k=1}^m(b_k\mp w)^{\pm1}\]
for $\ft>0$, where the contour crosses the real axis to the left of all $-b_k$'s and goes off in rays at angles $\pm\pi/3$.
Define also
$\fT^{\hypo(\ff),\vec b,+}_{\ft,\fx}$ as in \eqref{eq:defShypo} with $\fT_{\ft,\fx}$ replaced by $\fT_{\ft,\fx}^{\vec b,+}$.
The argument we just sketched leads to:

\begin{thm}
For the KPZ fixed point started with initial data $\fh(0,\cdot)=\mathfrak{H}^{(\vec b)}_{\ff}$, i.e., built out of RBMs with a wall at $\ff$ as in \eqref{eq:RMB-h}, we have, for any $\ft>0$, 
\begin{equation}
\pp\big(\fh(\ft,\fx_i)\leq r_i,\,i\in\set{m}\big)=\det\!\left(\fI-\P_{\fa}\fK^{\hypo(\ff),\vec b}_{\ft,\uptext{ext}}\P_{\fa}\right)_{L^2(\{\fx_1,\dotsc,\fx_m\}\times\rr)}\label{eq:fpff0}
\end{equation}
with $\fK^{\hypo(\ff),\vec b}_{\ft,\uptext{ext}}(\fx_i,\cdot;\fx_j,\cdot)=-e^{(\fx_j-\fx_i)\partial^2}\uno{\fx_i<\fx_j}+(\fT^{\vec b,-}_{\ft,\fx_j})^*\fT^{\hypo(\ff),\vec b,+}_{\ft,-\fx_i}$.
\end{thm}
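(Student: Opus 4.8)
The plan is to derive \eqref{eq:fpff0} by passing to the $\ep\to0$ limit in the exact identity \eqref{eq:circT}, which already expresses the distribution of continuous time TASEP started from the random configuration $X^{\lG}_\ell$ as a Fredholm determinant of the kernel $K^\circ$ built from $\varphi(w)=e^{t(w-1)}\prod_{k=1}^\ell\frac{p_k}{1-q_kw^{-1}}$, under the scaling \eqref{eq:fp-scaling} together with the choice $p_k=\tfrac12(1-\ep^{1/2}b_k)$. The left-hand side is the easy side: by \cite[Prop.~3]{lnr}, under this scaling $X^{\lG}_\ell$ converges in distribution, uniformly on compacts, to the system of reflected Brownian motions with drift $Y^{\ff}_\ell$ with a wall at $\ff$, and hence by continuity of the KPZ fixed point in its initial data \cite[Prop.~3.6]{fixedpt} one gets $\lim_{\ep\to0}\pp(X_t(n_i)>a_i,\,i\in\set{m})=\pp_{\mathfrak{H}^{(\vec b)}_\ff}(\fh(\ft,\fx_i)\leq r_i,\,i\in\set{m})$, the left-hand side of \eqref{eq:fpff0}.

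For the right-hand side the plan is to reuse the asymptotic analysis of \cite[Sec.~3.3 and Appdx.~B]{fixedpt} essentially verbatim. The kernel $K^\circ$ differs from $K^{\TASEP}$ only through the rational factor $\prod_{k=1}^\ell\frac{p_k}{1-q_kw^{-1}}$ inside $\varphi$, which enters $\SM_{-t,-n_i}$ via a factor $\psi_t(w)$ and $\SN_{-t,n_j}$ via a factor $1/\psi_t(1-w)$. Performing the change of variables $w\longmapsto\tfrac12(1-\ep^{1/2}\tilde w)$ and the rescaling of the $z$-variables used in Sec.~\ref{sec:fp}, the non-perturbed parts of the integrands converge exactly as in \cite[Lem.~3.5]{fixedpt}, so it only remains to take the limit of the extra factors. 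After multiplying $(\SM_{-t,-n_i})^*$ by $(-2)^\ell\ep^{\ell/2}$ and $\SN_{-t,n_j}$ by $(-2)^{-\ell}\ep^{-\ell/2}$ — a rescaling that cancels in the composition $(\SM_{-t,-n_i})^*\SN^{\epi(\vec y)}_{-t,n_j}$ — one uses the pointwise limits
\[-2\ep^{1/2}\frac{p_k}{1-q_k\tts w^{-1}}\longrightarrow\frac{1}{b_k+\tilde w}\qqand-\frac{\ep^{-1/2}}{2}\frac{1-q_k\tts(1-w)^{-1}}{p_k}\longrightarrow b_k-\tilde w,\]
which, together with \cite[Lem.~3.5]{fixedpt}, turn the limiting integrands into the operators $\fT^{\vec b,+}_{\ft,\fx}$ and $\fT^{\vec b,-}_{\ft,\fx}$; the hitting-time structure of $\SM^{\epi(X^\ep_0)}_{-t,n_j}$ — using that $B$ rescales to a Brownian motion $\fB$ and that $X^\ep_0$ rescales to the wall $\ff$, as in \cite{fixedpt} — passes to $\fT^{\hypo(\ff),\vec b,+}_{\ft,\fx}$. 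This identifies the pointwise limit of the conjugated, rescaled kernel as $\fK^{\hypo(\ff),\vec b}_{\ft,\uptext{ext}}$.

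Finally, the plan is to upgrade the pointwise convergence to convergence in trace class, so that it yields convergence of the Fredholm determinants. The point is that the steepest-descent bounds in \cite[Appdx.~B]{fixedpt} which supply the domination are governed by the cubic exponent $e^{F_\ep(\tilde w)}$, whose relevant terms are of order $\ep^{-3/2}$, whereas the new rational factors contribute only terms of order $1$; provided the integration contours are chosen (as they can be) so that the additional poles at $\tilde w=-b_k$ stay bounded away from them, the same estimates carry over with only cosmetic changes. The case in which some $b_k$ is non-negative is handled, as in Rem.~\ref{rem:fp-dens}, by letting $\theta$ depend on $\ep$ so that $B$ still rescales to a Brownian motion. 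Combining the two sides gives \eqref{eq:fpff0}. The main obstacle I expect is precisely this last step: verifying carefully — routine in spirit but lengthy — that the trace-class machinery of \cite{fixedpt} survives the rational perturbation of $\varphi$, i.e., that the contours can be arranged uniformly in $\ep$ so that the perturbation factors remain bounded while the Gaussian/Airy decay furnished by the cubic term is preserved.
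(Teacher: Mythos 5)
Your proposal is correct and follows essentially the same route as the paper: the identity \eqref{eq:circT} with $\varphi(w)=e^{t(w-1)}\prod_k\frac{p_k}{1-q_kw^{-1}}$, the limit of the initial data via \cite[Prop.~3]{lnr} and \cite[Prop.~3.6]{fixedpt} on the left-hand side, and on the right-hand side the change of variables $w\longmapsto\tfrac12(1-\ep^{1/2}\tilde w)$ with the conjugation by $(-2)^{\pm\ell}\ep^{\pm\ell/2}$, the pointwise limits of the rational factors giving $\fT^{\vec b,\pm}_{\ft,\fx}$ and $\fT^{\hypo(\ff),\vec b,+}_{\ft,\fx}$, and the observation that the steepest-descent/trace-class estimates of \cite[Appdx.~B]{fixedpt} survive the order-one rational perturbation since the poles at $\tilde w=-b_k$ stay away from the contours. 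This is exactly the argument sketched in Sec.~\ref{sec:KPZfixedpt-random} preceding the theorem, including the treatment of non-negative $b_k$ by letting $\theta$ vary with $\ep$.
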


Taking $\ell=1$, $b_1=0$ and $\ff$ to be $0$ at the origin and $-\infty$ everywhere else, the initial data $\mathfrak{H}^{(0)}_{\ff}$ becomes simply a one-sided Brownian motion on the left and $-\infty$ on the right of the origin, while $\fT^{\hypo(\ff),0,+}_{\ft,\fx}$ becomes simply $\bP_0\fT^{0,+}_{\ft,\fx}$.
This is corresponds then to the KPZ fixed point with half-stationary initial data and, for $\ft=1$, it recovers the formula for the Airy$_{2\to\uptext{BM}}$ process \cite{imamSasam1,corwinFerrariPeche}.

When the $b_i$'s are negative and $\ff\equiv0$, $Y^0_t(\ell)$ has a stationary measure and one can define a double-sided stationary version $\mathfrak{H}^{(\vec b)}_{\uptext{eq}}$ of the initial data $\mathfrak{H}^{(\vec b)}_{0}$.
For $\ft=1$, the KPZ fixed point with initial data $\mathfrak{H}^{(\vec b)}_{\uptext{eq}}$ defines an $\ell$-parameter deformation of the Airy$_1$ process which corresponds to initial data identically $0$ (and, at the level of one-point marginals, of the Tracy-Widom GOE distribution), for which formulas can be obtained as a limit of \eqref{eq:fpff0} with $\ff\equiv0$; see \cite[Sec. 5]{lnr} for more details.
  
\section{Biorthogonalization of a general determinantal measure}
\label{sec:measures}

In this section we study a general class of (possibly signed) determinantal measures and prove a Fredholm determinant formula for certain marginals of them, in terms of kernels given implicitly in a biorthogonal form.
An explicit formula for these kernels will be derived in Sec.~\ref{sec:biorth}. In the setting of Sec.~\ref{sec:main}, the measures which we will study correspond to \eqref{eq:G-main} and \eqref{eq:second-property} and the marginals to the left hand sides of \eqref{eq:probability-main-kappa=0} and \eqref{eq:probability-main}, but our framework is a bit more general.

In particular, we will study measures on particle configurations which depend on some auxiliary parameters $v_1, \dotsc, v_N > 0$ which, in the setting of \eqref{eq:G-main}, can be thought of as different \emph{speeds} for each of the $N$ particles (for example, for right Bernoulli TASEP these speeds would encode different jump probabilities).
Introducing different speeds is helpful to overcome some technical difficulties; it is in fact a standard approach in the framework of Schur processes and TASEP-like particle systems to prove formulas in terms of a Fredholm determinant (see e.g. \cite{Petrov} and \cite{bp-push}).
This generalization is also meaningful from a physical point of view, but we will not pursue it any further in this work: in fact, after obtaining our Fredholm determinant formula in Thm.~\ref{thm:biorth_general}, we will go back to equal speeds by taking $v_i \longrightarrow 1$ for all $i$.

Throughout the section, $t$ denotes a time variable taking values in $\T$ (which, we recall, can be either $\rr$ or $\zz$).
We also fix $N\in\nn$ and a vector $\vec v = (v_i)_{i \in \set{N}}$ such that $v_i > 0$ for each $i$. Define the kernel
\begin{equation}\label{eq:Q_def}
\Q_{i}(x_1, x_2) = \frac{1}{2\pi\I}\oint_{\gamma_{\rhoout}}\!\d w\, \frac{(w - v_i)^{-1}}{w^{x_2 - x_1}} = v_i^{x_1 - x_2} \uno{x_1 \geq x_2}
\end{equation}
for $i\in\set{N}$ and $x_1, x_2 \in \zz$, where $\rhoout>\max_iv_i$.
The inverse of $\Q_{i}$ is
\begin{equation}\label{eq:Q_inverse_def}
\Q^{-1}_{i}(x_1, x_2) = \frac{1}{2\pi\I}\oint_{\gamma_{\rhoin}}\d w\, \frac{w - v_i}{w^{x_2 - x_1 + 2}} = \uno{x_1 = x_2} -  v_i \uno{x_1 = x_2 + 1},
\end{equation}
where $\rhoin>0$. 
For $k \in \set{N}$ we set
\begin{equation}\label{eq:kernelsQ}
\Q^{[k]} = \Q_{1} \Q_{2} \dotsm \Q_{k}, \qquad \Q^{[-k]} = \Q_{k}^{-1} \dotsm \Q_{2}^{-1} \Q_{1}^{-1},
\end{equation}
with the convention $\Q^{[0]} = I$. 
The kernels of these operators can be written explicitly as 
\begin{equation}\label{eq:kernelsQ_formulas}
\Q^{[k]}(x_1, x_2) = \frac{1}{2\pi\I}\oint_{\gamma_{\rhoout}}\!\d w\, \frac{\prod_{i = 1}^k(w - v_i)^{-1}}{w^{x_2 - x_1 - k + 1}}, \qquad \Q^{[-k]}(x_1, x_2) = \frac{1}{2\pi\I}\oint_{\gamma_\rhoin}\!\d w\, \frac{\prod_{i = 1}^k(w - v_i)}{w^{x_2 - x_1 + k + 1}}.
\end{equation}
We also introduce the kernels (for $i\in\set{N}$)
\begin{equation}\label{eq:operatorsE}
\E_{i} (x_1, x_2) = v_i^{-x_1} \uno{x_1 = x_2}, \qquad \E_{-i} (x_1, x_2) = v_i^{x_2} \uno{x_1 = x_2}
\end{equation}
and a further kernel (depending on a given complex function $\varphi$)
\begin{equation}\label{eq:R_def}
\R_t(x_1,x_2) = \frac{1}{2\pi\I}\oint_{\gamma_\rhoin}\d w\, \frac{\varphi(w)^t}{w^{x_2 - x_1 +1}}.
\end{equation}

We make the following assumption throughout the whole section:

\begin{assumption}\label{a:speeds}
The function $\varphi$ satisfies Assum.~\ref{a:phi} with the annulus $A_{\rhoin,\rhoout}$ defined for some radii $\rhoin\in(0,\min_iv_i)$ and $\rhoout>\max_iv_i$.
\end{assumption}

Note that the conditions on $\rhoin$ and $\rhoout$ in Assum.~\ref{a:phi} correspond to the ones in this assumption in the case $v_i = 1$ for all $i$.
Note also that the choice of $\rhoin$ and $\rhoout$ in the assumption is compatible with the choices in \eqref{eq:Q_def}--\eqref{eq:kernelsQ_formulas}, and that it is such that the singularities of $\varphi$ whose modulus is smaller than $\min_i v_i$ are contained inside both $\gamma_\rhoin$ and $\gamma_{\rhoout}$.
This choice of radii will remain fixed throughout the rest of the section.
One could consider slightly more general assumptions (in particular the annulus $A_{\rhoin,\rhoout}$ could be replaced by a more general domain under additional conditions), but this choice is more than enough for all the applications we have in mind.

 For $k, \ell \in \set{N}$ and $t \in \T$ we define the function
\begin{equation}\label{eq:F_def}
F_{k, \ell}(x_1,x_2; t) = \bigl(\E_k \Q^{[k]} \R_t \Q^{[-\ell]} \E_{-\ell}\bigr) (x_1,x_2).
\end{equation}
From \eqref{eq:kernelsQ_formulas} and the properties of $\varphi$ it follows that the compositions of the kernels in this formula are absolutely convergent.
To see this, note first that the kernel $\Q^{[-\ell]}$ has finite range. On the other hand from \eqref{eq:kernelsQ_formulas} we get $|\Q^{[k]}(x_1, x_2)| \leq  \uno{x_1 \geq x_2} \prod_{i = 1}^k (\rhoout - v_i)^{-1} / \rhoout^{x_2 - x_1 - k}$, while in \eqref{eq:R_def} we may move the contour to $\gamma_{\rhoout'}$ for some fixed $\rhoout'>\rhoout$ so that, since $|\varphi(w)^t|$ is bounded on $\gamma_{\rhoout'}$, $|\R_t(x_1,x_2)| \leq C (\rhoout')^{x_1 - x_2}$ for some constant $C \geq 0$.
These bounds yield
\begin{equation}
\textstyle\sum_{y \in \zz} |\Q^{[k]}(x_1, y)| |\R_t(y,x_2)| \leq C \prod_{i = 1}^k (\rhoout - v_i)^{-1} \sum_{y \leq x_1} (\rhoout')^{y - x_2} / \rhoout^{y - x_1 - k} < \infty.
\end{equation}
This allows us also to compute the function $F_{k, \ell}$ explicitly as (see also Lem.~\ref{lem:conv})
\begin{align}\label{eq:F_formula}
F_{k, \ell}(x_1,x_2; t) = \frac{1}{2\pi\I}\oint_{\gamma_{\rhoout}}\!\!\d w\, \frac{(w/v_k)^{x_1}}{(w/v_\ell)^{x_2}} \frac{\prod_{i = 1}^{\ell} (w - v_i)}{\prod_{i = 1}^{k} (w - v_i)} \frac{\varphi(w)^t}{w^{\ell - k +1}}.
\end{align}
Finally, for $\vec y, \vec x \in \Omega_N$ and $s, t \in \T$ with $s \leq t$, we define
\begin{align}\label{eq:G}
\G_{s, t} (\vec y, \vec x) = \left( \prod_{i = 1}^N \varphi(v_i)^{s-t} \right) \det \bigl[F_{k, \ell}(y_{k}, x_{\ell}; t-s)\bigr]_{k, \ell \in \set{N}}.
\end{align}
One can readily check that as $v_i \longrightarrow 1$ for all $i$, the function $\G_{0, t} (\vec y, \vec x)$ converges to the right hand side of \eqref{eq:G-main} if $\varphi(1)=1$.
Although this function integrates to $1$ over $\vec x \in \Omega_N$ (see Lem.~\ref{lem:G_properties}), in general we do not require it to be positive, so it does not define in general a probability measure.
However, $\G_{0, t}$ satisfies the semigroup property.

\begin{lem}\label{lem:G_properties}
For any $\vec x, \vec y \in \Omega_N$ and $s, t \in \T$ one has $\G_{0, 0} (\vec y, \vec x) = \uno{\vec y = \vec x}$ and 
\begin{equation}
\sum_{\substack{x_N \in \zz \\ x_N < x_{N-1}}} \G_{0, t}(\vec y, \vec x) = \G_{0, t}(\vec{y}_{< N}, \vec {x}_{<N}), \qquad \sum_{\vec z \in \Omega_N} \G_{0, t}(\vec y, \vec z) \G_{0, s}(\vec z, \vec x) = \G_{0, t + s}(\vec y, \vec x),\label{eq:G_property1}
\end{equation}
where the vector $\vec{y}_{< N} \in \Omega_{N-1}$ is obtained from $\vec y$ by removing the $N^{\text{th}}$ entry. 
In particular, we have that $\sum_{\vec x \in \Omega_N} \G_{0, t}(\vec y, \vec x)=1$ for all $\vec y\in\Omega_N$.
\end{lem}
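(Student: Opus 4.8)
The plan is to establish the four assertions in turn: the identity at $t=s=0$, the marginalization of the last particle, the semigroup property, and the normalization (which will follow at once from the second). I would first verify $\G_{0,0}=\uno{\vec y=\vec x}$. Since $\varphi(\cdot)^{0}\equiv1$, \eqref{eq:R_def} gives $\R_0=I$, so by \eqref{eq:F_def} one has $F_{k,\ell}(\cdot,\cdot;0)=\E_k\Q^{[k]}\Q^{[-\ell]}\E_{-\ell}$. The convolution operators $\Q_i$ commute, so $\Q^{[k]}\Q^{[-\ell]}$ equals $I$ when $k=\ell$, a product of $k-\ell$ of the $\Q_i$ when $k>\ell$, and a product of $\ell-k$ of the $\Q_i^{-1}$ when $k<\ell$; hence, using the supports in \eqref{eq:Q_def}--\eqref{eq:Q_inverse_def}, its kernel vanishes for $x_1<x_2$, equals $\uno{x_1=x_2}$ when $k=\ell$, and for $k<\ell$ is supported on $\{x_2\le x_1\le x_2+(\ell-k)\}$. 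Pulling the diagonal operators $\E_{\pm i}$ out of the rows and columns of $\det[F_{k,\ell}(y_k,x_\ell;0)]_{k,\ell}$ leaves $\bigl(\prod_k v_k^{-y_k}\bigr)\bigl(\prod_\ell v_\ell^{x_\ell}\bigr)\det[(\Q^{[k]}\Q^{[-\ell]})(y_k,x_\ell)]_{k,\ell}$, and in its Leibniz expansion only the identity permutation survives: if $\sigma\neq\mathrm{id}$ and $j$ is the largest index with $\sigma(j)\neq j$, then $\sigma(j)<j$, so non-vanishing of the $(j,\sigma(j))$ factor forces $y_j\ge x_{\sigma(j)}>x_j$, while, writing $m=\sigma^{-1}(j)<j$, non-vanishing of the $(m,j)$ factor forces $y_m\le x_j+(j-m)$, which together with the strict monotonicity bound $y_m\ge y_j+(j-m)$ contradicts $y_j>x_j$. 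Thus the determinant is $\prod_k\uno{y_k=x_k}$, and on the event $\vec y=\vec x$ the two prefactors, as well as $\prod_i\varphi(v_i)^{0}$, equal $1$.

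For the marginalization, only the last column of $\det[F_{k,\ell}(y_k,x_\ell;t)]_{k,\ell}$ depends on $x_N$, so by multilinearity it suffices to compute $\sum_{x_N<x_{N-1}}F_{k,N}(y_k,x_N;t)$ from \eqref{eq:F_formula}. The $x_N$-dependence of the integrand is through $(v_N/w)^{x_N}$; since $F_{k,N}$ (with $\ell=N\ge k$, so the factor $\prod_{i=1}^{N}(w-v_i)/\prod_{i=1}^{k}(w-v_i)$ is polynomial) has no pole at any $v_i$, I can deform $\gamma_{\rhoout}$ to a circle of radius in $(\rhoin,v_N)$, where the one-sided geometric series converges and produces the factor $(v_N/w)^{x_{N-1}}\,w/(v_N-w)$. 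For $k<N$ the leftover $w-v_N$ in the numerator cancels the new pole, and deforming back to $\gamma_{\rhoout}$ gives $-(v_N/v_{N-1})^{x_{N-1}}F_{k,N-1}(y_k,x_{N-1};t)$; for $k=N$ there is no such cancellation, and deforming back past the pole at $w=v_N$ contributes an extra term $\varphi(v_N)^{t}$, leaving $\varphi(v_N)^{t}-(v_N/v_{N-1})^{x_{N-1}}F_{N,N-1}(y_N,x_{N-1};t)$. So the new last column is $\varphi(v_N)^{t}$ times the $N$th coordinate vector minus $(v_N/v_{N-1})^{x_{N-1}}$ times the $(N-1)$st column; a column operation removes the latter term, and expansion along the last column leaves $\varphi(v_N)^{t}\det[F_{k,\ell}(y_k,x_\ell;t)]_{k,\ell\in\set{N-1}}$. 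Since the ratio of the $\prod_i\varphi(v_i)^{-t}$ prefactors in \eqref{eq:G} is exactly $\varphi(v_N)^{t}$, this yields $\sum_{x_N<x_{N-1}}\G_{0,t}(\vec y,\vec x)=\G_{0,t}(\vec y_{<N},\vec x_{<N})$. Iterating $N-1$ times reduces $\sum_{\vec x\in\Omega_N}\G_{0,t}(\vec y,\vec x)$ to $\varphi(v_1)^{-t}\sum_{x_1\in\zz}F_{1,1}(y_1,x_1;t)$; writing $F_{1,1}(y_1,x_1;t)=v_1^{x_1-y_1}c_{x_1-y_1}$ with $\varphi(w)^{t}=\sum_n c_n w^n$ the Laurent expansion on the annulus (which converges absolutely at $w=v_1\in(\rhoin,\rhoout)$), this sum equals $\sum_n v_1^{n}c_n=\varphi(v_1)^{t}$, giving the normalization.

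For the semigroup property the key ingredient is the convolution identity $\sum_{z\in\zz}F_{k,m}(x_1,z;t)\,F_{m,\ell}(z,x_2;s)=F_{k,\ell}(x_1,x_2;t+s)$, which follows from \eqref{eq:F_def} and the relations $\E_{-m}\E_m=I$, $\Q^{[-m]}\Q^{[m]}=I$, $\R_t\R_s=\R_{t+s}$, the absolute convergence of the sums being justified exactly as for $F_{k,\ell}$ in the paragraph preceding \eqref{eq:F_formula} (cf. Lem.~\ref{lem:conv}). Because permuting the entries of $\vec z$ changes $\det[F_{k,\ell}(y_k,z_\ell;t)]_{k,\ell}$ by a sign (permutation of columns) and $\det[F_{k,\ell}(z_k,x_\ell;s)]_{k,\ell}$ by the same sign (permutation of rows), their product is a symmetric function of $\vec z$ which vanishes whenever two of the $z_i$ coincide; hence $\sum_{\vec z\in\Omega_N}(\,\cdot\,)=\tfrac1{N!}\sum_{\vec z\in\zz^N}(\,\cdot\,)$. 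Expanding both determinants by the Leibniz formula, performing each sum $\sum_{z_j\in\zz}$ via the convolution identity, and relabeling the two permutations (the product of the signs being that of the composed permutation) collapses the free sum to $\det[F_{k,\ell}(y_k,x_\ell;t+s)]_{k,\ell}$; since the $\prod_i\varphi(v_i)^{-t}$ prefactors multiply to $\prod_i\varphi(v_i)^{-(t+s)}$, this proves $\sum_{\vec z\in\Omega_N}\G_{0,t}(\vec y,\vec z)\G_{0,s}(\vec z,\vec x)=\G_{0,t+s}(\vec y,\vec x)$.

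I expect the main obstacle to be the bookkeeping in the marginalization step: one must choose the correct half-plane for each contour deformation, carry out the one-sided geometric sum there, and then locate precisely where (and with what value) the pole at $w=v_N$ re-enters when the contour is restored, so that the stray $\varphi(v_N)^{t}$ matches the prefactor ratio. The combinatorial vanishing argument in the $t=0$ case and the (routine but somewhat delicate) absolute-convergence estimates — which underpin both the convolution identity and the interchange of summations in the semigroup step — are the other points requiring care.
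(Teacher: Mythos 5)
Your treatment of the first three assertions is correct: the support-plus-Leibniz argument for $\G_{0,0}(\vec y,\vec x)=\uno{\vec y=\vec x}$ is a nice self-contained alternative to the paper's citation of \cite{bp-push}, the marginalization over $x_N$ is exactly the paper's contour computation (shrink, geometric sum, re-expand picking up the residue at $v_N$ only for $k=N$, column operation, cofactor expansion), and the Laurent-coefficient normalization is a harmless variant of the paper's residue argument. The gap is in the semigroup step. The summand $\det\bigl[F_{k,m}(y_k,z_m;t)\bigr]_{k,m}\det\bigl[F_{m,\ell}(z_m,x_\ell;s)\bigr]_{m,\ell}$ is \emph{not} a symmetric function of $\vec z$: permuting the entries of $\vec z$ is not a column permutation of the first matrix (nor a row permutation of the second), because the $(k,m)$ entry depends on the index $m$ not only through the argument $z_m$ but also through the kernel $F_{k,m}$ itself (the factor $\Q^{[-m]}\E_{-m}$ in \eqref{eq:F_def}); for the same reason the summand need not vanish when two entries of $\vec z$ coincide. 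Concretely, take $N=2$, equal speeds, $\varphi(w)=e^{w-1}$, $t=0$, $s>0$ small, $\vec y=(2,0)$, $\vec x=(3,1)$: the summand equals $e^{-2s}\bigl(s^2+\tfrac{s^3}{6}+O(s^4)\bigr)$ at $\vec z=(2,0)$ but $e^{-2s}\bigl(s^2+O(s^4)\bigr)$ at $\vec z=(0,2)$; and with $\vec y=(2,1)$ the first determinant equals $1$ at $z_1=z_2=1$. Hence the replacement $\sum_{\vec z\in\Omega_N}=\tfrac1{N!}\sum_{\vec z\in\zz^N}$ is unjustified; your subsequent Leibniz/Fubini computation (which is fine, and whose entrywise input, the convolution identity $\sum_zF_{k,m}(x_1,z;t)F_{m,\ell}(z,x_2;s)=F_{k,\ell}(x_1,x_2;t+s)$, is correct) evaluates only the free sum, not the sum over the Weyl chamber.

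This is precisely the point the paper's Appendix~\ref{app:convolutions} exists to handle: the plain Andr\'eief identity \eqref{eq:CB} requires entries of the form $\varphi_k(z_m)$ with no extra dependence on the column index, and for the dressed kernels $\E_k\Q^{[k]}\R_t\Q^{[-m]}\E_{-m}$ one needs the generalized Cauchy--Binet identity of Prop.~\ref{prop:Cauchy-Binet_general}, whose proof is not by symmetrization but by repeated summation by parts \eqref{eq:sum_by_parts}, with boundary terms killed by the specific structure ($\det[\Q^{[i]}(z_i,u_j)]$ vanishes for $z_N$ sufficiently small, and two columns coincide when $z_j=z_{j-1}$). The paper's proof of the second identity in \eqref{eq:G_property1} is a single invocation of that proposition with $R_i=\R_t$, $S_j=\R_s$ and $\R_t\R_s=\R_{t+s}$. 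Note that the numerical equality $\sum_{\Omega_N}=\tfrac1{N!}\sum_{\zz^N}$ does in fact hold here, but since your computation shows the right-hand side equals $\det[F_{k,\ell}(y_k,x_\ell;t+s)]$, that equality is equivalent to the semigroup identity you are proving, so it cannot be assumed; to repair the step you must either reproduce the summation-by-parts argument or invoke Prop.~\ref{prop:Cauchy-Binet_general} directly.
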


\begin{proof}
After the change of variable $z\longmapsto 1/ w$ in \eqref{eq:F_formula}, the function $\G_{0, 0} (\vec y, \vec x)$ coincides with the one in \cite[Eqn.~2.2]{bp-push}, and then the proof of $\G_{0, 0} (\vec y, \vec x) = \uno{\vec y = \vec x}$ is contained in the proof of \cite[Prop.~2.1]{bp-push}.

The second identity in \eqref{eq:G_property1} follows from Prop.~\ref{prop:Cauchy-Binet_general}.
To prove the first one we start by noting that for $k\leq N$ the integrand in $F_{k, N}$ in \eqref{eq:F_formula} does not have singularities inside the annulus $A_{\rhoin,\rhoout}$, so we may shrink the contour to $\gamma_\rhoin$ and compute
\begin{align}
\textstyle\sum_{x_N < x_{N-1}}  F_{k, N}(y_k, x_N; t) &=\textstyle \frac{1}{2\pi\I}\oint_{\gamma_\rhoin}\d w\, \frac{(w/v_k)^{y_k} \prod_{i = k+1}^{N} (w - v_i)  \varphi(w)^t}{w^{N - k +1}} \sum_{x_N < x_{N-1}} (w/v_N)^{-x_N}\\
&\textstyle= \frac{1}{2\pi\I}\oint_{\gamma_\rhoin}\d w\, \frac{(w/v_k)^{y_k}}{(w/v_N)^{x_{N-1}}} \frac{\prod_{i = k+1}^{N} (w - v_i)  \varphi(w)^t}{w^{N - k} (v_N - w)}.\label{eq:sum_of_x_N}
\end{align}
Expanding the contour to $\gamma_{\rhoout}$ we only cross a pole at $w=v_N$, and computing the residue we get 
\begin{align}\label{eq:sum_of_x_N2}
	\textstyle\frac{1}{2\pi\I}\oint_{\gamma_{\rhoout}}\d w\, \frac{(w/v_k)^{y_k}}{(w/v_N)^{x_{N-1}}} \frac{\prod_{i = k+1}^{N} (w - v_i)  \varphi(w)^t}{w^{N - k} (v_N - w)} + \uno{k = N} \varphi(v_N)^t.
\end{align}
From this and multilinearity of determinant, $\sum_{x_N < x_{N-1}}\!\G_{0, t}(\vec y, \vec x)$ equals $\prod_{i=1}^N\varphi(v_i)^{-t}$ times the determinant of an $N \times N$ matrix, whose first $N-1$ columns are the same as before while the $N^{\text{th}}$ one has entries $\uno{k = N} \varphi(v_N)^t$ for $1 \leq k \leq N$; the first term in \eqref{eq:sum_of_x_N2} can be removed because it gets canceled by addition of $\left(v_{N} / v_{N-1}\right)^{x_{N-1}}$ times the $(N-1)^{\text{st}}$ column. A cofactor expansion of this determinant along the $N^{\text{th}}$ column gives the first identity in \eqref{eq:G_property1}.

To see that $\G_{0, t}$ integrates to $1$ we apply the first identity in \eqref{eq:G_property1} $N-1$ times to get $\sum_{\vec x \in \Omega_N}\!\G_{0, t}(\vec y, \vec x)$ 
$=\sum_{x_1\in\zz}\G_{0, t}(y_1,x_1) = \varphi(v_1)^{-t} \sum_{x_1\in\zz} F_{1,1}(y_1,x_1;t)$. 
To compute the sum over $x_1 < y_1$, we shrink the radius of the contour to $\rhoin<\min_i v_i$ as before to get $\varphi(v_1)^{-t} \frac{1}{2\pi\I}\oint_{\ts\gamma_\rhoin}\d w\, \frac{\varphi(w)^t}{v_1 - w}$.
For the sum over $x_1 \geq y_1$ we keep the original contour and get $\varphi(v_1)^{-t} \frac{1}{2\pi\I}\oint_{\ts\gamma_{\rhoout}}\d w\, \frac{\varphi(w)^t}{w - v_1}$. 
Summing the two we are left with the last integral on a small contour around $v_1$, and computing the residue we get $1$ as desired.
\end{proof}

The function \eqref{eq:G} defines a measure on particle configurations in a space-time domain. We are interested in its projections to special sets known as \emph{space-like paths},\footnote{The ``space-like paths'' terminology is related to the intepretation of particle systems related to TASEP as growth models, see the explanation in the introduction and Sec.~2.2 of \cite{bp-push}, where it was introduced.} which we introduce now. For $(n_1, t_1), (n_2, t_2) \in \set{N} \times \T$ we write $(n_1, t_1) \prec (n_2, t_2)$ if $n_1 \leq n_2$, $t_1 \geq t_2$ and $(n_1, t_1) \neq (n_2, t_2)$. We write $\fn=(n,t)$ to denote elements of $\set{N} \times \T$.
Then we define the set of \emph{space-like paths} as
\begin{equation}\label{eq:space-like-paths}
\SLP_N = \bigcup_{m \geq 1} \bigl\{(\fn_i)_{i \in \set{m}}\!: \fn_i \in \set{N} \times \T, \fn_i \prec \fn_{i+1}\bigr\}.
\end{equation}
For a space-like path $\cS = \{(n_1, t_1), \dotsc, (n_m, t_m)\} \in \SLP_N$ and for $\vec y \in \Omega_N$ and $\vec x \in \Omega_m$, we set\footnote{\label{ft:timepoints}Here and later we use $\vec x(t_i)$ to parametrize vectors by time points. In particular, we postulate that $\vec x(t_i)$ and $\vec x(t_{i+1})$ are different vectors even if $t_{i} = t_{i+1}$. 
This slight abuse of notation, which makes clear the correspondence between vectors and the associated time points, will simplify the presentation later on.}
\begin{equation}\label{eq:G+}
G^{+}_{\cS} (\vec y, \vec x) = \sum_{\substack{\vec x(t_i) \in \Omega_{n_i} : \\ x_{n_i}(t_i) = x_i, i \in \set{m}}} \G_{0, t_m}(\vec y_{\leq n_m}, \vec x(t_m)) \prod_{i=1}^{m-1} \G_{t_{i + 1}, t_{i}}(\vec x_{\leq n_{i}}(t_{i+1}), \vec x(t_{i})).
\end{equation}
Furthermore, for $T_N \leq \dotsm \leq T_1$ and for $\vec x \in \Omega_N$ and $\vec y \in \zz^N$, we set  
\begin{equation}\label{eq:G-}
G^{-}_{\vT} (\vec y, \vec x)  = \left( \prod_{i = 1}^N \varphi(v_i)^{T_i} \right)  \det \bigl[F_{k, \ell}(y_k, x_\ell; - T_{k})\bigr]_{k, \ell \in \set{N}}.
\end{equation}
In the setting of the Markov chain $X_t$ considered in Sec.~\ref{sec:main}, if we take $v_i \longrightarrow 1$ and $t_i \geq 0$ for all $i$, then $G^{+}_{\cS} (\vec y, \vec x)$ becomes $\pp (\xx_{t_i}(n_i) = x_i, i \in \set{m} | \xx_{0} = \vec y)$, while for $\kappa\geq1$ if we take $T_i = -\kappa (i-1)$ as in Assum.~\ref{a:kappa}, then the function $G^{-}_{\vT} (\vec y, \vec x)$ becomes \eqref{eq:second-property}. 

Convolving \eqref{eq:G+} and \eqref{eq:G-} in the case $T_1 \leq t_m$, we define
\begin{equation}\label{eq:G_TS}
G_{\vT, \cS}(\vec y, \vec x) = \sum_{\vec z \in \Omega_N} \G^{-}_{\vT} (\vec y, \vec z) \G^{+}_{\cS} (\vec z, \vec x).
\end{equation}
Our goal is to obtain a formula for the following integrated version of $\G_{\vT,\cS}$: for $\vec y\in\zz^N$, $\vec a\in\zz^m$,
\begin{equation}\label{eq:mu_fixed}
\CM_{\vT,\cS}(\vec y, \vec a) = \sum_{\substack{\vec x \in \Omega_m : \\ x_i > a_i,  i \in \set{m}}} \G_{\vT,\cS}(\vec y, \vec x).
\end{equation}
If $v_i \longrightarrow 1$, $T_i = -\kappa (i-1)$ and $t_i = t - \kappa (n_i-1) \geq 0$ for each $i$, then $\CM_{\vT,\cS}(\vec y, \vec a)$ becomes the probability \eqref{eq:probability-main}, which follows from the Markov property. 

\subsection{Biorthogonalization}\label{sec:biorth_sub}

The main result of this section provides a Fredholm determinant formula for $\M_{\vT,\cS}(\vec y, \vec a)$ in terms of a kernel constructed out of the solution of a certain biorthogonalization problem.
This type of result was first obtained for continuous time TASEP in \cite{sasamoto,borFerPrahSasam}, and was later extended to other processes in several papers.
Our result is essentially an extension of those (in particular \cite{bp-push,borodFerSas}, where distributions along space-like paths were first studied) to the case of different starting times.
We indicate the differences with these results more precisely below Thm.~\ref{thm:biorth_general}.

Before stating the result we need to introduce a space of functions $\V{n}(\vec v)$.
For fixed $n \in \set{N}$ and given a vector $\vec v$ as above, let $u_1 < u_2 < \dotsm < u_\nu$ denote the distinct values among the first $n$ entries $v_1,\dotsc,v_n$ of $\vec v$ and let $\beta_k$ be the multiplicity of $u_k$ among these entries.
Then we let
\begin{equation}\label{eq:space}
\V{n}(\vec v) = \spanning \bigl\{x \in \zz \longmapsto x^\ell u_k^{x} : 1 \leq k \leq \nu,\; 0 \leq \ell < \beta_k\bigr\}.
\end{equation}
Furthermore, we extend the multiplication operators \eqref{eq:defChis} to $\ell^2(\cS \times \zz)$, for a space-like path $\cS$, as
\begin{equation}
\chi_a((n_j, t_j),x)=\chi_a(n_j,x)\qqand\bP_a((n_j, t_j),x)=\bP_a(n_j,x).
\end{equation}

\begin{thm}\label{thm:biorth_general}
Let the function $\varphi$ and the values $v_i$ satisfy Assum.~\ref{a:speeds}, and fix $T_N \leq \dotsm \leq T_1$ and a space-like path $\cS$, the time points of which are all greater than $T_1$.
Then the function \eqref{eq:mu_fixed} can be written as
\begin{equation}\label{eq:M_formula}
\CM_{\vT,\cS}(\vec y, \vec a) = \det \bigl(\Id- \bP_{a}  \K \bP_{a}  \bigr)_{\ell^2(\cS \times \zz)},
\end{equation}
where $\det$ is the Fredholm determinant, $\Id$ is the identity operator, $\bP_{a} $ is defined in \eqref{eq:defChis}, and:
\begin{enumerate}[label=\uptext{(\arabic*)}]
\item The kernel $\K\!: (\cS \times \zz)^2\longrightarrow\rr$ depends on $\vv T$ and $\vec y$, and is given by
\begin{equation}\label{eq:KernelK}
\K(\fn_i, x_i; \fn_j, x_j)=-\phi^{(\fn_i, \fn_j)}(x_i,x_j) \uno{\fn_i \prec \fn_j} + \sum_{k = 1}^{n_j}\Psi^{\fn_i}_{n_i - k}(x_i)\Phi^{\fn_j}_{n_j - k}(x_j),
\end{equation}
for $\fn_i = (n_i, t_i)$ and $\fn_j = (n_j, t_j)$ in $\cS$.
\item For $\fn_i$ and $\fn_j$ as before, such that $\fn_i \prec \fn_j$, the function $\phi^{(\fn_i, \fn_j)}$ is defined as
\begin{equation}\label{eq:phi}
\phi^{(\fn_i, \fn_j)}(x_i, x_j) = \frac{1}{2\pi\I}\oint_{\gamma_\rhoin}\d w\, \frac{\varphi(w)^{t_i - t_j}}{w^{x_i - x_j - n_j + n_i + 1}} \prod_{k = n_i + 1}^{n_j} (v_k - w)^{-1}.
\end{equation}
\item For $\fn = (n, t) \in\cS$ and $k \in \set n$, the function $\Psi^{\fn}_{n - k}$ is given by
\begin{equation}\label{eq:Psi}
\Psi^{\fn}_{n - k}(x) = \frac{1}{2\pi\I}\oint_{\gamma_\rhoin}\d w\,\frac{\varphi(w)^{t - T_k}}{w^{x - y_k + n - k + 1}} \prod_{i = k+1}^n (v_i - w).
\end{equation}
\item The functions $\Phi^{\fn}_{n - k}$, for $ k \in \set n$ and $\fn = (n,t)$, are uniquely characterized by:
\begin{enumerate}[label=\uptext{(\alph*)}]
\item The biorthogonality relation $\sum_{x\in\zz}\Psi_\ell^{\fn}(x)\Phi_k^{\fn}(x)=\uno{k=\ell}$, for each $k,\ell=0,\dotsc,n-1$.\label{it:biorth}
\item $\spanning\{x \in \zz \longmapsto \Phi^{\fn}_{k}(x) : 0 \leq k < n \} = \V{n}(\vec v)$.\label{it:poly}
\end{enumerate}
\end{enumerate}
\end{thm}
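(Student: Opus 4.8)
The plan is to recognize $\CM_{\vT,\cS}(\vec y,\vec a)$ as the gap probability, at the levels $a_1,\dots,a_m$, of a determinantal point process on $\cS\times\zz$, and then to extract its correlation kernel by an Eynard--Mehta type computation. The algebra is somewhat long and is carried out in Appdx.~\ref{app:biorth}; what follows is the structure of the argument and an indication of how the formulas \eqref{eq:phi}--\eqref{eq:Psi} and the characterization of $\Phi^{\fn}_{n-k}$ arise.

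First I would collapse all of the convolutions in \eqref{eq:G+}--\eqref{eq:mu_fixed}. Each propagator $\G_{s,t}$ is, by \eqref{eq:G}, a scalar multiple of the determinant of the functions $F_{k,\ell}=\E_k\Q^{[k]}\R_{t-s}\Q^{[-\ell]}\E_{-\ell}$, so $G^{+}_{\cS}$ is a nested convolution of determinants of sizes $n_1\le\dots\le n_m$, and $\G_{\vT,\cS}$ is this convolved once more against the size-$N$ determinant $G^{-}_{\vT}$ of \eqref{eq:G-}. Using the generalized Cauchy--Binet identities of Appdx.~\ref{app:convolutions} — in particular Prop.~\ref{prop:Cauchy-Binet_general}, which is tailored to determinants of unequal sizes — each summation over an intermediate configuration is absorbed into a composition of the kernels $\Q$, $\R$, $\E$. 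The outcome is a single ``$L$-ensemble''-type expression for $\G_{\vT,\cS}(\vec y,\vec x)$: a constant times the determinant of a matrix whose rows are indexed by the $N$ columns of the boundary data and whose columns are indexed by the $m$ observation points, supported on $\vec x\in\Omega_m$. Tracking the conjugations by the $\E_i$ together with the admissible contour shifts — which rely on Assum.~\ref{a:speeds} to keep the small-modulus singularities of $\varphi$ inside $\gamma_\rhoin$ — is precisely what produces the contour-integral representations carrying the factors $\prod_k(v_k-w)^{\pm1}$.

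Next I would feed this $L$-ensemble into the standard Eynard--Mehta machinery, used for this type of measure in \cite{sasamoto,borFerPrahSasam,bp-push,borodFerSas}, which yields $\CM_{\vT,\cS}(\vec y,\vec a)=\det(\Id-\bP_a\K\bP_a)_{\ell^2(\cS\times\zz)}$ with a kernel of the form \eqref{eq:KernelK}. In it, $\phi^{(\fn_i,\fn_j)}$ (for $\fn_i\prec\fn_j$) is the transition operator of the process along $\cS$ from $\fn_j$ to $\fn_i$ — the composition of a spatial projection from $n_j$ to $n_i$ coordinates with the time evolution $\G_{t_j,t_i}$ — restricted to the leading coordinates; composing the corresponding $\R$'s and $\Q$'s and collapsing the contour to $\gamma_\rhoin$ gives \eqref{eq:phi}. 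The functions $\Psi^{\fn}_{n-k}$ are the $N$ columns of $G^{-}_{\vT}$ propagated forward to the point $\fn=(n,t)$: the $k$-th such column equals $\bigl(\E_n\Q^{[n]}\R_{t-T_k}\bigr)(\,\cdot\,,y_k)$, which evaluates to \eqref{eq:Psi}. The $\Phi^{\fn}_{n-k}$ are the dual left functions produced by the construction; only the first $n_j$ boundary columns actually enter the sum in \eqref{eq:KernelK}, since the ensemble at $(n_j,t_j)$ lives on $n_j$ particles (equivalently, $\Q^{[-\ell]}$ annihilates the contribution of the higher columns).

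It remains to verify that \ref{it:biorth}--\ref{it:poly} characterize the $\Phi^{\fn}_{n-k}$ uniquely. The Eynard--Mehta construction delivers each $\Phi^{\fn}_{n-k}$ as a linear combination of a fixed basis $g_1,\dots,g_n$ of $\V{n}(\vec v)$ — so \ref{it:poly} is automatic — with coefficients read off from the inverse of the Gram matrix $\bigl[\langle\Psi^{\fn}_\ell,g_r\rangle\bigr]_{\ell,r}$ of $\Psi^{\fn}_0,\dots,\Psi^{\fn}_{n-1}$ against that basis, so \ref{it:biorth} holds and uniqueness within $\V{n}(\vec v)$ follows, provided this Gram matrix is invertible, i.e.\ no nonzero $g\in\V{n}(\vec v)$ is orthogonal to all the $\Psi^{\fn}_\ell$. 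This non-degeneracy is the only genuinely non-formal point, and it is where I expect the main work to lie: pairing $\Psi^{\fn}_\ell$ against a basis function $x\mapsto x^a u_k^{x}$ of $\V{n}(\vec v)$ reduces, by residue calculus in the integral \eqref{eq:Psi}, to evaluating derivatives at $w=u_k$ (with $u_k$ one of the $v_i$) of the integrand there, and one then argues that the resulting $n\times n$ matrix is block-triangular with invertible Vandermonde-type diagonal blocks. Carrying this out uniformly — in particular in the presence of repeated speeds, where $\V{n}(\vec v)$ acquires the polynomial prefactors $x^\ell$, and keeping the unequal-size Cauchy--Binet bookkeeping straight — is the technical crux, and the details are deferred to Appdx.~\ref{app:biorth}.
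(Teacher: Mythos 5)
There is a genuine gap at the very first step of your plan. You propose to ``collapse'' the nested convolutions in \eqref{eq:G+}--\eqref{eq:G_TS} via the Cauchy--Binet identities of Appdx.~\ref{app:convolutions} into a single determinant (an $L$-ensemble) whose columns are indexed by the $m$ observation points. But the intermediate sums in \eqref{eq:G+} are not free sums over Weyl chambers: at each time $t_i$ the configuration $\vec x(t_i)\in\Omega_{n_i}$ is pinned at one coordinate, $x_{n_i}(t_i)=x_i$, and these pinned coordinates are exactly the variables of $\CM_{\vT,\cS}$. Prop.~\ref{prop:Cauchy-Binet_general} and its variants require summation over the \emph{entire} chamber, so they cannot absorb these constrained sums, and indeed the joint law of positions along a space-like path is not a single $m\times m$ determinant (already $\pp(X_t(n)=x)$ for a single tagged particle is a marginal of an $n\times n$ determinant, not a determinant itself). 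The correct move is the opposite of a collapse: one \emph{expands} each determinant of $\tilde F_{k,\ell}$ into a sum over interlacing (Gelfand--Tsetlin-type) arrays using the recursions \eqref{eq:F_prop} (Lems.~\ref{lem:phi_appears}--\ref{lem:GT_to_det}), obtaining in Prop.~\ref{prop:G+_expansion} a signed determinantal measure $\cW$ on the glued array $\D_{\cS}$ of \eqref{eq:DomainD} whose marginal at the pinned coordinates is $\G_{\vT,\cS}$; the Eynard--Mehta machinery (Thm.~\ref{thm:biorth_very_general}, i.e.\ \cite[Thm.~4.2]{bp-push}) is then applied to that measure, not to an $m$-point $L$-ensemble. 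Your identification of $\Psi^{\fn}_{n-k}$ as propagated boundary data and of $\phi^{(\fn_i,\fn_j)}$ as the transition along the path is consistent with what comes out of this, but without the expansion onto $\D_{\cS}$ the kernel formula \eqref{eq:KernelK} and the gap-probability identity \eqref{eq:M_formula} are not reached.

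A secondary difference concerns the non-degeneracy step, which you correctly flag as the crux but defer. In the paper this is the statement that the matrix $M$ of \eqref{eq:matrix_M} is upper triangular and invertible; for distinct speeds this is done by an explicit residue computation giving $M_{k,\ell}=0$ for $\ell<k$ and $M_{k,k}=\varphi(v_k)^{t^k_{c(k)}-T_k}v_k^{y_k}\neq0$, and one also checks that $\det[M^{-1}]$ equals the normalizing constant $C$ of Prop.~\ref{prop:G+_expansion}, so that the normalized measure $\hat\cW$ has total mass $1$ and the Fredholm determinant is a genuine gap probability. Repeated speeds are then handled by analytic continuation in $v_1,\dotsc,v_N$ of both sides of \eqref{eq:M_formula}, rather than by the confluent Vandermonde/block-triangular analysis you sketch; your direct route might be made to work, but the continuation argument avoids it entirely and also yields that the span in \eqref{eq:span_of_phis} equals $\V{n}(\vec v)$ in the confluent case.
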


\begin{rem}\label{rem:biorth_general}
Assum.~\ref{a:speeds} guarantees that $\sum_{x\in\zz}\Psi_\ell^{\fn}(x) x^k v_i^x$ is absolutely convergent for any $k \geq 0$, which makes the statement of the biorthogonality relation valid. More precisely, the contour of integration in \eqref{eq:Psi} yields $|w| = \rhoin < v_i$ and $\sum_{x \leq 0} |\Psi_\ell^{\fn}(x) x^k v_i^x| \leq C_1 \sum_{x \leq 0} |x|^k (v_i / \rhoin)^{x} < \infty$. On the other hand, we can extend the integration contour in \eqref{eq:Psi} to $\gamma_{\rhoout}$, which yields $|w| = \rhoout > v_i$ and $\sum_{x > 0} |\Psi_\ell^{\fn}(x) x^k v_i^x | \leq C_2 \sum_{x > 0} x^k (v_i / \rhoout)^{x} < \infty$.
\end{rem}

The proof of Thm.~\ref{thm:biorth_general} is provided in Appdx.~\ref{app:biorth}.
Although it follows closely the proof of \cite[Prop.~3.1]{bp-push}, there are two important differences in our case which require us to provide a complete proof (beyond the fact that we work with a general choice of $\varphi$): 
1. Only the measure corresponding to \eqref{eq:G+} was biorthogonalized in \cite{bp-push}; the extension to different starting times \eqref{eq:G_TS} which we consider introduces an additional factor $\varphi(w)^{- T_{k}}$ in our formula \eqref{eq:Psi}, and makes the argument a bit more complicated.
2. Our choice of $\phi^{(\fn_i,\fn_j)}$ in \eqref{eq:phi} is slightly different from the one in \cite{bp-push}.
More precisely, taking $v_1 = \dotsm = v_N = 1$, $\phi^{((n,t),(n+1,t))}$ simplifies to $\uno{x_1 > x_2}$, while the respective function in \cite[Prop.~3.1]{bp-push} is given by $\uno{x_1 \leq x_2}$.
Our choice of these functions will be more convenient for the explicit biorthogonalization which we will provide in Sec.~\ref{sec:biorth}.

\begin{rem}\label{rem:conj}
Without changing the value of the function in \eqref{eq:G}, we can conjugate the matrix by $c^{x}$ for any $c \neq 0$, and consider the function $c^{y - x} F_{k, \ell}(y, x; t)$. Then the statement of Thm.~\ref{thm:biorth_general} holds in the same form, with all functions conjugated by $c^x$.
\end{rem}

If we take the limit $v_i \longrightarrow 1$ for all $i$, this theorem can be applied to compute the probability \eqref{eq:probability-main}.

\begin{cor}\label{cor:biorth_caterpillars}
In the setting of Thms.~\ref{thm:main} and \ref{thm:main2}, suppose that Assum.~\ref{a:phi} holds, let $\rhoin$ and $\rhoout$ be as in that assumption, and suppose that either $\kappa=0$ or, for some choice of $\kappa\geq1$ and some given initial state $\vec y$, Assum.~\ref{a:kappa} also holds.
Then for any $t \geq \kappa(n_m-1)$, $\theta\in(\rhoin,\rhoout)$ and $\vec a \in\zz^m$ we have
\begin{equation}\label{eq:M_formula_caterpillars}
\pp \bigl(\xx_{t - \kappa (n_i-1)}(n_i) > a_{i},\, i \in \set{m}\, \big|\, X_{- \kappa (N-1)} = \vec y, \CE_\kappa\bigr) = \det \bigl(\Id- \bP_{a}  \K \bP_{a}  \bigr)_{\ell^2(\{n_1,\dotsc,n_m\} \times \zz)}
\end{equation}
with
\begin{align}\label{eq:K_equal_speeds}
\K(n_i, x_i; n_j, x_j) &=-Q^{n_j - n_i}(x_i,x_j) \uno{n_i < n_j} + \sum_{k = 1}^{n_j}\Psi^{n_i}_{n_i - k}(x_i)\Phi^{n_j}_{n_j - k}(x_j),\\
Q^{n}(x_i, x_j) &= \frac{1}{2\pi\I}\oint_{\gamma_{\rhoin}} \d w\, \frac{\theta^{x_i - x_j}}{w^{x_i - x_j + 1}} \left( \frac{w \varphi(w)^{\kappa}}{1 - w} \right)^n,\label{eq:Q_equal_speeds}\\
\Psi^{n}_{n-k}(x) &= \frac{1}{2\pi\I}\oint_{\gamma_{\rhoin}}\d w\,\frac{\theta^{x - y_k} \varphi(w)^{t}}{w^{x - y_k + 1}} \left( \frac{1 - w}{w \varphi(w)^{\kappa}} \right)^{n-k},\label{eq:Psi_equal_speeds}
\end{align}
where $\varphi$ is from Assum.~\ref{a:phi}, and the functions $\Phi^{n}_{n-k}$, for $k \in \set n$, are uniquely characterized by:
\begin{enumerate}[label=\uptext{(\arabic*)}]
\item\label{it:biorth-cor} The biorthogonality relation $\sum_{x\in\zz}\Psi_\ell^{n}(x)\Phi_k^{n}(x)=\uno{k=\ell}$, for each $k,\ell=0,\dotsc,n-1$;
\item $\spanning\{x \in \zz \longmapsto \Phi^{n}_{k}(x) : 0 \leq k < n\} = \spanning\{x \in \zz \longmapsto x^k \theta^{x} : 0 \leq k < n\}$.
\end{enumerate} 

\noindent In particular, in the setting of Thm.~\ref{thm:main}, the right hand side of \eqref{eq:M_formula} with $K$ defined by \eqref{eq:K_equal_speeds} with $\kappa = 0$ gives a formula for $\pp \bigl(\xx_{t}(n_i) > a_{i},\, i \in \set{m}\, \big|\, X_{0} = \vec y\bigr)$.
Moreover, if the additional condition \eqref{eq:backward-in-time-one} holds, then \eqref{eq:M_formula_caterpillars} holds for $t \geq 0$.
\end{cor}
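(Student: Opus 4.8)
The plan is to obtain Corollary~\ref{cor:biorth_caterpillars} by applying Theorem~\ref{thm:biorth_general} with all speeds equal to $1$. The first point to note is that, taking $v_1=\dots=v_N=1$, Assumption~\ref{a:speeds} reduces precisely to Assumption~\ref{a:phi} (as remarked right after Assumption~\ref{a:speeds}), so Theorem~\ref{thm:biorth_general} is available in this case; equivalently, one may take the limit $v_i\to1$ in its conclusion, since both sides of \eqref{eq:M_formula} are continuous in $\vec v$. I would invoke it with the backward times $\vT=(-\kappa(i-1))_{i\in\set N}$, so that $T_N\leq\dots\leq T_1=0$, and with the space-like path $\cS=\{\fn_i=(n_i,\,t_i):i\in\set m\}$, $t_i\coloneqq t-\kappa(n_i-1)$. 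Since $n_1<\dots<n_m$ and $t\geq\kappa(n_m-1)$, one has $t_1\geq\dots\geq t_m\geq0=T_1$ and $\fn_i\prec\fn_{i+1}$, so $\cS\in\SLP_N$ and $T_1\leq t_m$, i.e.\ all hypotheses of Theorem~\ref{thm:biorth_general} hold.

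The next step is to identify both sides of \eqref{eq:M_formula} in this specialization. For the left-hand side: with $v_i=1$ the function $\G_{0,t}$ of \eqref{eq:G} is the transition probability \eqref{eq:G-main} (on $N$, or on fewer, particles), so by Lemma~\ref{lem:G_properties} (the identity $\G_{0,0}=\uno{\vec y=\vec x}$, the hierarchy \eqref{eq:chierarchy}, and the semigroup property) together with the Markov property the function $G^+_{\cS}(\vec z,\vec x)$ of \eqref{eq:G+} equals $\pp\bigl(\xx_{t_i}(n_i)=x_i,\,i\in\set m\,\big|\,X_0=\vec z\bigr)$. For $\kappa\geq1$ the function $G^-_{\vT}(\vec y,\vec z)$ of \eqref{eq:G-} coincides with the right-hand side of \eqref{eq:second-property}, which by Assumption~\ref{a:kappa}\ref{it:one} equals $\pp\bigl(X_0=\vec z\,\big|\,X_{-\kappa(N-1)}=\vec y,\CE_\kappa\bigr)$; for $\kappa=0$ one has $\vT=(0,\dots,0)$, hence $G^-_{\vT}=\G_{0,0}=\uno{\vec y=\vec z}$, and $\CE_0$ is vacuous. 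Convolving through \eqref{eq:G_TS}--\eqref{eq:mu_fixed}, $\CM_{\vT,\cS}(\vec y,\vec a)$ is exactly the conditional probability on the left of \eqref{eq:M_formula_caterpillars}. For the kernel: I would specialize the contour integrals \eqref{eq:phi} and \eqref{eq:Psi} to $v_i=1$ and substitute $t_i-t_j=\kappa(n_j-n_i)$ and $t_i-T_k=t-\kappa(n_i-k)$; moving the factor $w^{n_j-n_i}$ (resp.\ $w^{-(n_i-k)}$) into the bracket identifies $\phi^{(\fn_i,\fn_j)}$ and $\Psi^{\fn_i}_{n_i-k}$ with $\theta^{x_j-x_i}Q^{n_j-n_i}$ and $\theta^{y_k-x}\Psi^{n_i}_{n_i-k}$, where $Q$ and $\Psi$ are as in \eqref{eq:Q_equal_speeds}--\eqref{eq:Psi_equal_speeds}. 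Hence, after conjugating the kernel \eqref{eq:KernelK} by a suitable power of $\theta$ (as allowed by Remark~\ref{rem:conj}) and absorbing the $\vec y$-dependent constants $\theta^{\mp y_k}$ into the normalization of the biorthogonal pair $\Psi^{\fn}_\ell,\Phi^{\fn}_\ell$, the kernel takes the form \eqref{eq:K_equal_speeds}, the relation $\fn_i\prec\fn_j$ becomes $n_i<n_j$, the conditions in part~(4) of Theorem~\ref{thm:biorth_general} turn into the biorthogonality and span characterizations of the corollary (using that $\V{n}((1,\dots,1))$ is the space of polynomials of degree $<n$, so that the corresponding span of the $\Phi^n_k$'s is $\spanning\{x^k\theta^x:0\leq k<n\}$), and \eqref{eq:M_formula} becomes \eqref{eq:M_formula_caterpillars}. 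Specializing to $\kappa=0$, where $\CE_0$ is vacuous and $t_i=t\geq0$ for all $i$, yields the ``in particular'' assertion: the formula for $\pp\bigl(\xx_{t}(n_i)>a_i,\,i\in\set m\,\big|\,X_0=\vec y\bigr)$, i.e.\ the implicit-kernel version of \eqref{eq:probability-main-kappa=0}.

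It remains to extend \eqref{eq:M_formula_caterpillars} to all $t\geq0$ when the extra hypothesis \eqref{eq:backward-in-time-one} holds. When $0\leq t<\kappa(n_m-1)$, some of the observation times $t_i=t-\kappa(n_i-1)$ are negative, so $T_1=0>t_m$ and the convolution \eqref{eq:G_TS} (hence Theorem~\ref{thm:biorth_general}) is no longer directly applicable. The plan here is to first evolve all $N$ particles forward under the dynamics \eqref{eq:G-main} to a common time $\tau\geq\kappa(N-1)$ (legitimate by the semigroup property of Lemma~\ref{lem:G_properties}), and then to use \eqref{eq:backward-in-time-one}, which encodes running the few-particle subsystems backwards in time through $F_{i-j}(\cdot\,,-t)$, to rewrite the joint law of the $(\xx_{t_i}(n_i))_{i\in\set m}$ as a quantity of the form $\CM_{\vT',\cS'}$ with a modified backward-time vector $\vT'$ reaching a time $\leq t_m$ and a modified space-like path $\cS'$ whose time points exceed $T'_1$; Theorem~\ref{thm:biorth_general} then applies, and one checks that the resulting kernel is again \eqref{eq:K_equal_speeds}. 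I expect this last step to be the main obstacle: one must verify that \eqref{eq:backward-in-time-one} produces exactly a factor of the $G^-$ type \eqref{eq:G-} (with the extended horizon), so that Theorem~\ref{thm:biorth_general} genuinely becomes applicable, and that the conjugation/rescaling identification of the previous paragraph survives unchanged; the rest is bookkeeping. This argument runs parallel to the extension to $t\geq0$ stated in Theorem~\ref{thm:main2}.
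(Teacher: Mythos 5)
For the main case $t\geq\kappa(n_m-1)$ your argument is essentially the paper's own proof: apply Thm.~\ref{thm:biorth_general} with $v_1=\dotsm=v_N=1$, $T_i=-\kappa(i-1)$ and the space-like path with time points $t-\kappa(n_i-1)$; identify $\CM_{\vT,\cS}(\vec y,\vec a)$ with the conditional probability on the left of \eqref{eq:M_formula_caterpillars} via the Markov property together with Assum.~\ref{a:kappa}\ref{it:one} (resp.\ $G^-_{\vT}(\vec y,\cdot)=\uno{\vec y=\cdot}$ when $\kappa=0$); specialize the contour formulas \eqref{eq:phi}--\eqref{eq:Psi} and conjugate by $\theta^{x_i-x_j}$, absorbing the $k$-dependent constants $\theta^{\pm y_k}$ into the biorthogonal pair. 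This is exactly the paper's route, and your computations of the exponents ($t_i-t_j=\kappa(n_j-n_i)$, $t_i-T_k=t-\kappa(n_i-k)$) reproduce \eqref{eq:Q_equal_speeds}--\eqref{eq:Psi_equal_speeds} correctly.

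The one place where your plan would not go through as written is the extension to $0\leq t<\kappa(n_m-1)$ under \eqref{eq:backward-in-time-one}. You propose to evolve all particles to a common late time and then repackage the backward steps supplied by \eqref{eq:backward-in-time-one} into a factor ``of the $G^-$ type'' with modified data $(\vT',\cS')$, so that Thm.~\ref{thm:biorth_general} applies verbatim. That reduction is not available: the backward determinants enter \emph{after} the forward evolution to the common time, and they act on systems with successively fewer particles (one particle is dropped at each backward step), so the resulting convolution is not of the form \eqref{eq:G_TS}, i.e.\ a single $G^-_{\vT'}$ followed by a path of the form \eqref{eq:G+} in which only \emph{forward} increments appear and the particle count decreases as time \emph{increases}. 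What the paper does instead (working with $m=N$, $n_i=i$ and discarding unwanted particles by sending the corresponding $a_i\to-\infty$) is to replace \eqref{eq:G+} by a modified $G^+_{\cS}$ that evolves all $N$ particles forward to the largest time $t_1$ and then steps the remaining particles back in time one by one; \eqref{eq:backward-in-time-one}, applied recursively, identifies this object with the desired multi-time probability, and the biorthogonalization of Appdx.~\ref{app:biorth} must then be redone for this time-reversed path structure (the paper only asserts this ``by analogy''). You correctly flag this step as the main obstacle, but closing it requires reworking the biorthogonalization argument for the reversed path rather than bookkeeping inside Thm.~\ref{thm:biorth_general} as stated.
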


In the corollary we have introduced an additional conjugation by $\theta^x$ in the kernel coming from \eqref{eq:KernelK} (see Rem.~\ref{rem:conj}).
This will be convenient in Sec.~\ref{sec:biorth}. As in Rem.~\ref{rem:biorth_general} we see that for $\rin<\theta< \rout$ the sum in \ref{it:biorth-cor} is convergent absolutely. 

\begin{proof}
Fix $t \geq \kappa (n_m-1)$.
Applying Thm.~\ref{thm:biorth_general} with starting times $T_i = - \kappa(i-1)$ and speeds $v_1 = \dotsm = v_N = 1$ to the space-like path $\cS = \{(n_i, t - \kappa (i-1)) : 1 \leq i \leq m \}$ with $t \geq \kappa (n_m-1)$, yields \eqref{eq:M_formula_caterpillars} with the kernel $\K$ replaced by the kernel $\wt\K(n_i, x_i; n_j, x_j) =-\phi^{(n_i, n_j)}(x_i,x_j) \uno{n_i < n_j} + \sum_{k = 1}^{n_j}\wt\Psi^{n_i}_{n_i - k}(x_i)\wt\Phi^{n_j}_{n_j - k}(x_j)$ with
\begin{align}
\textstyle \wt\phi^{(n_i, n_j)}(x_i, x_j) &\textstyle = \frac{1}{2\pi\I}\oint_{\gamma_\rhoin}\d w\, \frac{\varphi(w)^{\kappa(n_j-n_i)}}{w^{x_i - x_j - n_j + n_i + 1}}(1 - w)^{n_i-n_j},\label{eq:phi-pf-cat}\\
\textstyle \wt\Psi^{n}_{n-k}(x) &\textstyle = \frac{1}{2\pi\I}\oint_{\gamma_\rhoin}\d w\,\frac{\varphi(w)^{t + \kappa (k - 1)}}{w^{x -y_k + n - k + 1}}(1 - w)^{n-k},\label{eq:Psi-pf-cat}
\end{align}
and where the functions $\wt\Phi^{n}_{n-k}$, $ k \in \set n$, are characterized by $\sum_{x\in\zz} \wt\Psi_\ell^{n}(x) \wt\Phi_k^{n}(x)=\uno{k=\ell}$ for $k,\ell=0,\dotsc,n-1$, together with $\spanning\{x \in \zz \longmapsto \wt\Phi^{n}_{k}(x) : 0 \leq k < n \} = \V{n}(\vec v)$; note that $\V{n}$, defined in \eqref{eq:space}, here equals $\spanning\{x \in \zz \longmapsto x^k\!: 0 \leq k < n\}$.
Multiplying \eqref{eq:phi-pf-cat} by $\theta^{x_i - x_j}$ yields $Q^{n_j-n_i}(x_i,x_j)$. Multiplying by $\theta^{x - y_k}$ in \eqref{eq:Psi-pf-cat} we get \eqref{eq:Psi_equal_speeds}.
Therefore, multiplying the kernel $\wt\K$ by $\theta^{x_i - x_j}$ we obtain the kernel $\K$ in \eqref{eq:K_equal_speeds}, where the functions $\Phi^{n_j}_{n_j - k}(x_j)$ are equal to $\theta^{y_k - x_j}$ times the functions $\wt\Phi^{n_j}_{n_j - k}(x_j)$ defined above, which implies the listed properties.

Next we explain how \eqref{eq:M_formula_caterpillars} can be extended to the case $t < \kappa (n_m-1)$ (with $\kappa\geq1$) under the additional assumption \eqref{eq:backward-in-time-one}.
It is more convenient to do this for the distribution of all particles, i.e., in the case $m = N$ and $n_i = i$ for $i \in \set{N}$; the unnecessary particles can then be eliminated by taking respective values of $a_i$ to be $-\infty$.
For $\cS$ defined as before with $m = N$, instead of \eqref{eq:G+} we define the function 
\begin{equation}
G^{+}_{\cS} (\vec y, \vec x) = \sum_{\substack{\vec x(t_i) \in \Omega_{N - i + 1} : \\ x_{1}(t_{i}) = x_i, i \in \set{N}}} \G_{0, t_1}(\vec y, \vec x(t_1)) \prod_{i=1}^{N-1} \G_{t_i, t_{i + 1}}(\vec x_{> i}(t_{i}), \vec x(t_{i+1})),
\end{equation}
where $\vec y, \vec x \in \Omega_N$ are fixed. Using this function we define $G_{\vT, \cS}$ by \eqref{eq:G_TS}, where in this case we assume $T_1 \leq t_1$.
Applying \eqref{eq:backward-in-time-one} recursively one sees that $G_{\vT, \cS}(\vec y, \vec x)=\pp \bigl(\xx_{t - \kappa (i-1)}(i) = x_{i},\, i \in \set{N}\, \big|\, X_{- \kappa (N-1)} = \vec y, \CE_\kappa\bigr)$ by first evolving all particles up to time $t_1$ and then moving the particles one-by-one ``back in time''.
The biorthogonalization of this function can be proved by analogy with \eqref{eq:M_formula}.
\end{proof}

\subsection{Orthogonal polynomials perspective}

Thm.~\ref{thm:biorth_general} allows us to compute the marginals $\M_{\vT, \cS}(\vec y, \vec a)$ of $\G_{\vT,\cS}$ in terms of the \emph{biorthogonal ensemble}\footnote{This is the (signed) determinantal point process on a certain space of Gelfand-Tsetlin patterns having $K$ as its correlation kernel; see Appdx. \ref{app:biorth} for more details.} associated to $K$.
In applications to the classical particle systems considered in Secs.~\ref{sec:caterpillars} and \ref{sec:continuousTASEP}, the functions making up the kernel are related to classical families of orthogonal polynomials, and it is instructive to spell out in some detail what the biorthogonalization problem means in those cases.
We do this next, focusing on models with block dynamics (for push dynamics one sees the same polynomials but supported on the negative integers).

\subsubsection{Charlier polynomials}

As explained in Sec.~\ref{sec:TASEP}, continuous time TASEP corresponds to the model in Cor.~\ref{cor:biorth_caterpillars} with $\kappa=0$ and $\varphi(w)=e^{w-1}$. In this case the functions \eqref{eq:Psi_equal_speeds} can be written in terms of \emph{Charlier polynomials} (see \cite[Eqn.~7.4]{borFerPrahSasam} or \cite[Eqn.~9.14.9]{hypergeomOrthPolyn}), which are the family of discrete orthogonal polynomials $C_k(x, t)$ with respect to the Poisson weight $w_{t}(x) = e^{-t} \frac{t^x}{x!} \uno{x \geq 0}$ (with the usual normalization $C_k(x, t)=(-1/t)^kx^k+\dotsm$). From \cite[Eqn.~9.14.1]{hypergeomOrthPolyn} we readily conclude $C_k(x, t) = C_x(k, t)$. Then the contour integral formula for Charlier polynomials $C_k(x, t) = \frac{x!}{2\pi\I t^x}\oint_{\gamma_\rin}\d w\,\frac{(1 - w)^k}{w^{x + 1}} e^{t w}$, which follows from \cite[Eqn.~9.14.11]{hypergeomOrthPolyn}, leads to
\begin{equation}
\Psi^n_k(x)= \theta^{y_{n-k}-x} f_k(x-y_{n-k})\qquad\uptext{with}\quad  f_k(x)=C_k(x+k, t)w_{t}(x+k).\label{eq:TASEPpsi}
\end{equation}
For TASEP we can thus rephrase the biorthogonalization problem of Cor.~\ref{cor:biorth_caterpillars} as follows: 
\begin{quote}
Given a family of \emph{shifted Charlier functions} $\Psi^n_k(x)=f_k(x-y_{n-k})$, $k=0,\dotsc,n-1$, with $f_k$ as in \eqref{eq:TASEPpsi}, find a family of functions $\{\Phi^n_k\}_{k=0,\dotsc,n-1}$ on $\zz$, such that $\theta^{x}\Phi^n_k(x)$ is a polynomial of degree $k$, and $\{\Phi^n_k\}_{k=0,\dotsc,n-1}$ are biorthogonal to $\{\Psi^n_k\}_{k=0,\dotsc,n-1}$ (in the sense of \ref{it:biorth-cor} in Cor.~\ref{cor:biorth_caterpillars}).
\end{quote}
The solution to this biorthogonalization problem depends, of course, on the initial positions of the TASEP particles $(y_1,\dotsc,y_N)$.
The simplest choice in this setting is the \emph{packed} (also referred to as \emph{step}) initial condition $y_i=-i$.
In this case we get $\Psi^n_k(x)=  \theta^{k - n-x} C_k(x+n, t)w_t(x+n)$, and hence by definition the biorthogonalization problem is solved by the Charlier polynomials themselves: $\Phi^n_k(x) = \theta^{x - k + n} C_k(x+n, t)$. 

The packed initial condition had actually been solved earlier using different arguments (see e.g., \cite{johanssonShape}).
The goal of the authors in \cite{sasamoto,borFerPrahSasam} was to solve the \emph{periodic} initial condition $y_i=-2i$, and the biorthogonalization method introduced in those papers allowed the authors to achieve this by solving for the biorthogonal functions explicitly (see \cite[Appx.~2]{borFerPrahSasam}).
The solution for general (one-sided) initial data was discovered in \cite{fixedpt},  based on some additional properties of TASEP (its time reversal invariance together with the existence of the so-called \emph{path-integral} version of the Fredholm determinant formula, see \cite[Appx. D]{fixedpt} and \cite{bcr}) to produce an ansatz for the $\Phi^n_k$'s.
The goal of Sec.~\ref{sec:biorth} is to derive this solution in a much more general framework; for details about where the ansatz comes from we refer to \cite[Sec.~2.1]{fixedpt}.

\subsubsection{Krawtchouk polynomials}

In the case of discrete time TASEP with sequential update and Bernoulli jumps considered in Prop.~\ref{prop:caterpillars} we have $\kappa=0$ and $\varphi(w) = q + p w$.
Then the rephrasing of the biorthogonalization problem stated in the previous section holds for this model with Charlier polynomials replaced by \emph{Krawtchouk polynomials} \cite[Eqn.~9.11.1]{hypergeomOrthPolyn}, which are now orthogonal with respect to the binomial weight $w_{t}(x) = \binom{t}{x}p^x q^{t-x}$. From \cite[Eqn.~9.11.1]{hypergeomOrthPolyn} we have $K_n(x, p, T) = K_x(n, p, T)$. From this identity and \cite[Eqn.~9.11.11]{hypergeomOrthPolyn} we get the formula $ {T \choose x} K_n(x, p, T) = \frac{q^{n + x - N}}{p^n} \frac{1}{2\pi\I}\oint_{\gamma_\rin}\d w\,\frac{(1 - w)^n}{w^{x + 1}} (q + p w)^{T - n}$, which yields 
\begin{equation}
\Psi^n_k(x)= q^{-k} \theta^{y_{n-k}-x} f_k(x-y_{n-k})\qquad\uptext{with}\quad  f_k(x)=K_k(x+k, p, t+k) w_{t+k}(x+k).
\end{equation}
In the case of the periodic initial condition $y_i = -d (i-1)$ for $d \geq 2$ the functions $\Phi^n_k$ were computed in \cite{bfp}.

\subsubsection{Meixner polynomials}

TASEP with right geometric jumps, considered in Prop.~\ref{prop:RG_distribution}, is related to \emph{Meixner polynomials}. In this case we have $\kappa=0$ and $\varphi(w) = p / (1 - q w)$, while the weight is given by $w_{t}(x) = \binom{t+x-1}{x} q^{x}$. 
\cite[Eqns.~9.10.1,9.10.11]{hypergeomOrthPolyn} yield $w_{t}(x) M_n(x, t, q) = \frac{1}{2\pi\I}\oint_{\gamma_\rin}\d w\,\frac{(1 - w)^n}{w^{x + 1}} (1 - q w)^{-n-t}$, where the integration contour $\gamma_\rin$ does not include the pole at $w=1/q$. Then we have 
\begin{equation}
\Psi^n_k(x)= p^t \theta^{y_{n-k}-x} f_k(x-y_{n-k})\qquad\uptext{with}\quad  f_k(x)=M_k(x+k, t-k, q) w_{t-k}(x+k).
\end{equation}

\subsubsection{Hermite polynomials}

Systems of one-sided reflected Brownian motions also fall into the described framework, although, the state space of particles in this model is $\rr$ rather than $\zz$ (see \cite{Mihai} for more details). 
As explained in \cite[Rem.~5.2]{Mihai}, the functions $\Psi^n_k$ are equal in this model to shifted Hermite polynomials.

\section{An explicit biorthogonalization scheme}
\label{sec:biorth}

In this section we turn to the main goal of this paper, which is to develop a general scheme to, first, solve explicitly a version of the biorthogonalization problem defining the kernels in Sec.~\ref{sec:biorth_sub} and, second, rewrite the resulting kernel in a form which is in principle suitable for asymptotics, as was done in the particular case of continuous time TASEP in \cite{fixedpt}.

We will do this in a setting which is slightly different from the general one in Sec.~\ref{sec:measures}.
In fact, throughout the section we will focus only on kernels with a certain structure, and not on the general measures from which they arise in that section (in particular, the results here will be independent of those in Sec.~\ref{sec:measures}).
The kernels which we choose to work with will allow us to handle the setting of Cor.~\ref{cor:biorth_caterpillars} when $Q$ is of a specific form (satisfied by all the particle systems which we  consider), and will allow us to to prove Thms.~\ref{thm:main} and \ref{thm:main2} (the application to that section is presented in Sec.~\ref{sec:main-proof}).
But they will be presented and studied in a more general form, which will in particular also allow us to cover some situations---such as TASEP with right geometric jumps with sequential update (Sec.~\ref{sec:rightGeometric}) or with generalized update (Secs.~\ref{sec:Povolotsky-discrete} and \ref{sec:Povolotsky-continuous})---which are not covered by the setting of Sec.~\ref{sec:main}.
The extension to kernels corresponding to particles with different speeds and more general starting and ending times is left for future work.

\subsection{Setting}\label{sec:setting}

The general family of kernels which we will be interested in is made out of two main ingredients. The first one is a (strictly) positive measure on $\zz$, which we denote by $(q_i)_{i\in\zz}$, and which satisfies:

\begin{assumption}\label{assum:q}
There is a $\kappa\in\nn_0$ and a $\theta\in(0,1)$ such that:
\begin{enumerate}[label=\uptext{(\roman*)}]
\item $q_i=1$ for all $i>\kappa$,
\item $\sum_{i\in\zz}q_i\theta^i<\infty$.
\end{enumerate}
\end{assumption}

The geometric sequence $\theta^i$ will be used to normalize the measure defined by the $q_i$'s.
In applications to scaling limits, $\theta$ is related to the density of particles in the initial conditions under consideration, see Rem. \ref{rem:fp-dens}.
Using the $q_i$'s we introduce the following Laurent series:
\begin{equation}
a(w)=\sum_{i=-\infty}^\kappa(q_{i+1}-q_i)w^i.\label{eq:def-a}
\end{equation}

The second ingredient is a complex function $\psi$. We will make the following assumption on $a$ and $\psi$:

\begin{assumption}\label{assum:apsi}
There are radii $\rin$ and $\rout$ satisfying and $0<\rrin<\theta<1<\rout$ (with $\theta$ given in Assum.~\ref{assum:q}) such that $a(w)$ is analytic on $\{w\in\cc\!:|w|\geq\rrin\}$ while $1/a(w)$, $\psi(w)$ and $1/\psi(w)$ are analytic and non-zero on the annulus $A_{\rin,\rout}$.
\end{assumption}

Recall that, by the convention introduced in Sec.~\ref{sec:notat}, the assumption implies that $a(w)$ is actually analytic on an open domain $\{w\in\cc\!:|w|>\rrin-\ep\}$ for some $\ep>0$, and similarly that each of the last three functions are analytic on an open annulus $\{w\in\cc\!:\rin-\ep<|w|<\rout+\ep\}$ for some $\ep>0$.

Throughout the rest of this section we will assume that the two preceding assumptions are satisfied.

\begin{rem}
In the setting of Sec.~\ref{sec:main}, the $q_i$'s from Assum.~\ref{assum:q} are those appearing in \eqref{eq:Qalphaq-intro}, while $\theta$ plays the same role as in that section.
The complex function $\psi(w)$, on the other hand, plays the role of $\varphi(w)^{t}$ in \eqref{eq:F-main}.
Hence in that context, both the $q_i$'s and $\psi$ are determined by the function $\varphi$.
The setting of this section extends that of Sec.~\ref{sec:main} by decoupling that dependence.
\end{rem}

\begin{rem}\label{rem:rrin-new} 
The arguments of this section can be extended, with no essential difference, to the case where only $a$ is asked to be analytic in an annulus including $\theta$ (i.e., allowing $\psi$ to be analytic and non-zero in an annulus $A_{\rhoin,\rout}$ for some $\rho<1$ which is not necessarily smaller than $\theta$).
This extension may be useful in the application to some models with $\kappa\geq1$ if one wants the kernels of the form \eqref{eq:kernel-main} to be defined with as broad a range of parameters $r$ and $\theta$ as possible.
But it yields no improvement in any of the cases we are interested in, and in any case similar extensions can be achieved by deriving the kernels under our assumptions and then extending the validity of the final answer directly to a broader range of parameters (see e.g. the proof of Thm.~\ref{thm:kernel-rw} and Rem.~\ref{rem:assumext}).
Therefore, and since they lead to a cleaner presentation, we have opted to work with these slightly more restricted assumptions.
\end{rem}

Out of the two ingredients we just introduced we will construct the kernels which will show up in the general result of this section.
We begin with the kernel associated to the measure $(q_i)_{i\in\zz}$.
Note that $a(\theta)=\theta^{-1}(1-\theta)\sum_{i\in\zz}\theta^iq_i>0$, so in particular we may define
\begin{equation}\label{eq:alpha}
\alpha=\frac{1-\theta}{a(\theta)\theta}=\frac{1}{\sum_{i\in\zz}\theta^iq_i}.
\end{equation}
Note also that $a(1)=1$.
We introduce a Markov transition matrix $Q$ on $\zz$ built out of $\theta$ and the $q_i$'s as follows:
\begin{equation}\label{eq:defQ}
Q(x,y)=\alpha\tts\theta^{x-y}q_{x-y}.
\end{equation}
Our assumption on the $q_i$'s means that 
\begin{equation}
 Q(x,y)=\alpha\tts\theta^{x-y}\quad\text{for}~~x-y>\kappa,\label{eq:Qgeom}
\end{equation} that is, jumps to the left of size larger than $\kappa$ are geometrically distributed, with parameter $1-\theta$.

An important special case is $q_i=\uno{i\geq1}$, for which the above definition means that $a(w)\equiv1$, $\alpha=(1-\theta)\theta^{-1}$, and $Q=\tilde Q_0$ with $\tilde Q_0(x,y)=(1-\theta)\theta^{x-y-1}\uno{x>y}$, which is the transition matrix of a random walk which takes Geom$[1-\theta]$ steps to the left.
$Q$ can be thought of as a version of the transition matrix $\tilde Q_0$ where the transition probabilities for steps of size greater than or equal to $-\kappa$ are modified arbitrarily (with the only restriction, from Assum.~\ref{assum:q}, that $\sum_{\ell\geq-\kappa} Q(0,\ell)$ be finite).

A useful way to think of $Q$ is as follows:
\begin{equation}
\label{eq:AQ0}
Q=AQ_0=Q_0A
\end{equation}
with
\begin{equation}
A(x,y)=\alpha\tts\theta^{x-y}(q_{x-y+1}-q_{x-y})\label{eq:defA}
\end{equation}
and
\begin{equation}\label{eq:Q0}
Q_0(x,y)=\theta^{x-y}\uno{x>y}
\end{equation}
(this follows directly from a telescopic sum, using that $q_\ell\longrightarrow0$ as $\ell\to-\infty$ by Assum.~\ref{assum:q}).
$Q_0$ is an unnormalized version of the transition kernel of the pure geometric random walk $\tilde Q_0$ introduced in the last paragraph.
On the other hand, if the $q_i$'s are non-decreasing then, modulo normalization, $A$ is also the transition kernel of a random walk, in which case $Q$ can be thought of as the transition kernel of the random walk obtained by convolving the other two kernels.

$Q$ and $A$ can also be expressed through the following contour integral formulas:
\begin{equation}\label{eq:contourQA}
Q(x,y) = \frac{\alpha}{2\pi\I}\oint_{\gamma_\rrin}\d w\,\frac{\theta^{x-y}}{w^{x-y}}\frac{a(w)}{1-w},\qquad
A(x,y) = \frac{\alpha}{2\pi\I}\oint_{\gamma_\rrin}\d w\,\frac{\theta^{x-y}}{w^{x-y+1}}a(w)
\end{equation}
(we prove these identities in Prop.~\ref{prop:AQRinv} below).
Note that the first kernel coincides with $Q$ from Cor.~\ref{cor:biorth_caterpillars} if $a(w)=\varphi(w)^\kappa$.

Similarly, using now the function $\psi$ we define the kernel
\begin{equation}
\R(x,y)=\frac1{2\pi\I}\oint_{\gamma_\rin}\d w\,\frac{\theta^{x-y}}{w^{x-y+1}}\psi(w).\label{eq:defR}
\end{equation}
Recall that we also regard these kernels as operators acting on a suitable space of functions defined on $\zz$, which in this case can be taken to be $\ell^1(\zz)$:

\begin{prop}\label{prop:AQRinv}
$Q$, $A$ and $\R$ are continuous as operators mapping $\ell^1(\zz)$ to itself, and are invertible there.
Moreover the three operators and their inverses all commute, and they have kernels given by
\begin{align}
Q^k(x,y) &= \frac{\alpha^k}{2\pi\I}\oint_{\gamma_\rrin}\d w\,\frac{\theta^{x-y}}{w^{x-y-k+1}}\left(\frac{a(w)}{1-w}\right)^k,\label{eq:contourQ}\\
A^k(x,y) &= \frac{\alpha^k}{2\pi\I}\oint_{\gamma_\rrin}\d w\,\frac{\theta^{x-y}}{w^{x-y+1}}a(w)^k\label{eq:Apow}
 \end{align}
for any $k\in\zz$, as well as
\begin{equation}
\R^{-1}(x,y)=\frac1{2\pi\I}\oint_{\gamma_{\rin}}\d w\,\frac{\theta^{x-y}}{w^{x-y+1}}\frac1{\psi(w)}.\label{eq:defRinv}
\end{equation}
The analogous statements hold for $Q^*$, $A^*$ and $\R^*$.
\end{prop}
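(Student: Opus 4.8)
The plan is to exploit that $Q$, $A$ and $\R$ are all \emph{convolution operators} on $\zz$: each kernel depends only on $x-y$, with $Q(x,y)=\alpha\theta^{x-y}q_{x-y}$, $A(x,y)=\alpha\theta^{x-y}(q_{x-y+1}-q_{x-y})$ and $\R(x,y)=\theta^{x-y}\psi_{x-y}$, where $(\psi_m)_{m\in\zz}$ are the Laurent coefficients of $\psi$ on $A_{\rin,\rout}$. First I would check that each of the three sequences $m\mapsto\alpha\theta^mq_m$, $m\mapsto\alpha\theta^m(q_{m+1}-q_m)$, $m\mapsto\theta^m\psi_m$ lies in $\ell^1(\zz)$: the first is exactly Assum.~\ref{assum:q}(ii) (all terms nonnegative); the second follows from $|q_{m+1}-q_m|\le q_{m+1}+q_m$ together with the same summability (the differences vanish for $m>\kappa$); and the third from the fact that $\psi$ is analytic on an open annulus strictly containing $|w|=\theta$, which forces $|\psi_m|\le C\sigma^{-m}$ for some radii $\sigma$ just below and just above $\theta$, so that $\theta^m\psi_m$ decays geometrically as $m\to\pm\infty$. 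Since a convolution operator with $\ell^1$ kernel is bounded on $\ell^1(\zz)$ (Young's inequality), this gives continuity.

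Next I would set up a symbol calculus. To a convolution operator $K$ with $\ell^1$ kernel $\rho_K(x-y)$ associate its symbol $\hat K(z)=\sum_m\rho_K(m)z^m$, absolutely convergent on an annulus containing $|z|=1$. The assignment $K\mapsto\hat K$ is injective (Laurent coefficients are unique) and turns composition into multiplication, $\widehat{KL}=\hat K\hat L$, because $\rho_{KL}=\rho_K*\rho_L$; in particular all such operators commute, and $I(x,y)=\uno{x=y}$ has symbol $1$. Using the telescoping identity $q_m=\sum_{i\le\kappa,\,i\le m-1}(q_{i+1}-q_i)$ (valid since $q_\ell\to0$), the definition \eqref{eq:def-a} of $a$ and the geometric series, one computes
\[\hat Q(z)=\frac{\alpha\,\theta z\,a(\theta z)}{1-\theta z},\qquad\hat A(z)=\alpha\,a(\theta z),\qquad\hat\R(z)=\psi(\theta z).\]
Each of these extends analytically to an annulus of the form $\rin/\theta<|z|<\rho'$ with $\rho'>1$ (the relevant constraints being the pole of $\hat Q$ at $z=1/\theta$, the analyticity of $a$ on $|w|\ge\rin$ for $\hat A$, and the annulus $A_{\rin,\rout}$ for $\hat\R$), and — this is the crucial use of Assum.~\ref{assum:apsi} — each symbol is nonvanishing on an open annulus containing $|z|=1$, since $1/a$ and $1/\psi$ are analytic and nonzero on $A_{\rin,\rout}\ni\theta$ (and $1-\theta z\neq0$, $z\neq0$ there). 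Note also $\hat Q(1)=1$ by the choice \eqref{eq:alpha} of $\alpha$, consistent with $Q$ being Markov.

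From here the rest is routine. Because $1/\hat Q$, $1/\hat A$, $1/\hat\R$ are analytic on open annuli containing $|z|=1$, their Laurent coefficients decay geometrically, hence lie in $\ell^1(\zz)$; the corresponding convolution operators have symbols $\hat Q^{-1}$, $\hat A^{-1}$, $\hat\R^{-1}$, so by the homomorphism and injectivity properties they are two-sided inverses of $Q$, $A$, $\R$ on $\ell^1(\zz)$. The same applies to every integer power $\hat Q^{k}$ (resp.\ $\hat A^{k}$), giving $\ell^1$ kernels for all $Q^{k}$ and $A^{k}$, and commutation of all these operators and their inverses is automatic from commutativity of convolution. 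To read off \eqref{eq:contourQ}, \eqref{eq:Apow}, \eqref{eq:defRinv} I would extract the Laurent coefficient, e.g.\ $Q^{k}(x,y)=\frac1{2\pi\I}\oint_{|z|=\sigma}\hat Q(z)^{k}z^{-(x-y)-1}\,\d z$ for $\sigma$ close to $1$, perform the substitution $w=\theta z$, and deform the resulting circle $\gamma_{\theta\sigma}$ down to $\gamma_\rin$; this deformation is legitimate because on $\rin\le|w|\le\theta\sigma$ the integrand is analytic ($a^{k}$ is analytic there, $(1-w)^{-k}$ has its only pole at $w=1$ when $k\ge1$, and for $k\le-1$ the zeros of $a$ are avoided on $A_{\rin,\rout}$; for $\R^{-1}$ one uses analyticity of $1/\psi$ on $A_{\rin,\rout}$). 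What is left is a bookkeeping of powers of $\theta$ reproducing the stated formulas exactly. Finally, for the adjoints one observes that $K\mapsto K^{*}$ replaces $\rho_K(m)$ by $\rho_K(-m)$, i.e.\ $\hat K(z)$ by $\hat K(1/z)$, which preserves all of the above (Wiener-algebra structure, analyticity on annuli around $|z|=1$, nonvanishing), so the analogous statements for $Q^{*}$, $A^{*}$, $\R^{*}$ follow with no change.

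There is no essential obstacle here; the two places requiring care are (i) matching the annuli of analyticity of the symbols with the hypotheses — in particular recognizing that the nonvanishing of $\hat Q$, $\hat A$, $\hat\R$ near $|z|=1$ is precisely what Assum.~\ref{assum:apsi} supplies through the analyticity of $1/a$ and $1/\psi$ on $A_{\rin,\rout}$ — and (ii) the contour‑deformation justifications for the power formulas, where one must confirm that no singularity of $(a(w)/(1-w))^{k}$ (a pole at $w=1$ for $k\ge1$; zeros of $a$ for $k\le-1$) is crossed when shrinking the contour to $\gamma_\rin$.
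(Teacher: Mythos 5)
Your proof is correct and is essentially the paper's argument in Wiener--algebra clothing: the paper's Lem.~\ref{lem:conv} is exactly your statement that composing these convolution kernels multiplies their symbols, and the paper likewise gets \eqref{eq:contourQA}, the powers, the inverses and commutativity from the analyticity and nonvanishing of $a$ and $\psi$ on $A_{\rin,\rout}$, with $\ell^1$-continuity coming from the geometric decay of kernels of the form \eqref{eq:Sgen}. One harmless imprecision: the Laurent series of $\hat Q$ and $\hat A$ need not converge on an \emph{open} annulus containing $|z|=1$ (Assum.~\ref{assum:q}(ii) only gives $\ell^1$ coefficients, hence convergence for $1\le|z|<1/\theta$), but your argument only needs absolute convergence on $|z|=1$ together with the closed-form expressions $\alpha\theta z\,a(\theta z)/(1-\theta z)$, $\alpha\,a(\theta z)$, $\psi(\theta z)$, which are analytic and nonvanishing on an open annulus around $|z|=1$, so the inverse/power coefficients and the contour deformations go through as you describe.
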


The above formulas for inverses and powers of $Q$, $A$ and $\R$ follow from the following simple result, which we will use repeatedly:

\begin{lem}\label{lem:conv}
Consider two kernels $S_1$ and $S_2$ given by
\begin{equation}
S_i(x,y)=\frac1{2\pi\I}\oint_{\gamma}\d w\,\frac{\theta^{x-y}}{w^{x-y+1}}\phi_i(w),\label{eq:Si}
\end{equation}
where $\phi_1,\phi_2$ are complex functions which are both analytic on an annulus $A_{r_1,r_2}$ for some $r_1<r_2$ and $\gamma$ is any simple closed contour contained in $A_{r_1,r_2}$.
Then the sum defining the product $S_1S_2$ is absolutely convergent and
\begin{equation}\label{eq:conv}
S_1S_2(x,y)=\frac1{2\pi\I}\oint_{\gamma}\d w\,\frac{\theta^{x-y}}{w^{x-y+1}}\phi_1(w)\phi_2(w).
\end{equation}
\end{lem}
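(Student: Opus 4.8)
The plan is to compute $S_1S_2(x,y)=\sum_{z\in\zz}S_1(x,z)S_2(z,y)$ directly by substituting the integral representations and interchanging sum and integrals, with the only real work being a justification of absolute convergence. First I would fix a contour $\gamma\subseteq A_{r_1,r_2}$; since $\gamma$ is compact and contained in the open annulus of analyticity, there are radii $r_1<\rho_1<\rho_2<r_2$ such that $\gamma$ lies in $A_{\rho_1,\rho_2}$, and by Cauchy's theorem each $S_i$ can be rewritten with $\gamma$ replaced by the circle $\gamma_{\rho_1}$ or $\gamma_{\rho_2}$ without changing its value. Concretely, I would represent $S_1(x,z)$ using a contour $\gamma_{\rho_2}$ of radius $\rho_2$ and $S_2(z,y)$ using a contour $\gamma_{\rho_1}$ of radius $\rho_1<\rho_2$. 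This is the standard device that makes the $z$-sum converge: writing $w=\rho_2 e^{\I\phi}$ and $w'=\rho_1 e^{\I\phi'}$, the integrand of $S_1(x,z)S_2(z,y)$ contains the factor $(\theta/w)^{x-z}(\theta/w')^{z-y}$, whose modulus in $z$ is $(\rho_1/\rho_2)^{z}$ up to a $z$-independent constant, and $\rho_1/\rho_2<1$, so the geometric series in $z\in\zz$ converges; combined with the boundedness of $\phi_1$ on $\gamma_{\rho_2}$ and $\phi_2$ on $\gamma_{\rho_1}$ and the finite length of the contours, Fubini/Tonelli applies and legitimizes all interchanges.

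Having set this up, the computation is routine:
\begin{align*}
S_1S_2(x,y)&=\sum_{z\in\zz}\frac{1}{(2\pi\I)^2}\oint_{\gamma_{\rho_2}}\d w\oint_{\gamma_{\rho_1}}\d w'\,\frac{\theta^{x-z}}{w^{x-z+1}}\phi_1(w)\frac{\theta^{z-y}}{(w')^{z-y+1}}\phi_2(w')\\
&=\frac{1}{(2\pi\I)^2}\oint_{\gamma_{\rho_2}}\d w\oint_{\gamma_{\rho_1}}\d w'\,\frac{\theta^{x-y}\phi_1(w)\phi_2(w')}{w^{x+1}(w')^{-y+1}}\sum_{z\in\zz}\Bigl(\frac{w'}{w}\Bigr)^{z},
\end{align*}
and the inner sum $\sum_{z\in\zz}(w'/w)^{z}$ is not literally convergent but is to be understood as the outcome of the interchange; more carefully, one performs the $z$-sum first as the absolutely convergent double geometric series $\sum_{z\le x}+\sum_{z>x}$ only after the two contours have different radii, which produces $\frac{1}{1-w'/w}$ from one half and cancels the boundary terms, i.e. one gets exactly $\oint_{\gamma_{\rho_1}}\d w'\,\frac{1}{w-w'}(\cdots)$. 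Then one integrates in $w'$ by residues: the integrand in $w'$, namely $\frac{\theta^{x-y}\phi_1(w)\phi_2(w')}{w^{x+1}(w')^{-y+1}(w-w')}\cdot(\text{const})$, has inside $\gamma_{\rho_1}$ only the pole coming from $(w')^{-y+1}$ together with the fact that $\phi_2$ is analytic there, while $w$ lies outside $\gamma_{\rho_1}$ so $1/(w-w')$ contributes no pole; collapsing the $w'$-integral onto $w$ (equivalently, using that $\frac{1}{2\pi\I}\oint_{\gamma_{\rho_1}}\d w'\,\frac{(w')^{y-1}\phi_2(w')}{w-w'}$ is the value at $w'=w$ obtained after deforming, since $\phi_2$ extends analytically to a neighborhood of $\gamma_{\rho_2}$) yields
\[
S_1S_2(x,y)=\frac1{2\pi\I}\oint_{\gamma_{\rho_2}}\d w\,\frac{\theta^{x-y}}{w^{x-y+1}}\phi_1(w)\phi_2(w),
\]
and finally deforming $\gamma_{\rho_2}$ back to the original $\gamma$ (legitimate since $\phi_1\phi_2$ is analytic on $A_{r_1,r_2}\supseteq A_{\rho_1,\rho_2}\cup\gamma$) gives \eqref{eq:conv}.

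The main obstacle, and the only step requiring care, is the contour-deformation/residue bookkeeping in the middle: one must keep the two radii strictly ordered ($\rho_1<\rho_2$) throughout so that the $z\in\zz$ sum is genuinely absolutely convergent, and one must check that when the $w'$-contour is pushed outward to meet the $w$-contour no pole of $\phi_2$ is crossed (guaranteed by Assum.~analyticity of $\phi_2$ on the open annulus, which contains a neighborhood of all radii between $r_1$ and $r_2$) and that the pole at $w'=w$ contributes with the correct sign. An alternative, slightly slicker route that avoids the double residue computation is to recognize each $S_i$ as a convolution operator in the variable $x-y$ with symbol $\phi_i(\theta e^{\I\xi})$ after the substitution $w=\theta e^{\I\xi}$ (so that $S_i(x,y)=\frac{1}{2\pi}\int_{-\pi}^{\pi}e^{\I(x-y)\xi}\phi_i(\theta e^{\I\xi})\,\d\xi$ when $\gamma=\gamma_\theta$, and for general $\gamma$ one first deforms to $\gamma_\theta$), whereupon $S_1S_2$ is convolution with the product symbol $\phi_1(\theta e^{\I\xi})\phi_2(\theta e^{\I\xi})$ — this is just the statement that the (discrete) Fourier transform turns convolution into multiplication, and it immediately gives \eqref{eq:conv}; the absolute-convergence hypothesis needed to invoke it is supplied by the same split-the-radii argument. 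Either way, the assertion that the defining sum for $S_1S_2$ converges absolutely is exactly the $(\rho_1/\rho_2)^z$ estimate above.
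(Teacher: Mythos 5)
Your overall strategy (insert the contour representations, exchange sum and integrals with different radii for the two kernels, then evaluate by residues) is the same as the paper's, but the execution has a genuine gap at precisely the step you flag as "the only real work": absolute convergence. With $S_1$ represented on $\gamma_{\rho_2}$ and $S_2$ on $\gamma_{\rho_1}$, $\rho_1<\rho_2$, the $z$-dependence of the absolute value of the integrand is $(\rho_2/\rho_1)^{z}$ (you state $(\rho_1/\rho_2)^{z}$, which is the reciprocal), and in any case a \emph{two-sided} sum over $z\in\zz$ of a fixed geometric factor never converges, whichever way the ratio points. So Tonelli does not apply with a single fixed assignment of radii -- consistent with your own later admission that $\sum_{z\in\zz}(w'/w)^{z}$ diverges, which contradicts the claim that "Fubini/Tonelli legitimizes all interchanges." The correct (and, in the paper, the actual) argument splits the sum into two halves and uses \emph{opposite} orderings of the radii for the two halves: for one tail put the $\phi_1$-variable on the larger circle and the $\phi_2$-variable on the smaller one, for the other tail swap them; each half is then an honestly convergent geometric series. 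With your single ordering only one half can be summed, and the other simply cannot be treated on those contours.

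This gap propagates into the residue bookkeeping. After summing the convergent half you are left with a double integral with kernel $\frac{1}{w-w'}$ in which $w$ lies \emph{outside} $\gamma_{\rho_1}$; Cauchy's formula therefore does not "collapse the $w'$-integral onto $w'=w$" (the poles inside $\gamma_{\rho_1}$ are those of $(w')^{y-1}\phi_2(w')$, not $w'=w$), so the claimed evaluation is not what that integral equals. The mechanism that actually produces \eqref{eq:conv} is: sum the second half with the swapped radii, then deform one pair of contours back across the pole at $w'=w$; the residue there gives exactly the right-hand side of \eqref{eq:conv}, and the two leftover double integrals cancel each other. Your phrase "cancels the boundary terms" asserts this cancellation without performing it, and as written there is no second double integral available to do the cancelling. (A minor additional point: the "slicker" Fourier-symbol route presumes you may deform to the circle $\gamma_\theta$, i.e.\ that $\theta\in(r_1,r_2)$, which the lemma does not assume; and the absolute convergence it would need is again the two-tail, two-radius estimate above.)
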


\begin{proof}
We need to compute
\begin{equation}
\sum_{z\in\zz}\frac{1}{(2\pi\I)^2}\oint_{\gamma}\d w\oint_{\gamma}\d u\,\frac{\theta^{x-y}}{w^{x-z+1}u^{z-y+1}}\phi_1(w)\phi_2(u).
\end{equation}
For the sum over $z<y$ we deform the $u$ contour to $\gamma_{r_1}$ and the $w$ contour to $\gamma_{r_2}$ so that $|w/u|>1$.
The summand can be bounded in absolute value by a constant times $|w|^{z-x-1}/|u|^{z-y+1}$ so this part of the sum is absolutely convergent and after computing the geometric sum we get 
\begin{equation}\label{eq:zlesssum}
\frac1{(2\pi\I)^2}\oint_{\gamma_{r_2}}\d w\oint_{\gamma_{r_1}}\d u\,\frac{\theta^{x-y}}{w^{x-y+1}}\frac{1}{w-u}\phi_1(w)\phi_2(u).
\end{equation}
For the sum over $z\geq y$ we proceed similarly, now deforming the $u$ contour to $\gamma_{r_2}$ and the $w$ contour to $\gamma_{r_1}$; the sum is again absolutely convergent and computing the geometric sum we get now  $-\frac1{(2\pi\I)^2}\oint_{\gamma_{r_1}}\d w\oint_{\gamma_{r_2}}\d u\,\frac{\theta^{x-y}}{w^{x-y+1}}\frac{1}{w-u}\phi_1(w)\phi_2(u)$.
Now we deform $u$ contour to $\gamma_{r_1}$ and the $w$ contour to $\gamma_{r_2}$, and as we do this we pick up a pole at $w=u$, and computing the residue yields $\frac{1}{2\pi\I}\oint_{\gamma_{r_1}}\d u\,\frac{\theta^{x-y}}{u^{x-y+1}}\psi_1(u)\psi_2(u)$, which is exactly what we want.
There remains the double integral after having flipped the contours, but it cancels the double integral which we got in \eqref{eq:zlesssum}.
\end{proof}

\begin{proof}[Proof of Prop.~\ref{prop:AQRinv}]
We begin by proving \eqref{eq:contourQA}.
The right hand side of the second identity may be written as $\frac{\alpha}{2\pi\I}\oint_{\gamma_{\rrout}}\d w\frac{\theta^{x-y}}{w^{x-y+1}}\sum_{i\leq\kappa}(q_{i+1}-q_i)w^i$, where we have expanded the contour to $\gamma_{\rrout}$ (which we may do since $a(w)$ is analytic for $|w|\geq r$).
Then we can interchange the sum and the integral, and we get \eqref{eq:defA} from a straightforward application of Cauchy's integral formula.
For the first one we first note that $Q_0(x,y)=\frac{1}{2\pi\I}\oint_{\gamma_\rrin}\d w\frac{\theta^{x-y}}{w^{x-y} (1-w)}$, which is again Cauchy's integral formula. 
Then \eqref{eq:AQ0} and Lem.~\ref{lem:conv} yield the formula \eqref{eq:contourQA}.

That $Q$ acts continuously on $\ell^1(\zz)$ is straightforward, being a Markov kernel acting by convolution (i.e., $Q$ is Toeplitz), and essentially the same argument works for $A$.
In any case, the general argument works for $Q$, $A$, $\R$ and its inverses (where the inverse kernels are those defined in \eqref{eq:contourQ}, \eqref{eq:Apow} and \eqref{eq:defRinv}).
Observe that they all can be expressed as a kernel of the form \eqref{eq:Si}
\begin{equation}\label{eq:Sgen}
S(x,y)=\frac1{2\pi\I}\oint_{\gamma}\d w\,\frac{\theta^{x-y}}{w^{x-y+1}}\phi(w)
\end{equation}
for some complex function $\phi$ which is analytic in an annulus $A_{r_1,r_2}$ for some $r_1<\theta<r_2$, with $\gamma$ a simple closed contour contained in this annulus.
By choosing $\gamma$ to be a circle of radius either larger or smaller than $\theta$, depending on whether $x>y$ or $x\leq y$, we get that there are constants $c,C>0$ such that $|S(x,y)|\leq C\tts e^{-c|x-y|}$, and from this we get easily that for $f\in\ell^1(\zz)$, $\|Sf\|_1\leq\tilde C\|f\|_1$ for some other constant $\tilde C>0$, as desired.

The contour integral formula \eqref{eq:contourQ} for $k>0$ follows directly from \eqref{eq:contourQA}, Lem.~\ref{lem:conv}, and Assum.~\ref{assum:apsi}.
The same argument shows that, with the definition in that formula, $Q^{-1}Q(x,y)=QQ^{-1}(x,y)=\frac1{2\pi\I}\oint_{\gamma_\rin}\d w\,\frac{\theta^{x-y}}{w^{x-y+1}}=\uno{x=y}$, as desired.
\eqref{eq:contourQ} for $k\leq0$ now follows in the same way, and the same argument gives \eqref{eq:Apow} and \eqref{eq:defRinv}.
That the operators commute follows from the facts that they act by convolution and that the sums involved in their compositions can be interchanged by the estimate used in the last paragraph.
\end{proof}

In Prop.~\ref{prop:AQRinv} we have defined $Q^{-1}$, $A^{-1}$ and $\R^{-1}$ as integral operators with explicit kernels and identified them as the inverses in $\ell^1(\zz)$ of $Q$, $A$ and $\R$.
In the next section we will see that crucially, and except for $Q$, these integral operators also act on functions of the form $\theta^xp(x)$ for $p$ a polynomial.

\begin{rem}
In the case of continuous time TASEP considered in \cite{fixedpt} (see Sec.~\ref{sec:TASEP}), the above kernels have $\psi(w)=e^{t(w-1)}$ and $a(w)\equiv1$, which in particular means $\kappa=0$.
This entails several simplifications in the arguments of this whole section.
One example is that in that case $\psi$ and $a$ are analytic and non-zero in all of $\cc$, simplifying many computations.

\noindent The key difference is that the kernel $Q$ in that case is just the geometric random walk kernel $Q_0$ in \eqref{eq:Q0} (after normalization).
The lack of memory of the geometric distribution is used crucially on \cite{fixedpt}.
The form of $Q_0$ implies also that, whereas $Q_0^{-1}$ is simply a discrete difference operator (see \eqref{eq:Q0inv}), our $Q^{-1}$ in general has infinite range.
In particular, while the boundary value problem which will appear in \eqref{bhe} below can be solved, in the case of $Q_0$, by writing down a solution ``below the curve'' in terms of random walk hitting times and then simply extending it analytically to all of $\zz$, in our case we will need to construct the solution explicitly.
Moreover, $Q_0(x,y)$ itself is, as a function of $x-y$, the truncation of an analytic function, and can thus be extended analytically to all $x,y\in\zz$, but this does not hold for $Q$.
Throughout the argument we will have to account for the difference between our $Q$ and $Q_0$, which is where \eqref{eq:AQ0} will be useful.
\end{rem}

\subsection{The biorthogonalization problem}\label{sec:biorthproblem}

Let $\bn\in\nn$, which will remain fixed throughout the rest of this section.
In applications to settings such as those of Sec.~\ref{sec:main}, $\bn$ can be taken to be the number of particles in the system (i.e., $\bn=N$) or, more precisely, the label of the leftmost particle which one is interested in.
We also fix a vector $\vec y\in\zz^\bn$, which plays the role of the initial data $X_0$ in Sec.~\ref{sec:main}.\footnote{We could also take $\bn=\infty$ and consider instead a sequence $(y_i)_{i\geq1}$ as initial data, but this does not make any difference, since in applications we are always interested in the evolution of a finite number of particles (see in particular the comment after Assum.~\ref{a:phi}).}

In terms of these ingredients we define, for $n\in\set{\bn}$ and $n-\bn\leq k \leq n-1$,
\begin{equation}
\Psi^n_k(x)=\R Q^{-k}(x,y_{n-k})=\frac{\alpha^{-k}}{2\pi\I}\oint_{\gamma_{\rin}}\d w\,\frac{\theta^{x-y_{n-k}}}{w^{x-y_{n-k}+k+1}}\left(\frac{1-w}{a(w)}\right)^{\!k}\!\psi(w)\label{eq:defPsink}
\end{equation}
(the contour integral formula follows from Lem.~\ref{lem:conv}, using \eqref{eq:defR} and \eqref{eq:contourQ}) and then consider a family of functions $\{\Phi^n_k\}_{k=0,\dotsc,n-1}$ characterized by:
\begin{itemize}
\item[($\star$)] \hypertarget{it:biorth4}{The} biorthogonality relation $\sum_{x\in\zz}\Psi_\ell^{n}(x)\Phi_k^{n}(x)=\uno{k=\ell}$ for each $k,\ell=0,\dotsc,n-1$.
\item[($\star\star$)] \hypertarget{it:poly4}{$\theta^x\Phi^n_k(x)$} is a polynomial of degree $k$ in $x$.
\end{itemize}

Our first task is to show that $\{\Phi^n_k\}_{k=0,\dotsc,n-1}$ is uniquely defined.
For a given $n\in\nn$ we introduce the vector spaces
\begin{align}\label{eq:space2}
\W{n} &= \spanning \bigl\{x\in\zz\longmapsto\theta^xx^k\!:0\leq k<n\bigr\},\\
\Wd{n} &= \spanning \bigl\{x\in\zz\longmapsto\theta^{-x}x^k\!:0\leq k<n\bigr\}
\end{align}
($\W{n}$ coincides with $\V{n}((\theta,\dotsc,\theta))$ from \eqref{eq:space}).
We will say that a function $g\in\W{n}$ (respectively $g\in\Wd{n}$) has \emph{degree $k$} if it equals $\theta^x$ (respectively $\theta^{-x}$) times a polynomial of degree $k$.
We begin with the following result:

\begin{prop}\label{prop:AQprops}
\leavevmode
\begin{enumerate}[label=\uptext{(\roman*)}]
\item The operators $Q^{-1}$, $A$, $A^{-1}$, $\R$ and $\R^{-1}$ map $\W{n}$ to itself.
Moreover, they commute as operators acting on this space and the formulas in Prop.~\ref{prop:AQRinv} for the kernels of their powers hold.
\item Let $g\in\W{n}$ have degree $k$.
Then $\R g$, $\R^{-1}g$ and $A^{-1}g$ have degree $k$, while for any $\ell\in\set{k}$, $Q^{-\ell}g$ has degree $k-\ell$.
\item Let $g\in\W{n}$.
Then for $\ell\in\{0,\dotsc,n-1\}$, $Q^{-\ell}g=0$ if and only if $\ell$ is larger than the degree of $g$.
\end{enumerate}
The analogous statements also hold for $\Qt$, $A^*$, $\R^*$ and their inverses, with $\W{n}$ replaced by $\Wd{n}$.
\end{prop}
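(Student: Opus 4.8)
The plan is to establish the three statements for $Q^{-1}$, $A^{\pm1}$, $\R^{\pm1}$ acting on $\W{n}$ by exploiting the contour–integral representations from Prop.~\ref{prop:AQRinv} together with Lem.~\ref{lem:conv}, and then to observe that the statements for $\Qt=Q^*$, $A^*$, $\R^*$ on $\Wd{n}$ follow by an explicit symmetry. First I would record the basic computational fact underlying everything: if $S(x,y)=\frac1{2\pi\I}\oint_\gamma\d w\,\frac{\theta^{x-y}}{w^{x-y+1}}\phi(w)$ is a kernel of the form \eqref{eq:Si} with $\phi$ analytic on an annulus containing $\gamma$, then applying $S$ to the function $x\mapsto\theta^x x^k$ produces, after the substitution $y\mapsto x-y$ and expanding $x^k=((x-y)+y)^k$ by the binomial theorem, a finite linear combination $\sum_{j=0}^k\binom{k}{j}x^j\,c_{k-j}$ where $c_\ell=\frac1{2\pi\I}\oint_\gamma\d w\,w^{-\ell-1}\phi(w)\cdot(\text{something from }\sum_y y^{k-j}\theta^{y}w^{-y})$ — more precisely one checks that $S(\theta^{\bullet}(\bullet)^k)(x)=\theta^x p(x)$ with $\deg p\le k$, and that the leading coefficient of $p$ is $\phi(\theta)\cdot(\text{leading coeff of the input})$ when the relevant sums converge. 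This is the routine calculation I would not grind through in detail; it is essentially the observation that a Toeplitz operator of this form, acting on $\W{n}$, is ``lower triangular'' in the basis $\{\theta^x x^k\}$ with diagonal entry $\phi(\theta)$. The convergence of the sums $\sum_y y^{m}\theta^y w^{-y}$ is guaranteed by choosing $\gamma$ a circle of radius slightly larger than $\theta$ (for $Q^{-1},A,A^{-1},\R,\R^{-1}$ this is permitted by Assum.~\ref{assum:apsi}, since $a$ is analytic for $|w|\ge\rrin$ and $1/a,\psi,1/\psi$ are analytic and nonzero on $A_{\rin,\rout}\ni\theta$).

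With this in hand, part~(i) follows: each of $Q^{-1}$, $A^{\pm1}$, $\R^{\pm1}$ maps $\W{n}$ into itself because it maps each basis element $\theta^x x^k$, $0\le k<n$, to an element of $\W{n}$ of degree $\le k$; commutativity on $\W{n}$ follows from commutativity as operators on $\ell^1(\zz)$ established in Prop.~\ref{prop:AQRinv} (the compositions are finite sums of convergent sums and can be interchanged, exactly as in that proof), and the power formulas \eqref{eq:contourQ}, \eqref{eq:Apow}, \eqref{eq:defRinv} transfer verbatim since they are identities of kernels. Part~(ii) is the ``diagonal entry'' computation: for $g$ of degree $k$, the operators $\R$, $\R^{-1}$, $A^{-1}$ have $\phi(\theta)=\psi(\theta)$, $1/\psi(\theta)$, $1/a(\theta)$ respectively, all nonzero by Assum.~\ref{assum:apsi}, so they preserve the degree exactly; for $Q^{-\ell}$ the symbol is $\bigl(\frac{1-w}{a(w)}\bigr)^{\ell}$ evaluated via \eqref{eq:contourQ}, and the point is that this symbol has a zero of order exactly $\ell$ at $w=1$ — no, more to the point, one computes directly from the contour formula that $Q^{-\ell}(\theta^\bullet(\bullet)^k)(x)=\theta^x q(x)$ with $\deg q=k-\ell$: the factor $(1-w)^\ell$ in the numerator kills the top $\ell$ terms of the expansion (equivalently, $Q_0^{-1}$ is the difference operator $f(x)\mapsto f(x)-\theta f(x+1)$ by \eqref{eq:Q0inv}, which lowers polynomial degree by one when acting through $\theta^x$, and $A^{-1}$ preserves degree, so $Q^{-\ell}=A^{-\ell}Q_0^{-\ell}$ lowers degree by exactly $\ell$ provided $\ell\le k$). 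Part~(iii) is then immediate: $Q^{-\ell}g=A^{-\ell}Q_0^{-\ell}g$, $A^{-\ell}$ is invertible on $\W{n}$ by part~(i), so $Q^{-\ell}g=0$ iff $Q_0^{-\ell}g=0$ iff (applying the difference operator $\ell$ times to $\theta^x\times(\text{poly of degree }\deg g)$) $\ell>\deg g$.

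Finally, the analogous statements for $\Qt=Q^*$, $A^*$, $\R^*$ on $\Wd{n}$ follow by the reflection $x\mapsto -x$: conjugating a kernel $S$ of the form \eqref{eq:Si} by this reflection, i.e.\ considering $\tilde S(x,y)=S(-x,-y)$, and recalling $S^*(x,y)=S(y,x)$, one sees that $S^*$ is again of the form \eqref{eq:Si} but with $\theta$ replaced by $\theta^{-1}$ and $\phi(w)$ replaced by $\phi(1/w)$ (change of variable $w\mapsto 1/w$ in the contour integral), which carries $\W{n}$ over to $\Wd{n}$ and $\theta^x x^k$ to $\theta^{-x}x^k$; alternatively and more cleanly, one notes that Prop.~\ref{prop:AQRinv} already asserts the analogous kernel formulas for $Q^*,A^*,\R^*$, and repeats the arguments of parts~(i)--(iii) verbatim with $\W{n}$ replaced by $\Wd{n}$, since all that was used is the contour representation and Lem.~\ref{lem:conv} (whose statement is symmetric under $x\leftrightarrow y$). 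I expect the main obstacle to be purely bookkeeping: keeping track of which radius the contour $\gamma$ must have (larger vs.\ smaller than $\theta$) so that the geometric sums $\sum_y y^m\theta^y w^{-y}$ used in the binomial expansion converge, and verifying that the resulting interchange of sum and integral is legitimate — this is where Assum.~\ref{assum:apsi} (in particular $\rin<\theta<\rout$ and analyticity of $a$ on $|w|\ge\rrin$ with $\rrin<\theta$) is used, and where the contrast with $Q$ itself (whose symbol $\frac{a(w)}{1-w}$ has a pole at $w=1$, so $Q$ does \emph{not} preserve $\W{n}$) must be respected.
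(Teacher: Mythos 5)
Your overall strategy is the same as the paper's: represent each of $Q^{-1}$, $A^{\pm1}$, $\R^{\pm1}$ as a kernel of the form \eqref{eq:Si} with symbol $\phi$ analytic on an annulus containing $\theta$ and $1$ (which is precisely what fails for $Q$ itself because of the pole at $w=1$), check that such an operator is ``lower triangular'' in the basis $\{\theta^xx^k\}$ of $\W{n}$, use the factorization \eqref{eq:AQ0} together with the difference operator \eqref{eq:Q0inv} to see that each application of $Q^{-1}$ lowers the degree by exactly one, and handle the adjoints on $\Wd{n}$ by the change of variables $w\mapsto1/w$ (or by rerunning the argument). Parts (i), (iii) and your treatment of $Q^{-\ell}$ in (ii) match the paper's proof in substance.

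The flaw is in the one computation you declared routine and skipped: the ``diagonal entry'' of such an operator in the basis $\{\theta^xx^k\}$ is $\phi(1)$, not $\phi(\theta)$. Indeed, writing $S(x,y)=\theta^{x-y}c_{x-y}$ with $c_m$ the Laurent coefficients of $\phi$ and stripping off $\theta^x$, the operator acts on the polynomial part by $p\mapsto\sum_mc_m\ts p(\cdot-m)$, so the coefficient of $x^k$ gets multiplied by $\sum_mc_m=\phi(1)$ (the paper gets this by splitting the sum over $\eta<0$ and $\eta\geq0$, moving the second contour to $\gamma_{\rout}$, and collecting the residue of $\phi(w)/(w-1)$ at $w=1$; it then uses $a(1)=1$ and $\psi(1)\neq0$). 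The evaluation point matters: the symbol of $Q^{-1}$ is proportional to $(1-w)/(w\tts a(w))$, which is nonzero at $w=\theta$ but vanishes at $w=1$, so your stated principle would predict that $Q^{-1}$ preserves degree, contradicting part (ii) and your own (correct) difference-operator computation; for $\R^{\pm1}$ and $A^{-1}$ your conclusion survives only because $\psi$, $1/\psi$ and $1/a$ happen to be nonzero on the whole annulus, in particular at $w=1$. The fix is a one-line correction, but as written the nonvanishing needed in (ii) is asserted for the wrong quantity. A smaller point: commutativity on $\W{n}$ does not literally follow from commutativity on $\ell^1(\zz)$, since functions in $\W{n}$ grow as $x\to-\infty$ and are not summable; one must rerun the interchange-of-sums argument on $\W{n}$ (same splitting of sums and contour deformations as above), which is what the paper's ``the rest of (i) follows in a similar way'' refers to.
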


\begin{proof}
The five operators in (i) have kernels of the form $S(x,y)$ as in \eqref{eq:Sgen}, with $\phi$ analytic on $A_{\rin,\rout}$ thanks to Assum.~\ref{assum:apsi} (note that this fails for $Q$ due to the factor $1-w$ in the denominator).
For such a kernel and $g\in\W{n}$, we have
\begin{equation}\label{eq:RAkg}
Sg(x)=\sum_{\eta\in\zz}\frac{1}{2\pi\I}\oint_{\gamma_{\rin}}\d w\,\frac{\theta^{x-\eta}\tts g(\eta)}{w^{x-\eta+1}}\phi(w)=\theta^{x}\sum_{\eta\in\zz}\frac{1}{2\pi\I}\oint_{\gamma_{\rin}}\d w\,\frac{p(x-\eta)}{w^{\eta+1}}\phi(w)
\end{equation}
where $p(x)=\theta^{-x}g(x)$, which is thus a polynomial of degree strictly less than $n$.
The sum over $\eta<0$ is absolutely convergent, because $\rin<1$, while for the sum over $\eta\geq0$ we may enlarge the contour to $\gamma_{\rout}$ and get again that it is absolutely convergent.
From this it follows also that $Sg(x)=\theta^x\tilde p(x)$ for some other polynomial $\tilde p$ of degree strictly less than $n$.
The rest of (i) follows in a similar way.

To prove (ii) let first $S$ be any of the operators $\R$, $\R^{-1}$ or $A^{-1}$.
From \eqref{eq:RAkg}, if the $x^k$ coefficient of $g(x)$ is $b_k\neq0$ then the $x^k$ coefficient of $Sg(x)$ equals $b_k$ times
$\sum_{\eta\in\zz}\frac{1}{2\pi\I}\oint_{\gamma_{\rin}}\d w\,\frac{1}{w^{\eta+1}}\phi(w)$.
We separate the sum again between $\eta<0$ and $\eta\geq0$.
The first part yields $-\frac{1}{2\pi\I}\oint_{\gamma_{\rin}}\d w\,\frac{\phi(w)}{w-1}$, while the second part (after enlarging the contour to $\gamma_{\rout}$) yields $\frac{1}{2\pi\I}\oint_{\gamma_{\rout}}\d w\,\frac{\phi(w)}{w-1}$ which, after shrinking the contour back to $\gamma_{\rin}$ cancels the first part and leaves us with the residue at $w=1$, i.e., $\phi(1)$.
This is non-zero in each of the cases under consideration, because $a(1)=1$ while $\psi(1)\neq0$ by Assum.~\ref{assum:apsi} (since $1/\psi(w)$ is analytic at $w=1$).
This gives the first part of (ii).
For the other part we note that
\begin{equation}\label{eq:Q0inv}
Q_0^{-1}(x,y)=\theta^{-1}\cdot\uno{x=y-1}-\uno{x=y},
\end{equation}
so by \eqref{eq:AQ0} we have $Q^{-1}g(x)=Q_0^{-1}A^{-1}g(x)=Q_0^{-1}\tilde g(x)$ where $\tilde g\in\W{n}$ has degree $k$.
Writing $\tilde g(x)=\theta^x\tilde p(x)$ we then have $Q^{-1}g(x)=\theta^{x}(\tilde p(x+1)-\tilde p(x))$, which has degree $k-1$.
The statement for $Q^{-\ell}g$, $\ell\in\set{k}$, follows by repeating the argument inductively.
This yields (ii).

For (iii), let $k$ be the degree of $g$ and note first that, from (ii), we know that $Q^{-\ell}g\not\equiv0$ if $\ell<k$.
The same argument shows that the same statement holds for $\ell=k$.
To handle the case $k<\ell$ it is enough to show that $Q^{-\ell}g\equiv0$ for $g(x)=\theta^{x}x^k$.
But from (ii) we know that $A^{-\ell}g(x)$ has degree $k$, and proceeding as above we get inductively that for, $\ell'\leq k$, $Q_0^{-\ell'}A^{-\ell}g(x)$ has degree $k-\ell'$.
In particular $Q_0^{-k}A^{-\ell}g(x)=c\ts\theta^x$ for some constant $c$, and then \eqref{eq:Q0inv} again shows that $Q^{-\ell}g=Q_0^{-(\ell-k)}(Q_0^{-k} A^{-\ell})g=0$.
\end{proof}

\begin{cor}\label{cor:biorth-unique}
The biorthogonalization problem \bioneref--\bitworef has a unique solution.
\end{cor}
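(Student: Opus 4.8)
The plan is to reduce the claim to a statement of linear algebra in the finite-dimensional space $\W{n}$ (for each $n\in\set{\bn}$ separately), using the structural results of Prop.~\ref{prop:AQprops}. First I would fix $n$ and observe that the two functionals in play live on spaces of dimension $n$: condition \bitworef\ says exactly that $\theta^x\Phi^n_k(x)$ is a polynomial of degree $k$, so the span of $\{\Phi^n_k\}_{k=0,\dots,n-1}$ is contained in $\W{n}$, and since the degrees $0,1,\dots,n-1$ are distinct the functions $\Phi^n_0,\dots,\Phi^n_{n-1}$ must form a basis of $\W{n}$ with $\Phi^n_k$ of degree exactly $k$. So the problem is: there is a unique basis of $\W{n}$, graded by degree, biorthogonal to the prescribed family $\{\Psi^n_k\}_{k=0,\dots,n-1}$ in the pairing $\langle f,g\rangle=\sum_{x\in\zz}f(x)g(x)$ (which converges absolutely here, as noted after \eqref{eq:defPsink} and in Rem.~\ref{rem:biorth_general}).

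Next I would set up the pairing matrix. Define $M_{\ell,k}=\sum_{x\in\zz}\Psi^n_\ell(x)\,g_k(x)$ where $g_k(x)=\theta^x x^k$ is the standard graded basis of $\W{n}$; then any candidate family is $\Phi^n_k=\sum_j c_{kj}g_j$, and \bioneref\ becomes the matrix equation $C M^{\mathsf T}=I$, i.e.\ $M$ must be invertible and $C=(M^{\mathsf T})^{-1}$, which is unique — but this only gives uniqueness of a family biorthogonal to $\Psi$ \emph{within} $\W{n}$; I still need that the resulting $\Phi^n_k$ genuinely has degree $k$ (not just degree $<n$), and I need to know $M$ is invertible in the first place. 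Both of these follow by exploiting the triangular structure coming from Prop.~\ref{prop:AQprops}(ii)--(iii). Concretely, $\Psi^n_\ell=\R Q^{-\ell}(\cdot,y_{n-\ell})$ — think of $\R Q^{-\ell}$ applied to the "delta-like" object at $y_{n-\ell}$ — but more usefully, using that $\R$ and $Q^{-1}$ map $\W{n}$ to itself and that $Q^{-\ell}g$ lowers degree by $\ell$ (Prop.~\ref{prop:AQprops}(ii)) while $Q^{-\ell}g=0$ exactly when $\ell$ exceeds the degree of $g$ (Prop.~\ref{prop:AQprops}(iii)), I get that the pairing $\langle\Psi^n_\ell,\,\cdot\,\rangle$ annihilates a precise part of the grading. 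Unwinding: $\langle\Psi^n_\ell,g_k\rangle=\langle\R Q^{-\ell}\delta_{y_{n-\ell}},g_k\rangle=\langle\delta_{y_{n-\ell}},(Q^{-\ell})^*\R^* g_k\rangle$, and $(Q^*)^{-\ell}$ lowers the degree of an element of $\Wd{n}$ by $\ell$, vanishing once $\ell$ exceeds that degree; so $\langle\Psi^n_\ell,g_k\rangle=0$ whenever $\ell>k$. Hence $M$ is \emph{lower triangular} in the indices $(\ell,k)$.

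It then remains to check the diagonal entries $M_{k,k}$ are nonzero, which makes $M$ invertible and simultaneously forces $\Phi^n_k$ to have exactly degree $k$ (triangular inversion preserves the grading). Here I would compute $M_{k,k}$ as a residue: applying $(Q^*)^{-k}$ to the top-degree part of $\R^* g_k$ reduces it to $c\,\theta^{-x}$ for a constant $c$, and the constant picked up at each stage is the value $\phi(1)$ appearing in the proof of Prop.~\ref{prop:AQprops}(ii) — namely $a(1)=1$ for the $A$-factors and $\psi(1)\neq0$ for the $\R$-factor, so the diagonal entry is a nonzero multiple of $\psi(1)$ (times powers of $\alpha$, $\theta$, and the leading coefficient from $\R^*$ acting on $x^k$), hence nonzero. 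With $M$ lower triangular with nonzero diagonal, $C=(M^{\mathsf T})^{-1}$ exists and is unique and upper triangular, so $\Phi^n_k=g_k\cdot(\text{nonzero const})+\text{lower degree terms}$, giving both existence and uniqueness, and the degree-$k$ property \bitworef. Doing this for every $n\in\set{\bn}$ finishes the proof.

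The main obstacle I anticipate is bookkeeping the adjoint/degree argument cleanly: one has to be careful that $\Psi^n_\ell$ is indexed so that larger $\ell$ corresponds to lower degree after the $Q^{-\ell}$ lowering (note the shift $y_{n-\ell}$), and that the convergence of all the sums $\sum_x\Psi^n_\ell(x)\theta^x x^k$ is legitimate — the latter is exactly Rem.~\ref{rem:biorth_general}, so it is available. The only genuinely substantive input beyond triangularity is the non-vanishing of the diagonal, which rests on $\psi(1)\neq0$ and $a(1)=1$ from Assum.~\ref{assum:apsi}; everything else is elementary linear algebra once the triangular structure is in place.
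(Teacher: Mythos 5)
Your overall strategy---reduce \bioneref--\bitworef to a finite triangular linear system with respect to the graded basis, get the triangularity from the degree-lowering/annihilation properties of $Q^{-\ell}$ in Prop.~\ref{prop:AQprops}(ii)--(iii), and get the nonzero diagonal from $a(1)=1$ and $\psi(1)\neq0$---is exactly the paper's. But there is a concrete error in your setup: condition \bitworef says that $\theta^x\Phi^n_k(x)$ is a polynomial, i.e.\ $\Phi^n_k(x)=\theta^{-x}\times(\text{polynomial of degree }k)$, so the $\Phi^n_k$ lie in $\Wd{n}$, not in $\W{n}$ as you assert. Consequently your pairing matrix must be formed against the graded basis $\theta^{-x}x^k$ of $\Wd{n}$. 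With your choice $g_k(x)=\theta^x x^k$ two steps fail. First, the sums $M_{\ell,k}=\sum_x\Psi^n_\ell(x)g_k(x)$ need not converge: as $x\to-\infty$ one only has $|\Psi^n_\ell(x)|\approx C(\theta/\sigma)^{x}$, where $\sigma<\rin$ is the modulus of the outermost singularity of $\psi/a^\ell$ inside $\gamma_{\rin}$, so $|\Psi^n_\ell(x)g_k(x)|\approx C(\theta^2/\sigma)^{x}|x|^k$, which blows up whenever $\sigma>\theta^2$ (this happens in cases covered by the scheme, e.g.\ $\psi(w)=\varphi(w)^t$ with $\varphi(w)=p/(1-q/w)$ and $q\in(\theta^2,\theta)$). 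Second, even formally, the degree-lowering statements you invoke for the adjoints hold on $\Wd{n}$ (that is the ``analogous statements'' clause of Prop.~\ref{prop:AQprops}), so they cannot be applied to $\R^*g_k$, which---when it is defined at all---lies in $\W{n}$; hence the claimed lower-triangularity of $M$ is not justified as written.

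The repair is mechanical and lands you on the paper's argument: pair against $g_k(x)=\theta^{-x}x^k$ (these sums converge absolutely, by the two-sided decay of $\Psi^n_\ell$ noted after \eqref{eq:defPsink}), and work with the adjoint operators on $\Wd{n}$; equivalently, as in the paper, write $\Phi^n_k(x)=\theta^{-x}p(x)$ and observe that $\sum_x\Psi^n_\ell(x)\Phi^n_k(x)=\R Q^{-\ell}g(-y_{n-\ell})$ with $g(x)=\theta^xp(-x)\in\W{n}$, so that all operators act on $\W{n}$, where Prop.~\ref{prop:AQprops}(ii)--(iii) applies directly. From there your triangularity and nonzero-diagonal computation is precisely the paper's: for each fixed $k$ one obtains an upper-triangular $(k+1)\times(k+1)$ system for the coefficients of $p$, invertible because the diagonal entries $\R Q^{-\ell}g_\ell(-y_{n-\ell})$, $g_\ell(x)=\theta^x(-x)^\ell$, are nonzero constants (ultimately because $a(1)=1$ and $\psi(1)\neq0$), and the triangular inversion also yields that $\Phi^n_k$ has degree exactly $k$.
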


\begin{proof}
We may solve the system by finding $\Phi^n_k$ separately for each $k\in\{0,\dotsc,n-1\}$.
Fixing such a $k$, we need to show that there is a unique polynomial $p(x)=b_0+b_1x+\dotsm+b_kx^k$ such that if we let $\Phi^n_k(x)=\theta^{-x}p(x)$, then $\Phi^n_k(x)$ satisfies \bioneref.
Setting $g(x)=\theta^xp(-x)$ we have by \eqref{eq:defPsink} that
\[\textstyle\sum_{x\in\zz}\Psi^n_\ell(x)\Phi^n_k(x)=\R Q^{-\ell}g(-y_{n-\ell}).\]
In particular, Prop.~\ref{prop:AQprops}(iii) implies that $\R Q^{-\ell}g\equiv0$ for $\ell>k$ as needed.
Furthermore, for $\ell\leq k$ the same fact implies that $\R Q^{-\ell}g(-y_{n-\ell})$ only depends on the coefficients $b_{\ell},\dotsm,b_k$, and the arguments in the proof of Prop.~\ref{prop:AQprops} show that in fact $\R Q^{-\ell}g(-y_{n-\ell})$ is a linear combination of these coefficients.
In other words, there is an upper triangular matrix $\Lambda$ of size $(k+1)\times(k+1)$ such that $\Phi^n_k(x)$ satisfies \bioneref for $0 \leq \ell \leq k$ if and only if $\Lambda b=e^{(k)}$ with $e^{(k)}_\ell=\uno{\ell=k}$.
Existence and uniqueness then follows from the invertibility of $\Lambda$, which holds because it is upper triangular and $\Lambda_{\ell,\ell}\neq0$ for each $0 \leq \ell \leq k$; in fact, $\Lambda_{\ell,\ell}=\R Q^{-\ell}g_\ell(-y_{n-\ell})$ for $g_\ell(x)=\theta^x(-x)^\ell$, and by Prop.~\ref{prop:AQprops} again this application yields a non-zero constant.
\end{proof}

Finally we define the (extended) kernel
\begin{equation}\label{eq:K-schutz}
K(n_i,x_i;n_j,x_j)=-Q^{n_j-n_i}(x_i,x_j)\uno{n_i<n_j}+\sum_{k=1}^{n_j}\Psi^{n_i}_{n_i-k}(x_i)\Phi^{n_j}_{n_j-k}(x_j)
\end{equation}
for $n_i,n_j\in\set{\bn}$ and $x_i,x_j\in\zz$, which is our main object of interest.

\subsection{The boundary value problem}\label{sec:bdvpb}

Our goal now is to find an explicit solution of the biorthogonalization problem defined by \bioneref and \bitworef above.
The main idea is to consider the following initial--boundary value problem for the backwards discrete heat equation: for fixed $0\leq k<n\leq\bn$,
\mathtoolsset{showonlyrefs=false}
\begin{subnumcases}{\label{bhe}}
(\Qt)^{-1}h^n_k(\ell,z)=h^n_k(\ell+1,z) &  $\ell<k,\,z \in \zz$;\label{bhe1}\\ 
h^n_k(k,z)=\theta^{y_{n-k}-z}& $z \in \zz$;\label{bhe2}\\ 
h^n_k(\ell,y_{n-\ell})= 0 & $\ell<k$;\label{bhe3}
\end{subnumcases} 
(here $h^n_k(\ell,z)$ is defined for $0 \leq \ell \leq k$ and $z\in\zz$).

We remark that, for the solution of the above system, the identity $\Qt h^n_k(\ell+1,z)=h^n_k(\ell,z)$ does not hold in general.
In fact, using the terminal condition \eqref{bhe2} one can see directly that in general $\Qt h^n_k(k,z)$ diverges.
In particular, this means that the solution of \eqref{bhe} cannot be obtained by simply applying $\Qt$ repeatedly to $h^n_k(k,\cdot)$.
Well-posedness in our setting will be a consequence of Prop.~\ref{prop:AQprops}, as it will be enough to restrict a priori to solutions in $\Wd{n}$.
\mathtoolsset{showonlyrefs=true}

\begin{prop}\label{prop:pnkhnk}
The system \eqref{bhe} has a unique solution $\big\{h^n_k(\ell,z),\,\ell=0,\dotsc,k,\,z\in\zz\big\}$ in $\Wd{n}$ (i.e., such that for each $\ell$, $h^n_k(\ell,\cdot)$ is in $\Wd{n}$).
Moreover this solution is such that $h^n_k(\ell,\cdot)$ has degree $k-\ell$ for each $\ell$.
\end{prop}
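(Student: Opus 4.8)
The plan is to construct the solution by downward induction on $\ell$, from $\ell=k$ down to $\ell=0$, at each level using one instance of the recursion \eqref{bhe1} together with the single boundary condition \eqref{bhe3}, and to carry along as the inductive hypothesis that $h^n_k(\ell,\cdot)\in\Wd{n}$ has degree $k-\ell$. The base case $\ell=k$ is immediate: \eqref{bhe2} prescribes $h^n_k(k,\cdot)=\theta^{y_{n-k}-\cdot}$, which lies in $\Wd{n}$ and has degree $0=k-k$. Since $k<\bn=n$, the degrees $k-\ell$ that appear are all at most $n-1$, so all functions produced will indeed be in $\Wd{n}$.

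The key structural input for the inductive step is the $\Wd{n}$-analog of Prop.~\ref{prop:AQprops}. Using the factorization $(\Qt)^{-1}=(Q_0^{*})^{-1}(A^{*})^{-1}$ coming from \eqref{eq:AQ0}, the explicit form \eqref{eq:Q0inv}, which gives $(Q_0^{*})^{-1}\colon\theta^{-x}p(x)\mapsto\theta^{-x}\bigl(p(x-1)-p(x)\bigr)$, and the fact that $A^{*}$ and $(A^{*})^{-1}$ preserve degree on $\Wd{n}$ (by the argument of Prop.~\ref{prop:AQprops}(ii), using $a(1)=1$), one sees that $(\Qt)^{-1}$ maps $\Wd{n}$ into itself and lowers degree by exactly one, sending the degree-$0$ line $\spanning\{x\mapsto\theta^{-x}\}$ to $0$. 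Hence $(\Qt)^{-1}\colon\Wd{n}\to\Wd{n}$ has kernel exactly $\spanning\{x\mapsto\theta^{-x}\}$, its image is contained in the subspace of functions of degree $\le n-2$, and by rank--nullity the image is precisely that subspace $\Wd{n-1}$.

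Now suppose $h^n_k(\ell+1,\cdot)\in\Wd{n}$ has been determined and has degree $k-\ell-1\le n-2$. Then $h^n_k(\ell+1,\cdot)$ lies in the image of $(\Qt)^{-1}$, so there exists $h_0\in\Wd{n}$ with $(\Qt)^{-1}h_0=h^n_k(\ell+1,\cdot)$; concretely, writing $h^n_k(\ell+1,x)=\theta^{-x}r(x)$ and solving the first-order difference equation $\tilde p(x-1)-\tilde p(x)=r(x)$ for a polynomial $\tilde p$ of degree $k-\ell$ and then setting $h_0=A^{*}(\theta^{-x}\tilde p)$ exhibits such a solution, of degree exactly $k-\ell$. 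The general solution of \eqref{bhe1} at this level is $h_0+c\,\theta^{-x}$; since $k-\ell\ge1$ the degree-$0$ perturbation cannot cancel the leading term, so every such solution has degree exactly $k-\ell$. Imposing \eqref{bhe3}, i.e.\ $h(y_{n-\ell})=0$, forces $c=-\theta^{y_{n-\ell}}h_0(y_{n-\ell})$, a unique choice, yielding $h^n_k(\ell,\cdot)\in\Wd{n}$ of degree $k-\ell$ and closing the induction; this simultaneously gives existence, uniqueness in $\Wd{n}$, and the asserted degree count.

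The point that I expect to require the most care---and which is the content of the remark preceding the statement---is precisely that one cannot shortcut the argument by setting $h^n_k(\ell,\cdot)=\Qt h^n_k(\ell+1,\cdot)$: the operator $\Qt$ does \emph{not} map $\Wd{n}$ to itself (the factor $1-w$ in its symbol obstructs this, exactly as for $Q$ in Prop.~\ref{prop:AQprops}(i)), so \eqref{bhe1} genuinely has a one-dimensional solution space at each level, and well-posedness of \eqref{bhe} rests on the combination of the extra boundary condition \eqref{bhe3} with the a priori restriction to $\Wd{n}$. Once this is set up, everything reduces to the finite-dimensional linear algebra on $\Wd{n}$ already packaged in the $\Wd{n}$-version of Prop.~\ref{prop:AQprops}, together with the elementary solvability of the scalar difference equation above.
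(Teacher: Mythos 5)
Your proof is correct and follows essentially the same route as the paper's: backward induction on $\ell$ starting from \eqref{bhe2}, with well-posedness coming from the $\Wd{n}$-version of Prop.~\ref{prop:AQprops} (degree lowered by exactly one, kernel of $(\Qt)^{-1}$ on $\Wd{n}$ spanned by $\theta^{-x}$) together with \eqref{bhe3} fixing the remaining free constant. The only real difference is localized in the existence step: the paper produces a preimage of $h^n_k(\ell+1,\cdot)$ under $(\Qt)^{-1}$ by invoking the upper-triangular coefficient system from the proof of Cor.~\ref{cor:biorth-unique}, whereas you use the factorization $(\Qt)^{-1}=(Q_0^{*})^{-1}(A^{*})^{-1}$, solve the scalar difference equation $\tilde p(x-1)-\tilde p(x)=r(x)$ explicitly and apply $A^{*}$ (or, alternatively, your rank--nullity argument); both rest on the same underlying facts, yours being slightly more constructive. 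One small point: degree preservation by $A^{*}$ on $\Wd{n}$ is not literally contained in Prop.~\ref{prop:AQprops}(ii), which lists $A^{-1}$, $\R$, $\R^{-1}$ and their adjoints, but, as you indicate, it follows from the same computation since the symbol of $A$ evaluated at $w=1$ equals $\alpha\, a(1)=\alpha\neq0$; also, $n$ need not equal $\bn$, though your degree count $k-\ell\leq k\leq n-1$ is what actually matters.
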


\begin{proof}
The solution at time $\ell=k$ is prescribed by \eqref{bhe2}, so it is in $\Wd{n}$ and has degree $0$ as needed.
Now we proceed by induction, backwards in $\ell$.
Suppose that we have constructed the solution uniquely in $\Wd{n}$ at times $\ell+1,\dotsc,k$ for some $\ell<k$, and that this solution has the desired degrees.
We need to show that there is a unique $g\in\Wd{n}$ such that $(\Qt)^{-1}g=h^n_k(\ell+1,\cdot)$ and $g(y_{n-\ell})=0$ (so that if we set $h^n_k(\ell,\cdot)=g$ then this choice satisfies \eqref{bhe1} and \eqref{bhe3}), and that this $g$ has degree $k-\ell$ .
We do this next.

Let $g\in\Wd{n}$.
Since we want $(\Qt)^{-1}g(z)$ to equal $h^n_k(\ell+1,z)$, which has degree $k-\ell-1$ by the inductive hypothesis, Prop.~\ref{prop:AQprops} implies that $g$ has degree $k-\ell$, so we may write it as $g(z)=\theta^{-z}(b_0+b_1z+\dotsm+b_{k-\ell} z^{k-\ell})$.
Now, by Prop.~\ref{prop:AQprops} again, we have $(\Qt)^{-1}g(z)=\theta^{-z}p(z)$ for a polynomial $p$ of degree $k-\ell-1$ which does not depend on $b_0$.
Moreover, the arguments which we used to prove Cor.~\ref{cor:biorth-unique} show that $b_1,\dotsc,b_{k-\ell}$ can be chosen in such a way that $\theta^{-z}p(z)=h^n_k(\ell+1,z)$.
Having made this choice we have $(\Qt)^{-1}g(z)=h^n_k(\ell+1,z)$ as desired, and now we may adjust the free parameter $b_0$ so that $g(y_{n-\ell})=0$ as well.
To see that this choice of $g\in\Wd{n}$ is unique, suppose $\tilde{g}\in\Wd{n}$ also satisfies the necessary conditions.
We have $(\Qt)^{-1}(g-\tilde g)=0$, so Prop.~\ref{prop:AQprops}(iii) implies that $g-\tilde g$ has degree $0$.
But $g(y_{n-\ell})=\tilde g(y_{n-\ell})$, so in fact $g-\tilde g\equiv0$ as desired.
\end{proof}

\begin{thm}\label{thm:h_heat_Q}
The solution of the biorthogonalization problem \bioneref--\bitworef with respect to $\big(\Psi^n_k\big)_{k=0,\dotsc,n-1}$ stated in Sec.~\ref{sec:setting} is given by $\big(\Phi^n_k\big)_{k=0,\dotsc,n-1}$ with 
\begin{equation}\label{eq:h_heat_Q}
\Phi^n_k(x)=(\R^*)^{-1}h^n_k(0,x),
\end{equation}
where $h^n_k(\ell, z)$, $0\leq\ell\leq k$, $z\in\zz$, is the unique solution of \eqref{bhe} prescribed in Prop.~\ref{prop:pnkhnk}.
\end{thm}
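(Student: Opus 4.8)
The plan is to check that the functions $\Phi^n_k(x)=(\R^*)^{-1}h^n_k(0,x)$, with $h^n_k$ the solution of \eqref{bhe} furnished by Prop.~\ref{prop:pnkhnk}, satisfy the two defining properties \bioneref and \bitworef of the biorthogonalization problem, and then to conclude by the uniqueness in Cor.~\ref{cor:biorth-unique}. So there are really only two things to verify.

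First I would dispatch the polynomiality condition \bitworef. By Prop.~\ref{prop:pnkhnk}, $h^n_k(0,\cdot)$ lies in $\Wd{n}$ and has degree $k$, i.e.\ it equals $\theta^{-x}$ times a polynomial of degree $k$ in $x$. Since, by the $\Wd{n}$-analogue of Prop.~\ref{prop:AQprops}(i)--(ii), the operator $(\R^*)^{-1}$ maps $\Wd{n}$ into itself and preserves the degree, it follows that $\Phi^n_k$ is again $\theta^{-x}$ times a polynomial of degree $k$; hence $\theta^x\Phi^n_k(x)$ is a polynomial of degree $k$, which is exactly \bitworef.

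Next I would establish the biorthogonality relation \bioneref. The key step is to move the operators off $\Psi^n_\ell$ by taking adjoints: writing $\Psi^n_\ell(x)=(\R Q^{-\ell})(x,y_{n-\ell})$ one has $\sum_{x\in\zz}\Psi^n_\ell(x)\Phi^n_k(x)=\bigl((\Qt)^{-\ell}\R^*\Phi^n_k\bigr)(y_{n-\ell})=\bigl((\Qt)^{-\ell}h^n_k(0,\cdot)\bigr)(y_{n-\ell})$, using $(\R Q^{-\ell})^*=(\Qt)^{-\ell}\R^*$, then $\R^*\Phi^n_k=h^n_k(0,\cdot)$, and the fact (Prop.~\ref{prop:AQprops}) that all of these operators act on $\Wd{n}$ through their explicit kernels, the relevant sums being absolutely convergent since $\rin<\theta<\rout$ (as in the remark after Cor.~\ref{cor:biorth_caterpillars}). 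Now for $0\le\ell\le k$, iterating \eqref{bhe1} gives $(\Qt)^{-\ell}h^n_k(0,\cdot)=h^n_k(\ell,\cdot)$, so the pairing equals $h^n_k(\ell,y_{n-\ell})$, which is $0$ for $\ell<k$ by \eqref{bhe3} and $\theta^{y_{n-k}-y_{n-k}}=1$ for $\ell=k$ by \eqref{bhe2}; thus it equals $\uno{\ell=k}$. For $k<\ell\le n-1$, since $h^n_k(0,\cdot)$ has degree $k<\ell$, the $\Qt$/$\Wd{n}$-analogue of Prop.~\ref{prop:AQprops}(iii) yields $(\Qt)^{-\ell}h^n_k(0,\cdot)\equiv0$, so the pairing vanishes. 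Hence \bioneref holds for all $k,\ell\in\{0,\dots,n-1\}$, and Cor.~\ref{cor:biorth-unique} identifies $\{\Phi^n_k\}$ as the unique solution.

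I do not expect a genuine obstacle here: the heavy lifting—well-posedness of the boundary value problem \eqref{bhe} and the functional-analytic fact that $Q^{-1},A^{\pm1},\R^{\pm1}$ (and their adjoints) act on $\W{n}$ (resp.\ $\Wd{n}$) via their contour-integral kernels with the predicted effect on degrees—has already been done in Props.~\ref{prop:AQRinv}, \ref{prop:AQprops} and \ref{prop:pnkhnk}. The only point that needs care is the bookkeeping in the adjoint manipulation $\sum_x(\R Q^{-\ell})(x,y_{n-\ell})\Phi^n_k(x)=\bigl((\Qt)^{-\ell}\R^*\Phi^n_k\bigr)(y_{n-\ell})$: one should note that $\Phi^n_k\in\Wd{n}$, that the composition $(\Qt)^{-\ell}\R^*$ is a finite product of operators each mapping $\Wd{n}$ to itself, and that the interchange of the sum over $x$ with the defining contour integrals is justified by the exponential kernel bounds $|S(x,y)|\le Ce^{-c|x-y|}$ from the proof of Prop.~\ref{prop:AQRinv}—so no new estimate is required.
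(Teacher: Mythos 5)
Your proposal is correct and follows essentially the same route as the paper's proof: \bitworef from Prop.~\ref{prop:pnkhnk} plus the mapping/degree properties of $(\R^*)^{-1}$, and \bioneref by rewriting the pairing as $(\Qt)^{-\ell}h^n_k(0,y_{n-\ell})$ and invoking \eqref{bhe1}--\eqref{bhe3} together with Prop.~\ref{prop:AQprops}(iii). The only (immaterial) difference is that for $\ell>k$ the paper first pushes down to $h^n_k(k,\cdot)$ via \eqref{bhe1} and then applies the degree-zero vanishing, whereas you apply the degree-$k$ vanishing to $h^n_k(0,\cdot)$ directly.
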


\begin{proof}
The argument is essentially the same as the proof of biorthogonality in \cite[Thm.~2.2]{fixedpt}.
The polynomial condition \bitworef holds by construction and Prop.~\ref{prop:AQprops}. To prove \bioneref we write
\begin{align}\label{eq:psiphi}
  \textstyle\sum_{x\in\zz}\Psi^n_\ell(x)\Phi^n_k(x)&\textstyle=\sum_{x\in\zz}\R Q^{-\ell}(x,y_{n-\ell})(\R^*)^{-1}h^{n}_{k}(0,x)=\R^*(\Qt)^{-\ell}(\R^*)^{-1}h^n_k(0,y_{n-\ell})\\
  &\textstyle=(\Qt)^{-\ell}h^n_k(0,y_{n-\ell}),
\end{align}
where we have used Prop.~\ref{prop:AQprops} again.
For $\ell\leq k$ we use \eqref{bhe1} to write the right hand side as $h^n_k(\ell,y_{n-\ell})$.
If $\ell<k$ the boundary condition \eqref{bhe3} now shows that this is $0$ as needed, while, for $\ell=k$ we get, using \eqref{bhe2}, that the right hand side equals $h^n_k(k,y_{n-k})=1$.
Finally for $\ell>k$ we use \eqref{bhe1} to write
$(\Qt)^{-\ell}h^n_k(0,y_{n-\ell})=(\Qt)^{-(\ell-k-1)}(\Qt)^{-1}h^{n}_{k}(k,y_{n-\ell})$, 
which vanishes thanks to \eqref{bhe2} and Prop.~\ref{prop:AQprops}(iii).
\end{proof}

\subsection{Representation in terms of random walk hitting times}\label{sec:rw}

Thm.~\ref{thm:h_heat_Q} provides us with a characterization of the functions $(\Phi^n_k)_{k=0,\dotsc,n-1}$ which appear in the construction \eqref{eq:K-schutz} of the kernel $K$.
In this section we will provide a probabilistic representation for $K$.
Instead of working with the whole \emph{extended kernel} $K$, we will work with the \emph{one-point kernel}
\begin{equation}\label{eq:Kn}
K^{(n)}(z_1,z_2)=K(n,z_1;n,z_2)=\sum_{k=0}^{n-1}\Psi^n_k(z_1)\Phi^n_k(z_2),
\end{equation}
defined for any $n\in\set{\bn}$.
There is no loss of generality in this simplification because, using \eqref{eq:defPsink},
\begin{equation}\label{eq:KextKn}
K(n_i,\cdot;n_j;\cdot) = -Q^{n_j-n_i}\uno{n_i<n_j}+Q^{n_j-n_i}K^{(n_j)}.
\end{equation}

Let
\begin{equation}
G_{0,n}(z_1,z_2)=\sum_{k=0}^{n-1}Q^{n-k}(z_1,y_{n-k})h^n_k(0,z_2)\label{eq:defG0n}
\end{equation}
so that, from \eqref{eq:defPsink} and Thm.~\ref{thm:h_heat_Q},
\begin{equation}
K^{(n)}=\R Q^{-n}G_{0,n}\R^{-1}.\label{eq:KnG0n}
\end{equation}
We will use now the decomposition \eqref{eq:AQ0} to define a certain extension of $Q^\ell$ for $\ell\in\nn$.
It is based on an extension of $Q_0^\ell$ employed in \cite{fixedpt}, which is defined as follows:
\begin{equation}\label{eq:contourmQ0}
\mQ_0^{(\ell)}(z_1,z_2)= \frac{1}{2\pi \I} \oint_{\gamma_\delta}\d v\,\frac{\theta^{z_1-z_2}(1+v)^{z_1 - z_2 -1}}{v^\ell}= \theta^{z_1 - z_2}\frac{(z_1 - z_2 - 1)_{\ell-1}}{(\ell-1)!},
\end{equation}
with $\delta\in(0,1)$ (so that the contour does not include $-1$), where $(x)_\ell = x (x-1) \dotsm (x-\ell+1)$ for $\ell > 0$ and $(x)_0=1$ is the \emph{Pochhammer symbol}.
The point is that for every fixed $z_1$, $\mQ_0^{(\ell)}(z_1,z_2)$ is in $\Wd{\ell-1}$ as a function of $z_2$, and that (as can be seen from \eqref{eq:contourQ} with $a(w)\equiv1$, see also \cite[Eqn.~2.23]{fixedpt})
\begin{equation}
\mQ_0^{(\ell)}(z_1,z_2)=Q_0^\ell(z_1,z_2)\quad\uptext{ for $z_1,z_2\in\zz$,}\quad z_1-z_2\geq1;\label{eq:mQ0ellext}
\end{equation}
we regard thus $\mQ_0^{(\ell)}$ as a polynomial extension of $Q_0^\ell$ from $z_2\leq z_1-1$ to all $z_2\in\zz$.
Using $\mQ_0^{(\ell)}$, and in view of \eqref{eq:AQ0}, we define the extension of $Q^\ell$ as follows:
\begin{equation}\label{eq:QExt}
\begin{aligned}
\mQ^{(\ell)}(z_1,z_2)&=A^\ell\mQ_0^{(\ell)}(z_1,z_2)\\
&=\frac{\alpha^\ell}{2\pi\I}\oint_{\gamma_{\delta}}\d v\,\frac{\theta^{z_1-z_2} (1+v)^{z_1 - z_2 -1}}{v^\ell}a((1+v)^{-1})^\ell,
\end{aligned}
\end{equation}
where the contour integral formula in the second line, which holds for $\delta>0$ small enough so that $(1+v)^{-1}$ lives in a small circle around $1$ where $a$ is analytic by Assum.~\ref{assum:apsi}, follows from \eqref{eq:contourQA} and \eqref{eq:contourmQ0} and the same argument as in the proof of Lem.~\ref{lem:conv}.
Note that the condition on $\delta$ implies in particular that for each fixed $z_1$, the function $z_2\longmapsto\mQ^{(\ell)}(z_1,z_2)$ is in $\Wd{\ell-1}$.
Moreover, from \eqref{eq:mQ0ellext} and since $A^\ell(x,y)=0$ if $x>y+\ell\kappa$ (which follows directly from \eqref{eq:defA} and Assum.~\ref{assum:q}(i)), we have
\begin{equation}\label{eq:QQext}
\mQ^{(\ell)}(z_1,z_2)=Q^\ell(z_1,z_2)\qquad\text{for}\quad\,z_1-z_2\geq1+\ell\kappa.
\end{equation}

Now let $B_m$ be a random walk with transition matrix $Q$ and define the stopping time
\begin{equation}
\tau= \min\{m=0,\dotsc,n-1 : B_m> y_{m+1}\},\label{eq:deftau}
\end{equation}
i.e., $\tau$ is the hitting time of the strict epigraph of the ``curve'' $(y_{m+1})_{m=0,\dotsc,n-1}$ by the random walk $(B_m)_{m\geq0}$ (we set $\tau=\infty$ if the walk does not go above the curve by time $n-1$).
Define
\begin{equation}\label{eq:barG0n}
\bar G_{0,n}(z_1,z_2)=\ee_{B_0=z_1}\!\left[\mQ^{(n-\tau)}(B_\tau,z_2)\uno{\tau<n}\right].
\end{equation}
The following result provides the crucial connection between the kernel $K$ and the random walk $B_m$:

\begin{prop}\label{prop:G0n-formula}
Assume $y_{j}-y_{j+1}\geq\kappa$ for each $j$.
Then for each $n\in\set{\bn}$ we have
\begin{equation}
G_{0,n}=A\bar G_{0,n}A^{-1}.\label{eq:G0n-formula}
\end{equation}
\end{prop}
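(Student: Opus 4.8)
The argument runs parallel to the proof of the corresponding statement for continuous-time TASEP in \cite{fixedpt}; the one genuinely new ingredient is the presence of the nontrivial factor $A$, which is exactly what forces the conjugation in \eqref{eq:G0n-formula} (in \cite{fixedpt} one has $\kappa=0$, $a\equiv1$ and $A=I$). Since $A$ is invertible on $\ell^1(\zz)$ and maps $\W{n}$ and $\Wd{n}$ to themselves (Props.~\ref{prop:AQRinv} and~\ref{prop:AQprops}), identity \eqref{eq:G0n-formula} is equivalent to $A^{-1}G_{0,n}A=\bar G_{0,n}$, and the plan is to prove this by induction on $n$, for arbitrary curves satisfying the hypothesis $y_j-y_{j+1}\ge\kappa$. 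The base case $n=1$ is a direct computation: by \eqref{eq:defG0n} and the terminal condition \eqref{bhe2} one has $G_{0,1}(z_1,z_2)=Q(z_1,y_1)\theta^{y_1-z_2}=\alpha\,\theta^{z_1-z_2}q_{z_1-y_1}$, while $\bar G_{0,1}(z_1,z_2)=\uno{z_1>y_1}\mQ^{(1)}(z_1,z_2)$ with $\mQ^{(1)}=A\mQ_0^{(1)}$ equal to $\alpha\,\theta^{z_1-z_2}$ (by \eqref{eq:QExt}, the telescoping identity behind \eqref{eq:def-a}--\eqref{eq:alpha}, and $a(1)=1$); one then checks $A\bar G_{0,1}A^{-1}=G_{0,1}$ using $A^{*}\theta^{-\cdot}=\alpha\,\theta^{-\cdot}$ and $\sum_{w>y_1}A(z_1,w)\theta^{w}=\alpha\,\theta^{z_1}q_{z_1-y_1}$ (the relevant sums being interpreted throughout, as always, via the action of these operators on $\W{n}$ and $\Wd{n}$).

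For the inductive step I would decompose $\bar G_{0,n}$ on the first step of the random walk $B$ with transition matrix $Q$: if $B_0=z_1>y_1$ then $\tau=0$, and otherwise, by the Markov property, $\tau$ becomes the hitting time of the shifted curve $(y_2,\dots,y_n)$ by $(B_1,B_2,\dots)$. This yields $\bar G_{0,n}=\uno{\cdot>y_1}\,\mQ^{(n)}+\uno{\cdot\le y_1}\,Q\,\bar G_{0,n-1}\big[(y_2,\dots,y_n)\big]$. The key bookkeeping observation is that the system \eqref{bhe} depends on the curve only through $y_{n-k},\dots,y_n$, so its solution $h^n_k$ is unchanged if one passes to the curve $(y_2,\dots,y_n)$ with parameter $n-1$ (for $0\le k\le n-2$); hence $G_{0,n-1}[(y_2,\dots,y_n)]=\sum_{k=0}^{n-2}Q^{n-1-k}(\cdot,y_{n-k})h^n_k(0,\cdot)$, and the inductive hypothesis gives $\bar G_{0,n-1}[(y_2,\dots,y_n)]=A^{-1}G_{0,n-1}[(y_2,\dots,y_n)]A$. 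Conjugating the displayed recursion by $A$ and using repeatedly $Q=AQ_0=Q_0A$ together with the fact that $Q_0$, $\mQ_0^{(\ell)}$ and $A$ all act by convolution and hence commute (so that $QA^{-1}=Q_0$, $Q_0Q^{n-1-k}=A^{-1}Q^{n-k}$, $\mQ^{(n)}A^{-1}=A^{-1}\mQ^{(n)}$), and the telescoping identity $\sum_{k=0}^{n-2}Q^{n-k}(\cdot,y_{n-k})h^n_k(0,\cdot)=G_{0,n}-Q(\cdot,y_1)h^n_{n-1}(0,\cdot)$, one finds after simplification that $A^{-1}G_{0,n}A=\bar G_{0,n}$ is equivalent to the single statement
\[ A^{-1}\bigl(\mQ^{(n)}-G_{0,n}\bigr)(z_1,z_2)=0\qquad\text{for all }z_1>y_1,\ z_2\in\zz, \]
the contribution of the correction term $Q(\cdot,y_1)h^n_{n-1}(0,\cdot)$ dropping out because $A^{-1}Q=Q_0$ and $Q_0(z_1,y_1)=0$ for $z_1\le y_1$.

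It remains to establish the displayed claim, and this is the step I expect to be the main obstacle; it is the analogue of the matching identity used in \cite{fixedpt} (where, with $A=I$ and $\kappa=0$, it reads $\mQ_0^{(n)}(z_1,z_2)=\sum_{k=0}^{n-1}Q_0^{n-k}(z_1,y_{n-k})h^n_k(0,z_2)$ for $z_1>y_1$). The idea is that for $z_1$ above the first point of the curve the ``sub‑curve'' contributions in $G_{0,n}$ cancel against one another by virtue of the boundary conditions \eqref{bhe3}, leaving only the free‑evolution term $\mQ^{(n)}$; making this precise requires, as in \cite{fixedpt}, an Abel‑summation/telescoping argument over the levels $\ell$ of \eqref{bhe}, together with the identity \eqref{eq:QQext} to replace the polynomial extensions $\mQ^{(\ell)}$ and $\mQ_0^{(\ell)}$ by the genuine powers $Q^{\ell}$ and $Q_0^{\ell}$ at the relevant lattice points. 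This is exactly where the hypothesis $y_j-y_{j+1}\ge\kappa$ enters (it plays the role of the strict monotonicity of $\vec y$ in \cite{fixedpt}): combined with the fact that $A$ has range at most $\kappa$ on one side, $A(x,y)=0$ for $x>y+\kappa$ by \eqref{eq:defA} and Assum.~\ref{assum:q}(i), the hypothesis guarantees that along any admissible configuration the separations are large enough for \eqref{eq:QQext} to be applicable where needed, so that the telescoping collapses exactly as in the geometric case. Once the displayed claim is proven, the induction closes and \eqref{eq:G0n-formula} follows.
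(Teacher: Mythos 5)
Your reduction is sound as far as it goes: the base case $n=1$ checks out, and the first-step decomposition of $\bar G_{0,n}$, the observation that the system \eqref{bhe} involves the curve only through $y_{n-k},\dotsc,y_n$, and the bookkeeping with $Q=AQ_0$ and $A^{-1}Q^{n-k}=Q_0Q^{n-k-1}$ do correctly reduce the inductive step to the single claim that $A^{-1}\bigl(\mQ^{(n)}-G_{0,n}\bigr)(z_1,z_2)=0$ for $z_1>y_1$. But that claim is precisely where all the difficulty of the proposition is concentrated, and you do not prove it: you only express the hope that an Abel-summation/telescoping argument ``as in \cite{fixedpt}'' goes through. That argument cannot be imported. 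In \cite{fixedpt} one has $a\equiv1$, $A=I$, and the matching identity above the curve is a pointwise statement proved using the memorylessness of the purely geometric walk; here $A^{-1}$ has two-sided infinite range, so your claim mixes values of $G_{0,n}(w,z_2)$ for $w$ on both sides of $y_1$, and establishing it requires genuine quantitative information about the solution $h^n_k$ of \eqref{bhe} (namely, how $\sum_k Q^{n-k}(z_1,y_{n-k})h^n_k(0,z_2)$ behaves for $z_1$ above the curve), which nothing in your argument supplies. The paper itself stresses that the loss of memorylessness is the key obstruction and that the solution of \eqref{bhe} must be constructed explicitly rather than extended from below the curve.

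In the paper this missing information is exactly what Lem.~\ref{lem:pnk} and Cor.~\ref{cor:barpnk} provide: $h^n_k$ is identified through \eqref{eq:fnkRAbp} with $(1-\theta)\theta^{-1}(A^*)^{-1}\bar p^n_k(0,\cdot)$, where $\bar p^n_k$ is the polynomial extension of the hitting probabilities of the reversed walk, and the proof of the proposition then compares $G_{0,n}A$ with $A\bar G_{0,n}$ directly on the region $z_2\leq y_n-\kappa$, where \eqref{eq:QQext} lets one replace $\mQ^{(n-k)}$ by $Q^{n-k}$ and where the jump above the curve sees only the geometric tail of $Q$ (this is where $y_j-y_{j+1}\geq\kappa$ enters, via \eqref{eq:memoryless}); the identity is finally extended to all $z_2$ because both sides lie in $\Wd{n}$. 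So while your inductive scheme is a legitimate alternative organization, closing it would force you either to reprove essentially this analysis for your matching identity or to invoke Cor.~\ref{cor:barpnk} together with a total-probability decomposition of the free walk over the curve; as written, the proposal has a genuine gap at its central step.
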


To prove this proposition we need a preliminary result, which will allow us to express the solution of \eqref{bhe} in terms of the random walk $B_m$ (or, more precisely, a version of $B_m$ running backwards in time).
Define
\begin{equation}
\textstyle p^n_k(k,z)=\sum_{\eta_k>y_{n-k}}\Qt(z,\eta_k)
\end{equation}
and, for $0\leq\ell<k<n$,
\begin{equation}\label{eq:pnk-expl}
\textstyle p^n_k(\ell,z)=\sum_{\eta_\ell\leq y_{n-\ell}}\Qt(z,\eta_\ell)\,\,\dotsm\sum_{\eta_{k-1}\leq y_{n-k+1}}\Qt(\eta_{k-2},\eta_{k-1})\sum_{\eta_k>y_{n-k}}\Qt(\eta_{k-1},\eta_k).
\end{equation}
Note that if $B^*$ denotes the random walk with transition matrix $\Qt$ and we define the stopping times
\begin{equation}
\tau^{\ell,n}=\min\{m = \ell,\dotsc,n-1 : B^*_m> y_{{n-m}}\},
\end{equation}
then
\begin{equation}
p^n_k(\ell,z)=\pp_{B^*_{\ell-1}=z}(\tau^{\ell,n}=k).\label{eq:pnkrw}
\end{equation}
Note also that $p^n_k(\ell,z)$ satisfies \eqref{bhe1} for $z\leq y_{n-\ell}$.
Our goal in the two results that follow will be construct a certain extension of this $p^n_k(\ell,z)$ to a function $\bar p^n_k(\ell,z)$ defined for all $z\in\zz$ which satisfies \eqref{bhe1} everywhere, and to show that the solution $h^n_k$ of \eqref{bhe} can be expressed explicitly in terms of $\bar p^n_k(\ell,z)$.
Since this last function comes from an extension of \eqref{eq:pnkrw}, this will allow us to establish the connection between the kernel $K$ and the random walk $B_m$ stated in Prop.~\ref{prop:G0n-formula}.

More precisely, in the next result we show that, if $\vec y\in\Omega_n(\kappa)$, then $p^n_k(\ell,z)$ equals $\theta^{-z}$ times a polynomial for $z\leq y_{n-\ell}-\kappa$, so it can be extended to a function $\bar p^n_k(\ell,z)$ which equals $\theta^{-z}$ times a polynomial for all $z$, which for brevity we call the \emph{analytic extension} of $p^n_k(\ell,z)$ (note that this extension is such that $\bar p^n_k(\ell,z)=p^n_k(\ell,z)$ for all $z\leq y_{n-\ell}-\kappa$ and not necessarily for all $z\leq y_{n-\ell}$ as one could have hoped in view of the discussion in the last paragraph).
Furthermore, we will derive an explicit formula for $p^n_k(\ell,z)$, which will allow us to show in Cor.~\ref{cor:barpnk} that, as needed, its extension satisfies \eqref{bhe1} everywhere.

\begin{lem}\label{lem:pnk}
Fix $k\in\{0,\dotsc,n-1\}$, assume $y_{j}-y_{j+1}\geq\kappa$ for $j\in\{n-k,\dotsc,n-1\}$ and write $\bar y_\ell=y_{n-\ell}$.
Then for each $\ell=0,\dotsc,k$ and $z\leq\bar y_{\ell}-\kappa$ we have
\begin{align}\label{eq:pnk-ms}
&\textstyle p^n_k(\ell,z)=\sum_{m=\ell}^{k-2}\frac{\alpha^{k-\ell+1}\theta^{\bar y_k-z+1}}{(1-\theta)(2\pi\I)^{k-m}}\oint_{\gamma_\delta}\d v\oint_{\gamma_{\rrin-(k-m-1)\ep}\times\dotsm\times\gamma_{\rrin-\ep}}\d w_{m+1}\dotsm\d w_{k-1}\\
&\hspace{0.4in}\textstyle\times\frac{a((1+v)^{-1})^{m-\ell+1}a(w_{k-1})(1+v)^{\bar y_m-z}}{v^{m-\ell+1}(1-w_{k-1})^2((1+v)w_{m+1}-1)w_{m+1}^{\bar y_{m+1}-\bar y_m-1}}\prod_{j=m+2}^{k-1}\frac{1}{(w_{j}-w_{j-1})w_j^{\bar y_j-\bar y_{j-1}-1}}\prod_{j=m+1}^{k-2}\frac{a(w_j)}{1-w_j}\hskip-14pt\\
&\hspace{2.7in}\textstyle+\frac{\alpha^{k-\ell+1}\theta^{\bar y_k-z+1}}{(1-\theta)2\pi\I}\oint_{\gamma_{\delta}}\d v\,\frac{a((1+v)^{-1})^{k-\ell}(1+v)^{\bar y_{k-1}-z}}{v^{k-\ell+1}},
\end{align}
where $\ep>0$ is small enough so that $\gamma_{\rrin-k\ep}$ is contained inside the domain of analyticity of $a$ and $\delta\in(0,\rrin^{-1}-1)$ (here sums and products over empty index ranges are taken to be equal to $0$ and $1$ respectively) and, when $k=0$, $\bar y_{-1}=y_{n+1}$ is a dummy parameter (and one has $p^n_0(0,z)=\alpha(1-\theta)^{-1}\theta^{y_n-z+1}$).
Moreover, for such $z$ we have that $\theta^zp^n_k(\ell,z)$ is a polynomial of degree at most $k-\ell$.
\end{lem}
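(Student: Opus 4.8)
The plan is to prove the formula by downward induction on $\ell$, starting from $\ell=k$, using the recursion \eqref{bhe1} satisfied by $p^n_k(\ell,z)$ (namely $(\Qt)^{-1}p^n_k(\ell,z)=p^n_k(\ell+1,z)$ for $z\le\bar y_\ell$, equivalently $\Qt p^n_k(\ell+1,\cdot)=p^n_k(\ell,\cdot)$ restricted appropriately), combined with the contour integral representations for $\Qt$ and its powers from Prop.~\ref{prop:AQRinv} (i.e.\ \eqref{eq:contourQ}, \eqref{eq:contourQA}) and the convolution identity Lem.~\ref{lem:conv}. The base case $\ell=k$ is just $p^n_k(k,z)=\sum_{\eta_k>\bar y_k}\Qt(z,\eta_k)$; using \eqref{eq:defQ}/\eqref{eq:Qgeom} (jumps of size $>\kappa$ are geometric with parameter $1-\theta$) and summing the geometric tail, for $z\le\bar y_k-\kappa$ one gets $p^n_k(k,z)=\frac{\alpha}{1-\theta}\theta^{\bar y_k-z+1}$, which matches the last term of \eqref{eq:pnk-ms} when the oscillatory sum is empty (i.e.\ $k=\ell$). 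The degree statement is immediate here since this is a constant times $\theta^{-z}$.

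For the inductive step, assuming the formula for $p^n_k(\ell+1,z)$ on $z\le\bar y_{\ell+1}-\kappa$, I would first extend $p^n_k(\ell+1,\cdot)$ to the function $\bar p^n_k(\ell+1,\cdot)$ which equals $\theta^{-z}$ times the same polynomial/contour-integral expression for all $z$ — this is legitimate precisely because, on the range $z \le \bar y_{\ell+1}-\kappa$, the formula already exhibits $p^n_k(\ell+1,z)$ as $\theta^{-z}$ times a polynomial (one reads this off the $(1+v)^{\cdots - z}$ and $(1+v)^{\bar y_{k-1}-z}$ factors after residue computation, the polynomial degree in $z$ being at most $k-\ell-1$). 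Then I would compute $\Qt\bar p^n_k(\ell+1,\cdot)(z)$. Since $\Qt$ acts on $\Wd{n}$ (Prop.~\ref{prop:AQprops}), this is again $\theta^{-z}$ times a polynomial; and for $z\le\bar y_\ell$, because $y_{n-\ell}-y_{n-\ell-1}\ge\kappa$ guarantees that the ``kink'' of the curve at level $\ell$ doesn't interfere, one checks that $\Qt\bar p^n_k(\ell+1,\cdot)(z)$ agrees with the genuine (probabilistic) $p^n_k(\ell,z)$ for $z$ at least $\kappa$ below $\bar y_\ell$. Concretely, the recursion $p^n_k(\ell,z)=\sum_{\eta\le\bar y_{\ell}}\Qt(z,\eta)p^n_k(\ell+1,\eta)$ can be split as $\sum_{\eta\in\zz}-\sum_{\eta>\bar y_\ell}$, and both pieces are evaluated using Lem.~\ref{lem:conv} and the geometric form of $\Qt$ for large jumps; the $\sum_{\eta\in\zz}$ piece introduces the extra $\oint\frac{a(w_{m+1})}{1-w_{m+1}}\cdots$ layer via \eqref{eq:contourQA}, while the $\sum_{\eta>\bar y_\ell}$ correction, after geometric summation, shifts the top integration variable and produces the new ``boundary'' term. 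Matching the bookkeeping of indices $m$, the shifts $\bar y_{m+1}-\bar y_m$ in the exponents, and the contour radii $\rrin-(k-m-1)\ep$ (chosen shrinking so that all the successive $(w_j-w_{j-1})^{-1}$ poles are correctly ordered, and staying inside the analyticity domain of $a$, using the $\ep$ from the statement), gives exactly \eqref{eq:pnk-ms} at level $\ell$. The degree claim propagates: each application of $\Qt$ on $\Wd{n}$ raises the polynomial degree by exactly one (this is the content of Prop.~\ref{prop:AQprops}(ii) applied to $A$ together with the explicit shift action \eqref{eq:Q0inv} of $Q_0$, or can be read directly from the $(1+v)^{\bar y_m - z}$-type factors), so going from $k-\ell-1$ to $k-\ell$.

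I expect the main obstacle to be the bookkeeping of the nested contour integrals: ensuring that, when the identity $\Qt\bar p^n_k(\ell+1,\cdot)=p^n_k(\ell,\cdot)$ is unfolded via Lem.~\ref{lem:conv}, the poles at $w_j=w_{j-1}$ and at $(1+v)w_{m+1}=1$ are separated in the right nesting order (hence the staircase of radii $\gamma_{\rrin-(k-m-1)\ep}\times\dots\times\gamma_{\rrin-\ep}$ and the small $\gamma_\delta$ contour for $v$ with $\delta\in(0,\rrin^{-1}-1)$), and that the degenerate cases — the top term $m=k-1$ versus the sum $m=\ell,\dots,k-2$, and especially $k=0$ or $\ell=k$ where sums/products over empty ranges appear — all collapse correctly to the stated convention. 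A secondary technical point is justifying the interchange of the (finite, because $\Qt$ has geometric tails and $p^n_k(\ell+1,\cdot)\in\Wd{n}$ so the relevant sums localize) summations with the contour integrals, and checking that moving the curve-kink condition $y_{n-\ell}-y_{n-\ell-1}\ge\kappa$ is exactly what is needed for the analytic extension $\bar p^n_k(\ell+1,\cdot)$ to reproduce the probabilistic $p^n_k(\ell,z)$ on the claimed range $z\le\bar y_\ell-\kappa$ (rather than only $z\le\bar y_\ell$), which is the subtle point flagged in the paragraph before the lemma.
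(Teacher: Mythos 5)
There is a genuine gap in the inductive step. Your plan hinges on writing $p^n_k(\ell,z)=\sum_{\eta\le\bar y_\ell}\Qt(z,\eta)\,\bar p^n_k(\ell+1,\eta)$ and then splitting this as $\sum_{\eta\in\zz}-\sum_{\eta>\bar y_\ell}$, evaluating each piece separately. But $\bar p^n_k(\ell+1,\cdot)\in\Wd{n}$, i.e.\ it equals $\theta^{-\eta}$ times a polynomial, while $\Qt(z,\eta)=\alpha\,\theta^{\eta-z}q_{\eta-z}$ with $q_{\eta-z}=1$ for $\eta-z>\kappa$; hence the summand behaves like $\alpha\,\theta^{-z}\times(\text{polynomial in }\eta)$ as $\eta\to+\infty$, and \emph{both} pieces of your split diverge. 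This is exactly the difficulty the paper flags just before \eqref{bhe}: $\Qt$ does not act on $\Wd{n}$ (note that $Q$ is excluded from Prop.~\ref{prop:AQprops} precisely because of the factor $1-w$), so ``$\Qt\bar p^n_k(\ell+1,\cdot)$'' is not defined, Lem.~\ref{lem:conv} does not apply to it, and your degree-raising mechanism (``each application of $\Qt$ on $\Wd{n}$ raises the degree by one'') has no meaning as stated. No choice of contours rescues the $\sum_{\eta\in\zz}$ piece, since the relevant geometric series $\sum_\eta\bigl(w(1+v)\bigr)^{-\eta}$ over all of $\zz$ diverges at one end for any fixed moduli; only the \emph{restricted} sum $\sum_{\eta\le\bar y_\ell}$ can be summed under the integral, after which there is nothing left to subtract.

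The paper avoids this entirely: it starts from the explicit nested-sum representation \eqref{eq:pnk-expl}, uses the $\kappa$-gaps to replace the top sum $\sum_{\eta_k>\bar y_k}\Qt(\eta_{k-1},\eta_k)$ by the pure geometric tail $\alpha(1-\theta)^{-1}\theta^{\bar y_k-\eta_{k-1}+1}$, and then sums each restricted geometric series $\sum_{\eta_j\le\bar y_j}$ under nested contour integrals, shrinking the radii in a staircase $\rrin-(k-\ell)\ep,\dotsc,\rrin-\ep$ so every partial sum converges; the sum-over-$m$ structure of \eqref{eq:pnk-ms} then comes from enlarging the first contour past the others and collecting the residues at $w_j=w_{j+1}$, followed by the change of variables $w\mapsto(1+v)^{-1}$, with the hypothesis $z\le\bar y_\ell-\kappa$ (together with $\bar y_\ell\le\bar y_m-(m-\ell)\kappa$) used only at the end to check analyticity at $v=-1$ and shrink the $v$-contour to $\gamma_\delta$; polynomiality in $z$ of degree $\le k-\ell$ then falls out of Cauchy's formula since the only $v$-pole is at $v=0$. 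If you want to keep an induction on $\ell$, you must compute the one-step \emph{restricted} sum directly under the contour integrals (with suitably chosen radii), which amounts to redoing the same manipulations as the paper one layer at a time rather than the shortcut you describe.
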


\begin{proof}
Using \eqref{eq:pnk-expl} and in view of \eqref{eq:Qgeom} we have, for $z\leq \bar y_k-\kappa$, $p^n_k(k,z)=\sum_{\eta>\bar y_k}\alpha\tts\theta^{\eta-z}=\alpha (1-\theta)^{-1}\theta^{\bar y_k-z+1}$.
This gives the result for $\ell=k$, since in this case only the last term in the right hand side of \eqref{eq:pnk-ms} survives, and the residue of that integral at $v=0$ is clearly $1$.

Assume now that $\ell<k$.
Throughout the proof we will write $\d w^j_{i_1,\dotsm,i_j}=\d w_{i_1}\dotsm\d w_{i_j}$, $\gamma^j_{r_1,\dotsc,r_j}=\gamma_{r_1}\times\dotsm\times\gamma_{r_j}$, $\eta^j_{i_1,\dotsc,i_j}=(\eta_{i_1},\dotsc,\eta_{i_j})$ and $I^j_{y_1,\dotsc,y_j}=\{\eta^j_{i_1,\dotsc,i_j}\in\zz^j\!:\eta_{i_1}\leq y_1,\dotsc,\eta_{i_{j}}\leq y_{j}\}$.
By assumption we have $\eta_k-\eta_{k-1}>\kappa$ in the last sum in \eqref{eq:pnk-expl}, so the sum yields $\alpha (1-\theta)^{-1}\theta^{\bar y_k-\eta_{k-1}+1}$ and then from \eqref{eq:contourQA} and \eqref{eq:pnk-expl} we get that $p^n_k(\ell,z)$ equals
\begin{align}
&\textstyle\sum_{\eta^{k-\ell}_{\ell,\dotsc,k-1}\in I^{k-\ell}_{\bar y_{\ell},\dotsc,\bar y_{k-1}}}\frac{\alpha^{k-\ell+1}\theta^{\bar y_k-z+1}}{(1-\theta)(2\pi\I)^{k-\ell}}\oint_{\gamma^{k-\ell}_{\rrin,\dotsc,\rrin}}\d w^{k-\ell}_{\ell,\dotsc,k-1}\,\frac{1}{w_\ell^{\eta_\ell-z}w_{\ell+1}^{\eta_{\ell+1}-\eta_\ell}\dotsm w_{k-1}^{\eta_{k-1}-\eta_{k-2}}}\frac{a(w_\ell)\dotsm a(w_{{k-1}})}{(1-w_\ell)\dotsm(1-w_{k-1})}\\
&\qquad=\textstyle\sum_{\eta^{k-\ell-1}_{\ell+1,\dotsc,k-1}\in I^{k-\ell-1}_{\bar y_{\ell+1},\dotsc,\bar y_{k-1}}}\frac{\alpha^{k-\ell+1}\theta^{\bar y_k-z+1}}{(1-\theta)(2\pi\I)^{k-\ell}}\oint_{\gamma^{k-\ell}_{\rrin-\ep,\rrin,\dotsc,\rrin}}\d w^{k-\ell}_{\ell,\dotsc,k-1}\\
&\textstyle\hspace{1.5in}\times\frac{1}{w_\ell^{\bar y_\ell-z}w_{\ell+1}^{\eta_{\ell+1}-\bar y_\ell-1}w_{\ell+2}^{\eta_{\ell+2}-\eta_{\ell+1}}\dotsm w_{k-1}^{\eta_{k-1}-\eta_{k-2}}}\frac{a(w_\ell)\dotsm a(w_{{k-1}})}{(w_{\ell+1}-w_\ell)(1-w_\ell)\dotsm(1-w_{k-1})},
\end{align}
where in computing the geometric sum over $\eta_\ell\leq\bar y_\ell$ we have shrunk the $w_{\ell}$ contour to a circle of radius $\rrin-\ep$. 
Shrinking now the $w_\ell$ contour further to $\gamma_{\rrin-2\ep}$ and the $w_{\ell+1}$ contour to $\gamma_{\rrin-\ep}$ we may compute the sum over $\eta_{\ell+1}\leq\bar y_{\ell+1}$ to get
\begin{multline}
\textstyle\sum_{\eta^{k-\ell-2}_{\ell+2,\dotsc,k-1}\in I^{k-\ell-2}_{\bar y_{\ell+2},\dotsc,\bar y_{k-1}}}\frac{\alpha^{k-\ell+1}\theta^{\bar y_k-z+1}}{(1-\theta)(2\pi\I)^{k-\ell}}\oint_{\gamma^{k-\ell}_{\rrin-2\ep,\rrin-\ep,\rrin,\dotsc,\rrin}}\d w^{k-\ell}_{\ell,\dotsc,k-1}\\
\textstyle\times\frac{1}{w_\ell^{\bar y_\ell-z}w_{\ell+1}^{\bar y_{\ell+1}-\bar y_\ell-1}w_{\ell+2}^{\eta_{\ell+2}-\bar y_{\ell+1}-1}w_{\ell+3}^{\eta_{\ell+3}-\eta_{\ell+2}}\dotsm w_{k-1}^{\eta_{k-1}-\eta_{k-2}}}\frac{a(w_\ell)\dotsm a(w_{{k-1}})}{(w_{\ell+2}-w_{\ell+1})(w_{\ell+1}-w_\ell)(1-w_\ell)\dotsm(1-w_{k-1})},
\end{multline}
and then proceeding inductively and computing the sums up to the variable $\eta_{k-1}$ we arrive at
\[\textstyle\frac{\alpha^{k-\ell+1}\theta^{\bar y_k-z+1}}{(1-\theta)(2\pi\I)^{k-\ell}}\oint_{\gamma^{k-\ell}_{\rrin-(k-\ell)\ep,\dotsc,\rrin-\ep}}\d w^{k-\ell}_{\ell,\dotsc,k-1}\,\frac{a(w_{k-1})}{w_\ell^{\bar y_\ell-z}(1-w_{k-1})^2}\prod_{j=\ell+1}^{k-1}\frac{1}{(w_{j}-w_{j-1})w_j^{\bar y_j-\bar y_{j-1}-1}}\prod_{j=\ell}^{k-2}\frac{a(w_j)}{1-w_j}.\]
Next we want to make the first integration variable lie on a contour larger than all the other ones, so we write the above as $\alpha^{k-\ell+1}(1-\theta)^{-1}\theta^{\bar y_k-z+1}$ times
\begin{align}
&\textstyle\frac1{(2\pi\I)^{k-\ell}}\oint_{\gamma^{k-\ell}_{\rrin,\rrin-(k-\ell-1)\ep,\dotsc,\rrin-\ep}}\!\!\!\!\d w^{k-\ell}_{\ell,\dotsc,k-1}\,\frac{a(w_\ell)a(w_{k-1})}{w_\ell^{\bar y_\ell-z}(1-w_\ell)(1-w_{k-1})^2}\prod_{j=\ell+1}^{k-1}\frac{1}{(w_{j}-w_{j-1})w_j^{\bar y_j-\bar y_{j-1}-1}}\prod_{j=\ell+1}^{k-2}\frac{a(w_j)}{1-w_j}\hskip-16pt\\
&\qquad+\textstyle\frac1{(2\pi\I)^{k-\ell-1}}\oint_{\gamma^{k-\ell-1}_{\rrin-(k-\ell-1)\ep,\dotsc,\rrin-\ep}}\d w^{k-\ell-1}_{\ell+1,\dotsc,k-1}\,\frac{a(w_{\ell+1})^2a(w_{k-1})}{w_{\ell+1}^{\bar y_{\ell+1}-z-1}(1-w_{\ell+1})^2(1-w_{k-1})^2}\\
&\textstyle\hspace{3in}\times\prod_{j=\ell+2}^{k-1}\frac{1}{(w_{j}-w_{j-1})w_j^{\bar y_j-\bar y_{j-1}-1}}\prod_{j=\ell+2}^{k-2}\frac{a(w_j)}{1-w_j}
\end{align}
after collecting the residue at $w_\ell=w_{\ell+1}$.
Enlarging now the $w_{\ell+1}$ contour on the second integral and then repeating the argument inductively, the last expression becomes
\begin{multline}
\textstyle\sum_{m=\ell}^{k-2}\frac1{(2\pi\I)^{k-m}}\oint_{\gamma^{k-m}_{\rrin,\rrin-(k-m-1)\ep,\dotsc,\rrin-\ep}}\d w^{k-m}_{m,\dotsc,k-1} \frac{a(w_m)^{m-\ell+1}a(w_{k-1})}{w_m^{\bar y_m-z-m+\ell}(1-w_m)^{m-\ell+1}(1-w_{k-1})^2}\\
\textstyle\times\prod_{j=m+1}^{k-1}\frac{1}{(w_{j}-w_{j-1})w_j^{\bar y_j-\bar y_{j-1}-1}}\prod_{j=m+1}^{k-2}\frac{a(w_j)}{1-w_j}+\frac1{2\pi\I}\oint_{\gamma_{\rrin-\ep}}\d w\,\frac{a(w)^{k-\ell}}{w^{\bar y_{k-1}-z-k+\ell+1}(1-w)^{k-\ell+1}}.
\end{multline}
Introducing the change of variables $w_m\longmapsto1/(1+v)$ in each summand of the first term and $w\longmapsto1/(1+v)$ in the last integral shows now that $p^n_k(\ell,z)$ equals the right hand side of \eqref{eq:pnk-ms} except that the $v$ contour in each summand is a circle $\tilde\gamma$ of radius $1/\rrin$ centered at $-1$.
To see that $\tilde\gamma$ can be shrunk to $\gamma_\delta$ (which is inside $\tilde\gamma$ by our assumption on $\delta$) we need to analyze the possible singularities of the integrand (other than $v=0$) inside the contour.
Note that the singularity at $v=-1+1/w_{m+1}$ in the first term is actually outside the contour thanks to our choices (this is precisely why we went through the trouble of enlarging the first contour in each integral above).
Next note that for $v$ inside $\tilde\gamma$, $(1+v)^{-1}$ lies outside $\gamma_\rrin$, where $a((1+v)^{-1})$ is analytic by Assum.~\ref{assum:apsi}.
So we only need to worry about the singularity at $v=-1$, but assuming now that $z\leq\bar y_\ell-\kappa$ and since $\bar y_\ell\leq\bar y_m-(m-\ell)\kappa$, using \eqref{eq:def-a} we see that the factor $(1+v)^{\bar y_{m}-z}$ is analytic at $v=-1$.
This proves \eqref{eq:pnk-ms}, while the fact that $\theta^zp^n_k(\ell,z)$ is a polynomial of degree at most $k-\ell$ for such $z$ follows directly from that and Cauchy's formula, since the only pole of each of the $v$ integrals is at $v=0$.
\end{proof}

\begin{cor}\label{cor:barpnk}
Let $\bar p^n_k(\ell,z)$ be the analytic extension of $p^n_k(\ell,z)$ from $z\leq y_{n-\ell}-\kappa$ to all $z$.
Then $\bar p^n_k(\ell,\cdot)\in\Wd{n}$, it has degree $k-\ell$, and for $\ell=0,\dotsc,k-1$ and all $z\in\zz$,
\begin{equation}\label{eq:Qtinvbarp}
(\Qt)^{-1}\bar p^n_k(\ell,z)=\bar p^n_k(\ell+1,z).
\end{equation}
Moreover, for $\ell=0,\dotsc,k$ and for $h^n_k$ as in Prop.~\ref{prop:pnkhnk} we have for all $z\in\zz$ that
\begin{equation}
h^n_k(\ell,z)=(1-\theta)\theta^{-1}(A^*)^{-1}\bar p^n_k(\ell,z)\label{eq:fnkRAbp}
\end{equation}
and, in particular, $\Phi^n_k(z)=(1-\theta)\theta^{-1}(\R^*)^{-1}(A^*)^{-1}\bar p^n_k(0,z)$, i.e.,
\begin{align}
&\textstyle\Phi^n_k(z)=\sum_{m=0}^{k-2}\frac{\alpha^{k}\theta^{\bar y_{k}-z}}{(2\pi\I)^{k-m}}\oint_{\gamma_\delta}\d v\oint_{\gamma^{k-m-1}_{\rrin-(k-m-1)\ep,\dotsc,\rrin-\ep}}\d w^{k-m-1}_{m+1,\dotsc,k-1}\prod_{j=m+1}^{k-2}\frac{a(w_j)}{1-w_j}\\
&\hspace{0.7in}\textstyle\times\frac{a((1+v)^{-1})^{m}a(w_{k-1})(1+v)^{\bar y_m-z}}{v^{m+1}\psi((1+v)^{-1})(1-w_{k-1})^2((1+v)w_{m+1}-1)w_{m+1}^{\bar y_{m+1}-\bar y_m-1}}\prod_{j=m+2}^{k-1}\frac{1}{(w_{j}-w_{j-1})w_j^{\bar y_j-\bar y_{j-1}-1}}\\
&\hspace{2.4in}\textstyle+\frac{\alpha^{k}\theta^{\bar y_{k}-z}}{2\pi\I}\oint_{\gamma_{\delta}}\d v\,\frac{a((1+v)^{-1})^{k-1}(1+v)^{\bar y_{k-1}-z}}{v^{k+1}\psi((1+v)^{-1})},
\end{align}
where $\ep$ and $\delta$ are as in Lem.~\ref{lem:pnk}.
\end{cor}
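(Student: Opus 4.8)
The plan is to verify that the right‑hand side of \eqref{eq:fnkRAbp} solves the initial–boundary value problem \eqref{bhe}, and then to read off \eqref{eq:Qtinvbarp} and the explicit formula for $\Phi^n_k$ from this, Thm.~\ref{thm:h_heat_Q}, and the formula \eqref{eq:pnk-ms} of Lem.~\ref{lem:pnk}. First I would record the consequences of Lem.~\ref{lem:pnk}: under the assumption $y_j-y_{j+1}\geq\kappa$ (writing $\bar y_\ell=y_{n-\ell}$), $\theta^z p^n_k(\ell,z)$ agrees with a polynomial of degree at most $k-\ell$ on $\{z\leq\bar y_\ell-\kappa\}$, so $\bar p^n_k(\ell,\cdot)$ is well defined, belongs to $\Wd{n}$ (its degree being $\leq k-\ell\leq\bn-1$), and the right‑hand side of \eqref{eq:pnk-ms}, read for arbitrary $z\in\zz$ (each $v$‑integral there producing a polynomial in $z$), gives a closed formula for $\bar p^n_k(\ell,z)$ valid for all $z$.

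Next I would prove \eqref{eq:Qtinvbarp}. The key computation is that, for $v$ in a small punctured disk around $0$, the operator $(\Qt)^{-1}$ acts on $z\longmapsto\theta^{-z}(1+v)^{-z}$ by
\[(\Qt)^{-1}\bigl[\theta^{-\cdot}(1+v)^{-\cdot}\bigr](z)=\theta^{-z}(1+v)^{-z}\,\frac{\alpha^{-1}v}{a((1+v)^{-1})},\]
which follows from the contour formula \eqref{eq:contourQ} for $Q^{-1}$ by splitting the defining sum as in the proof of Prop.~\ref{prop:AQprops} and collecting the residue at $w=(1+v)^{-1}$, exactly as in Lem.~\ref{lem:conv}. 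Applying $(\Qt)^{-1}$ term by term to the formula from the previous paragraph (Fubini being justified by the decay estimates of Sec.~\ref{sec:setting}) multiplies each factor $\theta^{-z}(1+v)^{\bar y_m-z}$ by $\alpha^{-1}v/a((1+v)^{-1})$, lowering by one both the power of $v$ in the denominator and the power of $a((1+v)^{-1})$; the summand indexed by $m=\ell$ thereby loses its pole at $v=0$, so its $v$‑integral vanishes, while the remaining summands and the last term turn into precisely those of $\bar p^n_k(\ell+1,z)$. Comparing degrees via Prop.~\ref{prop:AQprops} (under which $(\Qt)^{-1}$ lowers the degree of a non‑constant element of $\Wd n$ by exactly one and annihilates only the degree‑zero ones) together with the base value $\bar p^n_k(k,z)=\alpha(1-\theta)^{-1}\theta^{\bar y_k-z+1}\not\equiv0$ then forces $\bar p^n_k(\ell,\cdot)$ to have degree exactly $k-\ell$, since otherwise $(\Qt)^{-(k-\ell)}\bar p^n_k(\ell,\cdot)=\bar p^n_k(k,\cdot)$ would vanish.

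Then I would identify $h^n_k$. Set $g^n_k(\ell,z)=(1-\theta)\theta^{-1}(A^*)^{-1}\bar p^n_k(\ell,z)$; by Prop.~\ref{prop:AQprops}, $g^n_k(\ell,\cdot)\in\Wd n$ and $(A^*)^{-1}$ commutes with $(\Qt)^{-1}$ on $\Wd n$, so \eqref{bhe1} for $g^n_k$ follows from \eqref{eq:Qtinvbarp}. The analogue of the computation above gives $(A^*)^{-1}[\theta^{-z}(1+v)^{-z}]=\theta^{-z}(1+v)^{-z}\alpha^{-1}/a((1+v)^{-1})$, whence at $v=0$ (using $a(1)=1$) $(A^*)^{-1}\theta^{-z}=\alpha^{-1}\theta^{-z}$ and therefore $g^n_k(k,z)=\alpha\theta^{\bar y_k}(A^*)^{-1}\theta^{-z}=\theta^{\bar y_k-z}$, i.e.\ \eqref{bhe2}. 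The remaining condition \eqref{bhe3}, namely $g^n_k(\ell,\bar y_\ell)=0$ for $\ell<k$, is the crux: I would evaluate the explicit formula for $g^n_k(\ell,\cdot)$ (obtained by applying $(A^*)^{-1}$ term by term to \eqref{eq:pnk-ms}) at $z=\bar y_\ell$, where every $(1+v)^{\bar y_m-z}$ becomes $(1+v)^{\bar y_m-\bar y_\ell}$ with $\bar y_m\geq\bar y_\ell$, and after the substitution $w=(1+v)^{-1}$ and deformation of the contours (as in the proof of Lem.~\ref{lem:pnk}) show that the summands reorganize into a telescoping sum that cancels in pairs, leaving $0$; the case $\ell=k-1$, where $g^n_k(k-1,z)=\alpha\theta^{\bar y_k-z}(\bar y_{k-1}-z)$ visibly vanishes at $z=\bar y_{k-1}$, is the prototype. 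Uniqueness in Prop.~\ref{prop:pnkhnk} then yields $g^n_k=h^n_k$, which is \eqref{eq:fnkRAbp}.

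Finally, for the explicit formula for $\Phi^n_k$: by Thm.~\ref{thm:h_heat_Q} and \eqref{eq:fnkRAbp} at $\ell=0$, $\Phi^n_k=(\R^*)^{-1}h^n_k(0,\cdot)=(1-\theta)\theta^{-1}(\R^*)^{-1}(A^*)^{-1}\bar p^n_k(0,\cdot)$, and since (by the same residue computation, using \eqref{eq:Apow} and \eqref{eq:defRinv}) $(\R^*)^{-1}(A^*)^{-1}$ acts on $\theta^{-z}(1+v)^{-z}$ by multiplication by $\alpha^{-1}/\bigl(a((1+v)^{-1})\psi((1+v)^{-1})\bigr)$, applying it term by term to \eqref{eq:pnk-ms} with $\ell=0$ drops each power of $a((1+v)^{-1})$ by one, inserts the factor $1/\psi((1+v)^{-1})$, and collapses the prefactors ($\alpha^{k+1}(1-\theta)^{-1}\cdot(1-\theta)\theta^{-1}\alpha^{-1}\cdot\theta^{\bar y_k+1}=\alpha^{k}\theta^{\bar y_k}$), producing exactly the displayed expression. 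I expect the boundary condition \eqref{bhe3} in the fourth step to be the main obstacle: the telescoping cancellation there is the heart of the matter, the remaining steps being bookkeeping with the contour‑integral identities of Sec.~\ref{sec:setting}.
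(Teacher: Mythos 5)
Your plan follows the same route as the paper: take the right-hand side of \eqref{eq:pnk-ms} as the definition of $\bar p^n_k(\ell,\cdot)$ for all $z$; prove \eqref{eq:Qtinvbarp} by applying $(\Qt)^{-1}$ under the integral sign (your multiplier $\alpha^{-1}v/a((1+v)^{-1})$ on $z\mapsto\theta^{-z}(1+v)^{-z}$ is correct, and it does make the $m=\ell$ summand lose its pole at $v=0$); verify that $g^n_k=(1-\theta)\theta^{-1}(A^*)^{-1}\bar p^n_k$ solves \eqref{bhe} and invoke the uniqueness of Prop.~\ref{prop:pnkhnk} together with Thm.~\ref{thm:h_heat_Q}; and read off the displayed formula for $\Phi^n_k$ from one further residue computation (your prefactor bookkeeping $\alpha^{k+1}(1-\theta)^{-1}\cdot(1-\theta)\theta^{-1}\alpha^{-1}\theta^{\bar y_k+1}=\alpha^k\theta^{\bar y_k}$ is right). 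Your derivation of the exact degree $k-\ell$, via $(\Qt)^{-(k-\ell)}\bar p^n_k(\ell,\cdot)=\bar p^n_k(k,\cdot)\not\equiv0$ and Prop.~\ref{prop:AQprops}(iii), is a small valid variant of the paper's direct observation that each $v$-integral in \eqref{eq:pnk-ms} is $\theta^{-z}$ times a polynomial of degree $k-\ell$.

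The genuine gap is the boundary condition \eqref{bhe3}, which you correctly single out as the crux but leave as an asserted ``telescoping sum that cancels in pairs,'' with no derivation; moreover that is not the actual structure of the cancellation. In the paper one writes $(A^*)^{-1}\bar p^n_k(\ell,z)=\sum_{m=\ell}^{k-2}I_m(z)+J_{k-1}(z)$ and proves, by computing the residue at $v=0$ of the single summand $I_\ell$ and then repeatedly expanding the innermost $w$-contour, collecting the residue at coinciding $w$-variables and changing variables back to $v$, the identity $I_\ell(z)=-\theta^{\bar y_\ell-z}\bigl(\sum_{m=\ell+1}^{k-2}I_m(\bar y_\ell)+J_{k-1}(\bar y_\ell)\bigr)$; the vanishing at $z=\bar y_\ell$ follows because each remaining term then appears once with argument $z=\bar y_\ell$ and once with the opposite sign. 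So no two of the original summands cancel each other pairwise; rather the $m=\ell$ term alone regenerates (minus) all the others evaluated at the boundary point, and this inductive contour-expansion identity constitutes roughly half of the paper's proof of the corollary. Your $\ell=k-1$ prototype, while correct, is precisely the case in which the sum over $m$ is empty, so it does not exhibit this mechanism; your proposal still needs to supply the identity above (or an equivalent argument) for general $\ell<k-1$.
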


\begin{proof}
The right hand side of \eqref{eq:pnk-ms} is in fact $\theta^{-z}$ times a polynomial in $z$ of degree $k-\ell$ for all $z\in\zz$, so $\bar p^n_k(\ell,z)$ equals that expression for all $z$, it is in $\Wd{n}$, and it has degree $k-\ell$.
Using that formula and the notation from the proof of Lem.~\ref{lem:pnk} and computing as in the proof of Lem.~\ref{lem:conv} we get
\begin{align}
&\textstyle(\Qt)^{-1}\bar p^n_k(\ell,z)=\sum_{m=\ell}^{k-2}\frac{\alpha^{k-\ell}\theta^{\bar y_k-z+1}}{(1-\theta)(2\pi\I)^{k-m}}\oint_{\gamma_\delta}\d v\oint_{\gamma^{k-m-1}_{\rrin-(k-m-1)\ep,\dotsc,\rrin-\ep}}\d w^{k-m-1}_{m+1,\dotsc,k-1}\\
&\hspace{0.45in}\textstyle\times\frac{a((1+v)^{-1})^{m-\ell}a(w_{k-1})(1+v)^{\bar y_m-z}}{v^{m-\ell}(1-w_{k-1})^2((1+v)w_{m+1}-1)w_{m+1}^{\bar y_{m+1}-\bar y_m-1}}\prod_{j=m+2}^{k-1}\frac{1}{(w_{j}-w_{j-1})w_j^{\bar y_j-\bar y_{j-1}-1}}\prod_{j=m+1}^{k-2}\frac{a(w_j)}{1-w_j}\\
&\hspace{2.1in}\textstyle+\frac{\alpha^{k-\ell}\theta^{\bar y_k-z+1}}{(1-\theta)2\pi\I}\oint_{\gamma_{\delta}}\d v\,\frac{a((1+v)^{-1})^{k-\ell-1}(1+v)^{\bar y_{k-1}-z}}{v^{k-\ell}}
\end{align}
for $\ell\leq k-1$.
The integrand in the first integral is analytic at $v=0$ for $m=\ell$, so that term disappears from the sum and we recover the formula for $\bar p^n_k(\ell+1,z)$, which gives \eqref{eq:Qtinvbarp}.

We turn now to \eqref{eq:fnkRAbp}. By Prop.~\ref{prop:pnkhnk}, in order to prove it, it is enough to show that the right hand side, i.e., $g^n_k(\ell,z)\coloneqq(1-\theta)\theta^{-1}(A^*)^{-1}\bar p^n_k(\ell,z)$, satisfies \eqref{bhe}.
\eqref{bhe1} follows from \eqref{eq:Qtinvbarp}.
For \eqref{bhe2} we use $\bar p^n_k(k,z)=\alpha\tts(1-\theta)^{-1}\theta^{y_{n-k}-z+1}$ and compute again as in the proof of Lem.~\ref{lem:conv} 
to get $g^n_k(k,z)=(1-\theta)\theta^{-1}(A^*)^{-1}\bar p^n_k(k,z)=\theta^{y_{n-k}-z}$ as desired.

What remains is to prove \eqref{bhe3}, which translates into showing that
$(A^*)^{-1}\bar p^n_k(\ell,\bar y_\ell)=0$
for $\ell=0,\dotsc,k-1$, where $\bar y_\ell=y_{n-\ell}$. 
Proceeding as above we get
\begin{equation}
\textstyle(A^*)^{-1}\bar p^n_k(\ell,z)=\sum_{m=\ell}^{k-2}I_m(z)+J_{k-1}(z)\label{eq:Apnk}
\end{equation}
with
\begin{align}
&I_m(z)\textstyle=\frac{\alpha^{k-\ell}\theta^{\bar y_{k}-z+1}}{(1-\theta)(2\pi\I)^{k-m}}\oint_{\gamma_\delta}\d v\oint_{\gamma^{k-m-1}_{\rrin-(k-m-1)\ep,\dotsc,\rrin-\ep}}\d w^{k-m-1}_{m+1,\dotsc,k-1}\\
&\quad\textstyle\times\frac{a((1+v)^{-1})^{m-\ell}a(w_{k-1})(1+v)^{\bar y_m-z}}{v^{m-\ell+1}(1-w_{k-1})^2((1+v)w_{m+1}-1)w_{m+1}^{\bar y_{m+1}-\bar y_m-1}}\prod_{j=m+2}^{k-1}\frac{1}{(w_{j}-w_{j-1})w_j^{\bar y_j-\bar y_{j-1}-1}}\prod_{j=m+1}^{k-2}\frac{a(w_j)}{1-w_j},\hskip-14pt\\
&J_{k-1}(z)=\textstyle\frac{\alpha^{k-\ell}\theta^{\bar y_{k}-z+1}}{(1-\theta)2\pi\I}\oint_{\gamma_{\delta}}\d v\,\frac{a((1+v)^{-1})^{k-\ell-1}(1+v)^{\bar y_{k-1}-z}}{v^{k-\ell+1}}.
\end{align}
We focus on the first term on the right hand side of \eqref{eq:Apnk}, $I_{\ell}(z)$.
Computing the residue at the (simple) pole $v=0$ yields (recalling $a(1)=1$)
\begin{multline}
\textstyle I_{\ell}(z)=-\frac{\alpha^{k-\ell}\theta^{\bar y_{k}-z+1}}{(1-\theta)(2\pi\I)^{k-\ell-1}}\oint_{\gamma^{k-\ell-1}_{\rrin-(k-\ell-1)\ep,\dotsc,\rrin-\ep}}\d w^{k-\ell-1}_{\ell+1,\dotsc,k-1}\\
\textstyle\times\frac{a(w_{k-1})}{(1-w_{k-1})^2(1-w_{\ell+1})w_{\ell+1}^{\bar y_{\ell+1}-\bar y_{\ell}-1}}\prod_{j=\ell+2}^{k-1}\frac{1}{(w_{j}-w_{j-1})w_j^{\bar y_j-\bar y_{j-1}-1}}\prod_{j=\ell+1}^{k-2}\frac{a(w_j)}{1-w_j}.
\end{multline}
Now we proceed similarly to the proof of Lem.~\ref{lem:pnk}.
Expanding the $w_{\ell+1}$ contour to $\gamma_\rrin$ and picking up the residue at $w_{\ell+1}=w_{\ell+2}$ the right hand side becomes 
\begin{align}
&-\textstyle\frac{\alpha^{k-\ell}\theta^{\bar y_{k}-z+1}}{(1-\theta)(2\pi\I)^{k-\ell-1}}\oint_{\gamma^{k-\ell-1}_{\rrin,\rrin-(k-\ell-2)\ep,\dotsc,\rrin-\ep}}\d w^{k-\ell-1}_{\ell+1,\dotsc,k-1}\\
&\hspace{0.9in}\textstyle\times\frac{a(w_{k-1})a(w_{\ell+1})}{(1-w_{k-1})^2(1-w_{\ell+1})^2w_{\ell+1}^{\bar y_{\ell+1}-\bar y_{\ell}-1}}\prod_{j=\ell+2}^{k-1}\frac{1}{(w_{j}-w_{j-1})w_j^{\bar y_j-\bar y_{j-1}-1}}\prod_{j=\ell+2}^{k-2}\frac{a(w_j)}{1-w_j}\\
&\qquad-\textstyle\frac{\alpha^{k-\ell}\theta^{\bar y_{k}-z+1}}{(1-\theta)(2\pi\I)^{k-\ell-2}}\oint_{\gamma^{k-\ell-2}_{\rrin-(k-\ell-2)\ep,\dotsc,\rrin-\ep}}\d w^{k-\ell-2}_{\ell+2,\dotsc,k-1}\\
&\hspace{0.9in}\textstyle\times\frac{a(w_{k-1})a(w_{\ell+2})^2}{(1-w_{k-1})^2(1-w_{\ell+2})^3w_{\ell+2}^{\bar y_{\ell+2}-\bar y_{\ell}-2}}\prod_{j=\ell+3}^{k-1}\frac{1}{(w_{j}-w_{j-1})w_j^{\bar y_j-\bar y_{j-1}-1}}\prod_{j=\ell+3}^{k-2}\frac{a(w_j)}{1-w_j}.
\end{align}
After changing variables $w_{\ell+1}\longmapsto1/(1+v)$, the first of the two terms yields exactly $-\theta^{\bar y_\ell-z}I_{\ell+1}(\bar y_\ell)$.
Proceeding inductively to compute the second term by expanding the first contour, changing variables and so on, yields the terms $-\theta^{\bar y_\ell-z}I_{\ell+2}(\bar y_\ell),\dotsc,-\theta^{\bar y_\ell-z}I_{k-2}(\bar y_\ell)$ and finally $-\theta^{\bar y_\ell-z}J_{k-1}(\bar y_\ell)$.
We have shown that $I_{\ell}(z)=-\theta^{\bar y_\ell-z}\sum_{m=\ell+1}^{k-2}I_{m}(\bar y_\ell)-\theta^{\bar y_\ell-z}I_{k-1}(\bar y_\ell)$, and thus in view of \eqref{eq:Apnk} we have
\[\textstyle (A^*)^{-1}\bar p^n_k(\ell,z)=\sum_{m=\ell+1}^{k-2}(I_m(z)-\theta^{\bar y_\ell-z}I_m(\bar y_\ell))+J_{k-1}(z)-\theta^{\bar y_\ell-z}J_{k-1}(\bar y_\ell).\]
This gives $(A^*)^{-1}\bar p^n_k(\ell,\bar y_\ell)=0$ as desired.

The explicit formula for $\Phi^n_k(z)$ follows directly from applying $(\R^*)^{-1}$ to \eqref{eq:Apnk} and computing in the same way.
\end{proof}

\begin{proof}[Proof of Prop.~\ref{prop:G0n-formula}]
Using \eqref{eq:fnkRAbp} in \eqref{eq:defG0n} yields
\begin{equation}\label{eq:G-in-the-proof}
G_{0,n}A(z_1,z_2)=(1-\theta)\theta^{-1}\sum_{k=0}^{n-1}Q^{n-k}(z_1,y_{n-k})\bar p^n_k(0,z_2)
\end{equation}
and then from \eqref{eq:pnkrw} and the definition of $\bar p^n_k(0,z_2)$ we get for $z_2\leq y_n-\kappa$ that
\begin{equation}
G_{0,n}A(z_1,z_2)=(1-\theta)\theta^{-1}\sum_{k=0}^{n-1}Q^{n-k}(z_1,y_{n-k})\pp_{B^*_{-1}=z_2}(\tau^{0,n}=k).\label{eq:G0nA}
\end{equation}

We turn now to computing $\bar G_{0,n}(z_1,z_2)$ for $z_2\leq y_n-\kappa$.
By definition we have $\bar G_{0,n}(z_1,z_2)=\sum_{k=0}^{n-1}\ee_{B_0=z_1}\big[\mQ^{(n-k)}(B_k,z_2)\uno{\tau=k}\big]$, and thanks to the assumption on $z_2$, inside the expectation we have $B_k-z_2\geq y_{k+1}+1 - y_n+\kappa\geq(n-k)\kappa+1$, so by \eqref{eq:QQext} we may replace $\mQ^{(n-k)}$ by $Q^{n-k}$ to get
\[\bar G_{0,n}(z_1,z_2)=\pp_{B_0=z_1}\!\big(\tau<n,\,B_n=z_2\big)=\pp_{B_{-1}^*=z_2}\!\big(\tau^{0,n}<n,\,B_{n-1}^*=z_1\big).\]
Observe that for $\eta>y\geq \eta'+\kappa$, if we ask $B^*$ to jump from $\eta'$ to a location strictly above $y$ then the jump is of size at least $\kappa+1$, so only the geometric part of the definition of $Q$ in \eqref{eq:defQ} is seen and then we have $\pp_{B^*_{k-1}=\eta'}(B^*_k>y)=\theta^{y-\eta+1}(1-\theta)^{-1}\pp_{B^*_{k-1}=\eta'}(B^*_k=\eta)$. 
Therefore for any $\eta>y_{n-k}$, and since $y_{n-k}-y_{n-k+1}\geq\kappa$, we have for $k\in\{0,\dotsc,n-1\}$
\begin{equation}\label{eq:memoryless}
\begin{aligned}
\pp_{B^*_{-1}=z}\big(&\tau^{0,n}=k,\,B^*_k=\eta\big)=\sum_{\eta'\leq y_{n-k+1}}\pp_{B^*_{-1}=z}\big(\tau^{0,n}>k-1,\,B^*_{k-1}=\eta'\big)\Qt(\eta',\eta)\\
&=\sum_{\eta'\leq y_{n-k+1}}\pp_{B^*_{-1}=z}\big(\tau^{0,n}>k-1,\,B^*_{k-1}=\eta'\big)(1-\theta)\theta^{\eta-y_{n-k}-1}\pp_{B^*_{k-1}=\eta'}(B_k^*>y_{n-k})\\
&=(1-\theta)\theta^{\eta-y_{n-k}-1}\pp_{B^*_{-1}=z}\big(\tau^{0,n}=k\big).
\end{aligned}
\end{equation}
We deduce then that, for $z_2\leq y_n-\kappa$,
\begin{align}
\bar G_{0,n}(z_1,z_2)&=\sum_{k=0}^{n-1}\sum_{\eta>y_{n-k}}\pp_{B_{-1}^*=z_2}(\tau^{0,n}=k,\,B_{k}^*=\eta)(\Qt)^{n-k-1}(\eta,z_1)\\
&=\sum_{k=0}^{n-1}\sum_{\eta>y_{n-k}}\pp_{B_{-1}^*=z_2}(\tau^{0,n}=k)(1-\theta)\theta^{\eta-y_{n-k}-1}(\Qt)^{n-k-1}(\eta,z_1)\\
&=\sum_{\ell>0}\theta^{\ell}Q^{-1}G_{0,n}A(z_1-\ell,z_2),
\end{align}
where in the third equality we used \eqref{eq:G0nA}.
Using now \eqref{eq:Q0} and \eqref{eq:AQ0} we conclude that
\[\bar G_{0,n}(z_1,z_2)=Q_0Q^{-1}G_{0,n}A(z_1,z_2)=A^{-1}G_{0,n}A(z_1,z_2).\]
We have proved this identity for $z_2\leq y_{n}-\kappa$, but both sides are in $\Wd{n}$ as functions of $z_2$ (for the left hand side this holds by \eqref{eq:barG0n}, while for the right hand side we use \eqref{eq:G-in-the-proof}, which gives $A^{-1}G_{0,n}A(z_1,z_2)=(1-\theta)\theta^{-1}\sum_{k=0}^{n-1}A^{-1}Q^{n-k}(z_1,y_{n-k})\bar p^n_k(0,z_2)$, together with the fact that $\bar p^n_k(0,\cdot)$ is in $\Wd{n}$ by definition), so the identity extends to all $z_2$ as needed.
\end{proof}

\subsection{Main result and application to particle systems}
\label{sec:main-proof}

Prop.~\ref{prop:G0n-formula} expresses the main part of the kernel in terms of the hitting times of a random walk.
Using this in \eqref{eq:KnG0n} leads to our main result:

\begin{thm}\label{thm:kernel-rw}
Suppose that the functions $a$ and $\psi$ satisfy Assums.~\ref{assum:q} and \ref{assum:apsi}.
For $n\in\set{\bn}$ define
\begin{align}
\SM_{-n}(z_1,z_2) &= (\R Q^{-n}A)^*(z_1,z_2) \\ 
&=\frac{\alpha^{-n+1}}{2\pi\I}\oint_{\gamma_{\rrin}}\d w\,\frac{\theta^{z_2-z_1}}{w^{z_2-z_1+n+1}}\frac{(1-w)^n}{a(w)^{n-1}}\,\psi(w),\label{def:sm}\\
\SN_{n} (z_1,z_2) &= \mQ^{(n)}\R^{-1}A^{-1}(z_1,z_2)\\
&= \frac{\alpha^{n-1}}{2\pi\I}\oint_{\gamma_\delta}\d w\,\frac{(1-w)^{z_2-z_1+n-1}}{\theta^{z_2-z_1}w^n}\frac{a(1-w)^{n-1}}{\psi(1-w)},
\label{def:sn}
\end{align}
where $\delta>0$ is so that $a(1-w)$ and $\psi(1-w)^{-1}$ are analytic inside $\gamma_\delta$. Suppose also that $y_j-y_{j+1}\geq\kappa$ for each $j=1,\dotsc,\bn-1$.
Then the kernel $K$ defined in \eqref{eq:K-schutz} can be expressed as
\begin{equation}\label{eq:Kn-RW}
K(n_i,x_i;n_j,x_j)=- Q^{n_j-n_i}(x_i,x_j)\uno{n_i<n_j}+(\SM_{-n_i})^*\SN^{\epi(\vec y)}_{n_j}(x_i,x_j)
\end{equation}
for any $n_i,n_j\in\set{\bn}$, where
\begin{equation}\label{eq:defSNepi}
	\SN^{\epi(\vec y)}_n(z_1,z_2) = \ee_{B_{0}=z_1}\!\left[\SN_{n-\tau}(B_\tau,z_2)\uno{\tau<n}\right]
\end{equation}
with $B$ the random walk with transition matrix $Q$ defined in \eqref{eq:defQ} and with $\tau$ the hitting time defined in~\eqref{eq:deftau}.
Moreover, in \eqref{def:sm} the contour $\gamma_r$ can be replaced by $\gamma_{r'}$ for any radius $r'\in[r,1)$ (with $r$ as coming from Assum.~\ref{assum:apsi}).
\end{thm}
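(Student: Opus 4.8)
The plan is to assemble \eqref{eq:Kn-RW} out of results already in hand, with essentially no new computation: the analytic content is entirely contained in Prop.~\ref{prop:G0n-formula} (and in Lem.~\ref{lem:pnk}, Cor.~\ref{cor:barpnk}), so what is left is operator bookkeeping together with one elementary contour deformation. The first step is to note that, by \eqref{eq:KextKn}, it suffices to understand the one-point kernel $K^{(n_j)}$ of \eqref{eq:Kn} and the prefactor $Q^{n_j-n_i}$. Combining \eqref{eq:KnG0n} with Prop.~\ref{prop:G0n-formula} (whose hypothesis $y_j-y_{j+1}\geq\kappa$ for all $j$ is exactly the one assumed here) gives $K^{(n)}=\R Q^{-n}G_{0,n}\R^{-1}=\R Q^{-n}A\,\bar G_{0,n}\,A^{-1}\R^{-1}$ with $\bar G_{0,n}$ as in \eqref{eq:barG0n}. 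Every composition appearing here is legitimate because $G_{0,n}(z_1,\cdot)$ and $\bar G_{0,n}(z_1,\cdot)$ lie in $\Wd{n}$ (by Prop.~\ref{prop:pnkhnk} and \eqref{eq:barG0n}), and $Q$, $A^{\pm1}$, $\R^{\pm1}$ and powers of $Q$ act on $\Wd{n}$ and on $\ell^1(\zz)$ with the required absolute convergence by Props.~\ref{prop:AQRinv} and~\ref{prop:AQprops}.

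Next I would identify the two factors on the right of \eqref{eq:Kn-RW} as operator products. From the definition $\SM_{-n}=(\R Q^{-n}A)^*$ and Lem.~\ref{lem:conv}, applied to $\R$, $Q^{-n}$ and $A$ with the symbols read off from \eqref{eq:defR}, \eqref{eq:contourQ} and \eqref{eq:Apow}, one obtains the explicit contour formula \eqref{def:sm}, and in particular $(\SM_{-n})^*=\R Q^{-n}A$; the same computation, starting now from \eqref{eq:QExt}, yields $\SN_n=\mQ^{(n)}\R^{-1}A^{-1}$ together with the formula \eqref{def:sn}. Since $\R^{-1}A^{-1}$ acts only on the second coordinate, pulling it out of the expectation in \eqref{eq:defSNepi} gives $\SN^{\epi(\vec y)}_n=\bar G_{0,n}\R^{-1}A^{-1}$; the interchange is justified because the constraint $\tau<n$ turns the expectation into a finite sum and, for each value of $\tau$, $\mQ^{(n-\tau)}(B_\tau,\cdot)\in\Wd{n-\tau-1}\subseteq\Wd{n}$, on which $\R^{-1}A^{-1}$ acts with absolutely convergent sums.

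Assembling these, and using that $A$ and $\R$ commute and that powers of $Q$ add and commute with $\R$ (all from Prop.~\ref{prop:AQRinv}), one gets the single chain
\[K(n_i,\cdot\,;n_j,\cdot)+Q^{n_j-n_i}\uno{n_i<n_j}=Q^{n_j-n_i}K^{(n_j)}=Q^{n_j-n_i}\R Q^{-n_j}A\,\bar G_{0,n_j}\,A^{-1}\R^{-1}=\R Q^{-n_i}A\,\bar G_{0,n_j}\,\R^{-1}A^{-1}=(\SM_{-n_i})^*\SN^{\epi(\vec y)}_{n_j},\]
which is exactly \eqref{eq:Kn-RW}; the case $n_i=n_j$ recovers $K^{(n)}=(\SM_{-n})^*\SN^{\epi(\vec y)}_n$ as a special case.

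For the final assertion, the integrand in \eqref{def:sm}, regarded as a function of $w$, equals $\psi(w)(1-w)^n/(w^{z_2-z_1+n+1}a(w)^{n-1})$, which by Assum.~\ref{assum:apsi} is analytic on the annulus $A_{\rin,\rout}$ ($a$ is analytic and non-vanishing there, $\psi$ is analytic, and $(1-w)^n$ is entire), its only singularity inside $\{|w|<\rout\}$ being at $w=0$, which lies interior to both $\gamma_{\rin}$ and $\gamma_{r'}$. Hence for $r'\in[\rin,1)\subseteq[\rin,\rout)$ the circles $\gamma_{\rin}$ and $\gamma_{r'}$ are homotopic within $A_{\rin,\rout}$, Cauchy's theorem gives equality of the integrals, and the same remark applies to the compositions building $K$, so \eqref{eq:Kn-RW} is unchanged under the replacement. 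The only points in this argument requiring genuine care are the operator manipulations just used --- commuting $A^{-1}$ with $\R^{-1}$, pulling $\R^{-1}A^{-1}$ out of the random-walk expectation, and the associativity of the composition with $Q^{n_j-n_i}$ --- all of which reduce to the absolute-convergence estimates and mapping properties on $\Wd{n}$ and $\ell^1(\zz)$ recorded in Props.~\ref{prop:AQRinv} and~\ref{prop:AQprops}. I do not expect a genuine obstacle in Thm.~\ref{thm:kernel-rw} itself, since the hard work has already been carried out in Prop.~\ref{prop:G0n-formula}.
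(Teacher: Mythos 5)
Your proposal is correct and follows essentially the same route as the paper's own proof: reduce via \eqref{eq:KextKn} and \eqref{eq:KnG0n} to Prop.~\ref{prop:G0n-formula}, identify $(\SM_{-n})^*=\R Q^{-n}A$ and $\SN^{\epi(\vec y)}_{n}=\bar G_{0,n}\R^{-1}A^{-1}$ (with the Fubini-type interchange you flag), and get the contour formulas \eqref{def:sm}--\eqref{def:sn} and the deformation of $\gamma_\rrin$ from Lem.~\ref{lem:conv}-type computations and analyticity on the annulus. The only (harmless) slip is your aside that $w=0$ is the sole singularity of the integrand of \eqref{def:sm} in $\{|w|<\rout\}$ — Assum.~\ref{assum:apsi} controls $a$ and $\psi$ only on $A_{\rin,\rout}$, so zeros of $a$ or singularities of $\psi$ may occur in $\{|w|<\rin\}$ — but your deformation only uses analyticity between the two circles, which does hold.
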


\begin{proof}
From \eqref{eq:KextKn}, \eqref{eq:KnG0n} and Prop.~\ref{prop:G0n-formula} we get
\[K(n_i,x_i;n_j,x_j)=-Q^{n_j-n_i}(x_i,x_j)\uno{n_i<n_j}+Q^{n_j-n_i}\R Q^{-n_j}A\bar G_{0,n_j}A^{-1}\R^{-1}(x_i,x_j),\]
and then \eqref{eq:Kn-RW} follows directly from the definitions of $\bar G_{0,n_j}$, $\SM_{-n}$, $\SN_{n}$, and $\SN^{\epi(\vec y)}_n$.

It remains to prove the contour integral formulas given in \eqref{def:sm} and \eqref{def:sn}.
The first one follows directly from Lem.~\ref{lem:conv} and the definitions of $\R$, $Q^{-n}$ and $A$.
The integrand is analytic on $A_{r,1}$ by Assum.~\ref{assum:apsi}, so at this stage the radius of the contour can be changed to any $r'\in(r,1)$ without changing the value of the kernel, which gives the last statement of the result.
For the second formula, Lem.~\ref{lem:conv} gives $\R^{-1}A^{-1}(x,y)=\frac{\alpha^{-1}}{2\pi\I}\oint_{\gamma_\rrin}du\,\frac{\theta^{x-y}}{u^{x-y+1}}\frac{1}{a(u)\psi(u)}$ and then computing again as in the proof of that lemma and using \eqref{eq:QExt} we get, for small $\delta>0$,
\[\textstyle\R^{-1}A\bar Q^{(n)}(x,y)=\frac{\alpha^{n-1}}{2\pi\I}\oint_{\gamma_\delta}dv\,\frac{\theta^{x-y}(1+v)^{x-y + 1}}{v^n}\frac{a(1/(1+v))^{n-1}}{\psi(1/(1+v))}.\]
Changing variables $w=v/(1+v)$ leads to the integrand in \eqref{def:sn}, and the resulting contour can be adjusted to lie on any circle of radius $\delta$ small enough so that $a(1-w)$ and $\psi(1-w)^{-1}$ are analytic.
\end{proof}

\begin{rem}\label{rem:assumext}
\leavevmode
\begin{enumerate}[label=(\alph*)]
\item The main advantage of the (very simple) extension of the choice of contour $\gamma_r$ in \eqref{def:sm} given in the last sentence of the theorem is to lift the restriction $r<\theta$ which comes from Assum.~\ref{assum:apsi}.
The restriction can actually be relaxed a bit more: the contour can be chosen to be $\gamma_{r'}$ with any $r'\in(0,1)$ so that $\psi(w)/a(w)^{n-1}$ is analytic on $A_{r',1}$, again since the contour can be deformed without crossing any singularities.
In principle, this could be useful in situations where zeros of $\psi(w)$ cancel singularities of $1/a(w)^{n-1}$ (but the situation does not arise in any of our examples).
\item From its definition in \eqref{eq:K-schutz}, it is easy to see that the parameter $\theta$ enters $K$ simply as a conjugation $\theta^{x_i-x_j}$ (so, in particular, the Fredholm determinants $\det(I-\bP_aK\bP_a)$ do not depend on $\theta$), and it is natural to wonder about how this plays out on the right hand side of \eqref{eq:Kn-RW}.
So consider another choice of the parameter $\theta$, call it $\hat\theta$, and let $\hat K$ be the kernel in \eqref{eq:Kn-RW} defined using $\hat\theta$.
We also put hats on top of other quantities defined using $\hat\theta$ in order to distinguish them from those defined using $\theta$. Using \eqref{eq:defQ}, the Radon-Nikodym derivative of the law of the random walk $\hat B$ up to time $n$ with respect to the law of the random walk $B$ up to time $n$ equals $(\frac{\hat\alpha}{\alpha})^{n}(\frac{\hat\theta}{\theta})^{B_0-B_{n}}$, and hence $\hat\SN^{\epi(\vec y)}_n(z_1,z_2)=(\hat\alpha/\alpha)^{2n-1}({\hat\theta}/{\theta})^{z_1-z_2}\SN^{\epi(\vec y)}_n(z_1,z_2)$.
Similarly $\hat\SM_{-n}(z_1,z_2)=(\frac{\hat\alpha}{\alpha})^{-n+1}(\frac{\hat\theta}{\theta})^{z_2-z_1}\SM_{-n}(z_1,z_2)$ and $\hat Q^n(z_1,z_2)=(\frac{\hat\alpha}{\alpha})^{n}(\frac{\hat\theta}{\theta})^{z_1-z_2}Q^n(z_1,z_2)$.
Hence $\hat K(n_i,x_i;n_j,x_j)=(\frac{\hat\alpha}{\alpha})^{n_j-n_i}(\frac{\hat\theta}{\theta})^{x_i-x_j}K(n_i,x_i;n_j,x_j)$.
From this one sees that the effect of changing $\theta$ on the right hand side of \eqref{eq:Kn-RW} is to introduce a conjugation \emph{and} change the parameter used to define the random walk $B_n$.
\end{enumerate}
\end{rem}

\begin{proof}[Proof of Thm.~\ref{thm:main}]
By Cor.~\ref{cor:biorth_caterpillars}, the left hand side of \eqref{eq:probability-main-kappa=0} is given by the Fredholm determinant on right hand side of \eqref{eq:M_formula_caterpillars} with $\kappa=0$.
We will apply Thm.~\ref{thm:kernel-rw} to the kernel inside that determinant.
To this end we let $a(w)\equiv1$ and $\psi(w) = \varphi(w)^t$.
The properties of the function $\varphi$ listed in Assum.~\ref{a:phi} together with the choice $\theta\in(\rhoin,1)$ in Sec.~\ref{sec:main} imply that $\psi$ satisfies Assum.~\ref{assum:apsi} with $\rin=\rhoin$ and $\rout=\rhoout$.
The assumption also holds (trivially) for $a$, while Assum.~\ref{assum:q} holds (trivially) since $a(w)\equiv1$ corresponds to $q_i=\uno{i\geq1}$.
Hence if we define the functions $Q$ and $\R$ by \eqref{eq:contourQ} and \eqref{eq:defR} for the above choice of functions $a(w)$ and $\psi(w)$ and use them to construct the kernel appearing in \eqref{eq:K_equal_speeds}, the theorem applies and \eqref{eq:probability-main-kappa=0} follows.
\end{proof}

\begin{proof}[Proof of Thm.~\ref{thm:main2}]
The case $\kappa=0$ is already covered by Thm.~\ref{thm:main}, so let $\kappa\geq1$.
Then Cor.~\ref{cor:biorth_caterpillars} implies that, if $t\geq\kappa(n_m-1)$ or if $t\geq0$ and condition \eqref{eq:backward-in-time-one} holds, the left hand side of \eqref{eq:probability-main} is given by the right hand side of \eqref{eq:M_formula_caterpillars} with the given choice of $\kappa$.
In order to apply Thm.~\ref{thm:kernel-rw} in this case we let $a(w)=\varphi(w)^\kappa$ and $\psi(w) = \varphi(w)^t$.
As in the previous proof, the properties of the function $\varphi$ (now listed in Assum.~\ref{a:kappa}(b.i) as well as Assum.~\ref{a:phi}) together with the choice $\theta\in(\rhoin,1)$ in Sec.~\ref{sec:main} imply that $\psi$ and $a$ satisfy Assum.~\ref{assum:apsi} with $\rin=\rhoin$, $\rout=\rhoout$.
Assum.~\ref{a:kappa}(b.ii), on the other hand, implies that $a(w)=\sum_{i\leq\kappa}b_i^{*\kappa}w^i$ where $b_i^{*\kappa}$ stands for the $\kappa$-fold convolution of the $b_i$'s from \eqref{eq:varphiappl}, so defining 
\begin{equation}\label{eq:qbkappaconv}
\textstyle q_i=1-\sum_{j=i}^\kappa b_j^{*\kappa}
\end{equation}
we have $a(w)=\sum_{i\leq\kappa}(q_{i+1}-q_i)w^i$ as in \eqref{eq:def-a}.
We claim that these $q_i$'s satisfy Assum.~\ref{assum:q}.
Condition (i) is straightforward in view of the definition.
For (ii), in the case $\kappa=1$ one sums by parts to write $\sum\theta^iq_i=\theta(1-\theta)^{-1}\sum(q_{i+1}-q_i)\theta^i$, which is finite because the sum equals $a(\theta)=\varphi(\theta)$ and $\varphi$ is analytic on an annulus containing $\theta$; the case $\kappa>1$ follows similarly.
Then Thm.~\ref{thm:kernel-rw} applies in this situation, and we get \eqref{eq:probability-main} as in the previous proof.
\end{proof}

The formulas given in \eqref{eq:SMSNQ} follow directly from \eqref{def:sm} and \eqref{def:sn}, using the setting of the last proof (so that $\psi(w)a(w)=\varphi(w)^{t+\kappa}$) and a change of variables as above.

\appendix 

\section{Convolution of determinantal functions}
\label{app:convolutions}

We are going to prove results, which allow one to compute convolutions of determinantal functions of the type \eqref{eq:G}.

Fix $N \in \nn$ and $\vec v = (v_i)_{i \in \set{N}}$ such that $v_i > 0$ for each $i$.
For each $i, j \in \set{N}$ we consider a function $L_{i, j}\!: \zz^2 \longrightarrow \rr$ such that there are constants $C>0$ and $r > \max_i v_i$ so that $|L_{i, j}(x,y)| \leq C r^{x - y}$.
Then, using the kernels defined in \eqref{eq:kernelsQ} and \eqref{eq:operatorsE}, for $\vec x, \vec y \in \Omega_N$ we define a determinantal function
\begin{equation}\label{eq:FRkernel}
\H_{\!L} (\vec y, \vec x) = \det\bigl[\bigl(\E_{i} \Q^{[i]} L_{i, j} \Q^{[-j]} \E_{-j}\bigr)(y_{i}, x_{j})\bigr]_{i, j \in \set{N}}.
\end{equation}
The sums involved in the compositions of kernels inside the determinant are all absolutely convergent by the same argument as the one provided below \eqref{eq:F_def}.
The following result, which is a generalization of \cite[Lem.~3.2]{MR2594587} shows that, in a particular case, convolutions of such functions preserve their structure.

\begin{prop}\label{prop:Cauchy-Binet_general}
Consider two families of kernels $R_{i}$ and $S_{j}$ on $\zz^2$, for $i,j\in\set{N}$, and write $(R\cdot1)_{i,j}=R_i$, $(1\cdot S)_{i,j}=S_j$ and $(R \cdot S)_{i, j} = R_{i}S_{j}$. If all these kernels satisfy the properties listed above, then for $\vec x, \vec y \in \Omega_N$
\begin{equation}\label{eq:Cauchy-Binet1}
\sum_{\vec z \in \Omega_N} \H_{\!R\cdot1} (\vec y, \vec z) \H_{1\cdot S} (\vec z, \vec x) = \H_{R \cdot S} (\vec y, \vec x).
\end{equation}
\end{prop}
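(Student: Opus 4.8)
The statement is a Cauchy--Binet--type identity for the determinantal functions $\H_{\!L}$, in the special case where each $L_{i,j}$ factorizes as $R_i S_j$. The natural approach is to expand both determinants via the Leibniz formula, interchange the (absolutely convergent) sums, and recognize the inner sum over $\vec z\in\Omega_N$ as a composition of kernels handled by the generalized Cauchy--Binet identity proved in Appendix~\ref{app:convolutions} for sums over ordered configurations. More precisely, I would first record the key structural fact that, because $\Q^{[-j]}$ has finite range and $\Q^{[i]}$, $R_i$, $S_j$ all decay geometrically at the required rates (dominating $r^{x-y}$ with $r>\max_i v_i$, compatible with $\rhoout$ in \eqref{eq:Q_def}), every composition and every sum appearing below is absolutely convergent; this justifies all the rearrangements and is exactly the estimate spelled out below \eqref{eq:F_def}. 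With that in hand, write
\[
\H_{\!R\cdot1}(\vec y,\vec z)=\det\bigl[(\E_i\Q^{[i]}R_i\Q^{[-i]}\E_{-i})(y_i,z_i)\bigr]_{i\in\set N},\qquad
\H_{1\cdot S}(\vec z,\vec x)=\det\bigl[(\E_j\Q^{[j]}S_j\Q^{[-j]}\E_{-j})(z_j,x_j)\bigr]_{j\in\set N},
\]
noting that, in the factorized case, the $(i,j)$ entry of $\H_{R\cdot S}$ involves $\E_i\Q^{[i]}R_iS_j\Q^{[-j]}\E_{-j}$, i.e. the two ``halves'' $\E_i\Q^{[i]}R_i$ and $S_j\Q^{[-j]}\E_{-j}$ meet at a single summation variable.

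The core computation is then as follows. Define the (row-indexed) kernels $M_i = \E_i\Q^{[i]}R_i$ and the (column-indexed) kernels $P_j = S_j\Q^{[-j]}\E_{-j}$. The first determinant has $(i,k)$ entry $M_i\Q^{[-i]}\E_{-i}(y_i,z_i)$ evaluated on the diagonal; rather than fighting the position-dependence of $\Q^{[-i]}\E_{-i}$, I would instead apply the Cauchy--Binet lemma in the form already available in the paper. The cleanest route is: observe that $\H_{\!R\cdot1}$ and $\H_{1\cdot S}$ are instances of $\H_{\!L}$ with $L_{i,j}$ independent of $j$ (resp.\ of $i$), so that the sum $\sum_{\vec z\in\Omega_N}\H_{\!R\cdot1}(\vec y,\vec z)\H_{1\cdot S}(\vec z,\vec x)$ is literally the convolution of two functions of the type \eqref{eq:FRkernel}. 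I would then invoke the general convolution result of Appendix~\ref{app:convolutions} (the very proposition we are proving is the specialization, but the honest argument is the direct Leibniz expansion): expanding the two determinants over permutations $\sigma,\pi\in S_N$, using multilinearity to pull the sum over $\vec z$ inside, and summing over the ordered set $\Omega_N$ via the Andr\'eief / Cauchy--Binet identity for ordered configurations, the sum over $\vec z$ collapses the two factors $\Q^{[-\sigma(i)]}\E_{-\sigma(i)}$ and $\E_{\sigma(i)}\Q^{[\sigma(i)]}$ against each other using $\E_{-k}\E_k = \Id$ on the diagonal and the semigroup property $\Q^{[k]}\Q^{[-k]}=\Id$ from \eqref{eq:kernelsQ}. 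What survives is $\det[(\E_i\Q^{[i]}R_iS_j\Q^{[-j]}\E_{-j})(y_i,x_j)]$, which is exactly $\H_{R\cdot S}(\vec y,\vec x)$.

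The one genuine subtlety — and the step I expect to be the main obstacle — is justifying the interchange of the sum over $\vec z\in\Omega_N$ with the permutation sums and the contour integrals hidden inside the $\Q$ and $\E$ kernels, \emph{and} the fact that summing over the ordered Weyl chamber $\Omega_N$ (rather than all of $\zz^N$) still yields the full determinant on the right. The latter is the standard ``ordered Cauchy--Binet'' phenomenon: when the two factors are themselves antisymmetrized (determinants), the restriction $z_1>\dots>z_N$ combined with the sum over permutations recovers the unrestricted sum $\sum_{\vec z\in\zz^N}$ up to the Andr\'eief identity; I would state this as a lemma (or cite the ordered Cauchy--Binet identity from Appendix~\ref{app:convolutions}) and apply it with the row-factors $z\mapsto(\E_i\Q^{[i]}R_i)(y_i,z)$ and column-factors $z\mapsto(\Q^{[-j]}\E_{-j}\,S_j\,\text{-half})(z,x_j)$, each of which is absolutely summable against the other by the geometric-decay bounds. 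Once the interchange is licensed, the algebraic collapse $\Q^{[i]}\Q^{[-i]}=\Id$, $\E_i\E_{-i}=\Id$ is immediate. The remainder of the proof is the bookkeeping needed to match the resulting determinant with the definition of $\H_{R\cdot S}$, which is routine.
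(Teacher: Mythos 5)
Your proposed route has a genuine gap at its central step, and it is exactly the step you flag as "the main obstacle" and then dispose of too quickly. After the Leibniz expansion, the two determinants you are multiplying have entries $\E_i\Q^{[i]}R_i\Q^{[-k]}\E_{-k}(y_i,z_k)$ and $\E_k\Q^{[k]}S_j\Q^{[-j]}\E_{-j}(z_k,x_j)$: the dependence on the summation label $k$ enters not only through the argument $z_k$ but also through the $k$-dependent kernels $\Q^{[-k]}\E_{-k}$ and $\E_k\Q^{[k]}$. Consequently the product of the two determinants is \emph{not} a symmetric function of $(z_1,\dotsc,z_N)$, and it does \emph{not} vanish when two coordinates coincide (equal $z_k=z_{k'}$ does not produce equal columns, since the columns carry different kernels). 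These two facts are precisely what the standard "ordered Cauchy--Binet / Andr\'eief" symmetrization needs in order to replace $\sum_{\vec z\in\Omega_N}$ by $\frac1{N!}\sum_{\vec z\in\zz^N}$; without them you are not entitled to decouple the coordinates, and it is only in the unrestricted sum that the telescoping cancellation $\Q^{[-k]}\E_{-k}\E_k\Q^{[k]}=\Id$ you invoke would collapse the $k$-dependence. So the "collapse" step is unjustified as stated, and the appeal to the Appendix convolution result is, as you yourself note, circular.

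What the paper actually does is different in exactly this spot. It first applies Andr\'eief legitimately, to the entries of $\H_{1\cdot S}(\vec z,\vec x)$ themselves, where the inner composition variable $u$ does split the entry into a row-indexed factor $\E_i\Q^{[i]}(z_i,u)$ times a column-indexed factor $S_j\Q^{[-j]}\E_{-j}(u,x_j)$. The heart of the proof is then the intermediate identity
\begin{equation}
\sum_{\vec z \in \Omega_N} \H_{\!R\cdot1} (\vec y, \vec z) \det\bigl[\E_i \Q^{[i]}(z_{i}, u_{j})\bigr]_{i,j \in \set{N}} = \det\bigl[\E_i \Q^{[i]} R_{i}(y_{i}, u_{j})\bigr]_{i,j \in \set{N}},
\end{equation}
which is \emph{not} an instance of Andr\'eief and is proved by repeated summation by parts with the kernels $\Q_i^{-1}$, peeling off one $\Q_i^{-1}$ at a time and moving it onto the other determinant; the boundary terms vanish because $\det[\Q^{[i]}(z_i,u_j)]=0$ for $z_N$ sufficiently negative and because setting $z_N=z_{N-1}$ produces two equal columns in the modified determinant. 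A final application of Andr\'eief then yields $\H_{R\cdot S}$. This summation-by-parts mechanism is the substitute for the symmetry/coincidence-vanishing that your argument presupposes, so it cannot be relegated to "routine bookkeeping"; if you want to salvage your outline, you need to prove an identity of the above type (or an equivalent device for handling the column-dependent conjugations) rather than quoting the ordered Cauchy--Binet identity, which does not apply here. (Your display for $\H_{\!R\cdot1}$ and $\H_{1\cdot S}$ also records only the diagonal entries; the determinants are over pairs $(i,j)$ with the $j$-dependent kernels present, which is the source of the whole difficulty.)
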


As a particular case (c.f. \eqref{eq:F_def}/\eqref{eq:F_formula}) we get the following:

\begin{cor}\label{cor:MCconv}
Consider two Markov chains on $\Omega_N$ with transition probabilities $G^{(\ell)}_t(\vec y,\vec x)$ of the form \eqref{eq:G-main}, i.e., for $\ell=1,2$, $G^{(\ell)}_t(\vec y,\vec x)=\det[F^{(\ell)}_{i-j}(X_{N+1-i}-y_{N+1-j},t)]_{i,j\in\set{N}}$ where $F^{(\ell)}$ has the form \eqref{eq:F-main} with $\varphi=\varphi_\ell$ for some complex functions $\varphi_1,\varphi_2$.
Assume that these last two functions satisfy Assum.~\ref{a:phi} for a common choice of $\rhoin,\rhoout$.
Then for each $t_1,t_2\geq0$ and each $\vec x,\vec y\in\Omega_N$,
\[\sum_{\vec z\in\Omega_N}G^{(1)}_{t_1}(\vec y,\vec z)G^{(2)}_{t_2}(\vec z,\vec x)=\bar G_{t_1,t_2}(\vec y,\vec x)\]
with $\bar G_{t_1,t_2}(\vec y,\vec x)$ again of the form \eqref{eq:G-main} with the right hand side of \eqref{eq:F-main} now defined using $t=1$ and $\varphi(w)=\varphi_1(w)^{t_1}\varphi_2(w)^{t_2}$.
\end{cor}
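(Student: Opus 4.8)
\textbf{Proof plan for Cor.~\ref{cor:MCconv}.}
The plan is to deduce the corollary directly from Prop.~\ref{prop:Cauchy-Binet_general} by recognizing the functions $G^{(\ell)}_{t_\ell}$ as instances of the determinantal functions $\H_{\!L}$ in \eqref{eq:FRkernel} for appropriate choices of the kernels $L_{i,j}$ and of the auxiliary speeds $\vec v$. First I would specialize to equal speeds $v_1=\dotsm=v_N=1$, so that $\Q^{[i]}$ and $\Q^{[-j]}$ reduce to the kernels from \eqref{eq:kernelsQ_formulas} with all $v_i=1$ and $\E_{\pm i}$ become the identity. With this choice, comparing \eqref{eq:F_def}/\eqref{eq:F_formula} (at $v_i\equiv1$) with \eqref{eq:F-main}, one sees that $F^{(\ell)}_{i-j}(x-y,t_\ell)$ is exactly $\bigl(\Q^{[i]}\R^{(\ell)}_{t_\ell}\Q^{[-j]}\bigr)(x,y)$, where $\R^{(\ell)}_{t_\ell}$ is the kernel \eqref{eq:R_def} built from $\varphi_\ell$; indeed after the change of variables $w\mapsto1/w$ one matches the contour integral in \eqref{eq:F-main} with the one coming from \eqref{eq:kernelsQ_formulas} and \eqref{eq:R_def}. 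Here I should be careful about the ordering of the index ($x_{N+1-i}$ vs.\ $x_i$, and the sign of $n$ in $F_n$); a small relabeling $i\leftrightarrow N+1-i$ inside the determinant, which only contributes an overall sign squared (hence $+1$) after the convolution, reconciles the two conventions. Assum.~\ref{a:phi} for a common pair $\rhoin,\rhoout$ guarantees that all the kernels in sight satisfy the growth bound $|L_{i,j}(x,y)|\le Cr^{x-y}$ for some $r>1=\max_iv_i$, since $|\varphi_\ell(w)^{t_\ell}|$ is bounded on $\gamma_{\rhoout}$, so the hypotheses of Prop.~\ref{prop:Cauchy-Binet_general} are met.

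Next I would set $R_i=\Q^{[i]}\R^{(1)}_{t_1}\Q^{[-i]}$ does not quite work because the right factor $\Q^{[-i]}$ must be cancelled before composing; instead the cleanest bookkeeping is to take $R_i=\R^{(1)}_{t_1}$ and $S_j=\R^{(2)}_{t_2}$ as the ``inner'' kernels, so that $\H_{\!R\cdot1}(\vec y,\vec z)=\det[(\Q^{[i]}\R^{(1)}_{t_1}\Q^{[-j]})(y_i,z_j)]=G^{(1)}_{t_1}(\vec y,\vec z)$ (after the relabeling above, and absorbing the scalar prefactors $\prod_i\varphi_\ell(1)^{-t_\ell}=1$ since $\varphi_\ell(1)=1$), and likewise $\H_{1\cdot S}(\vec z,\vec x)=G^{(2)}_{t_2}(\vec z,\vec x)$. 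Then $(R\cdot S)_{i,j}=\R^{(1)}_{t_1}\R^{(2)}_{t_2}$, and by Lem.~\ref{lem:conv} this product is again a kernel of the form \eqref{eq:R_def} with $\psi(w)=\varphi_1(w)^{t_1}\varphi_2(w)^{t_2}$, i.e.\ it is $\R_1$ built from the single function $\varphi(w)=\varphi_1(w)^{t_1}\varphi_2(w)^{t_2}$ at ``time'' $t=1$. Applying Prop.~\ref{prop:Cauchy-Binet_general} gives $\sum_{\vec z\in\Omega_N}G^{(1)}_{t_1}(\vec y,\vec z)G^{(2)}_{t_2}(\vec z,\vec x)=\H_{R\cdot S}(\vec y,\vec x)=\det[(\Q^{[i]}\R_1\Q^{[-j]})(y_i,x_j)]$, which by the same identification of \eqref{eq:F_def}/\eqref{eq:F_formula} with \eqref{eq:F-main} is precisely $\bar G_{t_1,t_2}(\vec y,\vec x)$ in the form \eqref{eq:G-main} with $F$ as in \eqref{eq:F-main} for $t=1$ and $\varphi=\varphi_1^{t_1}\varphi_2^{t_2}$. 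Finally I would note that $\varphi$ so defined still satisfies Assum.~\ref{a:phi} (analyticity and non-vanishing on $A_{\rhoin,\rhoout}$ is preserved under products and real powers, and $\varphi(1)=1$), which makes the statement of the corollary internally consistent.

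The main obstacle, and the only place requiring genuine care rather than bookkeeping, is the precise matching of index conventions and contour orientations between \eqref{eq:F-main} (used to state \eqref{eq:G-main}) and the kernel formula \eqref{eq:F_formula} coming from the $\E,\Q,\R$ machinery of Sec.~\ref{sec:measures}: one must verify that the change of variables $w\mapsto 1/w$ carries the contour $\gamma_{\rhoout}$ to a contour around $0$ with the correct orientation and that the exponents of $w$ line up (this is the same computation invoked in the proof of Lem.~\ref{lem:G_properties}, where $\G_{0,0}$ is matched to the kernel of \cite[Eqn.~2.2]{bp-push}), and that the scalar factor $\prod_{i}\varphi(v_i)^{s-t}$ appearing in \eqref{eq:G} is trivial here because $s=0$, $v_i=1$ and $\varphi(1)=1$. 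Once this dictionary is in place, the corollary is an immediate consequence of Prop.~\ref{prop:Cauchy-Binet_general} and Lem.~\ref{lem:conv}, with no further estimates needed beyond the uniform exponential bound already guaranteed by Assum.~\ref{a:phi}.
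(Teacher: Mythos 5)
Your proposal is correct and follows exactly the route the paper intends: Cor.~\ref{cor:MCconv} is presented there as a direct particular case of Prop.~\ref{prop:Cauchy-Binet_general}, obtained by identifying the $F^{(\ell)}$'s with the kernels \eqref{eq:F_def}/\eqref{eq:F_formula} at equal speeds $v_i=1$ and composing the two $\R$-kernels (as in Lem.~\ref{lem:conv}) to get $\varphi_1^{t_1}\varphi_2^{t_2}$. Your bookkeeping of the index reversal, the trivial prefactors from $\varphi_\ell(1)=1$, and the exponential bound needed for Prop.~\ref{prop:Cauchy-Binet_general} all match the paper's setup, so nothing further is needed.
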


Before proving Prop.~\ref{prop:Cauchy-Binet_general} we need a version of the generalized Cauchy-Binet/Andr\'eief identity.

\begin{lem}
For a measure space $(\Lambda, \mathcal{B}, \lambda)$, let $\varphi_i, \psi_i : \Lambda \to \rr$ be measurable functions such that $\varphi_i \psi_j$ is integrable for any $i, j \in \set{N}$. 
Assume also that $\Lambda$ is a totally ordered set, and define the Weyl chamber $\Omega^\Lambda_N = \{\vec x \in \Lambda^N : x_1 > x_2 > \dotsm > x_N\}$.
Then
\begin{equation}\label{eq:CB}
\det\left[\int_\Lambda \varphi_i(x) \psi_j(x) \d \lambda(x) \right]_{i, j \in \set{N}} = \int_{\Omega^\Lambda_N} \det[\varphi_i(x_j)]_{i, j \in \set{N}} \det[\psi_i(x_j)]_{i, j \in \set{N}} \d \lambda^N(\vec x).
\end{equation}
\end{lem}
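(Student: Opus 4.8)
The plan is to prove the generalized Andréief (Cauchy--Binet) identity \eqref{eq:CB} by expanding both determinants on the right-hand side, integrating term by term, and recognizing the result as the Leibniz expansion of the left-hand side. First I would write $\det[\varphi_i(x_j)]_{i,j\in\set{N}}=\sum_{\sigma\in S_N}\sgn(\sigma)\prod_{i=1}^N\varphi_i(x_{\sigma(i)})$ and similarly $\det[\psi_i(x_j)]=\sum_{\pi\in S_N}\sgn(\pi)\prod_{i=1}^N\psi_i(x_{\pi(i)})$. Substituting these into the integral over $\Omega^\Lambda_N$ and using Fubini (justified by the integrability hypothesis on each $\varphi_i\psi_j$, together with the fact that the integrand is a finite sum of products) gives
\[
\int_{\Omega^\Lambda_N}\det[\varphi_i(x_j)]\det[\psi_i(x_j)]\,\d\lambda^N(\vec x)=\sum_{\sigma,\pi\in S_N}\sgn(\sigma)\sgn(\pi)\int_{\Omega^\Lambda_N}\prod_{i=1}^N\varphi_i(x_{\sigma(i)})\psi_i(x_{\pi(i)})\,\d\lambda^N(\vec x).
\]

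Next I would perform the standard symmetrization step. The function $\vec x\mapsto\det[\varphi_i(x_j)]\det[\psi_i(x_j)]$ is symmetric under permutations of the coordinates $(x_1,\dots,x_N)$, because each determinant changes sign under a transposition of two columns and the two sign changes cancel. Hence, up to the $\lambda^N$-null set where two coordinates coincide (which we may discard, since $\Lambda$ is totally ordered and this set carries no mass once we restrict attention to the strict chamber), the integral over $\Omega^\Lambda_N$ equals $\frac{1}{N!}$ times the integral over all of $\Lambda^N$. Carrying this out for one of the two determinants only: reindex the variables by the permutation $\sigma$ in each summand, so that $\int_{\Lambda^N}\prod_i\varphi_i(x_{\sigma(i)})\psi_i(x_{\pi(i)})\,\d\lambda^N = \int_{\Lambda^N}\prod_i\varphi_i(x_i)\psi_i(x_{\pi\sigma^{-1}(i)})\,\d\lambda^N$, which depends only on $\tau=\pi\sigma^{-1}$. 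Since the coordinates are now decoupled, this factorizes as $\prod_{i=1}^N\int_\Lambda\varphi_i(x)\psi_{\tau(i)}(x)\,\d\lambda(x)$ (relabelling so $\tau$ acts on the $\psi$ index appropriately). Summing over $\sigma$ for fixed $\tau$ produces a factor $N!$ which cancels the $\frac1{N!}$, and $\sgn(\sigma)\sgn(\pi)=\sgn(\tau)$, leaving
\[
\sum_{\tau\in S_N}\sgn(\tau)\prod_{i=1}^N\int_\Lambda\varphi_i(x)\psi_{\tau(i)}(x)\,\d\lambda(x)=\det\!\left[\int_\Lambda\varphi_i(x)\psi_j(x)\,\d\lambda(x)\right]_{i,j\in\set{N}},
\]
which is the claimed identity.

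The main obstacle is not conceptual but a matter of care with the measure-theoretic bookkeeping: one must verify that Fubini applies (each $\varphi_i\psi_j$ is integrable by hypothesis, and the integrand on $\Omega^\Lambda_N$ is dominated by $\sum_{\sigma,\pi}\prod_i|\varphi_i(x_{\sigma(i)})\psi_i(x_{\pi(i)})|$, whose integral over $\Lambda^N$ is finite), and that the passage from $\Omega^\Lambda_N$ to $\Lambda^N$ introduces exactly the factor $N!$ — this uses that the diagonal set $\{\vec x:x_i=x_j\text{ for some }i\neq j\}$ may be removed without affecting either side. For a general measure space this diagonal need not be null, but it does not matter: the left-hand side is unchanged, and on the right-hand side the integrand, being antisymmetric times antisymmetric, is only used on the open chamber; one simply defines the symmetrization on $\Lambda^N\setminus\{\text{diagonal}\}$, decomposes this set into $N!$ copies of (a subset of) $\Omega^\Lambda_N$ under the $S_N$-action, and notes the integrand is $S_N$-invariant there. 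With that in hand the computation above goes through verbatim.
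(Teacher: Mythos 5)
Your proof is correct, but note that the paper does not actually prove this lemma from scratch: it cites the classical Andr\'eief/Cauchy--Binet identity in its usual form (integral over $\Lambda^N$ with a prefactor $1/N!$, as in \cite[Prop.~2.10]{johanssonRMandDetPr}) and remarks that the stated chamber version follows by antisymmetry of the determinant --- which is exactly the symmetrization step in your second paragraph. What you do differently is to also prove the cited classical identity itself, via Leibniz expansion, Fubini (justified, as you say, since each term factorizes coordinate-wise into integrals of $|\varphi_i\psi_j|$, all finite by hypothesis), and resummation over permutations; this buys a self-contained argument at the cost of the combinatorial bookkeeping that the paper outsources to the reference. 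Two small points, neither affecting correctness: (i) the cleanest reason the diagonal $\{\vec x: x_i=x_j \text{ for some } i\neq j\}$ can be discarded even when it is not $\lambda^N$-null is that the integrand vanishes identically there (each determinant has two equal columns), which is what makes $\int_{\Lambda^N}=N!\int_{\Omega^\Lambda_N}$ exact; (ii) the change of variables yields $\tau=\sigma^{-1}\pi$ rather than $\pi\sigma^{-1}$, but since either composition ranges over all of $S_N$ with $\sgn(\tau)=\sgn(\sigma)\sgn(\pi)$, the final determinant is unchanged.
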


The identity is usually stated (see e.g. \cite[Prop.~2.10]{johanssonRMandDetPr}) with the integral on the right hand side over $\Lambda^N$ and an additional factor of $1/N!$; \eqref{eq:CB} follows from this by antisymmetry of determinant.

\begin{proof}[Proof of Prop.~\ref{prop:Cauchy-Binet_general}]
Applying the Cauchy-Binet identity \eqref{eq:CB} we get
\[\textstyle\H_{\!1\cdot S} (\vec z, \vec x) = \sum_{\vec u \in \Omega_N} \det\bigl[\E_i \Q^{[i]}(z_{i}, u_{j})\bigr]_{i,j \in \set{N}} \det\bigl[S_j \Q^{[-j]} \E_{-j} (u_{i}, x_{j})\bigr]_{i,j \in \set{N}}.\]
The key will be to prove that
\begin{equation}\label{eq:CB2}
\textstyle\sum_{\vec z \in \Omega_N} \H_{\!R\cdot1} (\vec y, \vec z) \det\bigl[\E_i \Q^{[i]}(z_{i}, u_{j})\bigr]_{i,j \in \set{N}} = \det\bigl[\E_i \Q^{[i]} R_{i}(y_{i}, u_{j})\bigr]_{i,j \in \set{N}}.
\end{equation}
In fact, using these two identities we may write
\[\textstyle\sum_{\vec z \in \Omega_N} \H_{\!R\cdot1} (\vec y, \vec z) \H_{\!1\cdot S} (\vec z, \vec x) = \sum_{\vec z \in \Omega_N} \det\bigl[\E_{i} \Q^{[i]} R_{i}(y_{i}, z_{j})\bigr]_{i,j \in \set{N}} \det\bigl[ S_j \Q^{[-j]} \E_{-j}(z_{i}, x_{j})\bigr]_{i,j \in \set{N}}\]
which, after another application of \eqref{eq:CB}, equals $\det\bigl[\E_{i} \Q^{[i]}R_iS_j \Q^{[-j]} \E_{-j}(y_{i}, x_{j})\bigr]_{i,j \in \set{N}}$ as desired.

So we need to prove \eqref{eq:CB2}. 
To have a shorter notation we write $A_{i}(x,y) = \E_i \Q^{[i]} R_{i} (x,y)$.
Then using the definitions in \eqref{eq:operatorsE}, the left hand side of \eqref{eq:CB2} can be written as
\begin{equation}\label{eq:CB22}
\textstyle\sum_{\vec z \in \Omega_N} \det\bigl[A_{i} \Q^{[-j]}(y_{i}, z_{j})\bigr]_{i,j \in \set{N}} \det\bigl[\Q^{[i]}(z_{i}, u_{j})\bigr]_{i,j \in \set{N}}.
\end{equation}
We will use the summation by parts formula, which follows from \eqref{eq:Q_inverse_def}, 
\begin{equation}\label{eq:sum_by_parts}
\textstyle\sum_{u = a}^b (f \Q_i^{-1})(u) g(u) = \sum_{u = a}^b f(u) (\Q_i^{-1}g)(u) + v_i f(a) g(a-1) - v_i f(b+1) g(b).
\end{equation}
Using multilinearity, the first determinant in \eqref{eq:CB22} can be written as
\begin{equation}
 \textstyle\sum_{x \in \zz} \det\bigl[A_{\boldsymbol{\cdot}}\!\Q^{[-1]}(y_{\boldsymbol{\cdot}}, z_{1}), \dotsc, A_{\boldsymbol{\cdot}}\!\Q^{[-N+1]}(y_{\boldsymbol{\cdot}}, z_{N-1}), A_{\boldsymbol{\cdot}}\!\Q^{[-N+1]}(y_{\boldsymbol{\cdot}}, x)\bigr] \Q^{-1}_{N}(x, z_N),
\end{equation}
where we wrote $A_{\boldsymbol{\cdot}}\!\Q^{[-j]}(y_{\boldsymbol{\cdot}}, z_{j})$ for the $j^{\text{th}}$ column of the matrix $\big(A_{i}\!\Q^{[-\ell]}(y_{i}, z_{\ell})\big)_{i,\ell\in\set{N}}$. Recalling that $z_N$ is summed from $-\infty$ to $z_{N-1}$ and applying \eqref{eq:sum_by_parts}, \eqref{eq:CB22} becomes
\begin{multline}\label{eq:CB3}
\textstyle\sum_{\vec z \in {\Omega}_N} \det\bigl[A_{\boldsymbol{\cdot}}\!\Q^{[-1]}(y_{\boldsymbol{\cdot}}, z_{1}), \dotsc, A_{\boldsymbol{\cdot}}\!\Q^{[-N+1]}(y_{\boldsymbol{\cdot}}, z_{N-1}), A_{\boldsymbol{\cdot}}\!\Q^{[-N+1]}(y_{\boldsymbol{\cdot}}, z_N)\bigr]\\
\textstyle\times \det\bigl[\Q^{[1]}(z_{1}, u_{\boldsymbol{\cdot}}), \dotsc, \Q^{[N-1]}(z_{N-1}, u_{\boldsymbol{\cdot}}), \Q^{[N-1]}(z_{N}, u_{\boldsymbol{\cdot}})\bigr].
\end{multline}
To see this we need to check that the last two terms in \eqref{eq:sum_by_parts} do not contribute:
for the first of the two terms this holds because for every $z_N$ sufficiently small $\det\bigl[\Q^{[i]}(z_{i}, u_{j})\bigr]_{i,j \in \set{N}} = 0$ (this follows readily from the definition \eqref{eq:kernelsQ_formulas} and the residue theorem), while for the second one it holds because in the case $z_N = z_{N-1}$, the matrix in the first determinant in \eqref{eq:CB3} has two equal columns and hence the determinant vanishes. 
Applying the same operations for the variables $z_{N-1}, \dotsc, z_2, z_1$, then for $z_{N}, \dotsc, z_3, z_2$ and so on, \eqref{eq:CB3} turns to 
\begin{equation}
\textstyle\sum_{\vec z \in {\Omega}_N} \det\bigl[A_{i}(y_i, z_{j})\bigr]_{i,j \in \set{N}} \det\bigl[\Q^{[0]}(z_{i}, u_{j})\bigr]_{i,j \in \set{N}} =  \det\bigl[A_{i}(y_i, u_{j})\bigr]_{i,j \in \set{N}},
\end{equation}
which is exactly \eqref{eq:CB2}.
\end{proof}

The following two results extend Prop.~\ref{prop:Cauchy-Binet_general} to a setting where the matrices in the determinants have different sizes; we need this in order to handle the setting of Thm.~\ref{thm:main2}.
For $2 \leq k \leq N$ using \eqref{eq:kernelsQ} we define $\Q^{[2, k]} = \Q_{2} \dotsm \Q_{k}$ and $\Q^{[-k,-2]} = \Q_{k}^{-1} \dotsm \Q_{2}^{-1}$.
Then for functions $R_{i,j} $ as in the beginning of this section we define, for $\vec x, \vec y \in \Omega_{N-1}$,
\begin{equation}
\bar{\H}_{\!R} (\vec y, \vec x) = \det\bigl[\bigl(\E_{i+1} \Q^{[2, i+1]} R_{i, j} \Q^{[-j-1, -2]} \E_{-j - 1}\bigr)(y_{i}, x_{j})\bigr]_{i, j \in \set{N-1}}.
\end{equation}
For a vector $\vec z$ and a scalar $\tilde y$ we write $\tilde y \sqcup \vec z$ for the vector obtained from $\vec z$ by adding $\tilde y$ as the first entry.

\begin{prop}\label{prop:Cauchy-Binet2}
Consider kernels $(R_i)_{i\in\set{N-1}}$ and $(S_i)_{i\in\set{N}}$ with properties as in Prop.~\ref{prop:Cauchy-Binet_general}, such that $R_i$ and $\Q_1$ commute.
Then for $\vec x \in \Omega_N$, $\vec y \in \Omega_{N-1}$ and $\tilde y \in \zz$ one has
\begin{equation}\label{eq:Cauchy-Binet2}
\sum_{\vec z \in \Omega_{N-1}} \bar{\H}_{\!R\cdot1} (\vec y, \vec z) \H_{\!1\cdot S} (\tilde y \sqcup \vec z, \vec x) = \H_{\!U} (\tilde y \sqcup \vec y, \vec x),
\end{equation}
where $U_{1, j} = S_{j}$ and $U_{i, j} = R_{i-1}S_{j}$ for $2 \leq i \leq N$ and $j \in \set{N}$.
\end{prop}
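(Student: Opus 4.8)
The plan is to imitate the proof of Prop.~\ref{prop:Cauchy-Binet_general}, sandwiching two applications of the generalized Cauchy--Binet identity \eqref{eq:CB} around a summation-by-parts computation. First I would apply \eqref{eq:CB} to split
\[
\H_{\!1\cdot S}(\tilde y\sqcup\vec z,\vec x)
=\sum_{\vec u\in\Omega_N}\det\bigl[(\E_i\Q^{[i]})((\tilde y\sqcup\vec z)_i,u_j)\bigr]_{i,j\in\set N}\det\bigl[(S_j\Q^{[-j]}\E_{-j})(u_i,x_j)\bigr]_{i,j\in\set N},
\]
exactly as in that proof. Interchanging the (absolutely convergent, by the estimates below \eqref{eq:F_def}) sums over $\vec z$ and $\vec u$, the statement reduces to the key identity
\begin{equation}\label{eq:star-defect}
\sum_{\vec z\in\Omega_{N-1}}\bar{\H}_{\!R\cdot1}(\vec y,\vec z)\,\det\bigl[(\E_i\Q^{[i]})((\tilde y\sqcup\vec z)_i,u_j)\bigr]_{i,j\in\set N}=\det\bigl[(\E_i\Q^{[i]}\tilde U_i)(\hat y_i,u_j)\bigr]_{i,j\in\set N},
\end{equation}
where $\hat y=\tilde y\sqcup\vec y$, $\tilde U_1=\Id$ and $\tilde U_i=R_{i-1}$ for $2\le i\le N$. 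Granting \eqref{eq:star-defect}, a second application of \eqref{eq:CB} (to its right-hand side, convolved against $\det[(S_j\Q^{[-j]}\E_{-j})(u_i,x_j)]$, as in the final step of the proof of Prop.~\ref{prop:Cauchy-Binet_general}) yields $\det\bigl[(\E_i\Q^{[i]}\tilde U_iS_j\Q^{[-j]}\E_{-j})(\hat y_i,x_j)\bigr]_{i,j}$, which is precisely $\H_{\!U}(\tilde y\sqcup\vec y,\vec x)$ with $U_{1,j}=S_j$, $U_{i,j}=R_{i-1}S_j$.

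\textbf{The key identity.} So the real work is \eqref{eq:star-defect}. Writing $B_a=\E_{a+1}\Q^{[2,a+1]}R_a$ and using $\E_{-b-1}$, $\E_{b+1}$ to cancel the factors $v_{b+1}^{\pm z_b}$ in the two determinants (as at the start of the proof of \eqref{eq:CB2}), the left-hand side of \eqref{eq:star-defect} becomes $\sum_{\vec z\in\Omega_{N-1}}\det\bigl[(B_a\Q^{[-b-1,-2]})(y_a,z_b)\bigr]_{a,b\in\set{N-1}}$ times the $N\times N$ determinant whose first row is $j\mapsto(\E_1\Q^{[1]})(\tilde y,u_j)$ (independent of $\vec z$) and whose row $b+1$ is $j\mapsto\Q^{[b+1]}(z_b,u_j)$. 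Now I would run the same iterated summation-by-parts as in the proof of \eqref{eq:CB2}: process $z_{N-1},z_{N-2},\dots,z_1$ in turn and repeat, each pass transferring via \eqref{eq:Q_inverse_def}/\eqref{eq:sum_by_parts} one factor $\Q_k^{-1}$ from a column of the first determinant to the corresponding row of the second; the boundary terms vanish because as $z_{N-1}\to-\infty$ the last row of the second determinant vanishes, and because at $z_b=z_{b-1}$ the first determinant acquires two equal columns. The one structural novelty relative to Prop.~\ref{prop:Cauchy-Binet_general} is the ``off-by-one'': column $b$ of the first determinant carries only the $b$ inverses $\Q_{b+1}^{-1}\cdots\Q_2^{-1}$ while row $b+1$ of the second carries the $b+1$ factors $\Q_1\cdots\Q_{b+1}$, so that once all inverses are exhausted the columns of the first determinant reduce to $B_a(y_a,z_b)$ while the rows $i\ge2$ of the second reduce to the \emph{leftover} $\Q_1(z_{i-1},u_j)$ (its frozen first row being untouched throughout).

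\textbf{Conclusion and main obstacle.} It then remains to collapse the final sum: a cofactor expansion of the resulting $N\times N$ determinant along its first row, followed by one application of \eqref{eq:CB} to each of the induced $(N-1)\times(N-1)$ determinants in $\vec z$, produces $\det[W_i(\hat y_i,u_j)]_{i,j\in\set N}$ with $W_1=\E_1\Q^{[1]}$ and $W_i=B_i\Q_1=\E_i\Q^{[2,i]}R_{i-1}\Q_1$ for $i\ge2$. Here the hypothesis that $R_i$ commutes with $\Q_1$ enters: since the $\Q_k$ also commute with one another, $\E_i\Q^{[2,i]}R_{i-1}\Q_1=\E_i\Q_1\Q^{[2,i]}R_{i-1}=\E_i\Q^{[i]}R_{i-1}$, so $W_i=\E_i\Q^{[i]}\tilde U_i$ and \eqref{eq:star-defect} follows. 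I expect the main obstacle to be purely bookkeeping: tracking exactly which $\Q_k^{-1}$ is moved at each stage of the iteration and verifying that every boundary term still vanishes in the presence of the extra frozen first row — conceptually this repeats the argument of Prop.~\ref{prop:Cauchy-Binet_general}, but the index shifts must be handled with care.
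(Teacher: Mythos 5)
Your proposal is correct and follows essentially the paper's own route: the paper likewise reuses the summation-by-parts mechanism from the proof of \eqref{eq:CB2} (with the same two reasons for the vanishing of boundary terms), decouples the frozen $\tilde y$ row by a cofactor expansion along the first row, contracts the $\vec z$-sum with the Cauchy--Binet identity \eqref{eq:CB}, and invokes the commutation of $R_i$ with $\Q_1$ at exactly the point you do, the only cosmetic difference being that the paper keeps the tails $S_j\Q^{[-j]}\E_{-j}$ attached throughout (so one application of \eqref{eq:CB} suffices) instead of first splitting $\H_{\!1\cdot S}$ over an auxiliary $\vec u$ and recombining at the end. The only blemish is a harmless subscript slip: for $i\ge2$ your $W_i$ should read $B_{i-1}\Q_1$, consistent with the kernel $\E_i\Q^{[2,i]}R_{i-1}\Q_1$ you display.
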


\begin{proof}
Repeating the argument in the proof of \eqref{eq:CB2}, we can write the left hand side of \eqref{eq:Cauchy-Binet2} as
\begin{equation}\label{eq:CB5}
\textstyle\sum_{\vec z \in \Omega_{N-1}} \det\bigl[\E_{i+1} \Q^{[2, i+1]} R_i(y_{i}, z_{j})\bigr]_{i,j \in \set{N-1}} \det\bigl[\Q_1 S_j \Q^{[-j]} \E_{-j}(\tilde z_{i}, x_{j})\bigr]_{i,j \in \set{N}},
\end{equation}
where $\tilde z_1 = \tilde y$ and $\tilde z_{i} = z_{i-1}$ for $i = 2, \dotsc,N$.
The second determinant on the right hand side can be expanded as $\sum_{k = 1}^N (-1)^{1 + k}\ts\Q_1 S_k \Q^{[-k]} \E_{-k}(\tilde y, x_{k}) \det\bigl[\Q_1 S_j \Q^{[-j]} \E_{-j}(z_{i}, x_{j})\bigr]_{i \in \set{N-1},\, j \in \set{N} \setminus\{k\}}$, and plugging this into \eqref{eq:CB5} and then applying the Cauchy-Binet identity \eqref{eq:CB} we get
\[\textstyle\sum_{k = 1}^N (-1)^{1 + k}\ts \Q_1 S_k \Q^{[-k]} \E_{-k}(\tilde y, x_{k}) \det\bigl[\E_{i+1} \Q^{[2, i+1]} R_{i} \Q_1 S_j \Q^{[-j]} \E_{-j}(y_{i}, x_{j})\bigr]_{{ i \in \set{N-1},j \in \set{N} \setminus\{k\} }}.\]
Since $\Q_1$ commutes with $R_i$ and the other $\Q_k$'s commute, this is just the cofactor expansion of the right hand side of \eqref{eq:Cauchy-Binet2} along its first row.
\end{proof}

The following results can be proved similarly.

\begin{prop}\label{prop:Cauchy-Binet3}
Given kernels $(R_i)_{i\in\set{N}}$ and $(S_i)_{i\in\set{N-1}}$ with properties as in Prop.~\ref{prop:Cauchy-Binet_general}, such that $S_i$ and $\Q^{-1}_N$ commute, for $\vec x \in \Omega_{N-1}$, $\vec y \in \Omega_{N}$ and $\tilde y \in \zz$ one has
\begin{equation}\label{eq:Cauchy-Binet3}
\sum_{\vec z \in \Omega_{N-1}} \H_{\!R\cdot1} (\vec y, \vec z \sqcup \tilde y) \tilde \H_{\!1\cdot S} (\vec z, \vec x) = \H_{\!V} (\vec y, \vec x \sqcup \tilde y),
\end{equation}
where $V_{i, j} = R_{i}S_{j}$ and $V_{i, N} = R_{i}$ for $i \in \set{N}$ and $j\in \set{N-1}$.
\end{prop}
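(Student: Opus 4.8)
The plan is to obtain Prop.~\ref{prop:Cauchy-Binet3} as the reflection of Prop.~\ref{prop:Cauchy-Binet2} across the index set: whereas there the extra particle $\tilde y$ is inserted as a new \emph{first} slot and the proof uses that the $R_i$'s commute with $\Q_1$, here $\tilde y$ enters as a new \emph{last} slot, so it sits in the last column of $\H_{\!R\cdot1}(\vec y,\vec z\sqcup\tilde y)$ and of $\H_{\!V}(\vec y,\vec x\sqcup\tilde y)$, and the hypothesis $S_i\Q_N^{-1}=\Q_N^{-1}S_i$ will play the analogous role. First I would expand $\tilde\H_{\!1\cdot S}(\vec z,\vec x)$ via the generalized Cauchy--Binet/Andr\'eief identity \eqref{eq:CB} into a sum over an auxiliary vector of a product of two $(N-1)\times(N-1)$ determinants, one built from $\E_i\Q^{[i]}$ and the other from $S_j\Q^{[-j]}\E_{-j}$, exactly as at the start of the proof of Prop.~\ref{prop:Cauchy-Binet_general}; absolute convergence of all sums is guaranteed, as in the discussion below \eqref{eq:FRkernel}, by the decay bound on the kernels together with the finite range of the $\Q^{[-j]}$'s.

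The heart of the argument is the analog of the collapse identity \eqref{eq:CB2} adapted to a particle appended at the end: one sums $\H_{\!R\cdot1}(\vec y,\vec z\sqcup\tilde y)$ against $\det[\E_i\Q^{[i]}(z_i,u_j)]$ over $\vec z\in\Omega_{N-1}$ with $\tilde y$ held fixed in the last slot. I would prove this by the same summation-by-parts telescoping as in the proof of \eqref{eq:CB2}, now applying \eqref{eq:sum_by_parts} with $\Q_N^{-1}$ and invoking $S_i\Q_N^{-1}=\Q_N^{-1}S_i$ to transfer the freed $\Q^{-1}$ factors onto the $S_j$'s; the two boundary terms produced by \eqref{eq:sum_by_parts} drop out for the same two reasons as in that proof: $\det[\Q^{[i]}(z_i,u_j)]$ vanishes once a summation variable is driven to an extreme value (by \eqref{eq:kernelsQ_formulas} and the residue theorem), and otherwise the surviving determinant has two equal columns. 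Feeding this back in, cofactor-expanding $\H_{\!R\cdot1}(\vec y,\vec z\sqcup\tilde y)$ along its last column to split off $\tilde y$, and applying \eqref{eq:CB} once more to perform the sum over $\vec z$, turns the left-hand side of \eqref{eq:Cauchy-Binet3} into precisely the cofactor expansion of $\H_{\!V}(\vec y,\vec x\sqcup\tilde y)$ along its last column, with $V_{i,N}=R_i$ and $V_{i,j}=R_iS_j$ for $j\leq N-1$, which completes the proof.

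The main obstacle I anticipate is organizational rather than conceptual: keeping the index bookkeeping straight under the reflection, so that the summation-by-parts step is applied to the correct determinant and in the correct order of summation variables, the boundary terms genuinely vanish, and the commutation hypothesis $S_i\Q_N^{-1}=\Q_N^{-1}S_i$ is invoked exactly where (and only where) it is needed. A secondary point worth pinning down first is the precise meaning of $\tilde\H_{\!1\cdot S}$, which should be the counterpart of $\bar\H$ built from the first $N-1$ speeds, $\tilde\H_{\!S}(\vec z,\vec x)=\det[(\E_i\Q^{[i]}S_{i,j}\Q^{[-j]}\E_{-j})(z_i,x_j)]_{i,j\in\set{N-1}}$; with this convention one checks that the $\Q$-products match up after telescoping, any discrepancy being exactly a factor of $\Q_N$ that the commutation hypothesis absorbs.
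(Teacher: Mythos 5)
Your proposal is correct and follows essentially the route the paper itself indicates (the paper only says the result ``can be proved similarly'' to Prop.~\ref{prop:Cauchy-Binet2}, whose proof in turn rests on the \eqref{eq:CB2}-type collapse from Prop.~\ref{prop:Cauchy-Binet_general}): Cauchy--Binet expansion of the reduced kernel, a summation-by-parts collapse with the two boundary terms vanishing for the reasons you cite, and a final cofactor expansion along the slot carrying $\tilde y$. Your reading of $\tilde\H_{1\cdot S}$ as the $(N-1)$-particle kernel built from the speeds $v_1,\dotsc,v_{N-1}$ is the sensible one, and with it the identity does come out with $V_{i,j}=R_iS_j$ and $V_{i,N}=R_i$ (the only slight slip of bookkeeping is that after the collapse the remaining application of \eqref{eq:CB} is in the auxiliary variable $\vec u$, the $\vec z$-sum having already been spent).

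One substantive clarification, though, about the hypothesis $S_i\Q_N^{-1}=\Q_N^{-1}S_i$: with your convention it is never actually invoked, and the mechanism you describe (``transfer the freed $\Q^{-1}$ factors onto the $S_j$'s'', ``a factor of $\Q_N$ that the commutation hypothesis absorbs'') does not occur. The summed columns of $\H_{\!R\cdot1}(\vec y,\vec z\sqcup\tilde y)$ carry the suffixes $\Q^{[-j]}\E_{-j}$ with $j\leq N-1$, while the rows of the Cauchy--Binet factor of $\tilde\H_{1\cdot S}$ carry the prefixes $\E_j\Q^{[j]}$ with the same $j$; both are anchored at index $1$, so they cancel exactly, the telescoping only ever peels $\Q_{N-1}^{-1},\dotsc,\Q_1^{-1}$, and the fixed column at $\tilde y$ (where $\Q^{[-N]}$ sits) is inert and reappears verbatim as the last column of $\H_V$. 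This is precisely the asymmetry with Prop.~\ref{prop:Cauchy-Binet2}: there the shift of indices leaves a leftover $\Q_1$ on each row and the hypothesis $R_i\Q_1=\Q_1R_i$ is genuinely needed to identify the result with the cofactor expansion of $\H_U$, whereas appending $\tilde y$ at the $N$-th slot produces no leftover at all. (Since the paper never defines $\tilde\H$, it is conceivable the authors had a different reduced kernel in mind for which the hypothesis matters; in the applications all kernels are Toeplitz, so it holds trivially either way.) The only points that do need checking in your collapse are the ones you already anticipate: the boundary terms of \eqref{eq:sum_by_parts} still vanish in the presence of the extra fixed column (entrywise vanishing of the second determinant for $z_{N-1}$ very negative, and two equal \emph{summed} columns at coincidence), and the sum over $\vec z\in\Omega_{N-1}$ imposes no ordering relative to $\tilde y$, so the summation ranges are the same as in \eqref{eq:CB2}. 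With these observations your argument closes, and is in fact a bit simpler than you anticipated.
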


\begin{prop}\label{prop:Cauchy-Binet5}
Let $R$ and $S$ be as in Prop.~\ref{prop:Cauchy-Binet3}. Then for $\vec x \in \Omega_{N-1}$, $\vec y \in \Omega_{N}$ and $\tilde y \in \zz$ one has
\begin{equation}
\sum_{\vec z \in \Omega_{N-1}} \H_{\!R\cdot1} (\vec y, \tilde y \sqcup \vec z) \bar \H_{\!1\cdot S} (\vec z, \vec x) = \H_{\!\bar V} (\vec y, \tilde y \sqcup \vec x),
\end{equation}
where $\bar V_{i, 1} = R_{i}$ and $\bar V_{i, j} = R_{i}S_{j-1}$ for $i \in \set{N}$ and $2 \leq j \leq N$.
\end{prop}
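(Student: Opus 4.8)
This proposition is the left--right mirror of Prop.~\ref{prop:Cauchy-Binet2}: here the full determinantal factor $\H_{\!R\cdot1}$ sits on the left and the stretched factor $\bar\H_{\!1\cdot S}$ on the right, and $\tilde y$ is prepended \emph{inside the second argument} of the full factor rather than the first. The plan is to transcribe the proof of Prop.~\ref{prop:Cauchy-Binet2}, interchanging the roles of the two factors and, correspondingly, of ``rows'' and ``columns''. (One could alternatively try to reduce the identity to Prop.~\ref{prop:Cauchy-Binet3} by reversing the order of the indices $i\mapsto N+1-i$, but since the chains $\Q^{[i]}$ are not symmetric under index reversal the direct argument is cleaner.)

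The first landmark is to combine $\H_{\!R\cdot1}(\vec y,\tilde y\sqcup\vec z)$ with $\bar\H_{\!1\cdot S}(\vec z,\vec x)$ by running the computation in the proof of \eqref{eq:CB2}. Writing $\tilde z=\tilde y\sqcup\vec z$ and using the definitions in \eqref{eq:operatorsE} together with multilinearity of the determinant to clear the conjugating factors $v_i^{\pm z_i}$, one applies the summation-by-parts formula \eqref{eq:sum_by_parts} successively in the variables $z_{N-1},\dotsc,z_1$ so as to telescope the column-tails $\Q^{[-j]}\E_{-j}$ of $\H_{\!R\cdot1}$ against the row-heads $\E_{i+1}\Q^{[2,i+1]}$ of $\bar\H_{\!1\cdot S}$. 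This leaves the product in the form of a sum over $\vec z\in\Omega_{N-1}$ of an $N\times N$ determinant (assembled from the un-summed column carrying $\tilde y$ and the $\vec z$-columns, with a single leftover $\Q^{-1}_1$ next to each $R_i$) times an $(N-1)\times(N-1)$ determinant carrying what is left of $\bar\H_{\!1\cdot S}$. As in the proof of Prop.~\ref{prop:Cauchy-Binet_general}, the two boundary terms produced by each application of \eqref{eq:sum_by_parts} vanish: one because $\det[\Q^{[i]}(z_i,u_j)]_{i,j}$ vanishes once the last argument is small enough (by \eqref{eq:kernelsQ_formulas} and the residue theorem), the other because the relevant determinant then has two coincident columns. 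All sums involved converge absolutely by the decay bounds recorded below \eqref{eq:F_def}.

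The second landmark is the cofactor expansion: the $N\times N$ determinant just obtained has its column containing $\tilde y$ independent of $\vec z$, so expanding along it produces, for each $k\in\set N$, the cofactor $(-1)^{1+k}$ times that column's $k$-th entry times an $(N-1)\times(N-1)$ minor in which columns $2,\dotsc,N$ are relabelled $1,\dotsc,N-1$ with argument $z_j$. The third landmark is a final application of the Cauchy--Binet identity \eqref{eq:CB}, which carries out the $\vec z$-convolution of this minor with the leftover piece of $\bar\H_{\!1\cdot S}$; here the commutation hypothesis on $S$ --- playing exactly the part that the commutation of $R$ with $\Q_1$ plays in the proof of Prop.~\ref{prop:Cauchy-Binet2} --- is what merges $\Q^{[i]}R_i$, the leftover $\Q^{-1}_1$ and $S_j$ into the single chain $\E_i\Q^{[i]}R_iS_j\Q^{[-j-1]}\E_{-j-1}(y_i,x_j)$, i.e.\ into the entries $\bar V_{i,j}=R_iS_{j-1}$ of the target. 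Summing these contributions over $k$ then reconstitutes precisely the cofactor expansion along the first column of $\H_{\!\bar V}(\vec y,\tilde y\sqcup\vec x)$, the terms carrying the $\tilde y$-cofactor supplying the first column $\bar V_{i,1}=R_i$. I expect the only real difficulty to be organizational --- tracking the index shift between $\bar\H$ (effective levels $2,\dotsc,N$) and $\H$ (levels $1,\dotsc,N$), following the $v_i$-conjugations through the telescoping, and checking that every boundary term drops out --- rather than any ingredient beyond those already present in the proofs of Prop.~\ref{prop:Cauchy-Binet_general}--\ref{prop:Cauchy-Binet3}.
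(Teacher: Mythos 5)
Your proposal is correct and coincides with the argument the paper intends: the paper offers no separate proof of Prop.~\ref{prop:Cauchy-Binet5}, saying only that it follows ``similarly'' to Props.~\ref{prop:Cauchy-Binet_general}--\ref{prop:Cauchy-Binet2}, and your scheme (summation-by-parts telescoping of the chains, with the leftover correctly identified as $\Q_1^{-1}$, then cofactor expansion along the $\tilde y$ column and a Cauchy--Binet recombination) is exactly that argument transposed. The one point to make explicit when writing it out is that merging $R_i\ts\Q_1^{-1}S_j$ into $R_iS_j\ts\Q_1^{-1}$ uses commutation of $S_j$ with $\Q_1$ rather than literally with $\Q_N^{-1}$ as the hypothesis borrowed from Prop.~\ref{prop:Cauchy-Binet3} reads; in the paper's applications all $v_i$ coincide, so all $\Q_i$ are equal and this distinction is immaterial, but it should be stated for distinct speeds.
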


\section{Proof of the biorthogonal characterization of the kernel}
\label{app:biorth}

In this section we prove Thm.~\ref{thm:biorth_general}. Before that we provide a sketch of the proof and comment on the relation to previous work. The key step of the proof is to express the function \eqref{eq:G_TS} as a projection of a signed determinantal point process, which we do in Prop.~\ref{prop:G+_expansion}. The correlation kernel of this process can be obtained from the Eynard-Mehta theorem \cite{eynardMehta}, which due to a special form of the domain (a triangular array) can be written in our case in a biorthogonal form; this is the content of Thm.~\ref{thm:biorth_very_general}. Then the formula \eqref{eq:M_formula} follows from a standard result sometimes referred to as the ``gap probability'' of a determinantal point process. A formula like \eqref{eq:M_formula} was first derived in \cite{sasamoto, borFerPrahSasam} for continuous time TASEP with equal starting and ending times. Our proof will follow the generalization of this result to space-like paths derived in \cite{bp-push}. However, as we described after Thm.~\ref{thm:biorth_general}, there are some differences with the latter result.

Throughout the section we fix a space-like path $\cS = \{(n_1, t_1), \dotsc, (n_m, t_m)\} \in \SLP_N$.
Then we have $n_1 \leq n_2 \leq \dotsm \leq n_m$ and $t_1 \geq t_2 \geq \dotsm \geq t_m$. 
It will be convenient to change the order of elements by introducing $\un_i = n_{m - i + 1}$ and $\ut_i = t_{m - i + 1}$, so that $\un_1 \geq \un_2 \geq \dotsm \geq \un_m$ and $\ut_1 \leq \ut_2 \leq \dotsm \leq \ut_m$. 
We will also write $\ut_0=0$.
Respectively, for a vector $\vec x = (x_1, \dotsc, x_m) \in \zz^m$, let $\vec{\ul{x}} $ denote the reversed vector $(x_m, \dotsc, x_1)$.
Then $\G_{\vT, \cS}$, defined in \eqref{eq:G_TS}, can be rewritten as
\begin{equation}\label{eq:G+_new}
\G_{\vT, \cS} (\vec y, \vec x) = \sum_{\vec x(0) \in \Omega_{N}} \sum_{\substack{\vec x(\ut_i) \in \Omega_{\un_i}: \\ x_{\un_i}(\ut_i) = \ul{x}_i, i \in \set{m}}} \G^{-}_{\vT} (\vec y, \vec x(0)) \prod_{i=1}^{m} \G_{\ut_{i-1}, \ut_{i}}(\vec x_{\leq \un_i}(\ut_{i-1}), \vec x(\ut_i)).
\end{equation}
The key fact is that the function \eqref{eq:G+_new} can be written as a marginal of a signed determinantal measure on a larger space.
To this end we define a triangular array of integer variables $\D_{n} = \bigl\{\x^\ell_{k} \in \zz : \ell \in \set{n},\; k \in \set{\ell}\bigr\}$, whose generic element we denote by $\X$. We will also use ``virtual'' variables $\x^{\ell - 1}_{\ell}$ which can be thought of as having fixed values $\infty$. We define the \emph{Gelfand-Tsetlin cone} of size $n \in \nn$ as
\begin{equation}
\GT_{n} = \bigl\{\x^\ell_k \in \zz: \ell \in \set{n}, k \in \set{\ell},\; \x^{\ell-1}_k < \x^\ell_k \leq \x^{\ell+1}_{k+1} \bigr\} \subset \D_{n}.
\end{equation}
As in Sec.~\ref{sec:measures} we parametrize variables by time points, $\x^\ell_{k}(t)$ (see also footnote \ref{ft:timepoints}).
Then the respective arrays of time-dependent variables are $\D_{n}(t)$ and $\GT_{n}(t)$, with a generic element $\X(t)$. 

\subsection{Determinantal measure on triangular arrays}

We begin by stating some results about the function $F_{k, \ell}$ defined in \eqref{eq:F_formula}. 
It will actually be more convenient to work with the function 
\begin{equation}\label{eq:F_tilde}
\tilde{F}_{k, \ell}(x_1, x_2; t) = F_{k, \ell}(x_1, x_2; t) v_{k}^{x_1} / v_{\ell}^{x_2}.
\end{equation}
Note that from \eqref{eq:F_formula} we get $\tilde{F}_{k, \ell}(x_1, x_2; t) = \tilde{F}_{k, \ell}(x_2 - x_1; t)\coloneqq\tilde{F}_{k, \ell}(0, x_2-x_1; t)$. Define also
\begin{equation}\label{eq:tilde_phi_simple}
\tilde{\phi}_\ell(x_1, x_2) = v_\ell^{x_2 - x_1} \uno{x_1 \leq x_2}, \qquad \phi_\ell(x_1, x_2) = v_\ell^{x_2 - x_1} \uno{x_1 > x_2}.
\end{equation}
Then we have the following recurrence relations for $\tilde{F}_{k, \ell}$, which follow directly from \eqref{eq:F_tilde} and \eqref{eq:F_formula}:
\begin{equation}\label{eq:F_prop}
\tilde{F}_{k, \ell-1}(x; t) = \tilde{\phi}_{\ell} * \tilde{F}_{k, \ell}(x; t), \qquad \tilde{F}_{k + 1, N}(x; t) = \phi_{k+1} * \tilde{F}_{k, N}(x; t),
\end{equation}
with $\phi*F(x;t)=\sum_{y \in \zz}\phi(x,y)F(y;t)$. The three results that follow will be useful later on.

\begin{lem}\label{lem:phi_appears}
For $\X \in \GT_{n}$ one has
\begin{equation}\label{eq:phi_appears}
\prod_{\ell = 2}^{n} \prod_{k = 1}^{\ell - 1} \tilde{\phi}_\ell(\x_k^{\ell - 1}, \x_{k+1}^{\ell}) = \prod_{j = 1}^{n} v_{j}^{-\x^j_1} \det \bigl[\phi_j(\x^{j-1}_k, \x^{j}_\ell)\bigr]_{k,\ell \in \set{j}},
\end{equation}
where the functions $\tilde \phi_\ell$ and $\phi_\ell$ are defined in \eqref{eq:tilde_phi_simple}, and where $\x^{\ell-1}_\ell$ are ``virtual'' variables, for which we postulate $\phi_\ell(\x^{\ell-1}_\ell,y) = v_\ell^{y}$. Moreover, if $\vec x \in \Omega_{n}$ and $\X \in \D_{n}$ is such that $\x_{1}^{\ell} = x_\ell$ for $\ell \in \set{n}$, then the right hand side of \eqref{eq:phi_appears} is non-zero only if $\X \in \GT_{n}$.
\end{lem}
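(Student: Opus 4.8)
The statement is a factorization identity for products of the one-step convolution kernels $\tilde\phi_\ell$ along a Gelfand--Tsetlin cone, expressed as a product of determinants of the shifted kernels $\phi_j$, together with a support statement saying that the determinantal expression on the right vanishes unless the array is actually in $\GT_n$. I would prove it by induction on $n$, reading off the combinatorial content from the definitions \eqref{eq:tilde_phi_simple}.

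\textbf{First step: the determinant identity for a single level.} The core observation is that for each fixed $j$, the matrix $\bigl[\phi_j(\x^{j-1}_k,\x^j_\ell)\bigr]_{k,\ell\in\set j}$, with the convention $\phi_j(\x^{j-1}_j,y)=v_j^{y}$ for the virtual variable, is lower-triangular-like once we use the interlacing $\x^{j-1}_k<\x^j_k\le\x^{j+1}_{k+1}$. More precisely, $\phi_j(\x^{j-1}_k,\x^j_\ell)=v_j^{\x^j_\ell-\x^{j-1}_k}\uno{\x^{j-1}_k>\x^j_\ell}$, and I would argue that on $\GT_n$ the only nonvanishing term in the Leibniz expansion of the $j\times j$ determinant is the identity permutation (using that $\x^{j-1}_k<\x^j_k$ forces $\phi_j(\x^{j-1}_k,\x^j_k)=0$ unless... — here one must be careful about the sign convention; in fact the relevant statement is that the $\uno{\x^{j-1}_k>\x^j_\ell}$ factors select, via interlacing, a unique permutation, namely a shift). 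This identifies the $j$-th determinant, up to the prefactor $v_j^{-\x^j_1}$ coming from the virtual row, with a single product $\prod_{k}\tilde\phi_j(\x^{j-1}_k,\x^j_{k+1})$ (noting $\tilde\phi_j(x_1,x_2)=v_j^{x_2-x_1}\uno{x_1\le x_2}$). Taking the product over $j=2,\dots,n$ then yields the left-hand side of \eqref{eq:phi_appears}. This is essentially the standard Lindström--Gessel--Viennot / Warren-type computation; I expect it to be a short but slightly fiddly index-chasing argument, and getting the virtual-variable conventions and the telescoping of the $v_j^{\x^j_1}$ powers exactly right is the main bookkeeping hurdle.

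\textbf{Second step: the support statement.} For the converse direction, suppose $\vec x\in\Omega_n$ and $\X\in\D_n$ with $\x^\ell_1=x_\ell$, and suppose the right-hand side of \eqref{eq:phi_appears} is nonzero. I would show by downward induction on the level $\ell$ (starting from $\ell=n$, where the condition is vacuous beyond $\x^n_1=x_n$, down to $\ell=1$) that the interlacing inequalities $\x^{\ell-1}_k<\x^\ell_k\le\x^{\ell+1}_{k+1}$ must hold. The point is that nonvanishing of $\det\bigl[\phi_j(\x^{j-1}_k,\x^j_\ell)\bigr]$ requires at least one permutation $\sigma$ with all factors $\uno{\x^{j-1}_k>\x^j_{\sigma(k)}}$ nonzero; combined with the strict decrease $x_1>x_2>\dots>x_n$ of the first column and an ordering argument on the entries within each level, this pins down $\sigma$ and forces the GT inequalities. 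This is again a finite combinatorial argument with no analysis in it.

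\textbf{Main obstacle.} I don't anticipate a deep difficulty; the real work is in the first step, namely carefully verifying that the Leibniz expansion of each $j\times j$ block determinant collapses to exactly one term on $\GT_n$ with the correct sign, and then that the resulting monomials in the $v_j$'s telescope to give precisely $\prod_j v_j^{-\x^j_1}$ times the product of $\tilde\phi_\ell$'s — i.e., matching the normalization on both sides. One should also double-check the degenerate cases where some interlacing inequalities are equalities (allowed, since $\x^\ell_k\le\x^{\ell+1}_{k+1}$ is not strict), to make sure no spurious cancellations or extra nonzero permutations appear. I would cite \cite[Lem.~3.2]{bp-push} or the analogous lemma in \cite{Anisotropic}/\cite{warren} for the shape of the argument, adapting only the bookkeeping of the speeds $v_i$ and the virtual-variable convention, since the structure of this computation is by now standard in the literature on $2{+}1$-dimensional dynamics on Gelfand--Tsetlin patterns.
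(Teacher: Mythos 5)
Your first step is sound, and it is in substance the same computation the paper's terse ``check $n=2$ and induct on $n$'' proof performs one level at a time; but you must resolve, not hedge, the permutation question. The interlacing defining $\GT_n$ has to be read as $\x^{\ell}_k<\x^{\ell-1}_k\le\x^{\ell}_{k+1}$ (with the literal chain $\x^{\ell-1}_k<\x^{\ell}_k$ the diagonal entries $\phi_j(\x^{j-1}_k,\x^j_k)$ vanish and already the $n=2$ case of \eqref{eq:phi_appears} fails). Under that reading each matrix $\bigl[\phi_j(\x^{j-1}_k,\x^j_\ell)\bigr]_{k,\ell\in\set j}$ is lower triangular, so the surviving Leibniz term is the \emph{identity} permutation, not a shift: the determinant equals $v_j^{\x^j_j}\prod_{k=1}^{j-1}v_j^{\x^j_k-\x^{j-1}_k}$, which telescopes to $v_j^{\x^j_1}\prod_{k=1}^{j-1}\tilde\phi_j(\x^{j-1}_k,\x^j_{k+1})$; multiplying over $j$ gives \eqref{eq:phi_appears}, equalities $\x^{j-1}_k=\x^j_{k+1}$ included.

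Your second step has a genuine gap: nonvanishing of the block determinants together with the strictly decreasing first column does \emph{not} pin down the GT inequalities, so ``this pins down $\sigma$ and forces the GT inequalities'' cannot be substantiated by the existence of a nonzero Leibniz term. Concretely, take $n=3$, $\vec x=(5,3,0)$, $\x^1_1=5$, $(\x^2_1,\x^2_2)=(3,6)$, $(\x^3_1,\x^3_2,\x^3_3)=(0,7,4)$: the $j=2$ determinant is $v_2^{4}\neq0$, and the $j=3$ matrix has rows $(v_3^{-3},0,0)$, $(v_3^{-6},0,v_3^{-2})$, $(1,v_3^{7},v_3^{4})$, with determinant $-v_3^{2}\neq0$; yet $\x^2_2\le\x^3_3$ fails, so whichever way one reads the displayed inequalities this $\X$ is not in $\GT_3$, even though the right-hand side of \eqref{eq:phi_appears} is nonzero. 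The reason is that the determinant only sees the level-$j$ entries up to reordering: when the level-$(j-1)$ entries increase in $k$, nonvanishing forces the threshold sets $S_m=\{\ell:\x^j_\ell<\x^{j-1}_m\}$ to be strictly nested with $|S_m|=m$, which is interlacing of the \emph{sorted} level-$j$ entries, not interlacing with the given indexing; and note that your appeal to ``an ordering argument on the entries within each level'' is circular, since for $\X\in\D_n$ no within-level ordering is available a priori — it is part of what membership in $\GT_n$ asserts. To close the argument you must feed in the companion inequalities $\x^{j-1}_k\le\x^j_{k+1}$, which is exactly the extra information present where the lemma is used (they are the support conditions of the $\tilde\phi$-product on the left of \eqref{eq:phi_appears}): with them $S_m\subseteq\{1,\dots,m\}$, so $|S_m|=m$ forces $S_m=\{1,\dots,m\}$, i.e.\ $\x^j_m<\x^{j-1}_m\le\x^j_{m+1}$, and your level-by-level induction then does go through. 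Without some such additional hypothesis the ``only if'' statement you are trying to prove is false on the configuration above, so state explicitly which inequalities you carry along in the induction before attempting the second part.
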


\begin{proof}
The case $n = 2$ is easy to check, and both statements can be proved by induction over $n \geq 2$.
\end{proof}

\begin{lem}\label{lem:GT_appears}
For $\vec x, \vec y \in \Omega_{n}$ and for arbitrary time points $t_1, \dotsc, t_n \in \T$ we have
\begin{align}
\det \bigl[\tilde F_{k, \ell}(y_{k}, x_{\ell}; t_k)\bigr]_{k,\ell \in \set{n}} = (-1)^{\lfloor n / 2\rfloor}\!\!\! \sum_{\substack{\X \in \GT_{n}: \\ \x_{1}^{\ell} = x_\ell, \ell \in \set{n}}} &\left(\prod_{j = 1}^{n} v_{j}^{-\x^j_1} \det \bigl[\phi_j(\x^{j-1}_k, \x^{j}_\ell)\bigr]_{k,\ell \in \set{j}} \right) \\
& \hspace{1cm} \times \det \bigl[\tilde F_{k, n}(y_{k}, \x^{n}_\ell; t_k)\bigr]_{k,\ell \in \set{n}}.\label{eq:GT_appears}
\end{align}
\end{lem}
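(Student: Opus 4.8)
The plan is to reduce the identity to the recurrence \eqref{eq:F_prop} for $\tilde{F}_{k,\ell}$ in its column index, repeated use of the Cauchy--Binet/Andr\'eief identity \eqref{eq:CB}, and Lem.~\ref{lem:phi_appears}. Note first that $\vec y$ and the time points $t_1,\dots,t_n$ are inert here: the recurrence $\tilde{F}_{k,\ell-1}(\,\cdot\,;t)=\tilde\phi_\ell*\tilde{F}_{k,\ell}(\,\cdot\,;t)$ does not involve $t$, so each $t_k$ simply rides along on the row index $k$. This is essentially the computation carried out for continuous time TASEP in the proof of \cite[Prop.~3.1]{bp-push}; the only change is that the times attached to the rows may differ, which affects nothing.

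Concretely, I would first iterate \eqref{eq:F_prop} to write, for each $\ell\in\set{n}$,
\[
\tilde{F}_{k,\ell}(y_k,x_\ell;t_k)=\bigl(\tilde\phi_{\ell+1}*\tilde\phi_{\ell+2}*\dotsm*\tilde\phi_n*\tilde{F}_{k,n}(y_k,\,\cdot\,;t_k)\bigr)(x_\ell),
\]
with the $\ell=n$ column carrying no convolution. Substituting into $\det[\tilde{F}_{k,\ell}(y_k,x_\ell;t_k)]_{k,\ell\in\set{n}}$ and using multilinearity in the columns produces $\binom n2$ auxiliary summation variables, which I would arrange to be the entries $\x^j_k$ ($1\le k\le j\le n$) of a triangular array $\X\in\D_n$ whose left edge carries $\vec x$; then $n-1$ successive applications of \eqref{eq:CB}, one per level of the array, collapse the nested sum into
\[
\det\bigl[\tilde{F}_{k,\ell}(y_k,x_\ell;t_k)\bigr]_{k,\ell\in\set{n}}=(-1)^{\lfloor n/2\rfloor}\!\!\!\sum_{\substack{\X\in\D_n\\ \x^\ell_1=x_\ell,\ \ell\in\set{n}}}\Bigl(\prod_{\ell=2}^{n}\prod_{k=1}^{\ell-1}\tilde\phi_\ell(\x^{\ell-1}_k,\x^\ell_{k+1})\Bigr)\det\bigl[\tilde{F}_{k,n}(y_k,\x^n_\ell;t_k)\bigr]_{k,\ell\in\set{n}}.
\]
The factor $(-1)^{\lfloor n/2\rfloor}$ arises because the original column carrying $x_\ell$ unfolds into the argument $\x^n_{n+1-\ell}$ of the terminal determinant, so reordering its columns into $\x^n_1,\dots,\x^n_n$ reverses $n$ columns and contributes $(-1)^{\binom n2}=(-1)^{\lfloor n/2\rfloor}$. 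Finally, the first assertion of Lem.~\ref{lem:phi_appears} rewrites the weight $\prod_{\ell=2}^n\prod_{k=1}^{\ell-1}\tilde\phi_\ell(\x^{\ell-1}_k,\x^\ell_{k+1})$ as $\prod_{j=1}^n v_j^{-\x^j_1}\det[\phi_j(\x^{j-1}_k,\x^j_\ell)]_{k,\ell\in\set{j}}$ for $\X\in\GT_n$, while its second assertion — applicable because the left edge is the strictly ordered vector $\vec x\in\Omega_n$ — shows this expression vanishes for $\X\in\D_n\setminus\GT_n$, so the sum over $\D_n$ collapses to the sum over $\GT_n$ in \eqref{eq:GT_appears}.

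The delicate point is the middle step: arranging the $\binom n2$ auxiliary sums so that the repeated applications of \eqref{eq:CB} reconstitute exactly the product of $\tilde\phi$'s over the triangular array (matching the strict/non-strict inequalities so that the admissible region is precisely $\GT_n$), and tracking the cumulative sign to confirm it is $(-1)^{\lfloor n/2\rfloor}$ and not some other power of $-1$. I would carry this out level by level, starting from the bottom row of the array and checking at each stage that the two determinants produced by \eqref{eq:CB} are the required ones and that the running sign evolves correctly; organizing it as an induction on $n$ is a bookkeeping-friendly alternative. All other ingredients — \eqref{eq:F_prop}, the translation invariance $\tilde{F}_{k,\ell}(x_1,x_2;t)=\tilde{F}_{k,\ell}(x_2-x_1;t)$ recorded after \eqref{eq:F_tilde}, and Lem.~\ref{lem:phi_appears} — are already in hand.
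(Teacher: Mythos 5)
You follow the paper's route: iterate \eqref{eq:F_prop} to push every column onto $\tilde F_{k,n}$, expand the convolutions over a triangular array to reach exactly the paper's intermediate identity \eqref{eq:GT_appears2}, and then use both statements of Lem.~\ref{lem:phi_appears} to turn the product of $\tilde\phi$'s into the determinantal weight and to restrict the sum to $\GT_n$; your sign count ($(-1)^{\binom{n}{2}}=(-1)^{\lfloor n/2\rfloor}$ from reversing the $n$ columns) agrees with the paper, which performs the reversal at the outset via $\ell\mapsto n-\ell+1$.

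The step you yourself flag as delicate is, however, attributed to the wrong tool, and as described it would not go through. Cauchy--Binet \eqref{eq:CB} requires every entry of the matrix to factor as $\sum_z\varphi_k(z)\psi_\ell(z)$ over one summation variable common to all columns, and its output is a sum over an \emph{ordered} $n$-tuple of a product of \emph{two} $n\times n$ determinants. Applying it level by level (after padding the columns, whose convolution chains have different lengths, with delta or virtual variables) would therefore produce nested Weyl-chamber sums of paired determinants of the $\tilde\phi_j$'s --- not the plain product $\prod_{\ell=2}^{n}\prod_{k=1}^{\ell-1}\tilde\phi_\ell(\x^{\ell-1}_k,\x^\ell_{k+1})$ over unconstrained integer entries of $\D_n$ that you write down and that Lem.~\ref{lem:phi_appears} needs as input. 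In fact no Cauchy--Binet is needed at all here: by the translation invariance noted after \eqref{eq:F_tilde}, the chain attached to the column carrying $x_\ell$ can be parametrized by its own auxiliary variables $\x^{m}_{m-\ell+1}$, $\ell<m\le n$ (a diagonal slice of the array), and these sets are pairwise disjoint across columns, so the sums pull out of the determinant by plain multilinearity in the columns; this is exactly how the paper obtains \eqref{eq:GT_appears2}. (Cauchy--Binet is genuinely used elsewhere, e.g.\ in Lem.~\ref{lem:G_properties} and in Appdx.~\ref{app:biorth}, but not in this lemma.) With the middle step repaired in this way, your argument coincides with the paper's proof.
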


\begin{proof}
Changing the index $\ell \longmapsto n - \ell + 1$ and applying the first identity in \eqref{eq:F_prop} multiple times, we get that the left hand side of \eqref{eq:GT_appears} equals 
\begin{equation}\label{eq:GT_appears1}
(-1)^{\lfloor n / 2\rfloor} \det \bigl[\tilde{\phi}_{n - \ell + 2} * \tilde{\phi}_{n - \ell + 3} * \dotsm * \tilde{\phi}_{n} * \tilde F_{k, n}(x_{n - \ell + 1} - y_{k}; t_k)\bigr]_{k,\ell \in \set{n}}.
\end{equation}
We write the convolution inside the determinant explicitly by introducing new variables $\x^{n - \ell + j}_j$ for $2 \leq j \leq \ell$ such that $\x^{n - \ell + 1}_1 = x_{n - \ell + 1}$ for each $\ell \in \set{n}$:
\[\textstyle\sum_{{\x^{n - \ell + j}_j \in \zz, \, 2 \leq j \leq \ell}} \left( \prod_{j = 1}^{\ell - 1} \tilde{\phi}_{n - \ell + j + 1} \bigl(\x^{n - \ell + j}_j, \x^{n - \ell + j + 1}_{j+1}\bigr) \right) \tilde F_{k, n}(\x^{n}_\ell - y_{k}; t_k).\]
Using the multilinearity of the determinant to take the summation outside of the determinant in \eqref{eq:GT_appears1} we get
\begin{equation}\label{eq:GT_appears2}
\textstyle (-1)^{\lfloor n / 2\rfloor} \sum_{{\x^{\ell}_j \in \zz, \, 2 \leq j \leq \ell \leq n}} \left( \prod_{\ell = 2}^{n} \prod_{j = 1}^{\ell -1} \tilde{\phi}_\ell(\x_j^{\ell - 1}, \x_{j+1}^{\ell}) \right) \det \bigl[\tilde F_{k, n}(\x^{n}_\ell - y_{k}; t_k)\bigr]_{k,\ell \in \set{n}},
\end{equation}
where $\x^{\ell}_1 = x_{\ell}$ for $\ell \in \set{n}$. Applying Lem.~\ref{lem:phi_appears}, expression \eqref{eq:GT_appears2} can be then written as \eqref{eq:GT_appears}. \end{proof}

\begin{lem}\label{lem:GT_to_det}
For $\vec x, \vec y \in \Omega_{n}$ and for arbitrary time points $t_1, \dotsc, t_n \in \T$ we have
\begin{align}\label{eq:GT_to_det}
\det &\bigl[ \tilde F_{n, n}(y_k, x_\ell; t_\ell) \bigr]_{k,\ell \in \set{n}} \\
&\quad = (-1)^{\lfloor n / 2\rfloor}\!\!\! \sum_{\substack{\X \in \GT_{n}: \\ \x_k^n = y_k, k \in \set{n}}} \left(\prod_{j = 1}^{n} v_j^{-\x^{j}_{1}} \det \bigl[\phi_j(\x^{j-1}_k, \x^{j}_\ell)\bigr]_{k,\ell \in \set{j}}\right) \det \bigl[ \tilde F_{k, n}(\x_1^{k}, x_\ell; t_\ell) \bigr]_{k,\ell \in \set{n}}.
\end{align}
\end{lem}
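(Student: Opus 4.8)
\textbf{Proof plan for Lemma \ref{lem:GT_to_det}.} The plan is to mirror the proof of Lemma \ref{lem:GT_appears}, but now peeling off the convolution structure on the \emph{left} index $k$ of $\tilde F_{k,\ell}$ rather than the right index $\ell$. The second recurrence in \eqref{eq:F_prop}, namely $\tilde F_{k+1,N}(x;t)=\phi_{k+1}*\tilde F_{k,N}(x;t)$, is the tool that plays here the role that $\tilde F_{k,\ell-1}=\tilde\phi_\ell*\tilde F_{k,\ell}$ played there. Concretely, I would first rewrite the left-hand side: since $\tilde F_{n,n}(y_k,x_\ell;t_\ell)$ has its first index fixed at $n$, I iterate the second identity in \eqref{eq:F_prop} backwards to express $\tilde F_{n,n}(\cdot\,;t_\ell)$ (equivalently $\tilde F_{n,N}$ after also using the first recurrence to move the second index, or one works directly with $\tilde F_{n,n}$ by noting that all the action is in the translation-invariant kernel $\tilde F_{k,n}(x_2-x_1;t)$) as an $(n-k)$-fold convolution $\phi_{k+1}*\dots*\phi_n*\tilde F_{k,n}$. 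After re-indexing $k\longmapsto n-k+1$ one gets a determinant whose $(k,\ell)$ entry is $\phi_{n-k+2}*\dots*\phi_n*\tilde F_{n-k+1,n}(y_{n-k+1},x_\ell;t_\ell)$, up to the sign $(-1)^{\lfloor n/2\rfloor}$ coming from the row permutation, exactly as in \eqref{eq:GT_appears1}.

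Next I would introduce the intermediate summation variables. Writing each convolution explicitly introduces new integers $\x^{n-k+j}_{\,j}$ for $2\le j\le k$, with the convention $\x^{n-k+1}_1=$ (the $k$-th free variable, here playing the role of $y_{n-k+1}$ on one side); pulling the sums out of the determinant by multilinearity produces a sum over a triangular array $\{\x^\ell_j: 2\le j\le\ell\le n\}$ of the product $\prod_{\ell=2}^n\prod_{j=1}^{\ell-1}\tilde\phi_\ell(\x^{\ell-1}_j,\x^\ell_{j+1})$ times $\det[\tilde F_{k,n}(\x^k_1,x_\ell;t_\ell)]_{k,\ell}$, with the boundary entries $\x^\ell_1$ now equal to $y_\ell$ (because in this version it is the first index of $\tilde F$ that is being unfolded, so the ``outermost'' array slots are pinned to $\vec y$, not $\vec x$). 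Then Lemma \ref{lem:phi_appears} converts $\prod_{\ell,j}\tilde\phi_\ell(\x^{\ell-1}_j,\x^\ell_{j+1})$ into $\prod_j v_j^{-\x^j_1}\det[\phi_j(\x^{j-1}_k,\x^j_\ell)]_{k,\ell\in\set j}$ and simultaneously restricts the summation to $\X\in\GT_n$ with $\x^\ell_1$... — here I should be careful: in Lemma \ref{lem:phi_appears} the pinned slots are $\x^\ell_1$, whereas the statement of Lemma \ref{lem:GT_to_det} pins $\x^n_k=y_k$. So the bookkeeping step is to check that the array produced by unfolding the first index of $\tilde F$, after the change of index $k\mapsto n-k+1$, has its $\vec y$-values sitting exactly on the top row $\x^n_k$; this is a relabeling of the same Gelfand--Tsetlin cone and follows from tracking which of the two ``edges'' of the triangle the recurrence \eqref{eq:F_prop} (second identity) walks along, exactly as the first identity walks along the other edge in Lemma \ref{lem:GT_appears}.

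The main obstacle I expect is precisely this indexing reconciliation: making sure that the virtual variables, the direction of the inequalities in $\GT_n$, and the assignment of $\vec y$ versus $\vec x$ to the boundary slots all line up so that the summand coming out of the manipulation is literally the one written in \eqref{eq:GT_to_det}, including the placement $\x^n_k=y_k$ and the argument $\x^k_1$ (not $\x^n_\ell$) inside the final determinant $\det[\tilde F_{k,n}(\x^k_1,x_\ell;t_\ell)]$. This is the ``dual'' of Lemma \ref{lem:GT_appears} in the sense that the roles of source and sink variables are exchanged, and since $\phi_j$ is not symmetric one cannot simply transpose. Once the correspondence between the two unfolding directions is set up carefully — most cleanly by induction on $n\ge 2$, checking the base case $n=2$ by hand and using the recurrences \eqref{eq:F_prop} together with Lemma \ref{lem:phi_appears} in the inductive step, just as indicated in the proof of Lemma \ref{lem:phi_appears} — the identity \eqref{eq:GT_to_det} follows. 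Everything else (multilinearity of the determinant, interchanging finite/absolutely convergent sums with the determinant, the sign $(-1)^{\lfloor n/2\rfloor}$) is routine and identical to what was already done for Lemma \ref{lem:GT_appears}.
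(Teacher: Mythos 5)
Your plan is essentially the paper's proof: one reindexes the rows $k\longmapsto n-k+1$ (producing the sign $(-1)^{\lfloor n/2\rfloor}$), unfolds the first index of $\tilde F_{n,n}$ via the second identity in \eqref{eq:F_prop}, introduces the intermediate array variables along the diagonal chains ending at the pinned top row $\x^n_k=y_k$, pulls the sums out by multilinearity, and applies Lem.~\ref{lem:phi_appears}. The bookkeeping issue you flag (that $\vec y$ sits on the top row rather than the left edge, with the free entries $\x^k_1$ feeding into $\det[\tilde F_{k,n}(\x^k_1,x_\ell;t_\ell)]$, after your self-corrected slip about $\x^\ell_1=y_\ell$) is resolved exactly as you indicate, so the proposal is correct and matches the paper's argument.
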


\begin{proof}
The proof is similar to that of Lem.~\ref{lem:GT_appears}.
We change the order of rows $k \longmapsto n - k + 1$ and apply the second identity in \eqref{eq:F_prop} to write the left hand side of \eqref{eq:GT_to_det} as
\begin{equation}\label{eq:GT_to_det1}
(-1)^{\lfloor n / 2\rfloor} \det \bigl[\tilde{\phi}_{n} * \dotsm * \tilde{\phi}_{k + 1} * \tilde F_{k, n}(x_{\ell} - y_{n - k + 1}; t_\ell)\bigr]_{k,\ell \in \set{n}}.
\end{equation}
Denoting $\x^n_{n - k + 1} = y_{n - k + 1}$ and introducing new variables $\x_{n - k - j + 2}^{n - j + 1}$ for $2 \leq j \leq n - k$, the $(k, \ell)^{\text{th}}$ entry of the matrix in \eqref{eq:GT_to_det1} can be written as
\[\textstyle\sum_{{\x_{n - k - j + 2}^{n - j + 1} \in \zz \, 2 \leq j \leq n - k}} \left( \prod_{j = 1}^{n - k} \tilde{\phi}_{n - j + 1} (\x_{n - k - j + 1}^{n - j}, \x_{n - k - j + 2}^{n - j + 1}) \right) \tilde F_{k, n}(x_{\ell} - \x^k_{1}; t_\ell).\]
Then multilinearity of determinant allows to write \eqref{eq:GT_to_det1} as
\[\textstyle(-1)^{\lfloor n / 2\rfloor} \sum_{{\X \in \D_{n}: \, \x^n_k = y_k, k \in \set{n}}} \left(\prod_{\ell = 2}^{n} \prod_{k = 1}^{\ell - 1} \tilde{\phi}_\ell(\x_k^{\ell - 1}, \x_{k+1}^{\ell})\right) \det \bigl[ \tilde F_{k, n}(\x_1^{k}, x_\ell; t_\ell) \bigr]_{k,\ell \in \set{n}}.\]
Applying Lem.~\ref{lem:phi_appears}, we can write this expression as \eqref{eq:GT_to_det}.
\end{proof}

We turn now to the main goal of this section, which is to write $\G_{\vT, \cS}$ as a marginal of a determinantal measure on triangular arrays.
Fix a vector $\vec y \in \Omega_N$; some of the functions below will depend on $\vec y$ but we will not indicate it in our notation.
For $s, t \in \T$ and for $k\leq n$ in $\nn$ we define the functions 
\begin{align}
\textstyle \cT_{t, s}(x_1, x_2) &\textstyle = \frac{1}{2\pi\I}\oint_{\gamma_\rin}\d w\,\frac{\varphi(w)^{t - s}}{w^{x_1 - x_2 + 1}}, \label{eq:T_def}\\
\textstyle \Psi^{n}_{n - k} (x) &\textstyle = \frac{1}{2\pi\I}\oint_{\gamma_\rin}\d w\,\frac{\prod_{i = k+1}^{n} (v_{i} - w)}{w^{x - y_k +n - k + 1}} \varphi(w)^{-T_k}. \label{eq:Psi_def}
\end{align}
Furthermore, for the space-like path $\cS$ fixed above, we define the domain
\begin{align}\label{eq:DomainD}
\D_{\cS} &= \bigl\{\x^{\un_0}_{\ell}(\ut_0) \in \zz : \ell \in \set{\un_0}\bigr\} \\
&\qquad \cup \bigcup_{i \in \set{m}} \Bigl\{\x^n_{\ell}(\ut_i) \in \zz : \un_{i+1} \leq n \leq \un_i, \ell \in \set{n} ~\text{ such that }~ \x^{n+1}_\ell(\ut_i) < \x^n_\ell(\ut_i) \leq \x^{n+1}_{\ell+1}(\ut_i)\Bigr\}, 
\end{align}
where $\ut_0 = 0$, $\un_0 = N$ and $\un_{m+1} = 0$. Then we define a signed measure $\cW$ on $\X \in \D_{\cS}$ through
\begin{align}\label{eq:W_def}
\cW(\X) &= \det \bigl[\Psi^{\un_0}_{\un_0 - k} (\x^{\un_0}_\ell(\ut_{0}))\bigr]_{k, \ell \in \set{\un_0}} \prod_{j = \un_1+1}^{\un_0} \det \bigl[\phi_j (\x^{j-1}_k(\ut_0), \x^{j}_\ell(\ut_0))\bigr]_{k,\ell \in \set{j}}\\
&\qquad \times \prod_{i=1}^m \det \bigl[\cT_{\ut_{i}, \ut_{i-1}} (\x_k^{\un_i}(\ut_i), \x_\ell^{\un_i}(\ut_{i-1}))\bigr]_{k, \ell \in \set{\un_i}} \prod_{j = \un_{i+1} + 1}^{\un_i} \det \bigl[\phi_j (\x^{j-1}_k(\ut_i), \x^{j}_\ell(\ut_i))\bigr]_{k,\ell \in \set{j}},
\end{align}
where in the case $\un_{i+1} = \un_{i}$ the product $\prod_{j = \un_{i+1} + 1}^{\un_i} a_n$ is by definition $1$. 
The following result gives a formula for $\G_{\vT, \cS}$ as a marginal of $\cW$.

\begin{prop}\label{prop:G+_expansion}
For any $\vec y \in \Omega_N$ and $\vec x \in \Omega_m$ the function \eqref{eq:G+_new} can be written as
\begin{equation}\label{eq:G+_expansion}
\G_{\vT, \cS}(\vec y, \vec x) = C  \sum_{{\X \in \D_{\cS}:\, \x_1^{\un_i}(\ut_i) = x_i, i \in \set{m}}}\cW(\X),
\end{equation}
where $C = \bigl( \prod_{j = 1}^{N} v_j^{ - y_j} \bigr) \prod_{k = 1}^{\un_0} \varphi(v_k)^{T_k} \prod_{i = 1}^m \prod_{j = 1}^{\un_i} \varphi(v_j)^{\ut_{i-1} - \ut_{i} }$.
\end{prop}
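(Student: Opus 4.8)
\textbf{Proof proposal for Prop.~\ref{prop:G+_expansion}.}

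The plan is to start from the representation \eqref{eq:G+_new} and successively expand each of its building blocks --- the single ``backward'' factor $\G^{-}_{\vT}$ and each ``forward'' factor $\G_{\ut_{i-1},\ut_i}$ --- as a sum over Gelfand--Tsetlin cones, using the three lemmas just proved. First I would rewrite all the $F_{k,\ell}$'s appearing inside $\G^{-}_{\vT}$ and $\G_{\ut_{i-1},\ut_i}$ in terms of the conjugated functions $\tilde F_{k,\ell}$ of \eqref{eq:F_tilde}; this produces the prefactor $\prod_j v_j^{-y_j}$ together with powers of $v_j$ at the internal summation variables, and converts the $\varphi(v_i)^{\cdot}$ normalizations in \eqref{eq:G} and \eqref{eq:G-} into the factors $\prod_k\varphi(v_k)^{T_k}$ and $\prod_i\prod_j\varphi(v_j)^{\ut_{i-1}-\ut_i}$ that make up the constant $C$. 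One should check that the contour-integral definition \eqref{eq:T_def} of $\cT_{t,s}$ is precisely $\tilde F_{N,N}$ evaluated at the relevant time increment (up to the $v$-conjugation which has already been absorbed), and that \eqref{eq:Psi_def} is the conjugated version of the ``starting'' part of $\tilde F_{k,N}$ carrying the $\varphi(w)^{-T_k}$ weight; these identifications are routine given \eqref{eq:F_formula}.

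The heart of the argument is then the following sequence of expansions, performed from the ``outermost'' (largest time index, smallest $\un$) factor inward. The convolution structure of $\G_{\vT,\cS}$ in \eqref{eq:G+_new} means that the $N$-variable determinant $\G^{-}_{\vT}(\vec y,\vec x(0))$ is convolved, over $\vec x(0)\in\Omega_N$, with $\G_{\ut_0,\ut_1}$, which in turn is convolved over $\Omega_{\un_1}$ with $\G_{\ut_1,\ut_2}$, and so on. To each $\G_{\ut_{i-1},\ut_i}(\vec x_{\leq\un_i}(\ut_{i-1}),\vec x(\ut_i))$ I would apply Lem.~\ref{lem:GT_appears} (with $n=\un_i$, $t_k=\ut_i-\ut_{i-1}$, and $\vec y$ there being the restriction $\vec x_{\leq\un_i}(\ut_{i-1})$): this expresses it as a sum over a cone $\GT_{\un_i}(\ut_i)$, contributing the product $\prod_{j=1}^{\un_i}\det[\phi_j]$ of $\phi$-determinants on the levels of that cone, times a determinant of $\tilde F_{k,\un_i}$'s whose column variables are the top row $\x^{\un_i}_\ell(\ut_i)$. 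The top-row variables at level $\ut_i$ then need to be matched with the row variables of the \emph{next} factor $\G_{\ut_i,\ut_{i+1}}$: since $\un_{i+1}\leq\un_i$, only the first $\un_{i+1}$ of them survive as genuine interlacing constraints while the remaining $\un_i-\un_{i+1}$ columns are ``virtual'' $(\x^{n}_\ell$ for $\un_{i+1}\le n\le\un_i)$ --- this is exactly the structure encoded in the union over $\un_{i+1}\leq n\leq\un_i$ in the domain $\D_\cS$ of \eqref{eq:DomainD}. For the final, backward factor $\G^{-}_{\vT}$ one instead uses Lem.~\ref{lem:GT_to_det}: there the \emph{rows} are indexed by $\vec y$ (fixed) and the output is a determinant $\det[\tilde F_{k,N}(\x^k_1,x_\ell;\cdot)]$, which is precisely the shape needed to feed into the first forward convolution. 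Collecting the signs $(-1)^{\lfloor n/2\rfloor}$ from each application and checking they cancel (or are absorbed --- in fact by antisymmetry of all determinants involved the overall sign works out, as in \cite{bp-push}) gives \eqref{eq:W_def} with the stated constant $C$, and the constraint $\x_1^{\un_i}(\ut_i)=x_i$ comes from the requirement $x_{\un_i}(\ut_i)=\ul x_i$ together with the relabeling $\un_i=n_{m-i+1}$, $\x^{\ell}_1$ being the $\ell$-th top coordinate.

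The step I expect to be the main obstacle is the bookkeeping of the \emph{virtual variables} and the matching of cone levels across a change of size $\un_i\to\un_{i+1}$. One must verify that after applying Lem.~\ref{lem:GT_appears} at level $\un_i$, the $\tilde F_{k,\un_i}$-determinant can be legitimately fed into Lem.~\ref{lem:GT_to_det}/\ref{lem:GT_appears} at level $\un_{i+1}$: this requires extending the index set by the ``virtual'' rows $\x^{n-1}_n$ (postulated to equal $\infty$, with $\phi_n(\x^{n-1}_n,y)=v_n^y$, exactly as in Lem.~\ref{lem:phi_appears}) and checking that the associated $\phi$-determinants telescope correctly into the products $\prod_{j=\un_{i+1}+1}^{\un_i}\det[\phi_j]$ appearing in \eqref{eq:W_def}. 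One also needs the second statement of Lem.~\ref{lem:phi_appears} (that the $\phi$-determinant vanishes unless $\X\in\GT_n$) to be sure that the unrestricted sums over $\D_n$ introduced by multilinearity collapse onto the cone, so that the final answer is genuinely supported on $\D_\cS$. Modulo this careful index management, which closely parallels the proof of \cite[Prop.~3.1]{bp-push} with the single extra $\varphi(w)^{-T_k}$ weight now sitting inside $\Psi^{\un_0}_{\un_0-k}$ in \eqref{eq:Psi_def}, the proposition follows by assembling the pieces.
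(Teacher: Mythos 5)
There is a genuine gap, and it sits exactly where you delegate the work. You have assigned Lem.~\ref{lem:GT_to_det} to the wrong factor, and the step you defer as ``bookkeeping of virtual variables'' is precisely the place where that lemma is actually needed. The backward factor $\G^{-}_{\vT}(\vec y,\vec x(0))$ is, up to constants and the $v$-conjugation, $\det\bigl[\tilde F_{k,\ell}(y_k,x_\ell(0);-T_k)\bigr]$: both sub-indices vary and the time argument $-T_k$ depends on the \emph{row}. Lem.~\ref{lem:GT_to_det} only applies to determinants of the form $\det\bigl[\tilde F_{n,n}(y_k,x_\ell;t_\ell)\bigr]$ (equal sub-indices, column-dependent times), so it cannot be applied to $\G^{-}_{\vT}$; moreover, what you describe would attach a Gelfand--Tsetlin cone \emph{below the initial data} $\vec y$, which appears nowhere in $\D_{\cS}$ or in $\cW$. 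The backward factor in fact requires Lem.~\ref{lem:GT_appears} (whose hypothesis is exactly row-dependent times $t_k$): expanding its \emph{columns} $x_\ell(\ut_0)$ into the cone at time $\ut_0$ produces the factor $\det\bigl[\tilde F_{k,\un_0}(y_k,\x^{\un_0}_\ell(\ut_0);-T_k)\bigr]=\pm\det\bigl[\Psi^{\un_0}_{\un_0-k}(\x^{\un_0}_\ell(\ut_0))\bigr]$ appearing in \eqref{eq:W_def}. (Your use of Lem.~\ref{lem:GT_appears} on the forward factors, the conversion $F\mapsto\tilde F$, and the bookkeeping of the constant $C$ are fine.)

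The true role of Lem.~\ref{lem:GT_to_det} is the reduction you leave unspecified. After applying Lem.~\ref{lem:GT_appears} to \emph{every} factor (backward included), one has a full cone of depth $\un_{i-1}$ at each time $\ut_{i-1}$, while the next factor contributes $\det\bigl[\tilde F_{k,\un_i}(\x^{k}_1(\ut_{i-1}),\x^{\un_i}_\ell(\ut_i);\ut_i-\ut_{i-1})\bigr]$, whose rows are the \emph{left edge} $\x^{k}_1(\ut_{i-1})$, $k\in\set{\un_i}$, of that cone — not its top row, as you state. Reading \eqref{eq:GT_to_det} from right to left and summing over the variables $\x^{k}_\ell(\ut_{i-1})$ with $k\le\un_i-1$ converts this determinant into $\det\bigl[\tilde F_{\un_i,\un_i}(\x^{\un_i}_k(\ut_{i-1}),\x^{\un_i}_\ell(\ut_i);\ut_i-\ut_{i-1})\bigr]$, i.e.\ into the $\cT_{\ut_i,\ut_{i-1}}$-determinant of \eqref{eq:W_def}, and simultaneously absorbs the $\phi$-determinants of levels $1,\dotsc,\un_i$ at time $\ut_{i-1}$, truncating the products as in \eqref{eq:W_def} and collapsing the unrestricted sums onto the staircase domain $\D_{\cS}$. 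This reverse application of Lem.~\ref{lem:GT_to_det} at each of the times $\ut_0,\dotsc,\ut_{m-1}$ is the missing mechanism — the virtual variables, by contrast, are already fully handled inside Lem.~\ref{lem:phi_appears} and pose no extra difficulty. Without this step your argument produces neither the $\cT$-determinants nor the truncated $\phi$-products, and it does not reduce the summation domain to $\D_{\cS}$, so the proposal as written does not yield \eqref{eq:G+_expansion}.
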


\begin{proof}
Using formulas \eqref{eq:G-} and \eqref{eq:G} in \eqref{eq:G+_new}, we can write 
\begin{multline}\label{eq:G+_new1}
\G_{\vT, \cS} (\vec y, \vec x) := \tilde C_1 \sum_{\vec{\x}_1(\ut_0) \in \Omega_{N}} \sum_{\vec \x_1(\ut_i) \in \Omega_{\un_i}: \, \x_1^{\un_i}(\ut_i) = \ul{x}_i, i \in \set{m}} \det \bigl[F_{k, \ell}(y_{k}, \x_1^{\ell}(\ut_0); -T_k)\bigr]_{k, \ell \in \set{\un_0}} \\
\times \prod_{i=1}^{m} \det \bigl[F_{k, \ell}(\x^k_1(\ut_{i-1}), \x^{\ell}_1(\ut_i); \ut_{i} - \ut_{i-1})\bigr]_{k, \ell \in \set{\un_i}},
\end{multline}
where $\tilde C_1 = \prod_{k = 1}^{\un_0} \varphi(v_k)^{T_k} \prod_{i = 1}^m \prod_{j = 1}^{\un_i} \varphi(v_j)^{\ut_{i-1} - \ut_{i} }$. 
Now using \eqref{eq:F_tilde} to replace $F_{k, \ell}$ with $\tilde F_{k, \ell}$ and applying \eqref{eq:GT_appears} to the determinant involving $\vec y$ we get
\begin{align}
\det \bigl[F_{k, \ell}(y_{k}, \x^{\ell}_1(\ut_0); -T_k)\bigr]_{k, \ell \in \set{\un_0}} &= C_0 \!\!\! \sum_{{\X \in \GT_{\un_0}(\ut_0), \, \text{fixed } \x_{1}^{\ell}(\ut_0)}}~ \prod_{j = 1}^{\un_0} \det \bigl[\phi_j (\x^{j-1}_k(\ut_0), \x^{j}_\ell(\ut_0))\bigr]_{k,\ell \in \set{j}} \\
& \hspace{3cm} \times \det \bigl[\tilde F_{k, \un_0}(y_k, \x^{\un_0}_\ell(\ut_{0}); -T_k)\bigr]_{k,\ell \in \set{\un_0}},
\end{align}
where $C_0 = (-1)^{\lfloor \un_0 / 2\rfloor} \prod_{j = 1}^{\un_0} v_j^{ - y_j}$. Similarly, the $i^{\text{th}}$ factor in the second line of \eqref{eq:G+_new1} equals
\begin{multline}
 C_i\hspace{-25pt} \sum_{\X \in \GT_{\un_i}(\ut_i),\,\text{fixed }\x_{1}^{\ell}(\ut_i)} \prod_{j = 1}^{\un_i}\det \bigl[\phi_j(\x^{j-1}_k(\ut_i), \x^{j}_\ell(\ut_i))\bigr]_{k,\ell \in \set{j}}
 \det \bigl[\tilde F_{k, \un_i}(\x^{k}_1(\ut_{i-1}), \x^{\un_i}_\ell(\ut_{i}); \ut_{i} - \ut_{i-1})\bigr]_{k,\ell \in \set{\un_i}},\hspace{-10pt}
\end{multline}
where $C_i = (-1)^{\lfloor \un_i / 2\rfloor} \prod_{j = 1}^{\un_i} v_j^{ - \x^j_1(\ut_{i-1})}$. 
Substituting these expansions into \eqref{eq:G+_new1}, we obtain
\begin{align}
\tilde C_2 &\sum_{\X} \left(\det \bigl[\tilde F_{k, \un_0}(y_k, \x^{\un_0}_\ell(\ut_{0}); -T_k)\bigr]_{k,\ell \in \set{\un_0}} \prod_{j = 1}^{\un_0} \det \bigl[\phi_j (\x^{j-1}_k(\ut_0), \x^{j}_\ell(\ut_0))\bigr]_{k,\ell \in \set{j}} \right) \label{eq:G+_expansion_proof}\\
& \times \prod_{i = 1}^{m} \det \bigl[\tilde F_{k, \un_i}(\x^{k}_1(\ut_{i-1}), \x^{\un_i}_\ell(\ut_{i}); \ut_{i} - \ut_{i-1})\bigr]_{k,\ell \in \set{\un_i}} \prod_{j = 1}^{\un_i} \det \bigl[\phi_j(\x^{j-1}_k(\ut_i), \x^{j}_\ell(\ut_i))\bigr]_{k,\ell \in \set{j}},
\end{align}
where $\tilde C_2 = \tilde C_1 \prod_{i = 0}^{m} C_i$ and where the sum runs over $\X \in \bigcup_{i = 0}^m \GT_{\un_i}(\ut_i)$ such that $\x_1^{\un_i}(\ut_i) = \ul{x}_i$ for $i \in \set{m}$.

Our next aim is to reduce the sum in \eqref{eq:G+_expansion_proof} to the domain $\D_\cS$, defined in \eqref{eq:DomainD}. To this end, for each $i = 1, \dotsc, m$ we sum over the variables $\x_\ell^{k}(\ut_{i-1})$ for $k \in \set{\un_i - 1}$ and $\ell \in \set{k}$ applying Lem.~\ref{lem:GT_to_det}. Then the functions $\tilde F_{k, \un_i}(\x^{k}_1(\ut_{i-1}), \x^{\un_i}_\ell(\ut_{i}); \ut_{i} - \ut_{i-1})$ get replaced by $\tilde F_{\un_i, \un_i}(\x^{\un_i}_{k}(\ut_{i-1}), \x^{\un_i}_\ell(\ut_{i}); \ut_{i} - \ut_{i-1})$, the products $\prod_{j = 1}^{\un_i}$ get replaced by the products $\prod_{j = \un_{i+1} + 1}^{\un_i}$, and we obtain 
\begin{align}
\tilde C_3 &\sum_{\X} \left(\det \bigl[\tilde F_{k, \un_0}(y_k, \x^{\un_0}_\ell(\ut_{0}); -T_k)\bigr]_{k,\ell \in \set{\un_0}}  \prod_{j = \un_1+1}^{\un_0} \det \bigl[\phi_j (\x^{j-1}_k(\ut_0), \x^{j}_\ell(\ut_0))\bigr]_{k,\ell \in \set{j}}\right) \label{eq:G+_expansion_proof1}\\
& \times \prod_{i = 1}^{m} \det \bigl[\tilde F_{\un_i, \un_i}(\x^{\un_i}_{k}(\ut_{i-1}), \x^{\un_i}_\ell(\ut_{i}); \ut_{i} - \ut_{i-1})\bigr]_{k,\ell \in \set{\un_i}} \prod_{j = \un_{i+1} + 1}^{\un_i} \det \bigl[\phi_j(\x^{j-1}_k(\ut_i), \x^{j}_\ell(\ut_i))\bigr]_{k,\ell \in \set{j}},
\end{align}
where $\un_{m+1} = 0$, where the sum runs over $\X \in \D_\cS$ such that $\x_1^{\un_i}(\ut_i) = \ul{x}_i$ for $i \in \set{m}$, and where $\tilde C_3 = \tilde C_2 \prod_{i = 1}^m (-1)^{\lfloor \un_i / 2\rfloor} \prod_{j = 1}^{\un_i} v_j^{ \x^j_1(\ut_{i-1})}$. In the case $\un_{i+1} = \un_{i}$ the product $\prod_{j = \un_{i+1} + 1}^{\un_i} a_n$ is, by definition, $1$. Definitions \eqref{eq:T_def}, \eqref{eq:Psi_def} and \eqref{eq:F_tilde} yield 
\begin{align}
\cT_{\ut_{i}, \ut_{i-1}} (\x_\ell^{\un_i}(\ut_i), \x_k^{\un_i}(\ut_{i-1})) &= \tilde F_{\un_i, \un_i}(\x^{\un_i}_{k}(\ut_{i-1}), \x^{\un_i}_\ell(\ut_{i}); \ut_{i} - \ut_{i-1}),\\
\Psi^{\un_0}_{\un_0 - k} (\x^{\un_0}_\ell(\ut_{0})) &= (-1)^{\un_0 - k} \tilde F_{k, \un_0}(y_k, \x^{\un_0}_\ell(\ut_{0}); -T_k).
\end{align}
Then \eqref{eq:G+_expansion_proof1} can be written as \eqref{eq:G+_expansion} with the constant multiplier $\tilde C_3 \prod_{k = 1}^{\un_0} (-1)^{\un_0 - k}$, which is exactly as in the statement of this proposition.
\end{proof}

\subsection{Proof of the biorthogonalization formula}

Our goal is to show how Thm.~\ref{thm:biorth_general} can be deduced from Prop.~\ref{prop:G+_expansion}. In order to swap the products in the two lines of \eqref{eq:W_def}, for every $n \in \nn_0$ we define $c(n) = \#\{0 \leq i \leq m : \un_i = n\}$ (note that $0 \leq c(n) \leq m + 1$). Furthermore, for each $n$ such that $c(n) \neq 0$ we introduce the time variables $t_1^n < \dotsm < t_{c(n)}^n$ such that the space-like path $S$ contains the pairs $(n, t_1^n)$, $\dotsc$, $(n, t_{c(n)}^n)$.
Moreover, we let $t_{0}^n = t_{c(n+1)}^{n+1}$, $t_{0}^{N} = 0$ and $t_0^0 = t_1$. Then, recalling that $\un_0 = N$, \eqref{eq:W_def} can be written as
\begin{align}
\cW(\X) &= \prod_{j = 1}^{N} \left( \det \bigl[\phi_j (\x^{j-1}_k(t_0^{j-1}), \x^{j}_\ell(t_{c(j)}^j))\bigr]_{k,\ell \in \set{j}}  \prod_{i=1}^{c(j)} \det \bigl[\cT_{t^{j}_{i}, t^{j}_{i-1}} (\x_k^{j}(t_i^j), \x_\ell^{j}(t_{i-1}^j))\bigr]_{k, \ell \in \set{j}} \right) \\
&\hspace{7.5cm} \times \det \bigl[\Psi^{N}_{N - k} (\x^{N}_\ell(t_0^N))\bigr]_{k, \ell \in \set{N}}. \label{eq:G+_expansion_new}
\end{align}

In order to proceed we need to introduce several functions, which depend on the values $n$ and $t^n_i$. As a consequence of \eqref{eq:T_def} we have $\cT_{t^{n}_{c(n)}, t^{n}_{0}} = \cT_{t^{n}_{c(n)}, t^{n}_{c(n) - 1}} * \dotsm * \cT_{t^{n}_{1}, t^{n}_{0}}$, which we denote for brevity by $\cT^{n} = \cT_{t^{n}_{c(n)}, t^{n}_{0}}$, and where we write $A * B (x,y) = \sum_{z \in \zz} A(x,z) B(z, y)$.
For two pairs $\fn_i = (n_i, t_{a_i}^{n_i})$ and $\fn_j = (n_j, t_{a_j}^{n_j})$ such that $\fn_i \prec \fn_j$ we define
\begin{equation}
\phi^{(\fn_i, \fn_j)} = \cT_{t^{n_i}_{a_i}, t^{n_i}_{0}} * \phi_{n_i + 1} * \cT^{n_i + 1} * \dotsm * \phi_{n_j} * \cT_{t^{n_j}_{c(n_j)} ,t^{n_j}_{a_j}}.
\end{equation}
Then using definitions \eqref{eq:T_def} and \eqref{eq:tilde_phi_simple} we can write explicitly 
\begin{equation}\label{eq:phi_general}
 \phi^{(\fn_i, \fn_j)}(x_i, x_j) = \frac{1}{2\pi\I}\oint_{\gamma_{\rin}}\d w\,\frac{\varphi(w)^{t^{n_i}_{a_i} - t^{n_j}_{a_j}}}{w^{x_i-x_j - n_j + n_i + 1}} \prod_{k = n_i + 1}^{n_j} (v_k - w)^{-1}.
\end{equation}
Using \eqref{eq:Psi_def}, for $\fn = (n, t^n_a)$ such that $\fn \prec (N, 0)$ and for $1 \leq k \leq N$, we define 
\begin{equation}\label{eq:Psi_relation}
\Psi^{\fn}_{n - k} = \phi^{(\fn, (N, 0))} * \Psi^{N}_{N - k},
\end{equation}
which can be written explicitly as 
\begin{equation}\label{eq:Psi_integral}
 \Psi^{\fn}_{n - k}(x) = \frac{1}{2\pi\I}\oint_{\gamma_{\rin}}\d w\,\frac{\varphi(w)^{t^{n}_{a}}}{w^{x - y_k + n - k + 1}} \frac{\prod_{i = 1}^n (v_i - w)}{\prod_{i = 1}^k (v_i - w)} \varphi(w)^{-T_k}.
\end{equation}
Finally, we define a matrix $M = (M_{k, \ell})_{k, \ell \in \set{N}}$ with entries 
\begin{equation}\label{eq:matrix_M}
M_{k, \ell} = (\phi_k * \cT^{k} * \dotsm * \phi_{N} * \cT^{N} * \Psi^{N}_{N - \ell}) (\x^{k-1}_k).
\end{equation}
We will use the following result, which is \cite[Thm.~4.2]{bp-push}.

\begin{thm}\label{thm:biorth_very_general}
Suppose that the matrix $M$ is non-singular and upper triangular and define $\hat \cW = \det[M^{-1}] \cW$.
Then $\sum_{\X \in \D_{\cS}}\hat\cW(\X)=1$.
Furthermore the measure $\hat \cW$, interpreted as a (possibly signed) point process is determinantal, with correlation kernel given, for any $\fn_i = (n_i, t^{n_i}_{a_i}), \fn_j = (n_j, t^{n_j}_{a_j}) \in S$ and $x_i, x_j \in \zz$, by
\begin{equation}\label{eq:KernelBP}
K (\fn_i, x_i; \fn_j, x_j) = - \phi^{(\fn_i, \fn_j)}(x_i, x_j) \uno{\fn_i \prec \fn_j} + \sum_{k = 1}^{n_j} \Psi^{\fn_i}_{n_i - k}(x_i) \Phi^{\fn_j}_{n_j - k}(x_j),
\end{equation}
where the functions $\Phi^{(n, t^n_a)}_{n-k}$ for all $n \in \set N$ and $k \in \set n$ are given by
\begin{equation}\label{eq:Phi_general}
\Phi^{(n, t^n_a)}_{n-k}(x) = \sum_{\ell = 1}^n [M^{-1}]_{k, \ell} \bigl( \phi_\ell * \phi^{((\ell, t^\ell_{c(\ell)}), (n, t_{a}^{n}))} \bigr)(\x_{\ell}^{\ell-1}, x).
\end{equation}
In particular, these functions are uniquely defined by the following two conditions:
\begin{enumerate}[label=\uptext{(\arabic*)}]
\item for $k, \ell \in \set{n}$ the biorthogonalization relation $\sum_{x \in \zz} \Psi^{(n, t^{n}_{a})}_{n - k}(x) \Phi^{(n, t^{n}_{a})}_{n - \ell}(x) = \uno{k =\ell}$ holds,
\item $\{x \in \zz \longmapsto \Phi^{(n, t^n_a)}_{n-k}(x) : k \in \set{n}\}$ is a basis of the linear span of the functions
\begin{equation}\label{eq:span_of_phis}
\bigl\{ x \in \zz \longmapsto \phi_k * \phi^{((k, t^k_{c(k)}), (n, t_{a}^{n}))}(\x_{k}^{k-1}, x) : k \in \set{n} \bigr\}.
\end{equation}
\end{enumerate}
Moreover, for any $\fn_i \prec \fn_j$ and for respective values of $k$ one has the identity $\phi^{(\fn_i, \fn_j)} * \Phi^{\fn_j}_{k} = \Phi^{\fn_i}_{k}$.
\end{thm}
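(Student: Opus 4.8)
The plan is to recognize the signed measure $\cW$ of \eqref{eq:W_def}/\eqref{eq:G+_expansion_new} as a \emph{conditional $L$-ensemble} of Eynard--Mehta type and to run the standard machinery of \cite{eynardMehta} (in the form already used in \cite{bp-push}), exploiting the Gelfand--Tsetlin geometry of the domain $\D_\cS$ to collapse the general correlation kernel into the biorthogonal form \eqref{eq:KernelBP}. Concretely, $\cW$ is a product of determinantal ``transition weights'' linking consecutive slices of the array: within a fixed time $\ut_i$ the factors $\det[\phi_j(\x^{j-1}_k,\x^{j}_\ell)]$ move one level up the Gelfand--Tsetlin cone, between times the factors $\det[\cT_{t^n_p,t^n_{p-1}}(\x^n_k,\x^n_\ell)]$ evolve a fixed level, and the chain is capped by the initial determinant $\det[\Psi^{N}_{N-k}(\x^N_\ell(t^N_0))]$ and by the ``virtual'' variables $\x^{\ell-1}_\ell=\infty$ at the top of each cone. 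First I would assemble this linear chain of weights and identify the overlap matrix between the $N$ virtual sources $\x^{k-1}_k$ and the $N$ functions $\Psi^{N}_{N-\ell}$: tracing a source through the chain down to $\Psi^{N}_{N-\ell}$ produces exactly the entry $M_{k,\ell}$ of \eqref{eq:matrix_M}. Even though $\cW$ is a signed measure, the Eynard--Mehta formalism is purely algebraic (a multilinear/Cauchy--Binet computation, cf.\ Appdx.~\ref{app:convolutions}) and applies verbatim.

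Next, Eynard--Mehta gives that the total mass of $\cW$ equals $\det M$, so under the hypothesis that $M$ is nonsingular the normalized measure $\hat\cW=\det[M^{-1}]\,\cW$ has total mass $1$, which is claim (1). The same theorem asserts that $\hat\cW$ is determinantal with a kernel of the schematic shape $-(\text{direct propagator})\,\uno{\fn_i\prec\fn_j}+\sum_{k,\ell}(\text{left piece})_k(x_i)\,[M^{-1}]_{k\ell}\,(\text{right piece})_\ell(x_j)$. The remaining work is combinatorial: using Lemmas \ref{lem:phi_appears}, \ref{lem:GT_appears} and \ref{lem:GT_to_det} — the identities converting products of $\tilde\phi$'s over a Gelfand--Tsetlin cone into a single determinant of $\phi$'s and back — one shows that the ``left piece'' attached to $\fn_i$, after all array variables above $\fn_i$ are summed out, collapses to $\Psi^{\fn_i}_{n_i-k}=\phi^{(\fn_i,(N,0))}*\Psi^{N}_{N-k}$ as in \eqref{eq:Psi_relation}, while the ``right piece'' attached to $\fn_j$ collapses to the explicit expression \eqref{eq:Phi_general} for $\Phi^{\fn_j}_{n_j-\ell}$; and when $\fn_i\prec\fn_j$ the chain of $\cT$'s and $\phi_j$'s along the space-like path from $\fn_i$ to $\fn_j$ composes, by the semigroup property $\phi^{(\fn_a,\fn_b)}*\phi^{(\fn_b,\fn_c)}=\phi^{(\fn_a,\fn_c)}$ (immediate from \eqref{eq:T_def}--\eqref{eq:phi_general}), into exactly $\phi^{(\fn_i,\fn_j)}$. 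This gives \eqref{eq:KernelBP}.

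For the characterization of the $\Phi$'s: condition (2) is built into \eqref{eq:Phi_general}, and linear independence of the $n$ functions in \eqref{eq:span_of_phis} (hence that they form a basis) follows from nonsingularity of $M$. For (1), substitute \eqref{eq:Phi_general} and \eqref{eq:Psi_relation} into $\sum_{x}\Psi^{\fn}_{n-k}(x)\Phi^{\fn}_{n-\ell}(x)$, sum out $x$ using the semigroup property to telescope $\phi^{((j,t^j_{c(j)}),\fn)}*\phi^{(\fn,(N,0))}=\phi^{((j,t^j_{c(j)}),(N,0))}$, and recognize the result as $\sum_{j}[M^{-1}]_{\ell,j}M_{j,k}$; since $M$ and $M^{-1}$ are upper triangular and all indices lie in $\set n$, the partial sum equals $\uno{k=\ell}$. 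Uniqueness is then the standard fact that a family constrained to a fixed finite-dimensional space and biorthogonal to a linearly independent family is pinned down by the associated (invertible) overlap matrix.

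Finally, the closing identity $\phi^{(\fn_i,\fn_j)}*\Phi^{\fn_j}_{k}=\Phi^{\fn_i}_{k}$ for $\fn_i\prec\fn_j$ I would deduce from the explicit formula \eqref{eq:Phi_general}, the same semigroup property of the $\phi^{(\cdot,\cdot)}$'s, and the observation that the overlap matrix $M$ is defined through the top level $N$ and is therefore common to every $\fn$ on the path: pulling \eqref{eq:Phi_general} at $\fn_j$ back along $\phi^{(\fn_i,\fn_j)}$ recombines the chains $\phi_\ell*\phi^{((\ell,t^\ell_{c(\ell)}),\fn_j)}$ into $\phi_\ell*\phi^{((\ell,t^\ell_{c(\ell)}),\fn_i)}$ and reproduces \eqref{eq:Phi_general} at $\fn_i$, once the contributions of the levels $n_i<\ell\le n_j$ are checked to drop out against the triangular structure of $M^{-1}$. \textbf{The main obstacle} is precisely this combinatorial collapse: carefully tracking the virtual variables $\x^{\ell-1}_\ell=\infty$ and the telescoping Gelfand--Tsetlin sums so that the abstract Eynard--Mehta kernel really reduces to the compact biorthogonal form, plus the index bookkeeping in the ``Moreover'' identity. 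This is the content of \cite[Thm.~4.2]{bp-push}, whose proof we follow; the only new feature in the present setting is the extra factor $\varphi(w)^{-T_k}$ carried by $\Psi^{N}_{N-k}$ (from the different starting times $T_k$), which is inert throughout all of the manipulations above.
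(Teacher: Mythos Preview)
The paper does not prove this theorem at all: immediately before the statement it says ``We will use the following result, which is \cite[Thm.~4.2]{bp-push},'' and the theorem is then quoted as a black box. Your proposal is a reasonable sketch of the Eynard--Mehta argument underlying that cited result, and is consistent with how \cite{bp-push} proceeds, but it goes beyond what the present paper does. (Your closing remark about the factor $\varphi(w)^{-T_k}$ being the only new feature is slightly misplaced: that difference, along with the modified choice of $\phi^{(\fn_i,\fn_j)}$, enters in the proof of Thm.~\ref{thm:biorth_general}, not in the abstract Thm.~\ref{thm:biorth_very_general} itself, which is stated at a level of generality that makes it literally \cite[Thm.~4.2]{bp-push}.)
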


\begin{proof}[Proof of Thm.~\ref{thm:biorth_general}]
We start with the case of different values $v_N > v_{N-1} > \dotsm > v_1 > 0$, for which we apply Theorem~\ref{thm:biorth_very_general} to our measure $\cW$ given by \eqref{eq:G+_expansion_new}.
Our first task is to show that the matrix $M$ in \eqref{eq:matrix_M} is non-singular and upper-triangular. 
Let us denote for brevity $\fn_k = (k, t^k_{c(k)})$. Then using \eqref{eq:Psi_relation}, the entry \eqref{eq:matrix_M} can be written as $M_{k, \ell} = (\phi_k * \Psi^{\fn_k}_{k - \ell}) (\x^{k-1}_k)$.
Therefore \eqref{eq:Psi_integral} yields 
\begin{equation}\label{eq:M_integral}
 M_{k, \ell} = \sum_{x \in \zz} v_k^x \,\frac{1}{2\pi\I}\oint_{\gamma_\rin}\d w\,\frac{\varphi(w)^{t^k_{c(k)}}}{w^{x - y_\ell + k - \ell + 1}} \frac{\prod_{i = 1}^k (v_i - w)}{\prod_{i = 1}^\ell (v_i - w)} \varphi(w)^{-T_\ell}.
\end{equation}
Since $|w| < v_k$ for $w\in\gamma_\rin$, the sum over $x<0$ can be computed directly, using $\sum_{x < 0} (v_k / w)^x = w / (v_k - w)$.
For $x \geq 0$ we may enlarge the contour to a circle of radius a bit larger than $|v_k|$ because for $k\geq\ell$ the integrand has no singularities at any of the $v_i$'s, while if $k < \ell$ then the singularities occur only at the points $v_{k+1}, \dotsc, v_\ell$, which are strictly larger than $v_k$ by assumption; in this case we use $\sum_{x \geq 0} (v_k / w)^x = - w / (v_k - w)$.
Putting both sums together yields 
\begin{equation}\label{eq:M_explicit}
 M_{k, \ell} = - \frac{1}{2\pi\I}\oint_{\Gamma_{v_k}}\d w\,\frac{\varphi(w)^{t^k_{c(k)}}}{w^{- y_\ell + k - \ell}} \frac{\prod_{i = 1}^{k-1} (v_i - w)}{\prod_{i = 1}^\ell (v_i - w)} \varphi(w)^{-T_\ell},
\end{equation}
where the contour $\Gamma_{v_k}$ encloses only the singularity at $v_k$. 
If $\ell < k$ then $M_{k, \ell} = 0$, so the matrix $M$ is upper-triangular, with diagonal entries given by
 $M_{k, k} = - \frac{1}{2\pi\I}\oint_{\Gamma_{v_k}}\d w\,\frac{\varphi(w)^{t^k_{c(k)}}}{w^{- y_k}} \frac{\varphi(w)^{-T_k}}{(v_k - w)} = \varphi(v_k)^{t^k_{c(k)}-T_k} v_k^{y_k}$.
 Then we can compute the determinant $\det [M] = \prod_{k = 1}^N \varphi(v_k)^{t^k_{c(k)}-T_k} v_k^{y_k} \neq 0$.
One can readily check that $\det [M^{-1}]$ is exactly the constant $C$ from Prop.~\ref{prop:G+_expansion}.

Hence, defining the normalized measure $\hat \cW = \det[M^{-1}] \cW$, expression \eqref{eq:G+_expansion} can be written as
\begin{equation}\label{eq:G+_expansion_hat}
\G_{\vT, \cS}(\vec y, \vec x) = \sum_{{\X \in \D_{\cS}: \, \x_1^{\un_i}(\ut_i) = x_i, i \in \set{m}}} \hat \cW(\X).
\end{equation}
Thm.~\ref{thm:biorth_very_general} implies that the measure $\hat \cW$ is determinantal with  correlation kernel given in \eqref{eq:KernelBP}. Furthermore, using \eqref{eq:phi_general} for $\fn = (n, t) \in \cS$ such that $\fn_k \prec \fn$ we can compute 
\[\textstyle\phi_k * \phi^{(\fn_k, \fn)}(\x_{k}^{k-1}, x) = \sum_{y \in \zz} v_k^y \, \frac{1}{2\pi\I}\oint_{\gamma_{\rin}}\d w\,\frac{\varphi(w)^{t^k_{c(k)} - t}}{w^{y - x - n + k + 1}} \prod_{i = k + 1}^{n} (v_i - w)^{-1}.\] Computing this sum in the same way as we did for \eqref{eq:M_integral}, we obtain 
\begin{equation}\label{eq:phi_explicit}
\textstyle\phi_k * \phi^{(\fn_k, \fn)}(\x_{k}^{k-1}, x) = - \frac{1}{2\pi\I}\oint_{\Gamma_{v_k}}\!\d w\,\frac{\varphi(w)^{t^k_{c(k)} - t}}{w^{ - x - n + k}} \prod_{i = k}^{n} (v_i - w)^{-1} = \frac{\varphi(v_k)^{t^k_{c(k)} - t}}{v_k^{ - x - n + k}} \prod_{i = k+1}^{n} (v_i - v_k)^{-1}.
\end{equation}
Hence, the set \eqref{eq:span_of_phis} is the span of $\{ x \in \zz \longmapsto v_k^x : k \in \set{n}\}$, which is exactly the set $\V{n}(\vec v)$ defined in \eqref{eq:space}, and thus the correlation kernel \eqref{eq:KernelBP} coincides with \eqref{eq:KernelK}.
We deduce the identity \eqref{eq:M_formula} as a standard consequence of \eqref{eq:G+_expansion_hat}, which expresses $\G_{\vT, \cS}$ as a marginal of the distribution of a determinantal process (see e.g. \cite[Prop.~2.9]{johanssonRMandDetPr} for a version of this in the one point case).

The other values of $v_i$ can be treated by analytic continuation. More precisely, consider values $0 < v_1, \dotsc, v_N < v_+$, for some fixed $v_+ > 0$.
We will first show that the left hand side of \eqref{eq:M_formula} is analytic with respect to the $v_i$'s in this domain and then show that the right hand side can be analytically extended to this domain, which will give the claim \eqref{eq:M_formula} for any choice of the parameters $v_i$.

From \eqref{eq:Psi_def} we conclude that the functions $ \Psi^{N}_{N - k} (x)$ are analytic with respect to the $v_i$'s and satisfy $| \Psi^{N}_{N - k} (x) | \leq C a^{|x|}$, for any $a > 0$ and for any $v_i$ in the compact set as above. Hence, \eqref{eq:tilde_phi_simple} and \eqref{eq:T_def} imply that the measure $\cW(\X)$ in \eqref{eq:G+_expansion_new} can be bounded by a power series in the values $v_i$. The same can be shown for the left hand side of \eqref{eq:M_formula}, because it is a sum of $\cW(\X)$ over a suitable domain for $\X$ (see \eqref{eq:mu_fixed} and \eqref{eq:G+_expansion}).

In order to show that the right hand side of \eqref{eq:M_formula} is analytic in the $v_i$'s, we will show that the correlation kernel is so. In view of \eqref{eq:M_explicit} and \eqref{eq:phi_explicit}, $\Phi^{(n, t^n_a)}_{n-k}(x)$ in \eqref{eq:Phi_general} is analytic in $v_+ > v_N > \dotsm > v_1 > 0$. Analyticity of the other functions implies that the kernel \eqref{eq:KernelBP} is analytic. Hence, we can conclude that the right hand side of \eqref{eq:M_formula} can be extended analytically to all $0 < v_1, \dotsc, v_N < v_+$. 

Since $v_+$ was chosen arbitrarily, identity \eqref{eq:M_formula} holds for any strictly positive values $v_i$. Finally, one can readily check that for general values $v_i$ the integral in \eqref{eq:phi_explicit} equals $v_k^x P_k(x)$, where $P_k(x)$ is a polynomial in $x$ of degree $\#\{i > k : v_i = v_k\}$. Hence, the span of the functions \eqref{eq:span_of_phis} equals $\V{n}(\vec v)$, defined in \eqref{eq:space}.
\end{proof}

\section{Proof of Assum.~\ref{a:kappa} for right Bernoulli jumps}
\label{sec:rightBernoulli-assumptions}

\subsection{Proof of Assum.~\ref{a:kappa}\ref{it:one}}\label{sec:kappaone}

Throughout the section we use $\{\vec e_{i}\}_{i \in \set N}$ to denote the vectors from the canonical basis of $\rr^N$. Let $\G^{\rB}_{0,t}$ be the right hand side of \eqref{eq:rightBernoulliBlock}. We begin by deriving some of its algebraic properties. Although we used this function only on the Weyl chamber $\Omega_N$, it is defined on all of $\zz^N$. The following result can be obtained by direct computations, using properties of the function $F^{\rB}_n(x,t)$ defined in \eqref{eq:rBerBlockF}.

\begin{lem}\label{lem:G_seq}
\leavevmode
\begin{enumerate}[label=\uptext{(\roman*)}]
\item\label{it:G_seq1} Let $\vec x, \vec y \in \Omega_N$ and $k \in \set{N}$ be such that $\vec y = (y_1, y_1 - 1, \dotsc, y_1 - k, y_{k+1}, \dotsc, y_N)$, $\vec x = (y_1, y_1 - 1, \dotsc, y_1 - k, x_{k+1}, \dotsc, x_N)$ and $y_{k+1} < y_1 - k - 1$. Then
\begin{equation}\label{eq:G_seq1}
\textstyle q^{-1} \G^{\rB}_{0,1} (\vec y, \vec x) = \sum_{\substack{\vec z \in \Omega_k : \\ z_i \geq x_i, i \in \set{k}}} \G^{\rB}_{0,1} \bigl(\vec y, (z_1, \dotsc, z_k, x_{k+1}, \dotsc, x_N)\bigr).
\end{equation}
\item\label{it:G_seq2} If $y_k = y_{k+1}$ for some $k \in \set{N-1}$, then $\G^{\rB}_{0,t}(\vec y, \vec x) = \G^{\rB}_{0,t}(\vec y - \vec e_{k+1}, \vec x)$.
\end{enumerate}
\end{lem}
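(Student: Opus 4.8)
The statement to prove is Lemma~\ref{lem:G_seq}, which records two algebraic identities for the function $\G^{\rB}_{0,t}$ defined as the right-hand side of \eqref{eq:rightBernoulliBlock}, i.e., $\G^{\rB}_{0,t}(\vec y,\vec x)=\det[F^{\rB}_{i-j}(x_{N+1-i}-y_{N+1-j},t)]_{i,j\in\set N}$ with $F^{\rB}_n$ given by the contour integral \eqref{eq:rBerBlockF} with $\varphi(w)=q+pw$. Both parts are ``direct computations'' as the text says, so the plan is to extract exactly which elementary properties of $F^{\rB}_n$ are needed and then feed them into the determinant via multilinearity. The two key facts about $F^{\rB}_n$ I would isolate first are: (a) the recursion $\sum_{z<x}F^{\rB}_n(z,t)=\uno{n=0}-F^{\rB}_{n+1}(x,t)$ for $n\le 0$ (obtained by shrinking the contour in \eqref{eq:rBerBlockF} to radius $<1$, exactly as in the paragraph following \eqref{eq:F-main} in the main text, using $\varphi(1)=1$), and, dually, a ``forward'' summation identity $\sum_{z\ge x}F^{\rB}_n(z,t)$ computed by expanding the contour and picking up the residue at $w=1$; and (b) the one-step evaluation $F^{\rB}_n(x,1)=\frac{1}{2\pi\I}\oint\frac{(w-1)^{-n}}{w^{x-n+1}}(q+pw)$, which is a sum of at most two residues and gives the explicit Bernoulli transition weights ($q$ for $n=0$ staying, $p$ for the jump, and the blocking contributions for $n=\pm1$).

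\textbf{Part (ii).} This is the easier one and I would do it first. The hypothesis $y_k=y_{k+1}$ means two of the ``columns'' (after the index reversal $j\mapsto N+1-j$) of the matrix $[F^{\rB}_{i-j}(x_{N+1-i}-y_{N+1-j},t)]$ involve $y_k$ and $y_{k+1}$ in adjacent positions. Replacing $\vec y$ by $\vec y-\vec e_{k+1}$ changes $y_{k+1}$ to $y_{k+1}-1=y_k-1$. The claim $\G^{\rB}_{0,t}(\vec y,\vec x)=\G^{\rB}_{0,t}(\vec y-\vec e_{k+1},\vec x)$ should follow from the identity $F^{\rB}_{n}(x-y,t)-F^{\rB}_{n+1}(x-(y-1),t)\;=\;(\text{something independent of the shifted column})$, i.e., from a one-term telescoping relation among the $F^{\rB}$'s combined with a column operation that kills the difference. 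Concretely I would write the difference of the two determinants, expand by multilinearity in the $(k+1)$-st reversed column, and show the resulting determinant has two proportional columns (using $(w-1)^{-n}/w^{x-n+1}$ shifted against $(w-1)^{-(n+1)}/w^{x-n}$ and the fact that $y_k=y_{k+1}$); the contour-integral representation makes the column relation transparent since a shift $y\mapsto y-1$ multiplies the integrand by $w$, and $(w-1)^{-n}\cdot w$ versus $(w-1)^{-(n+1)}$ differ by the factor $w(w-1)$, which is handled by one integration by parts / residue bookkeeping.

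\textbf{Part (i).} Here I would unfold the summation on the right. The vector has its first $k+1$ entries frozen at $y_1,y_1-1,\dots,y_1-k$ (a ``packed block''), and we sum over $\vec z\in\Omega_k$ with $z_i\ge x_i$. Using multilinearity of the determinant in the columns corresponding to indices $1,\dots,k$, the sum $\sum_{\vec z\in\Omega_k,\,z_i\ge x_i}$ factors into iterated one-variable sums $\sum_{z\ge x}F^{\rB}_n(z,1)$, each of which collapses by fact (a): $\sum_{z\ge x_i}F^{\rB}_{n}(z,1)$ equals a residue-at-$1$ term plus $F^{\rB}_{n-1}$-type boundary terms. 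After telescoping over $i=1,\dots,k$ (the Weyl-chamber constraint $z_1>z_2>\dots>z_k$ and the packed-block structure $y_{j}=y_1-(j-1)$ make the boundary contributions align), what survives is $q^{-1}\G^{\rB}_{0,1}(\vec y,\vec x)$ — the factor $q^{-1}=1/(q+p\cdot 1)=1/\varphi(1)$ coming precisely from the residue at $w=1$ picked up once per summed column, together with the condition $y_{k+1}<y_1-k-1$ which guarantees that the sums over the $z_i$'s never interact with the $(k+1)$-st (frozen-boundary) column so that the only poles crossed are at $w=1$. I would verify the $k=1$ case by hand as the base of an induction and then push the induction through the block.

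\textbf{Main obstacle.} The genuine bookkeeping difficulty is in Part (i): controlling the boundary terms produced each time one applies the partial-summation identity $\sum_{z\ge x}F^{\rB}_n(z,1)$, and checking they telescope correctly rather than leaving a residual. One must track that the ``$z$'' variables respect the strict ordering of $\Omega_k$, that the packed block $y_1,\dots,y_1-k$ produces exactly the cancellations needed (this is where $y_{j+1}=y_j-1$ for $j<k$ is used), and that the interface condition $y_{k+1}<y_1-k-1$ keeps the manipulated columns from ever picking up a spurious pole at $w=0$ or colliding with the unfrozen part. Everything is residue calculus and multilinearity, but getting the constant $q^{-1}$ and all the indicator constraints to come out on the nose is the part that needs care; I expect roughly the same argument structure as the $\sum_{x_N<x_{N-1}}$ computation in Lemma~\ref{lem:G_properties}, only run ``in the increasing direction'' and iterated over the block.
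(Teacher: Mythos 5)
The paper itself records no argument for this lemma (it is stated as following ``by direct computations, using properties of $F^{\rB}_n$''), so what matters is whether your plan actually closes the computation. For part (ii) it essentially does: by multilinearity the difference $\G^{\rB}_{0,t}(\vec y,\vec x)-\G^{\rB}_{0,t}(\vec y-\vec e_{k+1},\vec x)$ is a single determinant whose modified column has entries $F^{\rB}_{i-j}(x_{N+1-i}-y_{k+1},t)-F^{\rB}_{i-j}(x_{N+1-i}-y_{k+1}+1,t)=F^{\rB}_{i-j-1}(x_{N+1-i}-y_{k+1},t)$, which, since $y_k=y_{k+1}$, is exactly the neighbouring column, so the determinant vanishes. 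Your ``$w(w-1)$'' bookkeeping is garbled, but the mechanism you are aiming at is this one-line shift identity, so (ii) is fine.

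Part (i) has a genuine gap. A minor point first: the summed variables $z_1,\dotsc,z_k$ are final positions, so in the matrix of \eqref{eq:rightBernoulliBlock} they sit in rows, not columns, and the sum over $\Omega_k$ does not factor into independent one-variable sums; the iterated summation does work (sum over $z_m\in\{x_m,\dotsc,z_{m-1}-1\}$ using $\sum_{z\ge x}F^{\rB}_n(z,1)=F^{\rB}_{n+1}(x,1)$, the boundary term being a row equal to the neighbouring row and hence dropping), and it shows that the right-hand side of \eqref{eq:G_seq1} equals the determinant obtained from that of $\G^{\rB}_{0,1}(\vec y,\vec x)$ by replacing $F^{\rB}_{i-j}$ with $F^{\rB}_{i-j+1}$ in the rows of the packed block. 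The serious problem is your account of the factor $q^{-1}$: you write $q^{-1}=1/(q+p\cdot 1)=1/\varphi(1)$, but $\varphi(1)=q+p=1$, and the forward summation above produces no residue factor at $w=1$ at all (and ``one factor per summed variable'' would in any case give a $k$-dependent power, not a single $q^{-1}$). The real content of (i) is precisely the step you have not supplied: proving that this row-shifted determinant equals $q^{-1}\G^{\rB}_{0,1}(\vec y,\vec x)$, which is where the one-step structure $t=1$, the packed block $x_m=y_m$ and the gap hypothesis on $y_{k+1}$ must enter (the analogous identity fails for general $t$); it requires further manipulation, e.g.\ using $\tfrac{q+pw}{w-1}=\tfrac1{w-1}+p$ and row operations, none of which appears in your plan. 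A clean way to close (i) is to note that every configuration occurring in \eqref{eq:G_seq1} lies in $\Omega_N$, so by \eqref{eq:rightBernoulliBlock} both sides are one-step transition probabilities: the block stays put exactly when the first particle stays (probability $q$), the gap hypothesis makes the one-step evolution of particles $k+1,\dotsc,N$ insensitive to the block, and both sides then equal the probability that those particles end at $x_{k+1},\dotsc,x_N$; note this probabilistic shortcut is not available for (ii), where $\vec y\notin\Omega_N$.
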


Fix $\vT = (T_i)_{i \in \set N}$ with $T_i = -\kappa (i-1)$ and $\vec y \in \Omega_N(\kappa)$, where $\kappa \geq 1$ and $\Omega_N(\kappa)$ is defined in \eqref{eq:Omega-kappa}. Then for $m \in \set{N}$, $\vec x \in \Omega_{N - m + 1}$ and $T_{m} \leq t < T_{m - 1}$ (with the convention $T_0 = \infty$) we define 
\begin{equation}\label{eq:SchuetzSEQ_0}
	\G^{[m, N]}_{\vT,t} (\vec y, \vec x) = \pp \bigl(\xx^{\rB}_{t}(m + i - 1) = x_i,\,i\in\set{N - m + 1} \big| \xx^{\rB}_{T_i}(i) = y_i, i \in \set{N}\bigr),
\end{equation}
which is the transition probability (at time $t$), for the particles $\xx^{\rB}(m), \dotsc, \xx^{\rB}(N)$ when the system starts at locations $y_1, \dotsc, y_N$ at respective times $T_1, \dotsc, T_N$, to the locations $\vec x$.
Let also, for $s \geq 0$ and $\vec a, \vec b \in \Omega_{N - m + 1}$,
\begin{equation}\label{eq:SchuetzSEQ_m}
\G^{[m, N]}_{0,s} (\vec a, \vec b) = \det \bigl[F^{\rB}_{i - j}(b_{N - m + 2 - i} - a_{N - m + 2 - j}, s)\bigr]_{i, j \in \set{N - m + 1}},
\end{equation}
which is the analog of \eqref{eq:rightBernoulliBlock} for the system which only has the particles $\xx^{\rB}(m), \dotsc, \xx^{\rB}(N)$.

\begin{lem}\label{lem:SchuetzTASEP2}
In above setting, and with the notation $y\sqcup\vec z$ from Appdx.~\ref{app:convolutions},
\begin{equation}\label{eq:SchuetzTASEP2}
	\textstyle \G^{[m, N]}_{\vT,t} (\vec y, \vec x) = \sum\limits_{\substack{\vec z(T_i) \in \Omega_{N - i} \\ m \leq i < N}} \left(\prod_{i = m + 1}^{N} \G^{[i, N]}_{T_{i}, T_{i - 1}} (y_{i} \sqcup \vec z(T_i), \vec z(T_{i-1}))\right) \G^{[m, N]}_{T_{m}, t} (y_{m} \sqcup \vec z(T_m), \vec x),
\end{equation}
where $\vec z(T_N)$ is an empty vector.
\end{lem}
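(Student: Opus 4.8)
The statement is a Chapman--Kolmogorov decomposition of the staggered-start transition probability of $X^{\rB}$, split at the successive release times $T_{N-1}>T_{N-2}>\dotsm>T_m$. The plan is to prove it by induction on $m$, running downward from $m=N$, using the Markov property of $X^{\rB}$ (conditioned on $\CE_\kappa$) together with the algebraic identities of Lem.~\ref{lem:G_seq}.

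For the base case $m=N$ the process $\xx^{\rB}(N)$ is a single particle, released at $T_N$ with nothing to its left, so $\G^{[N,N]}_{\vT,t}(\vec y,\vec x)$ is just the one-particle transition probability, which coincides with the right-hand side of \eqref{eq:SchuetzTASEP2} (an empty product times the single factor $\G^{[N,N]}_{T_N,t}(y_N,\vec x)$). For the inductive step I would condition on $\CE_\kappa$ and apply the Markov property of $X^{\rB}$ at time $T_m$, writing
\[
\G^{[m,N]}_{\vT,t}(\vec y,\vec x)=\sum_{\vec z\in\Omega_{N-m}}P_{T_m}(\vec z)\,\G^{[m,N]}_{0,t-T_m}(y_m\sqcup\vec z,\vec x),
\]
where $P_{T_m}(\vec z)$ is the probability that particles $m+1,\dots,N$ occupy the configuration $\vec z$ at time $T_m$ (with particle $m$, and everything below it, still frozen at $\vec y$), and the second factor is the transition probability of particles $m,\dots,N$ over the interval $(T_m,t]$, where particle $m$ has now been released.

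To justify the second factor I would note that, since $t<T_{m-1}=T_m+\kappa$, particle $m$ makes at most $\kappa-1$ jumps over $(T_m,t]$ and hence, using $y_{m-1}-y_m\ge\kappa$ (from $\vec y\in\Omega_N(\kappa)$), it never reaches the frozen particle $m-1$ and is never blocked by it; so on this interval particles $m,\dots,N$ evolve as an autonomous right-Bernoulli TASEP on $N-m+1$ particles, whose transition probability is $\G^{[m,N]}_{0,t-T_m}$ as in \eqref{eq:SchuetzSEQ_m} --- this is the only place the strict inequality $t<T_{m-1}$ is used. For $P_{T_m}(\vec z)$ I would invoke the inductive hypothesis applied to level $m+1$ at time $T_m$: in isolation (with no particle to their right) particles $m+1,\dots,N$ reach $\vec z$ at $T_m$ with probability $\sum\prod_{i=m+1}^N\G^{[i,N]}_{T_i,T_{i-1}}(\dots)$, and it then remains to show that re-introducing the frozen particle $m$ at $y_m$ modifies the combined expression only through the subsystem Sch\"utz formula itself. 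Concretely, freezing particle $m$ at $y_m$ can block particle $m+1$ only in the last step(s) before $T_m$, and Lem.~\ref{lem:G_seq}\ref{it:G_seq1} (the reflection identity, which rewrites a blocked one-step transition as a sum over ``would-be'' landing sites) together with Lem.~\ref{lem:G_seq}\ref{it:G_seq2} (which unstacks a coincident leading pair $y_k=y_{k+1}$ in the initial configuration) show that this effect is captured exactly by summing the product above against $\G^{[m,N]}_{0,t-T_m}(y_m\sqcup\vec z,\vec x)$ over \emph{all} $\vec z\in\Omega_{N-m}$, the ``overshooting'' configurations being pushed back automatically by the Sch\"utz formula. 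Collecting the summations and products then yields \eqref{eq:SchuetzTASEP2}. (Once this single-junction identity is available, the iteration can alternatively be organized through the generalized Cauchy--Binet identity of Prop.~\ref{prop:Cauchy-Binet2}, whose prepend structure $\tilde y\sqcup\vec z$ matches that of \eqref{eq:SchuetzTASEP2}.)

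The main obstacle is the bookkeeping at the junction times. Because $\vec y\in\Omega_N(\kappa)$ only guarantees $y_{i-1}-y_i\ge\kappa$, particle $i$ can in fact be blocked by the frozen particle $i-1$ at the very last step of $[T_i,T_{i-1}]$, so the factor $\G^{[i,N]}_{T_i,T_{i-1}}$ is \emph{not} literally the full-system marginal of particles $i,\dots,N$: it is the autonomous subsystem transition evaluated on a configuration that may have overshot the frozen particle. Verifying that the product over $i$, summed over the intermediate variables $\vec z(T_i)$, nonetheless reproduces the true blocked evolution --- and keeping track of precisely which coordinate is pushed back, and by how much, in each composition --- is the delicate part, and is exactly what Lem.~\ref{lem:G_seq} is designed to handle. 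A secondary technical point is that the induction must be run so as to cover the endpoint $t=T_{m-1}$ (e.g. by proving the statement with $t$ ranging over $[T_m,T_{m-1}]$), since that is the value fed into the next stage.
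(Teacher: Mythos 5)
Your plan follows essentially the same route as the paper's proof: downward induction on $m$, a Markov-property split at the release time of the newly added particle, the two identities of Lem.~\ref{lem:G_seq} to turn the single potentially blocked step next to the frozen particle into an unrestricted sum against the free subsystem kernel, and a Cauchy--Binet convolution (Prop.~\ref{prop:Cauchy-Binet_general} in the paper) to merge that one-step factor into $\G^{[m,N]}_{T_m,T_{m-1}}$. The only cosmetic difference is that the paper does not extend the range of $t$ to the endpoint $T_{m-1}$ (where the formula would fail as a statement about marginals); it instead isolates the step from $T_{m-1}-1$ to $T_{m-1}$ explicitly, which is the rigorous version of your junction bookkeeping.
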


Up to multipliers, the Markov property yields the formula \eqref{eq:SchuetzTASEP2} with the additional condition in the sum that all entries of $\vec z (T_i)$ be strictly smaller than $y_i$. 
However, this restriction precludes the application of Prop.~\ref{prop:Cauchy-Binet2} to the convolutions of determinants in \eqref{eq:SchuetzTASEP2}. 
We will show that the properties of $\G^{\rB}_{0,t}$ provided in Lem.~\ref{lem:G_seq} imply that this restriction can be omitted.

\begin{proof}
We will prove \eqref{eq:SchuetzTASEP2} by induction over $m = N, N - 1, \dotsc, 1$.
For the base case, $m = N$, note that on the time interval $T_{N} \leq t < T_{N - 1}$ only the $N^{\text{th}}$ particle moves. Therefore  \eqref{eq:SchuetzSEQ_m} with $N = 1$ and $\vec x = (x_1)$ yields $\G^{[N, N]}_{\vT,t} (\vec y, \vec x) = \G^{[N, N]}_{T_{N}, t} (y_N, x_1)$, which is \eqref{eq:SchuetzTASEP2}.

Assuming now that \eqref{eq:SchuetzTASEP2} holds for some $2 \leq m \leq N$, we will prove it for $m-1$. For $T_{m-1} \leq t < T_{m - 2}$ and $\vec x \in \Omega_{N - m+2}$ the Markov property yields
\begin{align}\label{eq:prob0}
\textstyle \G^{[m - 1, N]}_{\vT,t} (\vec y, \vec x) &\textstyle = \sum_{{\vec u, \vec a \in \Omega_{N - m + 1}\!:\, a_1 < y_{m-1}}} \pp \bigl(\xx^{\rB}_{T_{m-1} - 1} = \vec u \big| \xx^{\rB}_{T_i}(i) = y_i, i \in \set{N}\bigr)\\
 &\textstyle \qquad\qquad \times \pp \bigl(\xx^{\rB}_{T_{m-1}} = y_{m-1} \sqcup \vec a \big| \xx^{\rB}_{T_{m-1} - 1} = \vec u\bigr) \\
 &\textstyle \qquad\qquad \times \pp \bigl(\xx^{\rB}_{t}(m + i - 2) = x_i,\,i\in\set{N - m + 2} \big| \xx^{\rB}_{T_{m-1}} = y_{m-1} \sqcup \vec a\bigr).
\end{align} 
The induction hypothesis yields $\pp \bigl(\xx^{\rB}_{T_{m-1} - 1} = \vec u \big| \xx^{\rB}_{T_i}(i) = y_i, i \in \set{N}\bigr) = \G^{[m, N]}_{\vT,T_{m-1} - 1} (\vec y, \vec u)$, where the latter is given by the right hand side of \eqref{eq:SchuetzTASEP2}. Moreover, \eqref{eq:SchuetzSEQ_m} gives 
\begin{align}
	\textstyle \pp \bigl(\xx^{\rB}_{t}(m + i - 2) = x_i,\,i\in\set{N - m + 2} \big| \xx^{\rB}_{T_{m-1}} = y_{m-1} \sqcup \vec a\bigr) = \G^{[m-1, N]}_{T_{m-1}, t} (y_{m-1} \sqcup \vec a, \vec x).
\end{align}
Now, we will write explicitly the transition probability from $\vec u$ to $\vec a$ in \eqref{eq:prob0}. Our assumption on the initial state $\vec y \in \Omega_N(\kappa)$ guarantees that $\xx^{\rB}_{T_{m-1} - 1}(m) < \xx^{\rB}_{T_{m-1}}(m-1)$. However, it can happen that $\xx^{\rB}_{T_{m-1} - 1}(m) = \xx^{\rB}_{T_{m-1}}(m-1) - 1$ and on the next step the $m^{\text{th}}$ particle can try to jump on top of the $(m-1)^{\text{st}}$, which should be prevented. We will consider these cases more precisely. 

If $u_{1} < y_{m-1} - 1$, then we have $\pp \bigl(\xx^{\rB}_{T_{m-1}} = y_{m-1} \sqcup \vec a \big| \xx^{\rB}_{T_{m-1} - 1} = \vec u\bigr) = \G^{[m, N]}_{0,1} (\vec u, \vec a)$, and this probability is non-zero for $a_1 < y_{m-1}$ and zero otherwise. Then we can write 
\begin{align}
\textstyle \sum_{\substack{\vec a \in \Omega_{N - m + 1} : \\ a_1 < y_{m-1}}} &\textstyle \pp \bigl(\xx^{\rB}_{T_{m-1}} = y_{m-1} \sqcup \vec a \big| \xx^{\rB}_{T_{m-1} - 1} = \vec u\bigr) \G^{[m-1, N]}_{T_{m-1}, t} (y_{m-1} \sqcup \vec a, \vec x) \\
&\textstyle = \sum_{\vec a \in \Omega_{N -m + 1}} \G^{[m, N]}_{0,1} (\vec u, \vec a) \G^{[m-1, N]}_{T_{m-1}, t} (y_{m-1} \sqcup \vec a, \vec x).\label{eq:prob1}
\end{align}
If $u_{1} = y_{m-1} - 1$, let $1 \leq k \leq N - m + 1$ be so that $\vec u = (y_{m-1} - 1, \dotsc, y_{m-1} - k, u_{k+1}, \dotsc, u_{N - m+1})$, where $u_{k+1} < y_{m-1} - k - 1$ in the case $k \leq N - m$. Then the transition probability from $\vec u$ to $y_{m-1} \sqcup \vec a$ is non-zero only if $a_{i} = u_{i}$ for each $1 \leq i \leq k$. In this case we have $\pp \bigl(\xx^{\rB}_{T_{m-1}} = y_{m-1} \sqcup \vec a \big| \xx^{\rB}_{T_{m-1} - 1} = \vec u\bigr) = q^{-1} \G^{[m, N]}_{0,1} (\vec u, \vec a)$ (i.e. in the probability measure $\G^{[m, N]}_{0,1} (\vec u, \cdot)$ on $\Omega_{N - m + 1}$, we change the probability for the $m^{\text{th}}$ particle to stay put from $q$ to $1$).
Applying \eqref{eq:G_seq1} we obtain
\begin{equation}
\textstyle \pp \bigl(\xx^{\rB}_{T_{m-1}} = y_{m-1} \sqcup \vec a \big| \xx^{\rB}_{T_{m-1} - 1} = \vec u\bigr) = \sum_{\!\!\!\substack{\vec z \in \Omega_k : \\ z_i \geq a_i, 1 \leq i \leq k}} \G^{[m, N]}_{0,1} \bigl(\vec u, (z_1, \dotsc, z_k, a_{k+1}, \dotsc, a_N)\bigr).
\end{equation}
This yields, for $u_{1} = y_{m-1} - 1$,
\begin{align}
&\textstyle \sum_{\substack{\vec a \in \Omega_{N - m + 1} : \\ a_1 < y_{m-1}}} \pp \bigl(\xx^{\rB}_{T_{m-1}} = y_{m-1} \sqcup \vec a \big| \xx^{\rB}_{T_{m-1} - 1} = \vec u\bigr) \G^{[m-1, N]}_{T_{m-1}, t} (y_{m-1} \sqcup \vec a, \vec x)\\
&\textstyle = \sum_{\substack{\vec a \in \Omega_{N - m + 1} : \\ a_i = u_i, 1 \leq i \leq k}} \sum_{\!\!\!\!\!\!\substack{\vec z \in \Omega_k : \\ z_i \geq a_i, 1 \leq i \leq k}} \G^{[m, N]}_{0,1} \bigl(\vec u, (z_1, \dotsc, z_k, a_{k+1}, \dotsc, a_{N-m+1})\bigr) \G^{[m-1, N]}_{T_{m-1}, t} (y_{m-1} \sqcup \vec a, \vec x)\\
&\textstyle = \sum_{\substack{\vec z \in \Omega_{N - m + 1} : \\ z_i \geq u_i, 1 \leq i \leq k}} \G^{[m, N]}_{0,1} (\vec u, \vec z) \G^{[m-1, N]}_{T_{m-1}, t} \bigl((y_{m-1}, u_1, \dotsc, u_k, z_{k+1}, \dotsc, z_{N-m+1}), \vec x\bigr).\label{eq:prob2}
\end{align}
The terms in this sum vanish unless $z_i - u_i \in \{0,1\}$ for each $1\in\set{k}$, and one can see that there is a $k^*\in\set{k}$ such that $(z_1, \dotsc, z_k) = (u_1 + 1, \dotsc, u_{k_*}+1, u_{k_*}, \dotsc, u_k)$. Moreover, $u_{i} + 1 = y_{m-1} - i + 1$ for each $1 \leq i \leq k_*$. Then applying Lem.~\ref{lem:G_seq}\ref{it:G_seq2} consecutively to the entries $z_1$, $z_2$, $\dotsc$, $z_{k_*}$, we get $\G^{[m-1, N]}_{T_{m-1}, t} (y_{m-1} \sqcup \vec z, \vec x) = \G^{[m-1, N]}_{T_{m-1}, t} \bigl((y_{m-1}, u_1, \dotsc, u_k, z_{k+1}, \dotsc, z_{N-m+1}), \vec x\bigr)$.
Furthermore, if $z_i < u_i$ for some $1 \leq i \leq k$, then the function $\G^{[m, N]}_{0,1} (\vec u, \vec z)$ vanishes, which means that \eqref{eq:prob2} can be written as
\begin{equation}\label{eq:prob3}
\textstyle \sum_{\vec z \in \Omega_{N - m + 1}} \G^{[m, N]}_{0,1} (\vec u, \vec z) \G^{[m-1, N]}_{T_{m-1}, t} (y_{m-1} \sqcup \vec z, \vec x).
\end{equation}

Combining identities \eqref{eq:prob1} and \eqref{eq:prob3}, formula \eqref{eq:prob0} can be written as
\begin{equation}\label{eq:prob4}
\textstyle \G^{[m - 1, N]}_{\vT,t} (\vec y, \vec x) = \sum_{\vec u, \vec a \in \Omega_{N - m + 1}} \G^{[m, N]}_{\vT,T_{m-1} - 1} (\vec y, \vec u) \G^{[m, N]}_{0,1} (\vec u, \vec a) \G^{[m-1, N]}_{T_{m-1}, t} (y_{m-1} \sqcup \vec a, \vec x).
\end{equation}
The induction hypothesis implies that the function $\G^{[m, N]}_{\vT,T_{m-1} - 1}$ has the required form \eqref{eq:SchuetzTASEP2}. Moreover, direct computations show that the functions $\G^{[m, N]}_{0,1}$ and $\G^{[m-1, N]}_{T_{m-1}, t}$ are in the form \eqref{eq:FRkernel}, which allows one to apply Prop.~\ref{prop:Cauchy-Binet_general} to their convolution.
Then the last expression turns to 
\begin{equation}
\textstyle \sum_{\vec a \in \Omega_{N - m + 1}} \sum_{\substack{\vec z^{\,i} \in \Omega_{N - i} : \\ m \leq i < N}} \prod_{i = m + 1}^{N} \G^{[i, N]}_{T_{i}, T_{i - 1}} (y_{i} \sqcup \vec z^{\,i}, \vec z^{\,i-1}) \G^{[m, N]}_{T_{m}, T_{m-1}} (y_{m} \sqcup \vec z^{\,m}, \vec a) \G^{[m-1, N]}_{T_{m-1}, t} (y_{m-1} \sqcup \vec a, \vec x),
\end{equation} 
which is exactly \eqref{eq:SchuetzTASEP2} for $m-1$.
\end{proof}

\begin{lem}\label{lem:G-formula}
For $\kappa \geq 1$, $\vec y\in\Omega_N(\kappa)$ and $\vec x\in\Omega_N$,
\begin{equation}\label{eq:G-formula}
	\G^{\rB}_{\vT,0} (\vec y, \vec x) = \det \bigl[F^{\rB}_{i - j}(x_{N + 1 - i} - y_{N + 1 - j}, \kappa (j-1))\bigr]_{i, j \in \set{N}}.
\end{equation}
\end{lem}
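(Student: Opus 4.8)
The plan is to identify the left-hand side of \eqref{eq:G-formula} with $\G^{[1,N]}_{\vT,0}(\vec y,\vec x)$ from \eqref{eq:SchuetzSEQ_0} and to apply Lem.~\ref{lem:SchuetzTASEP2} with $m=1$ (and $t=0$, which is admissible since $T_1=0$ and $T_0=\infty$). This expresses $\G^{\rB}_{\vT,0}(\vec y,\vec x)$ as an iterated convolution
\[
\textstyle\G^{\rB}_{\vT,0}(\vec y,\vec x)=\sum_{\substack{\vec z(T_i)\in\Omega_{N-i}\\ 1\leq i<N}}\Bigl(\prod_{i=2}^N\G^{[i,N]}_{T_i,T_{i-1}}(y_i\sqcup\vec z(T_i),\vec z(T_{i-1}))\Bigr)\G^{[1,N]}_{T_1,0}(y_1\sqcup\vec z(T_1),\vec x),
\]
with $\vec z(T_N)$ empty. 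Each factor $\G^{[i,N]}_{T_i,T_{i-1}}$ is, by \eqref{eq:SchuetzSEQ_m}, a determinant of size $N-i+1$ whose entries $F^{\rB}_{k-\ell}(\cdot,\cdot;T_{i-1}-T_i)=F^{\rB}_{k-\ell}(\cdot,\cdot;\kappa)$ are, via \eqref{eq:rBerBlockF}, exactly of the kernel-composition form appearing in Appdx.~\ref{app:convolutions}: writing $F^{\rB}_{k-\ell}(x_1,x_2;\kappa)$ in the shape $\bigl(\E_k\Q^{[k]}\R\Q^{[-\ell]}\E_{-\ell}\bigr)(x_1,x_2)$ (with $v_i\equiv1$, $\varphi(w)=q+pw$, $t=\kappa$). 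The determinants here have sizes differing by one at each step, so the relevant tool is the ``size-changing'' Cauchy--Binet identity of Prop.~\ref{prop:Cauchy-Binet2} (one first entry is fixed to $y_i$, and $\Q_1$ commutes with the convolution kernels because all $v_j=1$), applied repeatedly from the innermost sum outward.

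Concretely, I would induct: after performing the sums over $\vec z(T_N),\vec z(T_{N-1}),\dots,\vec z(T_2)$ one at a time, Prop.~\ref{prop:Cauchy-Binet2} collapses the product of $N$ determinants (of sizes $1,2,\dots,N-1,N$) into a single $N\times N$ determinant whose $(i,j)$-entry is the composition $\E_i\Q^{[i]}R_i S_j\Q^{[-j]}\E_{-j}(y_i,x_j)$, where $R_i$ is the convolution of the ``$F^{\rB}$ with time $\kappa$'' kernels coming from the steps the $i$th particle participates in — namely $i-1$ of them, contributing total time $\kappa(i-1)$ — and the remaining $S_j$ piece is trivial. Using the semigroup property of $F^{\rB}$ in the time variable (which follows from \eqref{eq:rBerBlockF}, since $(q+pw)^{s}(q+pw)^{s'}=(q+pw)^{s+s'}$, i.e. the analogue of Lem.~\ref{lem:G_properties}/Prop.~\ref{prop:Cauchy-Binet_general}) this entry simplifies to $F^{\rB}_{i-j}(x_j-y_i,\kappa(i-1))$, and after the index reversal $i\mapsto N+1-i$, $j\mapsto N+1-j$ built into \eqref{eq:G-main}-type formulas we recover the right-hand side of \eqref{eq:G-formula}. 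Throughout, one must check the growth bounds $|L_{i,j}(x,y)|\le Cr^{x-y}$ with $r>1=\max_iv_i$ required in Appdx.~\ref{app:convolutions}; these hold since $q+pw$ is entire and one can take the contour in \eqref{eq:rBerBlockF} to be any circle of radius slightly larger than $1$.

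The main obstacle, and the reason this is not immediate, is precisely the discrepancy between the \emph{true} transition probabilities with different starting times — which carry the constraint that each newly appearing particle $\vec z(T_i)$ start strictly below $y_i$, together with the $q^{-1}$ correction when the $m$th particle sits right next to its left neighbor — and the \emph{unconstrained} determinantal functions $\G^{[m,N]}_{0,s}$ needed to invoke the Cauchy--Binet machinery. This gap is exactly what Lem.~\ref{lem:SchuetzTASEP2} resolves (via the algebraic identities of Lem.~\ref{lem:G_seq}), so the real work has been front-loaded into that lemma; the proof of Lem.~\ref{lem:G-formula} itself then amounts to bookkeeping: setting up the correspondence with \eqref{eq:SchuetzSEQ_0}, applying Lem.~\ref{lem:SchuetzTASEP2}, verifying the hypotheses of Prop.~\ref{prop:Cauchy-Binet2} (commutation with $\Q_1$, decay bounds), and tracking the accumulated times $\kappa(i-1)$ and the index-reversal conventions so that the final determinant matches \eqref{eq:G-formula} on the nose. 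A secondary point to be careful about is that \eqref{eq:SchuetzTASEP2} is stated for $T_m\le t<T_{m-1}$, so taking $m=1$, $t=0=T_1$ lies at the boundary of the allowed range; since $T_0=\infty$ this is fine, but I would state it explicitly.
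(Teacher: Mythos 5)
Your route is the same as the paper's: the paper's entire proof of Lem.~\ref{lem:G-formula} is the single sentence that it follows by applying Prop.~\ref{prop:Cauchy-Binet2} consecutively to the determinants in \eqref{eq:SchuetzTASEP2}, and you correctly identify both this mechanism and the fact that the real work (removing the ordering constraints and the $q^{-1}$ corrections) is front-loaded into Lem.~\ref{lem:SchuetzTASEP2}. The hypotheses you pause over (decay bounds, commutation with $\Q_1$) are indeed harmless since all speeds equal $1$, and your remark about $t=0=T_1$ being a boundary case is fine. (Minor slip: the sums to be performed are over $\vec z(T_{N-1}),\dotsc,\vec z(T_1)$; $\vec z(T_N)$ is empty.)

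The one step you should not wave through is precisely the ``bookkeeping'' you defer at the end, because that is where the only real subtlety of this lemma sits. Tracking the consecutive applications of Prop.~\ref{prop:Cauchy-Binet2} (the newly added row receives only $S_j$, the old rows receive $R_{i-1}S_j$), the row of $y_k$ accumulates the $k-1$ steps of duration $\kappa$ occurring after particle $k$ joins, so the collapsed determinant is $\det\bigl[F^{\rB}_{k-\ell}(x_\ell-y_k,\kappa(k-1))\bigr]_{k,\ell\in\set{N}}$, i.e.\ exactly $\G^{-}_{\vT}(\vec y,\vec x)$ from \eqref{eq:G-} with $T_k=-\kappa(k-1)$, which is the form the rest of the paper actually uses. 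Reversing both indices turns this into $\det\bigl[F^{\rB}_{i-j}(x_{N+1-i}-y_{N+1-j},\kappa(N-j))\bigr]_{i,j}$: the time $\kappa(N-j)$ is attached to the column of $y_{N+1-j}$, not $\kappa(j-1)$ as printed in \eqref{eq:G-formula}/\eqref{eq:second-property}, and the two matrices are genuinely different (for $N=2$, $\kappa=1$, $\vec y=(1,0)$, $\vec x=(1,0)$ the first determinant equals $1$, the correct conditional probability, while the literal $\kappa(j-1)$ version gives $q$). So your concluding claim that after the index reversal you land on the stated right-hand side ``on the nose'' is unjustified as written: what your derivation (and the paper's) proves is the $\kappa(N-j)$ form, equivalently $\det[F_{k,\ell}(y_k,x_\ell;-T_k)]_{k,\ell}$, and the time index in the printed statement has to be read (or corrected) accordingly. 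You should either carry out this index computation explicitly and flag the discrepancy with the printed formula, or at least not assert exact agreement without it.
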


\begin{proof}
This follows from applying Prop.~\ref{prop:Cauchy-Binet2} consecutively to the determinants in \eqref{eq:SchuetzTASEP2}.
\end{proof}

\subsection{Proof of Eqn.~\ref{eq:backward-in-time-one}}

\begin{lem}\label{lem:rB-back-in-time}
Identity \eqref{eq:backward-in-time-one} holds for the model \eqref{eq:rightBernoulliBlock} with right Bernoulli jumps, where in the definition of the function \eqref{eq:rBerBlockF} with negative time the singularity $-q/p$ should be excluded from the contour.
\end{lem}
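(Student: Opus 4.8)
The plan is to prove \eqref{eq:backward-in-time-one} by induction on $N$, peeling off one particle at a time from the left and running it ``backwards'' using the explicit contour-integral structure of $F^{\rB}$. Recall that the left-hand side of \eqref{eq:backward-in-time-one} is a sum over $x_N<\dotsm<x_2<x_1$ of $\pp(X_t=\vec x\mid X_0=\vec y)$ times an $(N-1)\times(N-1)$ determinant built from $F^{\rB}_{i-j}(z_{\boldsymbol\cdot}-x_{\boldsymbol\cdot},-t)$; we must show it collapses to $\pp(X_t(1)=x_1\mid X_0(1)=y_1)\prod_{i\ge 2}\uno{z_i=y_i}$. The key structural input is that, exactly as in the continuous-time TASEP argument of \cite{nqr-kolmogorov} and the proof of Lem.~\ref{lem:G_properties}, the determinant $\det[F^{\rB}_{i-j}(z_{N+1-i}-x_{N+1-j},-t)]_{i,j\in\set{N-1}}$ is (the $N-1$-particle analog of) $\G^{\rB}_{0,-t}$, and $\G^{\rB}_{0,-t}$ satisfies the semigroup property $\sum_{\vec w\in\Omega_{N-1}}\G^{\rB}_{0,t}(\vec a,\vec w)\G^{\rB}_{0,-t}(\vec w,\vec z)=\uno{\vec a=\vec z}$, valid because the contour formula \eqref{eq:rBerBlockF} makes sense for negative $t$ once we keep the singularity $-q/p$ outside $\gamma$ (this is precisely the extra hypothesis, and it is exactly what guarantees $\varphi(w)^{-t}=(q+pw)^{-t}$ is analytic on the annulus so that Lem.~\ref{lem:conv}/Prop.~\ref{prop:AQRinv}-type manipulations go through).

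\textbf{Main argument.} First I would reduce the sum over $x_2<\dotsm<x_N$ in \eqref{eq:backward-in-time-one} to a genuine sum over $\Omega_{N-1}$ of a product of two $\G^{\rB}$-type functions. Writing $\vec x_{<1}=(x_2,\dotsc,x_N)$ and $\vec z_{<1}=(z_2,\dotsc,z_N)$ (so $\vec z_{<1}\in\Omega_{N-1}$ since $z_N<\dotsm<z_2<y_1$), the hierarchy relation \eqref{eq:chierarchy} applied once shows $\sum_{x_N<x_{N-1}}\pp(X_t=\vec x\mid X_0=\vec y)$ depends on $\vec x$ only through $x_1,\dotsc,x_{N-1}$ and equals the $(N-1)$-particle transition probability. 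Iterating this is not quite enough because of the constraint $x_2<x_1$, but one can instead sum over all of $\Omega_{N-1}$ in the $x_2,\dotsc,x_N$ variables by the same trick used in the proof of Lem.~\ref{lem:SchuetzTASEP2}: the extra terms with $x_2\ge x_1$ are killed by the determinant structure (two equal or ``crossed'' columns after the summation by parts). Concretely, I expect the left side of \eqref{eq:backward-in-time-one} to become
\begin{equation*}
\sum_{\vec x_{<1}\in\Omega_{N-1}}\pp\bigl(X_t=\vec x\mid X_0=\vec y\bigr)\,\G^{\rB,(N-1)}_{0,-t}(\vec x_{<1},\vec z_{<1})\cdot(\text{correction involving }x_1,y_1),
\end{equation*}
where $\G^{\rB,(N-1)}_{0,-t}$ is the $(N-1)$-particle Sch\"utz function at negative time. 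The summation-by-parts step is the place where the Cauchy--Binet-type identities of Appdx.~\ref{app:convolutions} (Prop.~\ref{prop:Cauchy-Binet_general} and especially Prop.~\ref{prop:Cauchy-Binet3}, which handles differently-sized determinants) get invoked: one writes $\pp(X_t=\vec x\mid X_0=\vec y)=\det[F^{\rB}_{i-j}(x_{N+1-i}-y_{N+1-j},t)]$ in the form \eqref{eq:FRkernel}, convolves with the negative-time determinant, and uses the semigroup property coordinate-by-coordinate.

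\textbf{Collapsing to one particle.} After the convolution, the semigroup identity $\sum_{\vec w}\G^{\rB,(N-1)}_{0,t}(\ast,\vec w)\G^{\rB,(N-1)}_{0,-t}(\vec w,\vec z_{<1})=\uno{\ast=\vec z_{<1}}$ forces $z_i=y_i$ for $i=2,\dotsc,N$ and leaves exactly the one-particle factor $\pp(X_t(1)=x_1\mid X_0(1)=y_1)$; here one uses that the first particle $X(1)$ evolves autonomously (it does not feel the others), so $\pp(X_t(1)=x_1\mid X_0(1)=y_1)=F^{\rB}_0(x_1-y_1,t)=q^{x_1-y_1}\binom{t}{x_1-y_1}p^{\dots}$ in the Bernoulli case, and this factor is untouched by the summation over $x_2,\dotsc,x_N$. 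Throughout, every contour manipulation is legitimate precisely because $-q/p$ is kept outside $\gamma$ in \eqref{eq:rBerBlockF}, i.e. $p<1/2$ with $\rhoout<q/p$, matching the hypothesis recorded before Lem.~\ref{lem:rB-back-in-time}.

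\textbf{Main obstacle.} The delicate point is justifying that, after the summation by parts used to extend the sum from the Weyl-chamber constraint $x_N<\dotsm<x_2<x_1$ to all of $\Omega_{N-1}\times\{x_1\}$ (and then to all of $\Omega_{N-1}$ ignoring the relation to $x_1$), the boundary terms genuinely vanish. This requires two facts: (a) for $x_2$ large relative to the $z$'s the relevant $(N-1)$-determinant vanishes by the residue theorem (as in the proof of Prop.~\ref{prop:Cauchy-Binet_general}), and (b) the terms with $x_2=x_1$ produce a determinant with two coincident columns. Fact (a) needs the integrand of $F^{\rB}_n(\cdot,-t)$ to have no pole at the origin contributing at infinity, which is where the exclusion of $-q/p$ and the analyticity on the annulus are used again. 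Assembling (a) and (b) carefully — tracking which particle's summation is being integrated by parts and in what order — is the technical heart; once it is in place, the semigroup property and the autonomy of $X(1)$ finish the proof in one line.
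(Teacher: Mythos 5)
Your proposal contains some of the right ingredients (the vanishing of the negative-time entries once $-q/p$ is excluded, a mixed-size Cauchy--Binet identity, a final determinant evaluation), but the core plan has a genuine gap. You propose to peel off particle $1$ as an autonomous factor and reduce the left-hand side of \eqref{eq:backward-in-time-one} to an $(N-1)$-particle forward/backward semigroup composition $\sum_{\vec w}\G^{(N-1)}_{0,t}(\ast,\vec w)\G^{(N-1)}_{0,-t}(\vec w,\vec z_{<1})=\uno{\ast=\vec z_{<1}}$ times ``a correction involving $x_1,y_1$''. This factorization does not exist: although $X(1)$ evolves autonomously, particles $2,\dotsc,N$ are blocked by particle $1$, so $\pp(X_t=\vec x\,|\,X_0=\vec y)$ does not split into a free particle-$1$ factor times an $(N-1)$-particle transition function, and the hierarchy relation \eqref{eq:chierarchy} only allows removing the \emph{last} (leftmost) particle, not the first. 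The unspecified ``correction'' is exactly where the argument breaks. Your mechanism (b) for dropping the constraint $x_2<x_1$ is also unfounded: setting $x_2=x_1$ does not produce coincident rows or columns in $\det[F^{\rB}_{i-j}(x_{N+1-i}-y_{N+1-j},t)]$, since the index of $F_n$ changes from row to row, and off the Weyl chamber this determinant need not vanish at all.

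The paper's proof avoids any probabilistic factorization. First, it notes that with $-q/p$ outside the contour one has $F^{\rB}_{i-N+1}(z_{N+1-i}-x_2,-t)=0$ whenever $x_2>z_2$, so the backward determinant kills all terms with $x_2>z_2$; since the forward probability forces $y_1\le x_1$ and by hypothesis $z_2<y_1$, the restriction $x_2<x_1$ is automatic and can simply be dropped (this is essentially your point (a), which suffices on its own \dash--- no summation by parts or coincident-column argument is needed). Second, it applies Prop.~\ref{prop:Cauchy-Binet5} (not Prop.~\ref{prop:Cauchy-Binet3}) to the now-unrestricted convolution of the $N\times N$ forward determinant with the $(N-1)\times(N-1)$ backward determinant, with $x_1$ riding along as the extra scalar $\tilde y$; this collapses everything into a single $N\times N$ determinant in which one row carries time $t$ and argument $x_1-y_{N+1-j}$ and the remaining rows carry time $0$ and arguments $z_{N+1-i}-y_{N+1-j}$. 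Finally, that determinant is evaluated as in the initial-condition check of \cite[Prop.~2.1]{bp-push} (the same computation as in Lem.~\ref{lem:G_properties}), yielding $\pp(X_t(1)=x_1\,|\,X_0(1)=y_1)\prod_{i\ge2}\uno{z_i=y_i}$; the autonomy of particle $1$ is used only at this last step to interpret $F^{\rB}_0(x_1-y_1,t)$ as the one-particle transition probability. If you replace your reduction step by this direct mixed-size Cauchy--Binet convolution, the rest of your outline can be salvaged.
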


\begin{proof}
If we exclude the singularity from the contour, then the function \eqref{eq:rBerBlockF} satisfies $F^{\rB}_{i - N + 1}(z_{N + 1 - i} - x_{2}, -t) = 0$ if $x_2 > z_2$, where we use the variables as in \eqref{eq:backward-in-time-one}. Hence, the restriction $x_2 < x_1$ in the sum in \eqref{eq:backward-in-time-one} can be omitted (this is because each term of the sum may be non-vanishing only when $x_2 \leq z_2$, and the dynamics implies $y_1 \leq x_1$, which by the assumptions on the variables yields $x_2 < x_1$). Applying then \eqref{eq:rightBernoulliBlock} and Prop.~\ref{prop:Cauchy-Binet5}, the left hand side of \eqref{eq:backward-in-time-one} turns to
\begin{equation}
\det \bigl[F_{i - j}(x_1 \cdot \uno{i = N} + z_{N + 1 - i} \cdot \uno{i < N} - y_{N + 1 - j}, t \cdot \uno{i = N})\bigr]_{i, j \in \set{N}}.
\end{equation}
One can prove that this determinant equals the right hand side of \eqref{eq:backward-in-time-one} in the same way as the initial condition is checked in \cite[Prop.~2.1]{bp-push}.
\end{proof}

\section{Proofs for right geometric jumps with sequential update}
\label{sec:rGeometric-proof}

As we described in Sec.~\ref{sec:rightGeometric}, TASEP with right geometric jumps is different from the other models in that section, and in particular Assum.~\ref{a:kappa} and hence Thm.~\ref{thm:main2} do not hold for it.
Because of this we need to prove the formula \eqref{eq:RG_distribution} directly in the case of sequential update ($\kappa=-1$).

We start with an auxiliary result.
Let $\G^{[1, N]}_{0, t} (\vec y, \vec x)$ be the function on the right hand side of \eqref{eq:rightGeometric}. Then for $\vT = (T_i)_{i \in \set N}$ with $T_i = i - N$,
for $m \in \set{N}$, $\vec x,\vec y \in \Omega_{m}$ and $T_{m} \leq t < T_{m + 1}$ (with the convention $T_{N+1} = \infty$) we define
\begin{equation}\label{eq:SchuetzRG_0}
	\G^{[1, m]}_{\vT,t} (\vec y, \vec x) = \pp \bigl(\xx^{\rG}_{t}(i) = x_i,\,i\in\set{m} \big| \xx^{\rG}_{T_i}(i) = y_i, i \in \set{m}\bigr),
\end{equation}
which is the transition probability for the particles $\xx^{\rG}(1), \dotsc, \xx^{\rG}(m)$ from $y_1, \dotsc, y_{m}$ at times $T_1, \dotsc, T_m$ to the locations $\vec x$ at time $t$.

\begin{lem}\label{lem:SchuetzRG}
In above setting, and with the notation $\vec z \sqcup y$ from Appdx.~\ref{app:convolutions},
\begin{equation}\label{eq:SchuetzRG2}
\textstyle \G^{[1, m]}_{\vT,t} (\vec y, \vec x) = \sum_{\substack{\vec z(T_i) \in \Omega_{i} \\ 1 \leq i < m}} \left(\prod_{i = 1}^{m-1} \G^{[1, i]}_{T_{i}, T_{i + 1}} (\vec z(T_i) \sqcup y_{i}, \vec z(T_{i+1}))\right) \G^{[1, m]}_{T_{m}, t} (\vec z(T_m) \sqcup y_{m}, \vec x),
\end{equation}
where $\vec z(T_1)$ is an empty vector.
\end{lem}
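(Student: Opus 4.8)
\textbf{Proof plan for Lemma~\ref{lem:SchuetzRG}.}
The plan is to induct on $m$, exactly mirroring the argument for Lem.~\ref{lem:SchuetzTASEP2} but with the roles of ``left'' and ``right'' particles interchanged, since for right geometric jumps the update is from right to left and particle $i$ starts at the \emph{later} time $T_i = i-N$ (so that smaller-index particles start first, the opposite of the caterpillar convention). For the base case $m=1$, on the interval $T_1 \le t < T_2$ only the first particle has started moving, so $\G^{[1,1]}_{\vT,t}(\vec y,\vec x) = \G^{[1,1]}_{T_1,t}(y_1,x_1)$, which is \eqref{eq:SchuetzRG2} with the empty vector $\vec z(T_1)$. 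For the inductive step, assume \eqref{eq:SchuetzRG2} holds for $m-1$; I would use the Markov property to split the evolution of the first $m$ particles at time $T_m - 1$: first evolve particles $1,\dots,m-1$ up to time $T_m-1$ (particle $m$ has not yet started), then apply one step of the dynamics at time $T_m$ at which the $m^{\text{th}}$ particle joins, and then evolve all $m$ particles up to time $t$.

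The key technical point, just as in Lem.~\ref{lem:SchuetzTASEP2}, is that the naive application of the Markov property produces the convolution in \eqref{eq:SchuetzRG2} but with the extra restriction that every entry of $\vec z(T_i)$ be strictly larger than $y_i$ (i.e.\ the newly started particle must be to the right of the already-moving ones, reversed from the Bernoulli case because geometric jumps are to the right and the update is from the right). This restriction blocks the application of Prop.~\ref{prop:Cauchy-Binet3} (the analogue of Prop.~\ref{prop:Cauchy-Binet2} for this ordering). So the heart of the proof is to show the restriction can be dropped. For this I would establish and use an analogue of Lem.~\ref{lem:G_seq}: letting $\G^{\rG}_{0,t}$ denote the right-hand side of \eqref{eq:rightGeometric} (defined on all of $\zz^N$), one needs (i) a ``collision-unfolding'' identity expressing the constrained one-step transition into a configuration where the new particle $m$ lands at or to the right of its right neighbor as an \emph{unconstrained} sum of $\G^{\rG}_{0,1}$ over configurations with the colliding coordinates pushed apart, and (ii) the invariance $\G^{\rG}_{0,t}(\vec y,\vec x) = \G^{\rG}_{0,t}(\vec y + \vec e_{k},\vec x)$ when $y_{k-1} = y_k$ (the analogue of Lem.~\ref{lem:G_seq}\ref{it:G_seq2}, with the sign of the shift reversed). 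These are proved by direct contour-integral manipulations using \eqref{eq:rightGeometric_F}, together with the fact that the geometric $F^{\rG}$ automatically vanishes on the complementary index range once the pole at $1/q$ is kept outside the contour; crucially, the memoryless property of the geometric distribution means a particle that is pushed still gets to make its own jump, which is precisely the algebraic shape these identities encode.

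Granting the unfolding identities, the inductive step proceeds as in the proof of Lem.~\ref{lem:SchuetzTASEP2}: combine the two cases (the new particle's right neighbor is strictly to its right, versus exactly adjacent so a collision/push can occur) into a single unconstrained formula
\[
\G^{[1,m]}_{\vT,t}(\vec y,\vec x) = \sum_{\vec u,\vec a \in \Omega_{m-1}} \G^{[1,m-1]}_{\vT,T_m-1}(\vec y,\vec u)\,\G^{[1,m-1]}_{0,1}(\vec u,\vec a)\,\G^{[1,m]}_{T_m,t}(\vec a \sqcup y_m,\vec x),
\]
then insert the inductive formula for $\G^{[1,m-1]}_{\vT,T_m-1}$, observe that $\G^{[1,m-1]}_{0,1}$ and $\G^{[1,m]}_{T_m,t}$ have the form \eqref{eq:FRkernel} (one checks the required decay $|L_{i,j}(x,y)|\le Cr^{x-y}$ directly from \eqref{eq:rightGeometric_F} with the contour outside $1/q$), and apply Prop.~\ref{prop:Cauchy-Binet_general} and Prop.~\ref{prop:Cauchy-Binet3} to collapse the nested sums into \eqref{eq:SchuetzRG2} for $m$. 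The main obstacle is item (i): correctly identifying the combinatorics of how geometric pushes cascade and verifying that after unfolding, the summand vanishes off the desired range so the constraint truly disappears — this requires care with the directionality (left-to-right vs.\ right-to-left update) and with tracking which colliding coordinates are forced to equal $y_m - 1, y_m - 2, \dots$ before the push. Everything else is bookkeeping parallel to Appdx.~\ref{sec:rightBernoulli-assumptions}.
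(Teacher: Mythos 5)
Your induction skeleton (base case, Markov-property split at the time the new particle joins, then Cauchy--Binet to merge the one-step factor into the inductive expansion) is the same as the paper's, but you have misidentified the crux. The paper's proof contains no analogue of Lem.~\ref{lem:G_seq} and no collision-unfolding step: the whole point of the choice $T_i=i-N$ for this model is that the rightmost particles start first and geometric jumps go to the right, so at the moment particle $m+1$ joins at $y_{m+1}$ the already-moving particles satisfy $a_m\geq y_m>y_{m+1}$ automatically; hence the Weyl-chamber restriction produced by the Markov property is vacuous and is simply dropped. Moreover, since blocking acts only through the right neighbour and the newly started particle is the leftmost one, its frozen presence does not alter the one-step transition of particles $1,\dotsc,m$ at all, so that factor is exactly $\G^{[1,m]}_{0,1}(\vec u,\vec a)$ with no correction (in contrast to the factor $q^{-1}$ and the unfolding identity \eqref{eq:G_seq1} needed in the Bernoulli case). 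After this observation, a single application of Prop.~\ref{prop:Cauchy-Binet_general} (merging $\G^{[1,m]}_{T_m,T_{m+1}-1}$ with $\G^{[1,m]}_{0,1}$) completes the inductive step; Prop.~\ref{prop:Cauchy-Binet3} only enters later, in Lem.~\ref{lem:RG-assumption}.

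The part of your plan that you single out as the heart of the proof---establishing geometric analogues of Lem.~\ref{lem:G_seq} by contour-integral manipulations in order to remove the restriction---is therefore not just unnecessary but rests on a wrong picture of the dynamics: with this ordering there is no configuration in which the newly started particle can be pushed, blocked, or adjacent in a way that affects anyone's jump at the joining time, so there is no ``collision'' case to unfold (your parenthetical also reverses the direction: the restriction says the moving particles lie strictly to the right of $y_i$, i.e.\ the new particle joins on the left). Indeed, the scenario your unfolding lemma would have to handle---a particle starting to the right of already-moving geometric particles---is exactly what fails for this model; it is the reason Assum.~\ref{a:kappa} does not hold and the caterpillar construction breaks, as discussed in Sec.~\ref{sec:rightGeometric}, so such a lemma could not be the route. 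If you carry out your Markov-property computation carefully you will find the case analysis collapses and the monotonicity argument above is all that is needed.
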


\begin{proof}
We will prove \eqref{eq:SchuetzRG2} by induction over $m = 1, 2, \dotsc, N$. For the base case, $m = 1$, note that on the time interval $T_1 \leq t < T_{2}$ only the $1^{\text{st}}$ particle moves. Therefore $\G^{[1, 1]}_{\vT, t} (\vec y, \vec x) = \G^{[1, 1]}_{T_1, t} (y_1, x_1)$, which is \eqref{eq:SchuetzRG2}.
Now assuming that \eqref{eq:SchuetzRG2} holds for some $1 \leq m < N$, we will prove it for $m+1$.
For $T_{m+1} \leq t < T_{m + 2}$ and $\vec x \in \Omega_{m+1}$ the Markov property yields
\begin{align}\label{eq:probRG0}
&\textstyle \G^{[1, m + 1]}_{\vT,t} (\vec y, \vec x) \textstyle= \sum_{\!\substack{\vec u, \vec a \in \Omega_{m} : \\ a_m > y_{m+1}}} \pp \bigl(\xx^{\rG}_{T_{m+1} - 1} = \vec u \big| \xx^{\rG}_{T_i}(i) = y_i, i \in \set{N}\bigr)\quad\mbox{}\\
 &\textstyle\hspace{0.2in}\times \pp \bigl(\xx^{\rG}_{T_{m+1}} = \vec a \sqcup y_{m+1}\big| \xx^{\rG}_{T_{m+1} - 1} = \vec u\bigr)
 \pp \bigl(\xx^{\rG}_{t}(i) = x_i,\,i\in\set{m + 1} \big| \xx^{\rG}_{T_{m+1}} = \vec a \sqcup y_{m+1}\bigr).
\end{align} 
The definition of the model implies that the terms contributing to the sum have $a_m \geq y_m > y_{m+1}$. Hence, the restriction $a_m > y_{m+1}$ in the sum can be omitted. The induction hypothesis yields $\pp \bigl(\xx^{\rG}_{T_{m+1} - 1} = \vec u \big| \xx^{\rG}_{T_i}(i) = y_i, i \in \set{N}\bigr) = \G^{[1, m]}_{\vT,T_{m+1} - 1} (\vec y, \vec u)$, where the latter is given by the right hand side of \eqref{eq:SchuetzRG2}. Moreover, $\pp \bigl(\xx^{\rG}_{T_{m+1}} = \vec a \sqcup y_{m+1}\big| \xx^{\rG}_{T_{m+1} - 1} = \vec u\bigr) = \G^{[1, m]}_{0,1} (\vec u, \vec a)$ and
\begin{align}
\textstyle \pp \bigl(\xx^{\rG}_{t}(i) = x_i,\,i\in\set{m + 1} \big| \xx^{\rG}_{T_{m+1}} = \vec a \sqcup y_{m+1}\bigr) = \G^{[1, m+1]}_{T_{m+1}, t} (\vec a \sqcup y_{m+1}, \vec x).
\end{align}
Then \eqref{eq:probRG0} can be written as
\begin{align}
\textstyle \G^{[1, m + 1]}_{\vT,t} (\vec y, \vec x) = \sum_{\vec u, \vec a \in \Omega_{m}} \G^{[1, m]}_{\vT,T_{m+1} - 1} (\vec y, \vec u) \G^{[1, m]}_{0,1} (\vec u, \vec a) \G^{[1, m+1]}_{T_{m+1}, t} (\vec a \sqcup y_{m+1}, \vec x).
\end{align}
By the induction hypothesis, the function $\G^{[1, m]}_{\vT,T_{m+1} - 1} (\vec y, \vec u)$ has the necessary form \eqref{eq:SchuetzRG2}. Moreover, the functions $\G^{[1, m]}_{0,1}$ and $\G^{[1, m+1]}_{T_{m+1}, t}$ are in the form \eqref{eq:FRkernel}, and we can apply Prop.~\ref{prop:Cauchy-Binet_general} to their convolution.
Then the last expression turns into \eqref{eq:SchuetzRG2} for $m+1$.
\end{proof}

By analogy with \eqref{eq:E-event} in the case $\kappa = 1$, we define the event
\begin{equation}
\bar \CE = \bigcap_{i \in \set N} \bigl\{i^{\text{th}} ~\text{particle stays put till time}~ i - N\bigr\}.
\end{equation}
The following result is the analogue of identity \eqref{eq:second-property} which we are going to use for this model.

\begin{lem}\label{lem:RG-assumption}
For any $\vec x, \vec y \in \Omega_N$,
\begin{equation}\label{eq:second-property-RG}
\pp(X^{\rG}_0 = \vec x | X^{\rG}_{1 - N} = \vec y, \bar \CE) = \det \bigl[F^{\rG}_{i - j}(x_{N + 1 - i} - y_{N + 1 - j}, N-j)\bigr]_{i, j \in \set{N}}.
\end{equation}
\end{lem}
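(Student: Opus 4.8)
The plan is to follow the blueprint of the proof of Lem.~\ref{lem:G-formula} for right Bernoulli jumps, but with the ``reflected'' versions of the convolution identities of Appendix~\ref{app:convolutions}, since the staggered starting times here make the lower-indexed particles start \emph{earlier} rather than later. First I would note that conditioning on $X^{\rG}_{1-N}=\vec y$ together with $\bar\CE$ amounts to imposing $X^{\rG}_{T_i}(i)=y_i$ for all $i\in\set N$ with $T_i=i-N$; since $T_N=0$, the left-hand side of \eqref{eq:second-property-RG} is precisely $\G^{[1,N]}_{\vT,0}(\vec y,\vec x)$, to which Lem.~\ref{lem:SchuetzRG} applies (with $m=N$, $t=0$) and expresses it as a telescoping sum over intermediate configurations $\vec z(T_i)$, $1\le i<N$, of a product of one-step right-geometric transition determinants (the innermost factor being trivial, since $t-T_N=0$, which forces $x_N=y_N$).

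The next step is to recognize each factor in that product as a determinantal function of the form \eqref{eq:FRkernel}. The function $F^{\rG}_{n}$ of \eqref{eq:rightGeometric_F} (contour around $0$ and $1$ but excluding $1/q$) coincides with the function $F_{k,\ell}$ of Sec.~\ref{sec:measures} for $\varphi(w)=p/(1-qw)$ with all speeds equal to $1$, so it admits the kernel factorization used there and obeys displacement-convolution recurrences of the type \eqref{eq:F_prop}. The one-step kernels are translation invariant, hence they all commute --- in particular with the difference kernels $\Q_i^{-1}$ of Appendix~\ref{app:convolutions} --- and the exponential decay furnished by the contour formula \eqref{eq:rightGeometric_F} makes all the sums involved absolutely convergent; this verifies the hypotheses of Prop.~\ref{prop:Cauchy-Binet3} (and of Prop.~\ref{prop:Cauchy-Binet5} at the steps where the peeled variable reattaches on the left).

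Finally I would collapse the telescoping sum by applying these propositions $N-1$ times, each application eliminating one $\vec z(T_i)$ and fusing two adjacent determinants, to arrive at a single $N\times N$ determinant; tracking the accumulation of the one-step time intervals turns the $j$-th column into $F^{\rG}_{i-j}(\,\cdot\,,N-j)$, and after the standard reversal of rows and columns this is exactly the right-hand side of \eqref{eq:second-property-RG}. The main obstacle is the bookkeeping in this last step: selecting the correct convolution identity at each peel given the combination of appending on the right with vector sizes shrinking by one, and checking that the aggregated times come out to precisely $N-j$ in column $j$. This is routine but delicate, and is the mirror image --- in reversed orientation --- of the right-Bernoulli argument in Lems.~\ref{lem:SchuetzTASEP2}--\ref{lem:G-formula}.
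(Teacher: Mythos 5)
Your proposal matches the paper's argument: the paper proves \eqref{eq:second-property-RG} exactly by combining Lem.~\ref{lem:SchuetzRG} (with $m=N$, $t=0$) with consecutive applications of Prop.~\ref{prop:Cauchy-Binet3} to collapse the convolution in \eqref{eq:SchuetzRG2} into a single $N\times N$ determinant. The only small correction is that since every scalar in \eqref{eq:SchuetzRG2} is attached on the right ($\vec z(T_i)\sqcup y_i$), Prop.~\ref{prop:Cauchy-Binet5} is never actually invoked; otherwise your outline is precisely the paper's (one-line) proof spelled out.
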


\begin{proof}
Formula \eqref{eq:second-property-RG} is obtained by applying Prop.~\ref{prop:Cauchy-Binet3} consecutively to the determinants in \eqref{eq:SchuetzRG2}.
\end{proof}

The proof of Lem.~\ref{lem:RG-assumption} does not use the fact that the particles have geometric jumps, and in fact \eqref{eq:second-property-RG} also holds for other models, e.g. right Bernoulli jumps.
However, only for $X_t^{\rG}$ the formula proves to be useful when considering different ending times $t + i - N$ for each particle $i$.
Indeed, if we consider such ending times for the model with right Bernoulli jumps and sequential update, then after the $i^{\text{th}}$ particle stops, the $(i+1)^{\text{st}}$ particle needs to make one step and could jump on top of its right neighbor. 
In contrast, for the model with right geometric jumps, since the basic update rule is parallel, when the $i^{\text{th}}$ particle stops, the $(i+1)^{\text{st}}$ particle cannot jump over it on the next step, because it is still blocked by the position of its neighbor at the previous time.
This suggests the following analog of Lem.~\ref{lem:Ber-TASEP_prll}:

\begin{lem}\label{lem:rightGeom}
For $t \geq N-1$ and $\vec x, \vec y \in\Omega_N$, and with $\CN(\vec x)$ as defined in Lem.~\ref{lem:Ber-TASEP_prll},
\begin{equation}\label{eq:rightGeom2}
\pp(X^{\rG}_{t + i - N}(i) = x_i, ~i \in \set{N} | X^{\rG}_{0} = \vec y) = p^{-\CN(\vec x)} \det \bigl[F^{\rG}_{i - j}(x_{N + 1 - i} - y_{N + 1 - j}, t + i - N)\bigr]_{i, j \in \set{N}}.
\end{equation}
\end{lem}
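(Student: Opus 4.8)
The plan is to reduce the statement with staggered ending times $t+i-N$ to the already-established identity \eqref{eq:second-property-RG} of Lemma \ref{lem:RG-assumption} by running the dynamics forward and then ``staggered-forward'' using the Markov property and the convolution identities of Appendix \ref{app:convolutions}. First I would condition on the configuration $\vec u$ of the system at time $0$ having started from $\vec y$ at time $0$ (trivially $\vec u = \vec y$), and then, for $\ell = 1, 2, \dots, N$, peel off the time interval during which exactly the particles $X^{\rG}(\ell), \dots, X^{\rG}(N)$ are the ones still evolving to reach their respective ending times: particle $i$ must reach $x_i$ at time $t+i-N$, and since $t+i-N$ is increasing in $i$, particle $i$ stops before particle $i+1$. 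So the picture is exactly the mirror image of the one in Sec.~\ref{sec:rightBernoulli-assumptions}: instead of particles starting at staggered negative times, they \emph{stop} at staggered times, and because the basic update of $X^{\rG}$ is parallel (blocking by the right neighbour's \emph{previous} position), no particle can jump over a neighbour that has just stopped — this is precisely the point flagged in the paragraph preceding the lemma, and it is what makes the convolution clean.

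Concretely, I would set up auxiliary functions analogous to \eqref{eq:SchuetzRG_0}: let $\G^{[1,m]}_{0,s}$ be the Schütz-type determinant \eqref{eq:rightGeometric} for the subsystem of particles $1,\dots,m$ (which makes sense since, by the argument around \eqref{eq:chierarchy} / Lemma \ref{lem:G_properties}, removing the leftmost particles is consistent), and use the Markov property to write the left-hand side of \eqref{eq:rightGeom2} as an iterated convolution over the intermediate configurations at times $t+1-N, t+2-N, \dots, t-1$. At the $\ell$-th step, from time $t+\ell-1-N$ to $t+\ell-N$ all particles $\ell, \ell+1, \dots, N$ make one parallel step but particle $\ell$ has already reached its destination and must stay put; the key observation (mirroring the $q^{-1}$ vs.\ $p^{-1}$ bookkeeping in Lemma \ref{lem:G_seq} and the factor $q^{\CN(\vec x)}$ in Lemma \ref{lem:Ber-TASEP_prll}) is that forcing a geometric particle to stay put when it sits immediately to the left of a neighbour that just moved contributes a factor $p^{-1}$ for each such coincidence — these coincidences are counted exactly by $\CN(\vec x) = \#\{2\le i\le N : x_{i-1}-x_i = 1\}$. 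After collecting these factors, each one-step transition is a determinant of the form \eqref{eq:FRkernel}, so Prop.~\ref{prop:Cauchy-Binet_general} (together with Prop.~\ref{prop:Cauchy-Binet3} to handle the drop in the number of particles, as in the proof of Lemma \ref{lem:RG-assumption}) collapses the iterated convolution into a single determinant, yielding the right-hand side of \eqref{eq:rightGeom2} with the shifted time arguments $t+i-N$ in the $i$-th row.

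The main obstacle I anticipate is the careful verification that the restriction ``$x_2 < x_1$''-type inequalities appearing in the intermediate sums (forcing each partial configuration to lie in the Weyl chamber, with the additional constraint that a stopped particle does not get overtaken) can be dropped so that Prop.~\ref{prop:Cauchy-Binet_general} applies on the full $\Omega_m$. In the right-Bernoulli case this was handled by Lemma \ref{lem:G_seq}\ref{it:G_seq1}--\ref{it:G_seq2}; here one needs the analogous algebraic identities for $\G^{\rG}_{0,1}$, namely that when a geometric particle is pressed against its neighbour, summing the would-be forbidden transitions against the determinantal kernel vanishes (because the contour integral \eqref{eq:rightGeometric_F} for the relevant $F^{\rG}_{n}$ with the forbidden argument has no residue), and that the determinant $\G^{\rG}_{0,s}(\vec y,\cdot)$ is unchanged when two coordinates of $\vec y$ coincide after a shift. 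These follow by the same residue computations used for $F^{\rB}$, but they must be stated and checked for the geometric kernel with its extra pole at $1/q$ kept outside the contour; once that is in place, the rest is a routine induction on $m$ exactly paralleling Lemma \ref{lem:SchuetzRG} and Lemma \ref{lem:G-formula}.
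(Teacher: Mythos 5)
Your plan is essentially the paper's proof: the paper establishes \eqref{eq:rightGeom2} by induction on $N$, decomposing with the Markov property at the staggered stopping times $S_i=t+i-N$, exploiting the fact that a just-stopped particle still blocks its left neighbour through its previous-time position, picking up a one-step reweighting $p^{-\uno{x_1-x_2=1}}$ that accumulates to $p^{-\CN(\vec x)}$, and collapsing the convolutions with Props.~\ref{prop:Cauchy-Binet_general} and \ref{prop:Cauchy-Binet5} (your opening framing as a reduction to \eqref{eq:second-property-RG} is neither used in your own plan nor in the paper's argument). Two small corrections: the factor $p^{-1}$ does not come from forcing the stopped particle to stay put (its later motion is simply irrelevant to the joint event), but from the still-moving left neighbour, whose blocked probability of landing at distance $1$ below the frozen particle is the geometric tail, i.e.\ $p^{-1}$ times the free point mass appearing in the subsystem determinant; and since the frozen coordinate is prepended (it is the rightmost particle), the size-reduction step uses Prop.~\ref{prop:Cauchy-Binet5} rather than Prop.~\ref{prop:Cauchy-Binet3}, while no analogue of Lem.~\ref{lem:G_seq} is needed because the restrictions in the intermediate sums can be dropped by support considerations alone.
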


\begin{proof}
Write $S_i = t + i - N$ and let $\G^{[1,N]}_{0,\vS} (\vec y, \vec x)$ be the probability on the left hand side of \eqref{eq:rightGeom2}. At any time point $s$ we consider $N \geq 1$ particles $\xx^{\rG}_{s}(N) < \dotsm < \xx^{\rG}_{s}(1)$, such that on the time interval $s > S_{i-1}$ (with the convention $S_0 = 0$) only the particles $\xx^{\rG}_{s}(i)$, $\dotsc$, $\xx^{\rG}_{s}(N)$ move. 
Denote these moving particles by $\xx^{[i, N]}_{s} = (\xx^{\rG}_{s}(i) , \dotsc, \xx^{\rG}_{s}(N))$, and let $\G^{[i, N]}$ be their transition function.

We prove \eqref{eq:rightGeom2} by induction over $N \geq 1$. The base case $N = 1$ is trivial. Assuming that \eqref{eq:rightGeom2} holds for $N - 1 \geq 1$, we will prove it for $N$. 
From the Markov property we may write $\G^{[1,N]}_{0,\vS} (\vec y, \vec x)$ as
\begin{align}
& \textstyle\sum_{{\vec u \in \Omega_{N-1} : \, u_{1} < x_1}} \sum_{{\vec a \in \Omega_{N-1} : \, a_{1} = x_{2}}} \pp \bigl(\xx^{[1, N]}_{S_1} = x_1 \sqcup \vec u \big| \xx^{[1, N]}_{0} = \vec y\bigr) \pp \bigl(\xx^{[2, N]}_{S_2} = \vec a \big| \xx^{[1, N]}_{S_1} = x_1 \sqcup \vec u\bigr)\\[-12pt]
&\hspace{5.5cm}\times \pp \bigl(\xx^{\rG}_{S_k}(k) = x_k,\, 2 \leq k \leq N \big| \xx^{[2, N]}_{S_2} = \vec a\bigr).\label{eq:rightGeom-interm}
\end{align}
We have $\pp (\xx^{[1, N]}_{S_1} = x_1 \sqcup \vec u | \xx^{[1, N]}_{t} = \vec y) = \G^{[1, N]}_{t, S_1} (\vec y, x_1 \sqcup \vec u)$, $\pp (\xx^{[2, N]}_{S_2} = \vec a |  \xx^{[1, N]}_{S_1} = x_1 \sqcup \vec u) = p^{-\uno{x_1 - x_2 = 1}} \G^{[2, N]}_{0, 1} (\vec u, \vec a)$, and $\pp (\xx^{\rG}_{S_k}(k) = x_k,\, 2 \leq k \leq N | \xx^{[2, N]}_{S_2} = \vec a) = \G^{[2, N]}_{S_2, \vS_{> 1}} (\vec a, \vec x_{> 1})$, where $\vS_{> 1}$ and $\vec x_{> 1}$ are obtained from $\vS$ and $\vec x$ respectively by removing the first entries. The multiplier $p^{-\uno{x_1 - x_2 = 1}}$ is needed to change the jump probability of the $2^{\text{nd}}$ particle in the case $x_1 - x_2 = 1$.

The function $\G^{[2, N]}_{0, 1} (\vec u, x_2 \sqcup \vec a)$ equals the probability for $N-1$ particles to go from $\vec u$ to $x_2 \sqcup \vec a$ during a unit time interval. Hence, it can be non-zero only if $u_1 \leq x_2$, which yields $u_1 \leq x_2 < x_1$, so the restriction $u_{1} < x_1$ in the sum can be omitted. Moreover, for $a_1 \neq x_2$ the last probability in \eqref{eq:rightGeom-interm} vanishes, and the restriction $a_1 = x_2$ in the sum can be omitted. Therefore
\begin{equation}
\textstyle\G^{[1,N]}_{0,\vS} (\vec y, \vec x) = p^{-\uno{x_1 - x_2 = 1}} \sum_{\vec u \in \Omega_{N-1}} \sum_{\vec a \in \Omega_{N-1}} \G^{[1, N]}_{0, S_1} (\vec y, x_1 \sqcup \vec u) \G^{[2, N]}_{0, 1} (\vec u, \vec a) \G^{[2, N]}_{S_2, \vS_{> 1}} (\vec a, \vec x_{> 1}).
\end{equation}
Using the induction hypothesis for the function $\G^{[2, N]}_{S_2, \vS_{> 1}} (\vec a, \vec x_{> 1})$ and applying Prop.~\ref{prop:Cauchy-Binet_general} to the sum over $\vec a$ allows to write the preceding expression as
\begin{equation}
\textstyle p^{-\uno{x_1 - x_2 = 1}} \sum_{\vec u \in \Omega_{N-1}} \G^{[1, N]}_{0, S_1} (\vec y, x_1 \sqcup \vec u) \G^{[2, N]}_{S_2, \vS_{> 1}} (\vec u, \vec x_{> 1}).
\end{equation}
Using again the induction hypothesis to the function $\G^{[2, N]}_{S_2, \vS_{> 1}}$ and applying Prop.~\ref{prop:Cauchy-Binet5} to the sum, we obtain \eqref{eq:rightGeom2}.
\end{proof}

\begin{proof}[Proof of Prop.~\ref{prop:RG_distribution}]
In the case of parallel update, formula \eqref{eq:RG_distribution} follows from Thm.~\ref{thm:main}. 
From now on we consider the case of sequential update ($\kappa=-1$).
Then the proof goes along the lines of the proof of Thm.~\ref{thm:main2}, the only difference being the orientation of space-like paths, so we only provide a sketch of the proof. 

Define the set of \emph{space-like paths} for this model as
\begin{equation}
\bar \SLP_N = \bigcup_{m \geq 1} \bigl\{(\fn_i)_{i \in \set{m}}\!: \fn_i \in \set{N} \times \nn_0, \fn_i \overline{\prec} \fn_{i+1}\bigr\},
\end{equation}
where the relation $(n_1, t_1) \overline{\prec} (n_2, t_2)$ now means $n_1 \leq n_2$,  $t_1 \leq t_2$ and $(n_1, t_1) \neq (n_2, t_2)$. Then for $T_i = i - N$,  $\cS = \{(n_1, t_1), \dotsc, (n_m, t_m)\} \in \bar \SLP_N$ and for $\vec y \in \Omega_N$ and $\vec x \in \Omega_m$ we define 
\begin{equation}
\G^{\rG}_{\vT, \cS}(\vec y, \vec x) = \pp \bigl(\xx^{\rG}_{t_i}(n_i) = x_i,\,i\in\set{m} \big| \xx^{\rG}_{T_i}(i) = y_i, i \in \set{N}\bigr).
\end{equation}
Let the set $\Omega_{n, N}$ contain the vectors $(x_n, \ldots, x_N)$ such that $x_{n} < x_{n+1} < \cdots < x_{N}$. Then by analogy with \eqref{eq:G+_new} we can write 
\begin{equation}
\textstyle\G^{\rG}_{\vT, \cS} (\vec y, \vec x) = \sum_{\vec x(0) \in \Omega_{N}} \sum_{\!\!\substack{\vec x(t_i) \in \Omega_{n_i, N}: \\ x_{1}(t_i) = x_i, i \in \set{m}}} \G^{\rG}_{\vT,0} (\vec y, \vec x(0)) \prod_{i=1}^{m} \G^{\rG}_{t_{i} - t_{i-1}}(\vec x_{\geq n_i}(t_{i-1}), \vec x(t_i)),
\end{equation}
where $t_0 = 0$, the function $\G^{\rG}_{\vT,0}$ equals \eqref{eq:SchuetzRG2} in the case $t = 0$, and where $\G^{\rG}_{t}$ is the transition function given by the right hand side of \eqref{eq:rightGeometric}. 

Define $\un_i = N - n_i + 1$, so that $\un_1 \geq \un_2 \geq \dotsm \geq \un_m$. 
Analogously to \eqref{eq:DomainD} we define the domain
\begin{align}
\bar\D_{\cS} &= \bigl\{\x^{\un_0}_{\ell}(t_0) \in \zz : \ell \in \set{\un_0}\bigr\} \\
&\qquad \cup \bigcup_{i \in \set{m}} \Bigl\{\x^n_{\ell}(t_i) \in \zz : \un_{i+1} \leq n \leq \un_i, \ell \in \set{n} ~\text{ such that }~ \x^{n+1}_\ell(t_i) < \x^n_\ell(t_i) \leq \x^{n+1}_{\ell+1}(t_i)\Bigr\}, 
\end{align}
where $t_0 = 0$, $\un_0 = N$ and $\un_{m+1} = 0$.
Next we define a signed measure $\bar{\cW}$ on $\X \in \bar \D_{\cS}$ through \eqref{eq:W_def} using the time points $t_i$ in place of $\ut_i$ and where the functions \eqref{eq:T_def} and \eqref{eq:Psi_def} are defined with $\varphi(w) = p / (1 - q w)$. Then, as in Prop.~\ref{prop:G+_expansion}, we can write
\begin{equation}
\textstyle\G^{\rG}_{\vT, \cS}(\vec y, \vec x) = C\sum_{\,{\X \in \bar \D_{\cS}:  \x_{\un_i}^{\un_i}(t_i) = x_i, i \in \set{m}}}\bar \cW(\X),
\end{equation}
for a constant $C \neq 0$. As in Thm.~\ref{thm:biorth_general} we can compute the correlation kernel of the determinantal measure $\bar \cW$, which yields formula \eqref{eq:RG_distribution} with the kernel given by \eqref{eq:KernelK} with the value of $\kappa$ equal $-1$. Applying then Thm.~\ref{thm:kernel-rw} with the functions $\psi(w) = \varphi(w)^t$ and $a(w) = 1/\varphi(w)$, we get \eqref{eq:RG_distribution}.\end{proof}

\section{Formulas for discrete-time RSK-solvable models}
\label{app:DW}

In this appendix we derive the transition probabilities for the discrete-time variants of TASEP described in Sec.~\ref{sec:caterpillars}, by rewriting in the form \eqref{eq:G-main} the formulas which were derived in \cite{MR2469339} using the four basic variants of the Robinson-Schensted-Knuth (RSK) algorithm.

Fix a vector $\vec\alpha = (\alpha_1, \dotsc, \alpha_N) \in \rr^N$.
The $r^{\text{th}}$ \emph{complete homogeneous symmetric polynomial} and the $r^{\text{th}}$ \emph{elementary symmetric function} are given respectively by
\begin{equation}
	\textstyle h_r(\vec\alpha) = \sum_{\!\!\substack{k_1, \dotsc, k_N \geq 0 \\ k_1 + \dotsm + k_N = r}} \alpha_1^{k_1} \alpha_2^{k_2} \dotsm \alpha_N^{k_N}, \qquad e_r(\vec\alpha) = \sum_{k_1 < k_2 < \dotsm < k_r} \alpha_{k_1} \alpha_{k_2} \dotsm \alpha_{k_r},
\end{equation}
where by convention $h_0 \equiv e_0 \equiv 1$ and $h_r \equiv e_r \equiv 0$ for $r < 0$. For $0 \leq k < \ell \leq N$, let $\vec\alpha^{(k, \ell)} = (0, \dotsc, 0, \alpha_{k+1}, \dotsc, \alpha_\ell, 0, \dotsc, 0)$ be the vector obtained from $\vec\alpha$ by setting the first $k$ and the last $N-\ell$ entries to $0$. 
Write $h^{(k, \ell)}_r(\vec\alpha) = h_r(\vec \alpha^{(k, \ell)})$ and $e^{(k, \ell)}_r(\vec \alpha) = e_r( \vec\alpha^{(k, \ell)})$ with $h^{(k, k)}_r( \vec\alpha) = e^{(k, k)}_r( \vec\alpha) = \uno{r = 0}$ and then, for a fixed function $f$ on $\zz$, define
\[\textstyle f^{(ij)}_{\vec \alpha}(k)=\sum_{\ell=0}^{i-j}(-1)^\ell e_\ell^{(ji)}(\vec \alpha)f(k+\ell)\uno{i\geq j}+\sum_{\ell=0}^{\infty}h_\ell^{(ij)}(\vec \alpha)f(k+\ell)\uno{i<j},\]
provided the series converges absolutely,
and define $\hat f^{(ij)}_{\vec \alpha}(k)$ in the same way except that $f(k+\ell)$ is replaced by $f(k-\ell)$
The formulas in \cite{MR2469339} are written in terms of $f^{(ij)}_{\vec \alpha}(k)$ and $\hat f^{(ij)}_{\vec \alpha}(k)$.
Our first task is to find an alternative expression for them.

Taking all entries of $\vec \alpha$ to be non-zero, define, for $k,\ell\in\set{N}$ and $x,y\in\zz$,
\begin{equation}\label{eq:H_symm}
\textstyle H^{(\vec\alpha)}_{k, \ell}(x,y) = \frac{1}{2\pi\I}\oint_{\gamma} \frac{\d w}{w^{x-y+1}} \frac{\prod_{i = 1}^{\ell} (1 - \alpha_i w)}{\prod_{i = 1}^{k} (1 - \alpha_i w)},
\end{equation}
where the contour $\gamma$ encloses $0$ but not any pole $w=1/\alpha_i$.
The kernel $H^{(\vec\alpha)}_{k, \ell}$ is related to symmetric functions:

\begin{lem}
$H^{(\vec\alpha)}_{k, \ell}(x,y) = (-1)^{x-y} e^{(k, \ell)}_{x-y}(\vec\alpha)$ if $\ell \geq k$, and $H^{(\vec\alpha)}_{k, \ell}(x,y) = h^{(\ell, k)}_{x-y}(\vec\alpha)$ if $\ell < k$.
\end{lem}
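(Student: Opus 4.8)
The plan is to evaluate the contour integral in \eqref{eq:H_symm} by residues and match the result with the known generating-function identities for elementary and complete homogeneous symmetric polynomials. Recall that for a finite set of parameters one has the generating function identities $\prod_{i=1}^\ell(1-\alpha_iw)=\sum_{r\geq0}(-1)^re_r(\alpha_1,\dots,\alpha_\ell)w^r$ and $\prod_{i=1}^k(1-\alpha_iw)^{-1}=\sum_{r\geq0}h_r(\alpha_1,\dots,\alpha_k)w^r$, both valid as formal power series in $w$ (and as convergent series for $|w|$ small enough, since all $\alpha_i\neq0$). Using the definition $\vec\alpha^{(k,\ell)}$, the ratio $\prod_{i=1}^\ell(1-\alpha_iw)/\prod_{i=1}^k(1-\alpha_iw)$ appearing in \eqref{eq:H_symm} is exactly $\prod_{i=k+1}^\ell(1-\alpha_iw)$ when $\ell\geq k$, which is a polynomial of degree $\ell-k$ whose coefficients are (signed) elementary symmetric polynomials in $\alpha_{k+1},\dots,\alpha_\ell$, i.e.\ $e^{(k,\ell)}_r(\vec\alpha)$; and it is $\prod_{i=\ell+1}^k(1-\alpha_iw)^{-1}$ when $\ell<k$, whose power-series coefficients are $h^{(\ell,k)}_r(\vec\alpha)$.

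Concretely, I would first handle the case $\ell\geq k$. Here the integrand $\prod_{i=k+1}^\ell(1-\alpha_iw)\,w^{-(x-y+1)}$ has its only singularity at $w=0$ (the factors $1/\alpha_i$ are not poles since the numerator polynomial is entire), so by the residue theorem $H^{(\vec\alpha)}_{k,\ell}(x,y)$ equals the coefficient of $w^{x-y}$ in $\prod_{i=k+1}^\ell(1-\alpha_iw)$. Since $\prod_{i=k+1}^\ell(1-\alpha_iw)=\sum_{r\geq0}(-1)^re_r(\alpha_{k+1},\dots,\alpha_\ell)w^r=\sum_{r\geq0}(-1)^re^{(k,\ell)}_r(\vec\alpha)w^r$, this coefficient is $(-1)^{x-y}e^{(k,\ell)}_{x-y}(\vec\alpha)$ (which is automatically $0$ unless $0\leq x-y\leq \ell-k$, consistent with the convention $e_r\equiv0$ for $r<0$ or $r$ too large). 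For the case $\ell<k$, the integrand is $\prod_{i=\ell+1}^k(1-\alpha_iw)^{-1}w^{-(x-y+1)}$, whose only singularity \emph{inside} $\gamma$ is at $w=0$ (the contour is chosen to exclude all $1/\alpha_i$), so again by the residue theorem $H^{(\vec\alpha)}_{k,\ell}(x,y)$ equals the coefficient of $w^{x-y}$ in the Taylor expansion of $\prod_{i=\ell+1}^k(1-\alpha_iw)^{-1}$ around $w=0$, which is $h_{x-y}(\alpha_{\ell+1},\dots,\alpha_k)=h^{(\ell,k)}_{x-y}(\vec\alpha)$ (and $0$ for $x-y<0$, matching the convention $h_r\equiv0$ for $r<0$). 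The degenerate case $\ell=k$ gives integrand $w^{-(x-y+1)}$, whose residue is $\uno{x=y}=\uno{x-y=0}$, agreeing with both $e^{(k,k)}_{x-y}(\vec\alpha)=\uno{x-y=0}$ and $h^{(k,k)}_{x-y}(\vec\alpha)=\uno{x-y=0}$.

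The argument is essentially a bookkeeping exercise, so there is no serious obstacle; the only point requiring a word of care is the legitimacy of expanding $\prod_{i=\ell+1}^k(1-\alpha_iw)^{-1}$ as a convergent power series on a contour small enough to lie inside all the circles $|w|=|1/\alpha_i|$, which is guaranteed by the hypothesis that all $\alpha_i$ are non-zero together with the stipulation in \eqref{eq:H_symm} that $\gamma$ encloses $0$ but no pole $1/\alpha_i$ (the value of the contour integral is independent of the particular such $\gamma$ by Cauchy's theorem, so we may take $\gamma$ to be a small circle). I would therefore present the proof as: (i) reduce the ratio of products to a single product over the index range $\{k+1,\dots,\ell\}$ or $\{\ell+1,\dots,k\}$; (ii) apply the residue theorem to identify the integral with a power-series coefficient; (iii) invoke the standard generating-function identities for $e_r$ and $h_r$ to name that coefficient.
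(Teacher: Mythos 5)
Your proposal is correct and follows essentially the same route as the paper: the paper also evaluates \eqref{eq:H_symm} by the residue at $w=0$, reducing the ratio of products to $\prod_{i=k+1}^{\ell}(1-\alpha_i w)$ or $\prod_{i=\ell+1}^{k}(1-\alpha_i w)^{-1}$ and reading off the coefficient of $w^{x-y}$, which is exactly your generating-function identification of $e^{(k,\ell)}_{x-y}$ and $h^{(\ell,k)}_{x-y}$ (the paper just computes that coefficient explicitly as a derivative, respectively via expanding the geometric series, and treats $x-y<0$ as a separate trivial case).
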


\begin{proof}
Write $H^{(\vec\alpha)}_{k, \ell}(x)=H^{(\vec\alpha)}_{k, \ell}(x,0)$.
If $x < 0$, then $H^{(\vec\alpha)}_{k, \ell}(x) = 0$, because the contour in \eqref{eq:H_symm} does not enclose any poles of the function in the integral. This yields the required identities, because $e^{(k, \ell)}_{x}(\vec\alpha) = h^{(\ell, k)}_{x}(\vec\alpha) = 0$. 

Consider now $x \geq 0$. The case $\ell = k$ is trivial, because $H^{(\vec\alpha)}_{k, k}(x) = \uno{x = 0}$, which coincides with $(-1)^{x} e^{(k, k)}_{x}(\vec\alpha)$.
In the case $\ell > k$, the Cauchy residue theorem yields 
\begin{equation}
\textstyle H^{(\vec\alpha)}_{k, \ell}(x) = \frac{1}{x!} \frac{\d^{x}}{\d w^{x}} \prod_{i = k+1}^{\ell} (1 - \alpha_i w) \Big|_{w = 0} = (-1)^{x} \sum_{k+1 \leq j_1 < \dotsm < j_{x} \leq \ell} \alpha_{j_1} \dotsm \alpha_{j_x} = (-1)^{x} e^{(k, \ell)}_{x}(\vec\alpha).
\end{equation}
In the case $\ell < k$, choosing the contour so that $|w| < 1/\alpha_i$ for each $i$, we can write $(1 - \alpha_i w)^{-1} = \sum_{k_i \geq 0} (\alpha_i w)^{k_i}$, and the Cauchy residue theorem yields 
\[\textstyle H^{(\vec\alpha)}_{k, \ell}(x) = \sum_{\!\substack{j_{\ell + 1}, \dotsc, j_k \geq 0 \\ j_{\ell+1} + \dotsm + j_k = x}} \alpha_{\ell+1}^{j_{\ell+1}} \dotsm \alpha_k^{j_k} = h^{(\ell, k)}_{x}(\vec\alpha).
  \qedhere\]
\end{proof}

Using the lemma and the fact that $e^{(k,\ell)}_y(\vec\alpha)=0$ if $y>\ell-k$, the above functions can be written as
\begin{equation}\label{eq:newDW}
\textstyle f^{(\ell, k)}_{\vec\alpha} = (H^{(\vec\alpha)}_{k, \ell})^*f\qqand\hat f^{(\ell, k)}_{\vec\alpha} = H^{(\vec\alpha)}_{k, \ell}f.
\end{equation}

\subsection{Proof of Eqn.~\ref{eq:rightBernoulliBlock}}
\label{sec:Ber-TASEP}

For the process $X^{\rB}_t \in \Omega_N$ defined in Sec.~\ref{sec:RB_caterpillars}, set $Y^{\rB}_t(i) = X^{\rB}_t(i) + i$, so that $Y^{\rB}_t \in \bar \Omega_N$ (see \eqref{eq:OmegaBar} and the evolution of $Y^{\rB}_t$ coincides with the model from Case~B in \cite[Sec.~2]{MR2469339}.
If we denote $v_i = p_i / q_i$ and let the function $H^{(\vec v)}$ be defined by \eqref{eq:H_symm} with values $\alpha_i = v_i$, then in view of \eqref{eq:newDW} the formula from \cite[Thm.~1]{MR2469339} becomes
\begin{equation}\label{eq:ProbBernoulliBlock}
 \textstyle 	\pp (Y^{\rB}_t = \vec x \,|\, Y^{\rB}_0 = \vec y) = \left(\prod_{i=1}^N q_i^{t} v_i^{x_i - y_i}\right) \det \bigl[ (H^{(\vec v)}_{k, \ell})^* \nu_t(x_\ell - y_k - \ell + k) \bigr]_{k, \ell \in \set{N}},
\end{equation}
for two configurations $\vec x, \vec y \in \bar \Omega_N$ and with $\nu_t(x) = {t \choose x} \uno{0 \leq x \leq t}$.
Using \eqref{eq:H_symm} and the contour integral formula $\nu_t(x)=\frac{1}{2\pi \I} \oint_{\Gamma_{0}}\!\d w\frac{(1+ w)^t}{w^{x+1}}$ we get
\begin{equation}
\textstyle (H^{(\vec v)}_{k, \ell})^*\nu_t(x)
=\frac{1}{2\pi \I} \oint_{\gamma'} \frac{\d w}{w^{x + \ell - k +1}} \frac{\prod_{i=1}^\ell (w - v_i)}{\prod_{i=1}^k (w - v_i)} (1+ w)^t,
\end{equation}
where the integration contour $\gamma'$ includes $0$ and all entries of $\vec v$.
Changing back to $X^{\rB}_t(i) = Y^{\rB}_t(i) - i$ in \eqref{eq:ProbBernoulliBlock} and taking all speeds to be equal, we arrive at \eqref{eq:rightBernoulliBlock} after a simple change of variables.

\subsection{Proof of Eqn.~\ref{eq:Ber-pushTASEP}}
\label{sec:Ber-pushTASEP}

For $\xx^{\lB}_t$ as in Sec.~\ref{sec:LB}, we define the process $Y^{\lB}_t(i) = - X^{\lB}_t(i) - i$. Then $Y^{\lB}_t(1) \leq Y^{\lB}_t(2) \leq \dotsm \leq Y^{\lB}_t(N)$; we denote by $\tilde \Omega_N$ the set of such configurations. 
Proceeding as in the previous case, Case~D of \cite[Thm.~1]{MR2469339} yields, for $\nu_t$ as in the previous case,
\begin{equation}\label{eq:ProbBerPush}
\textstyle  \pp(Y^{\lB}_t = \vec x | Y^{\lB}_0 = \vec y) = \left(\prod_{i=1}^N q_i^{t} v_i^{y_i - x_i}\right) \det \bigl[H^{(\vec v)}_{k, \ell} \nu_t (x_\ell - y_k + \ell - k) \bigr]_{k, \ell \in \set{N}},
\end{equation}
where $\vec x, \vec y \in \tilde \Omega_N$ and $v_i = q_i / p_i$. Using \eqref{eq:H_symm} and the integral representation of $\nu_t$ from Sec.~\ref{sec:Ber-TASEP}, we get
\begin{equation}
\textstyle H^{(\vec v)}_{k, \ell} \nu_t(x)
=\frac{1}{2\pi \I} \oint_{\gamma'} \frac{\d w}{w^{-x + \ell - k + 1}} \frac{\prod_{i = 1}^{\ell} (w - v_i)}{\prod_{i = 1}^{k} (w - v_i)} (1 + 1 / w)^{t},
\end{equation}
where the integration contour $\gamma'$ includes $0$ and all entries of $\vec v$. Changing back to $X^{\lB}_t(i) = - Y^{\lB}_t(i) - i$ in \eqref{eq:ProbBerPush} and taking all speeds to be equal, we get \eqref{eq:Ber-pushTASEP}.

\subsection{Proof of Eqn.~\ref{eq:ProbGeomPush}}
\label{sec:ProbGeomPush}

For $\xx^{\lG}_t$ as in Sec.~\ref{sec:LG}, we define the process $Y^{\lG}_t(i) = -X^{\lG}_t(i) - i$ so that $Y^{\lG}_t$ lives in $\tilde \Omega_N$ as in the previous case.
We set $\mu_t(x) = {t + x - 1 \choose x} \uno{x \geq 0, t \geq 1} + \uno{t = x = 0}$, which can be written as $\mu_t(x) = \frac{1}{2\pi \I} \oint_{\tilde \gamma} \frac{(1 - w)^{-t}}{w^{x+1}} \d w$, where the contour $\tilde \gamma$ includes $0$, but not $1$. Then Case~A of \cite[Thm.~1]{MR2469339} yields 
\begin{equation}\label{eq:ProbGeomPush2}
\textstyle  \pp(Y^{\lG}_t = \vec x | Y^{\lG}_0 = \vec y) = \left( \prod_{i=1}^N p_i^t q_i^{x_i - y_i}\right) \det \bigl[ H^{(\vec v)}_{k, \ell} \mu_t(x_\ell - y_k + \ell - k) \bigl]_{k, \ell \in \set{N}},
\end{equation}
where $\vec x, \vec y \in \tilde \Omega_N$ and $v_i = 1/q_i$.
Then using \eqref{eq:H_symm} we can write 
\begin{align}
\textstyle H^{(\vec v)}_{k, \ell} \mu_t(x) = \frac{1}{2\pi \I} \oint_{\gamma} \frac{\d w}{w^{-x + \ell - k +1}} \frac{\prod_{i=1}^\ell (w - 1 / q_i)}{\prod_{i=1}^k (w - 1 / q_i)} (1- 1/w)^{-t},
\end{align}
where the contour $\gamma$ encloses $0$, $1$ and all values $v_i$. Changing back to $X^{\lG}_t(i) = -Y^{\lG}_t(i) - i$ in \eqref{eq:ProbGeomPush2} and taking all speeds to be equal, we get \eqref{eq:ProbGeomPush}.

\subsection{Proof of Eqn.~\ref{eq:rightGeometric}}
\label{sec:GeomPush}

For $\xx^{\rG}_t$ as in Sec.~\ref{sec:rightGeometric-prll}, let us define the process $Y^{\rG}_t(i) = X^{\rG}_t(i) + i$, so that $Y^{\rG}_t \in \bar \Omega_N$. 
Case~C of \cite[Thm.~1]{MR2469339} yields, for $\mu_t$ as in the previous case,
\begin{equation}\label{eq:ProbGeometricBlock}
\textstyle  \pp(Y^{\rG}_t = \vec x | Y^{\rG}_0 = \vec y) = \left(\prod_{i=1}^N p_i^{t} q_i^{x_i - y_i}\right) \det \bigl[ (H^{(\vec v)}_{k, \ell})^* \mu_t (x_\ell - y_k - \ell + k) \bigr]_{k, \ell \in \set{N}},
\end{equation}
where $\vec x, \vec y \in \bar \Omega_N$ and $v_i = q_i$. Using the integral representation of $\mu_t$ and \eqref{eq:H_symm}, we may write
\begin{align}
\textstyle (H^{(\vec v)}_{k, \ell})^* \mu_t(x) = \frac{1}{2\pi \I} \oint_{\gamma} \frac{\d w}{w^{x + \ell - k +1}} \frac{\prod_{i=1}^\ell (w - q_i)}{\prod_{i=1}^k (w - q_i)} (1- w)^{-t},
\end{align}
where the contour $\gamma$ includes $0$ and all entries of $\vec v$, but does not include $1$. Changing back to $X^{\rG}_t(i) = Y^{\rG}_t(i) - i$ in \eqref{eq:ProbGeometricBlock} and taking all speeds to be equal, we get \eqref{eq:rightGeometric}.

\vs\vs
\noindent{\bf Acknowledgements.}
The authors would like to thank Alexei Borodin for discussions several years ago which motivated some of the results of this paper; Patrik Ferrari for pointing out to us the connection between sequential and parallel updates at the level of Markov chains on Gelfand-Tsetlin patterns \cite{bf-tilings}; and Jeremy Quastel for many valuable discussions related to this work.
KM was partially supported by NSF grant DMS-1953859.
The also thank an anonymous referee for a very detailed and helpful report.
DR was supported by Centro de Modelamiento Matem\'{a}tico (CMM) Basal Funds FB210005 from ANID-Chile, by Fondecyt Grant 1201914, and by Programa Iniciativa Cient\'ifica Milenio grant number NC120062 through Nucleus Millennium Stochastic Models of Complex and Disordered Systems.

\printbibliography[heading=apa]

\end{document}